\newcommand{\Ker}{\operatorname{Ker}}
\newcommand{\IM}{\operatorname{Im}}
\newcommand{\Coker}{\operatorname{Coker}}
\newcommand{\Spec}{\operatorname{Spec}}
\newcommand{\Sm}{\operatorname{Sm}}
\newcommand{\Cor}{\operatorname{Cor}}
\newcommand{\car}{\operatorname{char}}
\newcommand{\Pic}{\operatorname{Pic}}
\newcommand{\NS}{\operatorname{NS}}
\newcommand{\Alb}{\operatorname{Alb}}
\newcommand{\uHom}{\operatorname{\underline{Hom}}}
\newcommand{\Hom}{\operatorname{Hom}}
\newcommand{\End}{\operatorname{End}}
\newcommand{\Tor}{\operatorname{Tor}}
\newcommand{\Ext}{\operatorname{Ext}}
\newcommand{\Ab}{\operatorname{Ab}}
\newcommand{\DA}{\operatorname{DA}}
\newcommand{\DM}{\operatorname{DM}}
\newcommand{\SH}{\operatorname{SH}}
\newcommand{\NST}{\operatorname{NST}}
\newcommand{\rat}{{\operatorname{rat}}}
\newcommand{\num}{{\operatorname{num}}}
\newcommand{\eff}{{\operatorname{eff}}}
\newcommand{\gm}{{\operatorname{gm}}}
\newcommand{\cont}{{\operatorname{cont}}}
\newcommand{\tors}{{\operatorname{tors}}}
\newcommand{\Nis}{{\operatorname{Nis}}}
\newcommand{\et}{{\operatorname{\acute{e}t}}}
\newcommand{\ord}{\operatorname{ord}}
\newcommand{\rk}{\operatorname{rk}}
\newcommand{\cl}{\operatorname{cl}}
\newcommand{\cone}{\operatorname{cone}}
\renewcommand{\lim}{\varprojlim}
\newcommand{\colim}{\varinjlim}
\newcommand{\hocolim}{\operatorname{hocolim}}
\newcommand{\Inj}{\lhook\joinrel\longrightarrow}
\newcommand{\surj}{\rightarrow\!\!\!\!\!\rightarrow}
\newcommand{\Surj}{\relbar\joinrel\surj}
\newcommand{\by}[1]{\overset{#1}{\longrightarrow}}
\newcommand{\iso}{\by{\sim}}
\newcommand{\yb}[1]{\overset{#1}{\longleftarrow}}
\newcommand{\osi}{\yb{\sim}}
\newcommand{\oo}{\mathop{\otimes}\limits}
\newcommand{\sA}{\mathcal{A}}
\newcommand{\sC}{\mathcal{C}}
\newcommand{\sF}{\mathcal{F}}
\newcommand{\sH}{\mathcal{H}}
\newcommand{\sM}{\mathcal{M}}
\newcommand{\sN}{\mathcal{N}}
\newcommand{\sS}{\mathcal{S}}
\newcommand{\sT}{\mathcal{T}}
\newcommand{\sX}{\mathcal{X}}
\newcommand{\sZ}{\mathcal{Z}}
\newcommand{\A}{\mathbf{A}}
\newcommand{\F}{\mathbf{F}}
\newcommand{\G}{\mathbb{G}}
\renewcommand{\L}{\mathbb{L}}
\renewcommand{\P}{\mathbf{P}}
\newcommand{\Q}{\mathbf{Q}}
\newcommand{\R}{\mathbf{R}}
\newcommand{\Z}{\mathbf{Z}}
\newcommand{\un}{\mathbf{1}}
\newcommand{\bH}{\mathbb{H}}
\renewcommand{\phi}{\varphi}
\DeclareFontFamily{U}{wncy}{}
\DeclareFontShape{U}{wncy}{m}{n}{%
<5>wncyr5%
<6>wncyr6%
<7>wncyr7%
<8>wncyr8%
<9>wncyr9%
<10>wncyr10%
<11>wncyr10%
<12>wncyr6%
<14>wncyr7%
<17>wncyr8%
<20>wncyr10%
<25>wncyr10}{}
\DeclareMathAlphabet{\cyr}{U}{wncy}{m}{n}
\newtheorem{Th}{Theorem}
\newtheorem{thm}{Theorem}[subsection]
\newtheorem{prop}[thm]{Proposition}
\newtheorem{lemma}[thm]{Lemma}
\newtheorem{cor}[thm]{Corollary}
\newtheorem{conj}[thm]{Conjecture}
\theoremstyle{definition}
\newtheorem{defn}[thm]{Definition}
\newtheorem{remark}[thm]{Remark}
\newtheorem{remarks}[thm]{Remarks}
\newcounter{spec}
\newenvironment{thlist}{\begin{list}{\rm{(\roman{spec})}}%
{\usecounter{spec}\labelwidth=20pt\itemindent=0pt\labelsep=10pt}}%
{\end{list}}%
\numberwithin{equation}{subsection}
\begin{document}

\title{The full faithfulness conjectures in characteristic $p$}
\author{Bruno Kahn}
\address{Institut de Math\'ematiques de Jussieu\\Case 247\\
4 place Jussieu\\
75252 Paris Cedex 05\\France}
\email{kahn@math.jussieu.fr}
\date{September 19, 2012}
\begin{abstract}
We present a triangulated version of the conjectures of Tate and Beilinson on algebraic cycles
over a finite field. This sheds a new light on Lichtenbaum's Weil-\'etale cohomology.
\end{abstract}
\maketitle

\tableofcontents

\section*{Introduction} 

It is generally understood that the ``standard" conjectures on mixed motives predict
that certain triangulated realisation functors should be conservative. The aim of this text is
to explain that, at least in characteristic $p$, they predict much more: namely, that suitable
triangulated realisation functors should be \emph{fully faihtful}.

The main result is the following. Let $\F$ be a finite field, and let $\DM_\et(\F)$ be
Voevodsky's stable category of (unbounded, \'etale) motivic complexes. It contains the category
$\DM_\et^\eff(\F)$ of effective motivic complexes as a full subcategory. Let
$l$ be a prime number different from $2$ and $\car \F$. By work of Ayoub \cite{ayoubreal}, there is a
pair of adjoint functors:
\[\DM_\et(\F)\begin{smallmatrix}\Omega_l\\\displaystyle \leftrightarrows\\ R_l
\end{smallmatrix}\hat{D}_\et(\F,\Z_l)\]
where the right hand side is Ekedahl's category of $l$-adic coefficients \cite{ek}. In
particular, we have the object
\[\Gamma=\Omega_l(\Z_l)\in \DM_\et(\F).\]

\begin{Th}[cf. Corollary \protect{\ref{c9.1}}] \label{tmain}The following conditions are equivalent:
\begin{thlist}
\item The Tate conjecture (on the poles of the zeta function) and the Beilinson conjecture (on
rational equivalence agreeing with numerical equivalence) hold for any smooth projective
$\F$-variety.
\item $\Gamma\in \DM_\et^\eff(\F)$.
\end{thlist}
\end{Th}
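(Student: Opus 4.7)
The plan is to translate the effectivity condition $\Gamma\in \DM_\et^\eff(\F)$ into a cohomological statement via the adjunction $(\Omega_l,R_l)$, and then to match that statement with Tate plus Beilinson through cycle class maps.

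First I fix a framework for testing effectivity. Since $\DM_\et(\F)$ is obtained from $\DM_\et^\eff(\F)$ by inverting the Tate twist, the inclusion $\iota:\DM_\et^\eff(\F)\hookrightarrow \DM_\et(\F)$ is fully faithful and admits a right adjoint $\nu^{\geq 0}$; effectivity of $\Gamma$ amounts to the counit $\iota\,\nu^{\geq 0}\Gamma\to \Gamma$ being an isomorphism. Testing this on the compact generators $M(X)(n)[i]$ with $X$ smooth projective reduces effectivity to a family of comparisons of $\Hom$-groups that I will compute on both sides.

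Next I unfold the stable side using Ayoub's adjunction: for any $M$, $\Hom_{\DM_\et(\F)}(\Gamma,M)=\Hom_{\hat D_\et(\F,\Z_l)}(\Z_l,R_l M)$. For $M=M(X)(n)[i]$, $R_l(M)$ is, via Ayoub's comparison, the $l$-adic cohomology complex $R\Gamma_\et(X,\Z_l(n))[i]$ viewed as a Galois complex on $\Spec \F$, and taking $\Z_l$-valued Homs yields $l$-adic cohomology of $X$ over $\F$ through a Hochschild--Serre spectral sequence over $\mathrm{Gal}(\bar\F/\F)$. On the effective side, the analogous Hom computed in $\DM_\et^\eff(\F)$ is controlled by Bloch's cycle complex and the Geisser--Levine computation of motivic cohomology in characteristic $p$; in particular, for $i=2n$ it is $CH^n(X)\otimes \Z_l$. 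The comparison of the two Hom groups is, by construction, the $l$-adic cycle class map (together with its higher-weight analogues).

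Third, I match this comparison with the conjectures. The Tate conjecture says that $CH^n(X)_{\Q_l}\to H^{2n}_\et(X_{\bar\F},\Q_l(n))^{\mathrm{Gal}}$ is surjective; the Beilinson conjecture (rat $=$ num), together with Jannsen's semisimplicity theorem, forces numerical, homological and $l$-adic equivalences to coincide on smooth projective $\F$-varieties, hence forces the same map to be injective. Together they make each cycle map a bijection, which plugged back into the Hom comparison yields $\Gamma\in \DM_\et^\eff(\F)$. Conversely, effectivity of $\Gamma$ promotes each cycle map to an isomorphism, from which Tate follows at once and Beilinson follows via Jannsen.

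The main obstacle will be the bookkeeping between integral and rational coefficients, the role of $l$-primary torsion in the unbounded derived setting, and the convergence of the Hochschild--Serre spectral sequence in this context. A further delicate point is the reduction of effectivity of $\Gamma$ to a statement tested only on motives of smooth projective varieties: this uses compact generation of $\DM_\et(\F)$ and the fact that $\Gamma$ commutes with enough colimits, and it is likely to go through Kahn's earlier identification of birational motives with a slice of the filtration, combined with Ayoub's machinery for manipulating realisations. Once these technical points are cleared, both implications fall out of the dictionary constructed above.
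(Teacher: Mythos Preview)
Your outline has the right shape—test effectivity of $\Gamma$ on compact generators and compare with $l$-adic cohomology via the adjunction—but there is a genuine gap in the identification of the effective side, and this gap is fatal rather than technical.

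You implicitly assume that $\Gamma^\eff:=\nu^{\ge 0}\Gamma$ is $\Z_l[0]$, so that $\Hom_{\DM_\et^\eff(\F)}(M(X),\Gamma^\eff(n)[2n])=CH^n(X)\otimes\Z_l$ and the comparison map is the naive cycle class map. This is false. The paper computes $\Gamma^\eff\simeq \Z^c\otimes\Z_l$ (Proposition \ref{p7.1}), where $\Z^c$ is a length-one complex with cohomology $\Z$ in degree $0$ and $\Q$ in degree $1$; the proof uses the Riemann hypothesis over finite fields in an essential way. Consequently the effective Hom is $H^i_\et(X,\Z(n)\oo^L\Z^c)\otimes\Z_l$, and the map to $H^i_\cont(X,\Z_l(n))$ is the \emph{modified} cycle map $\cl_X^{n,i}$ of \eqref{eq7.3}, not the naive one. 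This matters: the naive map can \emph{never} be an isomorphism for all $(X,n,i)$, since for $X$ smooth projective one has $H^{2n+1}_\et(X,\Q(n))=0$ while $H^{2n+1}_\cont(X,\Q_l(n))\simeq H^{2n}_\cont(\bar X,\Q_l(n))_G$ is generally nonzero (see \S\ref{s6.1}). So your dictionary, as written, would prove that (i) is always false.

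A second, related gap is in degrees $i\ne 2n$. You only discuss the Chow-group degree, but effectivity of $\Gamma$ requires the modified cycle map to be an isomorphism in \emph{all} degrees. On the motivic side this forces Parshin's conjecture (vanishing of $H^i(X,\Q(n))$ for $i\ne 2n$), which the paper obtains from Tate $+$ Beilinson via Geisser's argument (Theorem \ref{tG}); on the $l$-adic side one also needs the semisimplicity condition $S^n$ of Remark \ref{sn}, which enters through Proposition \ref{rk=f}. Neither ingredient appears in your sketch. Finally, a minor slip: the adjunction gives $\Hom(M,\Gamma)=\Hom(R_l M,\Z_l)$, not $\Hom(\Gamma,M)=\Hom(\Z_l,R_l M)$; you have the variance reversed.
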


The full faithfulness statement announced above appears as another equivalent condition in
Proposition \ref{p10.1} a); further equivalent conditions (finite generation of Hom groups)
appear in Theorem \ref{t10.1}. After the fact, see \S \ref{s.w}, these reformulations involve
Weil \'etale cohomology. For a case when they hold in the triangulated context, see
Theorem \ref{t4ab}.

Curiously, the Beilinson conjecture and the Parshin conjecture (on vanishing of higher rational
$K$-groups of smooth projective $\F$-varieties) are sufficient to imply the existence of a
motivic $t$-structure on $\DM_\gm(\F,\Q)$, as well as semi-simplicity and independence of $l$
for the $\Q_l$-adic realisations of objects of this category: see Proposition \ref{p10.2}.

A problem is that there is no known analogue of this picture in characteristic $0$ at the
moment. While in characteristic $p$ a single $l$-adic cohomology is sufficient to approach
cycles modulo rational equivalence, it seems that in characteristic $0$ one should consider the
full array of realisation functors, plus their comparison isomorphisms. Even with this idea it
does not seem obvious how to get a clean conjectural statement. In the light of \S \ref{s.w},
this might be of great interest to get the right definition of Weil-\'etale cohomology in
characteristic
$0$.

This is a write-up of the talk I gave at the summer school on July 27, 2006. Much of the oral
version was tentative because the suitable $l$-adic realisation functors were not constructed
at the time. The final version is much more substantial than I had envisioned: this is both
because of technical difficulties and because I tried to make the exposition as pedagogical as
possible, in the spirit of the summer school. I hope the reader will bear with the first
reason, and be satisfied with the second one.

I also hope that some readers will, like me, find the coherence and beauty of the picture
below compelling reasons to believe in these conjectures.

I wish to thank Joseph Ayoub for a great number of exchanges while preparing this work, and the referee for a thorough reading which helped me improve the exposition.

\subsection*{Notation} $k$ denotes a perfect field; we write $\Sm(k)$ for
the category of smooth separated
$k$-schemes of finite type. When $k$ is finite we write $\F$
instead of $k$ and denote by $G\simeq \hat{\Z}$ its absolute Galois group. 

If $\sC$ is a category, we write $\sC(X,Y)$, $\Hom_\sC(X,Y)$ or $\Hom(X,Y)$ for the set of
morphisms between two objects $X,Y$, according to notational convenience.

\section{General overview}

This section gives a background to the sequel of the paper.

\subsection{Triangulated categories of motives} \label{0.1} As explained in Andr\'e's book
\cite[Ch. 7]{andre}, the classical conjectures of Hodge and Tate, and less classical ones of
Grothendieck and Ogus, may be interpreted as requesting certain realisation functors on pure
Grothendieck motives to be fully faithful. These conjectures concern algebraic cycles on smooth
projective varieties modulo homological equivalence. On the other hand, both Bloch's answer to
Mumford's nonrepresentability theorem for $0$-cycles
\cite[Lect. 1]{bloch} and Beilinson's approach to special values of $L$-functions
\cite{beiL,beilinson} led to conjectures on cycles modulo
\emph{rational} equivalence: the conjectures of Bloch-Beilinson and Murre (see Jannsen 
\cite{jannsen2} for an exposition). 

This development came parallel to another idea of Beilinson: in order to construct the
(still conjectural) abelian category $\sM(k)$ of mixed motives over a field $k$, one might start
with the easier problem of constructing a triangulated category of motives, leaving for later
the issue of finding a good $t$-structure on this category. Perhaps Beilinson had two main
insights: first, the theory of perverse sheaves he had been developing with Bernstein, Deligne
and Gabber \cite{bbd} and second, his vanishing conjecture for Adams eigenspaces on algebraic
$K$-groups (found independently by Soul\'e) which deals with an \emph{a priori} obstruction to the
existence of $\sM(k)$.

The latter programme: constructing triangulated categories of motives, was successfully
developed by Levine
\cite{levine}, Hanamura
\cite{hanamura} and Voevodsky \cite{V} independently. All three defined tensor triangulated categories
of motives over
$k$, by approaches similar in flavour but quite different in detail. It is now known that all
these categories are equivalent, if $\car k=0$ or if we take rational
coefficients.\footnote{Gabber's recent refinement of de
Jong's alteration theorem \cite{illusie-gabber} now allows us to just invert the exponential characteristic
for these theorems.} More precisely, the comparison between Levine's and Voevodsky's categories
is due to Levine in characteristic $0$  \cite[Part I, Ch. VI, 2.5.5]{levine} and to Ivorra in
general \cite{ivorra}, while the comparison between Hanamura's and Voevodsky's categories is
due to Bondarko \cite{bondarko} and independently to Hanamura (unpublished).

These three constructions extend when replacing the field $k$ by a rather
general base $S$ \cite{levine,voebase,hana11}\footnote{As far as I know, no comparison between these extensions has been attempted yet.}. At this stage, the issue
of Grothendieck's six operations \cite[5.10 A]{beilinson} starts to make sense. In a talk at the ICTP in 2002, Voevodsky
gave hints on how to carry this over in an abstract framework which would fit with his 
constructions, at least for the
four functors $f^*$, $f_*$,
$f_!$ and
$f^!$. This programme was taken up by Ayoub in \cite{ayoub}; he added a great deal to
Voevodsky's outline, namely a study of the missing operations $\otimes$ and $\uHom$ plus
related issues like constructibility and Verdier duality, as well as an impressive theory of
specialisation systems, a vast generalisation of the theory of nearby cycle functors.

It remained to see whether this abstract framework applied to categories of
motives over a base, for example to the Voevodsky version $S\mapsto \DM(S)$  constructed using
relative cycles (``sheaves with transfers"). It did apply to a variant ``without transfers"
$S\mapsto \DA(S)$ (as well as to Voevodsky's motivating example: the Morel-Voevodsky stable
$\A^1$-homotopy categories $S\mapsto \SH_{\A^1}(S)$): see \cite[Ch. 4]{ayoub} for this. It did
not apply directly to $\DM$, however. This issue was solved to some extent by Cisinski and
D\'eglise \cite{cis-deg}, who showed that the natural functor $\DA_\et(S,\Q)\to \DM(S,\Q)$
is an equivalence of categories when $S$ is a normal scheme, where $\DA_\et$ is an \'etale
variant of
$\DA$. All this will be explained in much more detail in \S \ref{s.base}.

\subsection{Motivic conjectures and categories of motives} It is both a conceptual and a
tactical issue to reformulate the conjectures alluded to at the beginning of \S \ref{0.1} 
 in this triangulated framework. The first necessary thing is to have
triangulated realisation functors at hand. In Levine's framework, many of them are
constructed in his book
\cite[Part I, Ch. V]{levine}. In Voevodsky's framework, with rational coefficients
and over a field of characteristic $0$, this was done by Huber using her triangulated category
of mixed realisations as a target
\cite{huber}. Over a separated Noetherian base, with integral coefficients and for $l$-adic
cohomology, this was done by Ivorra \cite{ivorrareal}.

Then came up the issue whether realisation functors commute with the six
operations. The only context where the question made full sense was Ivorra's. But there were
three problems at the outset: Ivorra's functors 1) are only defined on geometric motives, and
2) are contravariant. The third problem is that the formalism of six operations is not known
to exist on $S\mapsto \DM(S)$ in full generality, as explained above.

These issues were recently solved by Ayoub who constructed covariant $l$-adic realisation
functors from $\DA_\et(S)$ to Ekedahl's $l$-adic categories
$\hat{D}(S,\Z_l)$ \cite{ayoubreal}. He proved that they commute with the six operations and with
the right choice of a specialisation system. More details are in \S \ref{s.base}.

\enlargethispage*{30pt}

\section{The Tate conjecture: a review}

In this section, $\F=\F_q$ is a finite field with $q$ elements. The main reference here is
Tate's survey 
\cite{tate}.

\subsection{The zeta function and the Weil conjectures} Let $X$ be a
$\F$-scheme of finite type. It has a zeta function:
\[\zeta(X,s) = \exp\left(\sum_{n\ge 1} |X(\F_{q^n})|\frac{q^{-ns}}{n} \right)= \prod_{x\in
X_{(0)}} (1-|\F(x)|^{-s})^{-1}
\]

Weil conjectured that $\zeta(X,s)\in \Q(q^{-s})$ for any $X$: Dwork was first to
prove it in  \cite{dwork}. A different proof, based on
$l$-adic cohomology, was given by Grothendieck et al in \cite{sga5}. It provided the following
extra property, also conjectured by Weil:  if
$X$ is smooth projective, there is a functional equation of the form
\[\zeta(X,s) = A B^s \zeta(X,\dim X-s)\]
where $A,B$ are constants.

Continue to assume $X$ smooth projective and let $n\ge 0$. We write $\sZ^n(X)$ for the
group of cycles of codimension $n$ on $X$ and $A^n_\num(X)$ for its quotient by numerical equivalence:  $A^n_\num(X)$ is finitely
generated (see
\cite[3.4.6]{andre} or 
\cite[Th. 2.15]{riou}). The starting point of this work is the following

\begin{conj}[Tate \protect{\cite{tatepoles}}]\label{ct} $\ord_{s=n}\zeta(X,s) = - \rk
A^n_\num(X)$.
\end{conj}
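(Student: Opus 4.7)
The plan is to pass from the analytic information in $\zeta(X,s)$ to arithmetic information on $l$-adic cohomology, and then to algebraic cycles. Fix a prime $l\ne\car\F$, let $\bar\F$ be an algebraic closure of $\F$, and write $H^i=H^i_\et(X\times_\F\bar\F,\Q_l)$ with geometric Frobenius $F$ and $d=\dim X$. Grothendieck's trace formula gives
\[\zeta(X,s)=\prod_{i=0}^{2d}\det(1-Fq^{-s}\mid H^i)^{(-1)^{i+1}},\]
and Deligne's Riemann hypothesis tells us that all eigenvalues of $F$ on $H^i$ have absolute value $q^{i/2}$. Hence $q^n$ can be an eigenvalue of $F$ only on $H^{2n}$, and
\[-\ord_{s=n}\zeta(X,s)=m_n(X),\]
where $m_n(X)$ is the multiplicity of $q^n$ as an eigenvalue of $F$ on $H^{2n}$, i.e.\ the dimension of the generalised eigenspace.

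The second step is to interpose the $l$-adic cycle class map
\[\cl\colon \sZ^n(X)\otimes\Q_l\longrightarrow H^{2n}(n)^F,\]
where $(n)$ denotes the Tate twist; the $F$-invariants coincide with the Galois invariants since $G\simeq \hat\Z$. Compatibility of $\cl$ with the intersection pairing, together with the fact that numerical equivalence is by definition the radical of that pairing, forces $\cl$ to descend to $A^n_\num(X)\otimes\Q_l$ and to be \emph{injective} there. The conjecture then splits cleanly into
\begin{thlist}
\item $\cl\colon A^n_\num(X)\otimes\Q_l\to H^{2n}(n)^F$ is surjective (classical Tate in codimension $n$);
\item $F$ acts semisimply at the eigenvalue $q^n$ on $H^{2n}$, so that the generalised eigenspace coincides with $H^{2n}(n)^F$ and $m_n(X)=\dim H^{2n}(n)^F$.
\end{thlist}
Assertions (i) and (ii) together are equivalent to the desired equality $\rk A^n_\num(X)=m_n(X)$.

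The real obstacle is (i): constructing enough algebraic cycles to account for all Frobenius-invariant classes in $l$-adic cohomology. This is unknown beyond isolated cases (divisors on certain surfaces, abelian varieties after Tate and Zarhin, a few Shimura-type examples). Part (ii) is of a more categorical nature and would plausibly follow from a Chow--K\"unneth decomposition together with (i). Since no variety-by-variety strategy is in sight, I would abandon the direct attack and adopt the point of view pursued in the sequel of this paper: bundle (i) and (ii), for all $X$ and all $n$, into a single global condition on the object $\Gamma\in\DM_\et(\F)$, as in Theorem \ref{tmain}, and try to leverage the triangulated structure to obtain a handle unavailable at the level of an individual cycle.
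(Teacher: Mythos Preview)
The statement you are addressing is Conjecture~\ref{ct}: it is not a theorem, and the paper does not prove it. What the paper does (Theorem~\ref{tt} and Remark~\ref{sn}, following Milne and Tate) is record its known \emph{equivalent reformulations}. Your proposal is best read not as a proof but as an attempt to reproduce that analysis, and on that level it is broadly in the right spirit: the reduction via the cohomological expression of $\zeta$ and Deligne's theorem to a statement about the eigenvalue $q^n$ on $H^{2n}$ is exactly the first move the paper makes.

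There is, however, a genuine error in your second step. You assert that compatibility of the cycle class map with the intersection pairing ``forces $\cl$ to descend to $A^n_\num(X)\otimes\Q_l$ and to be injective there''. This is not correct. If $\alpha\in\sZ^n(X)$ is numerically trivial, compatibility only gives $\langle\cl(\alpha),\cl(\beta)\rangle=0$ for every \emph{algebraic} class $\beta\in\sZ^{d-n}(X)$; it says nothing about pairing $\cl(\alpha)$ against an arbitrary cohomology class. Concluding $\cl(\alpha)=0$ would require knowing that algebraic classes span $H^{2(d-n)}(d-n)^F$, which is the Tate conjecture in codimension $d-n$. What is true unconditionally is that $\cl$ factors injectively through $A^n_\hom(X)\otimes\Q_l$; the further passage to $A^n_\num$ is exactly condition (3)(b) of Theorem~\ref{tt} (agreement of homological and numerical equivalence), and is part of the content of the conjecture, not a free consequence of compatibility. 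This is why the paper's list of equivalent conditions keeps (3)(a) and (3)(b) separate, and why Remark~\ref{sn} isolates the semisimplicity condition $S^n$ as a third independent ingredient rather than folding it into your dichotomy (i)/(ii).

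So: your reduction to $m_n(X)$ is fine, but your clean split into (i) surjectivity and (ii) semisimplicity, with injectivity on $A^n_\num$ taken for granted, oversimplifies the picture. Compare carefully with Theorem~\ref{tt} and Remark~\ref{sn} to see how the three pieces (surjectivity, $\hom=\num$, and $S^n$) interact.
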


\subsection{Cohomological interpretation} The proofs of \cite{sga5} give the factorisation
(another Weil conjecture)
\[\zeta(X,s) = \prod_{i=0}^{2d} P_i(q^{-s})^{(-1)^{i+1}}\]
with
\begin{gather*}
P_i(t) = \det(1-\phi t\mid H^i_l(X))\\
 \phi=\text{Frobenius},\quad H^i_l(X):= H^i_\cont(\bar X, \Q_l)\quad (l\ne p).
\end{gather*}

Here $\bar X=X\otimes_\F \bar \F$, $H^i_\cont(\bar X, \Q_l):= \lim
H^i_\et(\bar X,\Z/l^n)
\otimes
\Q_l$. 

Finally the last Weil conjecture
was proven by Deligne in
\cite{weilI}: 

\begin{thm}[``Riemann Hypothesis over finite fields"]\label{rh} For all $i$, $P_i(t)\in \Z[t]$ and its inverse roots have complex absolute
values $= q^{i/2}$.
\end{thm}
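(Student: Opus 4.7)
The plan is to follow Deligne's strategy from his proof of the Weil conjectures. The archimedean bound on inverse roots is the deep content; integrality of $P_i(t)$ should come out almost as a byproduct. Indeed, once one knows that the eigenvalues of $\phi$ on each $H^i_l(X)$ have absolute value $q^{i/2}$, the factorisation $\zeta(X,s) = \prod_i P_i(q^{-s})^{(-1)^{i+1}} \in \Q(q^{-s})$ coming from Grothendieck's rationality theorem forces each $P_i(t)$ to lie in $\Q[t]$ and to be independent of $l$, because distinct weights produce no cancellations between different $i$'s; combined with the Grothendieck--Lefschetz trace formula, which shows that the traces of iterates of $\phi$ are integers, this upgrades to $P_i(t)\in\Z[t]$. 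So I would concentrate the entire effort on proving the archimedean bound.

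First I would reduce to the case where $i$ is the middle degree of a variety of odd dimension. Weak Lefschetz handles small degrees by induction on dimension after restriction to a smooth hyperplane section, and Poincar\'e duality takes care of the complementary high degrees; one may then arrange $\dim X$ odd by multiplying with an auxiliary variety. Next I would choose a Lefschetz pencil on $X$ (after a suitable projective embedding) and replace $X$ by the blow-up $\tilde X \to \P^1$ along the axis of the pencil. Writing $U\subset\P^1$ for the open locus of smooth fibres and $\sF$ for the lisse $\Q_l$-sheaf of vanishing cycles on $\bar U$, the primitive middle cohomology of $X$ appears as a subquotient of the first $l$-adic cohomology of $\bar U$ with coefficients in $\sF$ (more precisely, of a middle extension along $U\hookrightarrow \P^1$); the task reduces to bounding eigenvalues of Frobenius on this $H^1$.

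The core is Rankin's trick: for each $k\geq 1$ one considers the $L$-function $L(U,\sF^{\otimes 2k},t)$. It is rational in $t$ by the Grothendieck--Lefschetz formula, and, via the Poincar\'e pairing on $\sF$ coming from the cup product on vanishing cycles, its logarithmic derivative has non-negative coefficients after an appropriate Tate twist. The location of its poles is controlled by the Frobenius-fixed parts of $H^0$ and $H^2$ of $\sF^{\otimes 2k}$ on $\bar U$. The decisive geometric input is Deligne's \emph{big monodromy theorem}: the Zariski closure of the image of $\pi_1(\bar U)$ acting on a geometric stalk of $\sF$ contains the full symplectic (or orthogonal) group of the Poincar\'e pairing. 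Combined with classical invariant theory this pins down the invariant subspaces in $\sF^{\otimes 2k}$, hence the pole locus of $L(U,\sF^{\otimes 2k},t)$. Letting $k\to\infty$ and matching the two pieces of information yields a one-sided bound $|\alpha|\leq q^{(i+1)/2}$ for every eigenvalue $\alpha$ of Frobenius on the relevant $H^1$; Poincar\'e duality supplies the complementary bound $|\alpha|\geq q^{(i-1)/2}$. A K\"unneth-type bootstrap, applying the same estimate to the powers $X^m$ and letting $m\to\infty$, then sharpens this gap of width one to the exact equality $|\alpha|=q^{i/2}$.

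The main obstacle is the big monodromy theorem together with its interplay with the positivity step. Proving that the vanishing-cycles monodromy is as large as possible requires a detailed analysis of Picard--Lefschetz transformations, the conjugacy of vanishing cycles under $\pi_1(\bar U)$, and a Kazhdan--Margulis style closure argument. The positivity input, in turn, requires constructing a real-valued form on the Frobenius-fixed parts of $\sF^{\otimes 2k}$, which is where the polarisation of the ambient variety enters crucially. Everything else --- the construction of Lefschetz pencils via blow-up along a suitable axis, weak and hard Lefschetz, rationality of $L$-functions with coefficients in a lisse sheaf, the Grothendieck--Lefschetz trace formula, and Poincar\'e duality --- is standard machinery from SGA 4, 5 and 7.
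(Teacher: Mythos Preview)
The paper does not prove this theorem at all: it is stated as Deligne's result, with the single attribution ``proven by Deligne in \cite{weilI}'', and is then used freely as a black box throughout (e.g.\ in the proofs of Proposition~\ref{p7.1}, Remark~\ref{r9.7}, Theorem~\ref{t1}). So there is no ``paper's own proof'' to compare against; your outline is a reasonable summary of the strategy of \cite{weilI} itself, which is exactly what the paper is invoking.

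One small remark on your sketch: the final ``K\"unneth bootstrap with $X^m$'' is not part of Deligne's argument in Weil~I and is in fact unnecessary. The Rankin step, applied to $\sF^{\otimes 2k}$ and letting $k\to\infty$, already produces the sharp bound $|\alpha|\le q^{i/2}$ directly (the $1/2k$ error term goes to zero), not merely $|\alpha|\le q^{(i+1)/2}$. Poincar\'e duality then gives equality. Your product-of-copies trick would also close the gap, so the logic is sound, but it duplicates work that the $k\to\infty$ limit has already done.
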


This yields:

\begin{thm}[Milne \protect{\cite[Prop. 8.2 and 8.4]{milneamer}}, Tate \protect{\cite[Th. (2.9)]{tate}}]\label{tt} For any $(X,n,l)$, the
following are equivalent:
\begin{enumerate}
\item Conjecture \ref{ct};
\item $\rk A^n_\num(X)=\dim_{\Q_l} H^{2n}_\cont(\bar X, \Q_l(n))^G$, $G=
Gal(\bar \F/\F)$;
\item 
\begin{enumerate}
\item the cycle map $\sZ^n(X)\otimes \Q_l\to H^{2n}_\cont(\bar X, \Q_l(n))^G$ is
surjective,
\item homological and numerical equivalences agree on $\sZ^n(X)\otimes\Q$.
\end{enumerate}
\end{enumerate}
\end{thm}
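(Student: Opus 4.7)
The plan is to interpret all four quantities as invariants of the $G$-action on $H^{2n}_l(\bar X)$ and to arrange them into a canonical chain of inequalities that each of (1), (2), and (3) collapses to an equality.

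First I would give a cohomological expression for $\ord_{s=n}\zeta(X,s)$. From the factorisation $\zeta(X,s)=\prod_{i=0}^{2d} P_i(q^{-s})^{(-1)^{i+1}}$ combined with Theorem \ref{rh}, the inverse roots of $P_i$ have complex absolute value $q^{i/2}$, so the only factor that can vanish at $q^{-s}=q^{-n}$ is $P_{2n}$. Writing $m$ for the algebraic multiplicity of $q^n$ as an eigenvalue of $\phi$ on $H^{2n}_l(\bar X)$, this yields
\[-\ord_{s=n}\zeta(X,s)=m.\]

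Next I would establish the chain
\[\rk A^n_\num(X)\le \rk A^n_\hom(X)\le \dim_{\Q_l} H^{2n}_\cont(\bar X,\Q_l(n))^G\le m.\]
The first inequality is tautological, via the natural surjection $A^n_\hom(X)\surj A^n_\num(X)$. The second records that the cycle class map induces an injection $A^n_\hom(X)\otimes\Q_l \hookrightarrow H^{2n}_\cont(\bar X,\Q_l(n))^G$ (cycles are defined over $\F$, so land in the invariants). The third is pure linear algebra: $H^{2n}_\cont(\bar X,\Q_l(n))^G=\ker(\phi-1)$ on the Tate-twisted cohomology, whose dimension (the geometric multiplicity of the eigenvalue $1$) is bounded by the algebraic multiplicity $m$. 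Condition (1), via the first step, is precisely the equality of the two extremes, which by the sandwich forces all four terms to coincide; hence (1) implies (2) and (3). Condition (2) equates the first and third terms, which again by the sandwich forces $\rk A^n_\num=\rk A^n_\hom=\dim H^{2n}(n)^G$: the injection in (3)(a) becomes an isomorphism after $\otimes\Q_l$ (noting $A^n_\hom$ is finitely generated), and the equality $\rk A^n_\num=\rk A^n_\hom$ is (3)(b). Conversely (3) gives back (2) by the same bookkeeping.

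The remaining and hardest implication is (2)$\Rightarrow$(1), which amounts to upgrading the geometric multiplicity $\dim H^{2n}(n)^G$ to the algebraic multiplicity $m$; equivalently, to the \emph{semisimplicity} of $\phi$ at the eigenvalue $q^n$ on $H^{2n}_l(\bar X)$. This is the main obstacle: the chain of inequalities alone cannot close it. I would follow Tate's Poincar\'e-duality argument. The cup product
\[H^{2n}_l(\bar X)(n)\otimes H^{2d-2n}_l(\bar X)(d-n)\longrightarrow H^{2d}_l(\bar X)(d)\simeq \Q_l\]
is a perfect $G$-equivariant pairing, and the corresponding pairing on cycles modulo numerical equivalence is perfect by definition. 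Once (2) is promoted to its Poincar\'e-dual counterpart in codimension $d-n$ (which follows from (2) by perfectness of both pairings together with the identity $\dim V^G=\dim V_G$ valid for any continuous finite-dimensional $\hat{\Z}$-representation $V$), the subspaces of cycle classes pair perfectly inside $H^{2n}(n)^G$ and $H^{2d-2n}(d-n)^G$. Comparing this with the perfect pairing that always factors as $V^G\otimes W\to V^G\otimes W_G\to \Q_l$ forces the natural map $W^G\to W_G$ to be bijective, which is equivalent to the absence of nontrivial Jordan blocks of $\phi$ at the eigenvalue $1$ on $W=H^{2d-2n}(d-n)$; by Poincar\'e duality the same holds for $V=H^{2n}(n)$. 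This yields $m=\dim H^{2n}(n)^G=\rk A^n_\num(X)$, which by the first step is (1). The non-formal heart of the argument is exactly this semisimplicity step; everything else is organising the four invariants into the chain above.
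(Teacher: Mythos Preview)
The paper does not give its own proof of this theorem: it is stated with attribution to Milne \cite[Prop.~8.2 and 8.4]{milneamer} and Tate \cite[Th.~(2.9)]{tate}, and the discussion continues immediately with Remark~\ref{sn}. So there is no in-paper argument to compare against.

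Your reconstruction is the classical Milne--Tate argument and is correct. The chain
\[\rk A^n_\num(X)\le \rk A^n_\hom(X)\le \dim_{\Q_l} H^{2n}_\cont(\bar X,\Q_l(n))^G\le m\]
together with the identification $-\ord_{s=n}\zeta(X,s)=m$ (via Theorem~\ref{rh}) disposes of all implications except (2)$\Rightarrow$(1), and your Poincar\'e-duality step for that is the right one: promoting (2) to codimension $d-n$, you get that cycle classes span both $V^G$ and $W^G$ and that the restricted cup-product is non-degenerate (compatibility of cycle map with intersection product); since the pairing $V^G\times W\to\Q_l$ always factors through $V^G\times W_G$, non-degeneracy on $V^G\times W^G$ forces $W^G\to W_G$ to be injective, hence bijective. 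This bijectivity is exactly condition $S^n$ of Remark~\ref{sn}, and the route you take --- (2) in codimensions $n$ and $d-n$ yields (3)(a) in both, which combined with $S^n$ gives (1) --- is precisely Tate's condition (4) recorded there. One small point worth making explicit: the injectivity $A^n_\hom(X)\otimes\Q_l\hookrightarrow H^{2n}(n)^G$ uses flatness of $\Q_l$ over $\Q$, and $A^n_\hom(X)\otimes\Q$ is finite-dimensional because it injects into the finite-dimensional $H^{2n}(n)$; with that in hand your ``equal ranks $\Rightarrow$ surjective'' step goes through.
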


(3) (a) is called the \emph{cohomological Tate conjecture}: it makes sense (and is conjectured
to hold) over any finitely generated field.

\begin{remark}\label{sn} Let us write $S^n$ for the following condition: the composition
\[H^{2n}_\cont(\bar X, \Q_l(n))^G\Inj H^{2n}_\cont(\bar X,
\Q_l(n))\Surj H^{2n}_\cont(\bar X, \Q_l(n))_G\]
is bijective. Poincar\'e duality easily implies that $S^n\iff S^{d-n}$ if $d=\dim X$. Let us
also write more precisely (a)$_n$ for (a) in Theorem \ref{tt} (3). Then, by
\cite[Th. (2.9)]{tate}, the conditions of this theorem are also equivalent to
\begin{itemize}
\item[(4)] (a)$_n$ + (a)$_{d-n}$ + $S^n$.
\end{itemize}
Condition $S^n$ is verified if $X$ is an abelian variety,  essentially because the Frobenius
endomorphism of $X$ is semi-simple as an element of the centre of $\End^0(X)$ \cite[lemme
1.9]{cell}. Thus, in this case, (1) $\iff$ (a)$_n$ + (a)$_{d-n}$ in Theorem \ref{tt}.
\end{remark}

\begin{remark}\label{r2} Here are other known consequences of Conjecture \ref{ct} (see Tate
\cite{tate} and Milne \cite{milne}):
\begin{thlist}
\item Grothendieck's standard conjecture B (by Theorem \ref{tt} (3) (b)).
\item Semi-simplicity of Galois action on $l$-adic cohomology (\cite[Rk. 8.6]{milneamer}, 
\cite{ss}).
\item Any (pure) homological motive $M$ is a direct summand of $h(A_K)(n)$ for an abelian $\F$-variety $A$,
$K/\F$ a finite extension and $n\in\Z$ \cite[Rk. 2.7]{milne}. We say that $M$ is \emph{of abelian type}.
\item ``Any mixed motive [over a finite field!] is pure" \cite[Th. 2.49]{milne} (see also Proposition \ref{p10.2}
below).
\end{thlist}
\end{remark}

\subsection{Known cases of the cohomological Tate conjecture}\label{known} They are scarce: see
\cite[\S 5]{tate} for a 1994 state of the art. 

The main case is $n=1$, where the conjecture is known for  abelian varieties over any finitely
generated field $k$ (Tate \cite{tatefinite} if $k$ is finite, Zarhin \cite{zarhin} if $\car k>
0$, Faltings \cite{faltings,faltings-wusth} if $\car k=0$). This trivially extends to smooth
projective varieties $X$ whose homological motive is of abelian type in the sense of Remark
\ref{r2} (iii): for simplicity, we shall then say that $X$ is \emph{homologically of abelian
type}. For some other $X$'s, see \cite[Th. (5.6) and (5.8)]{tate}. 

One can then sometimes extend the cohomological Tate conjecture from codimension $1$ to all
codimensions. It was essentially observed by Soul\'e that this is automatic if $X$ is
homologically of abelian type and of dimension $\le 3$ \cite[Th. 3]{soule}.\footnote{Soul\'e's
proof works over a finite field, but the argument of \cite[proof of Th. 82]{handbook}
extends this to any finitely generated field.} In another direction, this will work if the
algebra $H^{2*}_\cont(X\otimes_K \bar k,\Q_l(*))^G$ is generated in cohomological degree $2$:
this happens when $k$ is finite and $X$ is a product of elliptic curves (Spiess \cite{spiess}),
and also for powers of certain simple abelian varieties (Lenstra, Zarhin). See \cite{milne2},
where Milne also proves the Tate conjecture for powers of certain abelian varieties for which
the above generation condition is not satisfied.

\section{The Beilinson, Parshin and Friedlander conjectures}

\subsection{The conjectures}

\begin{conj}[Beilinson]\label{cb} Let $X/\F$ be smooth projective  and let
$n\ge 0$. Then rational and numerical equivalences coincide on
$\sZ^n(X)\otimes\Q$.
\end{conj}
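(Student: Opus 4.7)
This is Beilinson's conjecture over a finite field, a central open problem, so what follows is a roadmap toward the known partial cases and an identification of the essential obstruction, not a literal recipe for a proof.

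The natural first move is to factor the problem into two halves: (i) rational equivalence equals homological equivalence on $\sZ^n(X)\otimes\Q$, and (ii) homological equivalence equals numerical equivalence. Half (ii) is the standard conjecture $D(X)$, which follows from the Tate conjecture by Theorem \ref{tt}(3)(b); so granting Tate it is free. Half (i) is the hard core of Beilinson, and is where any plan must concentrate its effort.

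For (i), I would first reduce to the abelian type case: Remark \ref{r2}(iii) shows that, under Tate, every homological motive over $\F$ is a direct summand of $h(A_K)(n)$ for some abelian variety $A$ and finite extension $K/\F$, so it suffices to handle smooth projective varieties that are homologically of abelian type. On such varieties one gains two strong tools: Kimura--O'Sullivan finite-dimensionality (known for motives of abelian varieties after Kimura and Shermenev), and Tate's theorem on endomorphisms of abelian varieties over finite fields. Combined with the Andr\'e--Kahn theorem, finite-dimensionality implies that the kernel of the projection from the rational Chow motive to the numerical motive is a nil-ideal; to upgrade ``nilpotent'' to ``zero'' one needs a further $K$-theoretic input. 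The natural candidate is Parshin's conjecture on the vanishing of $K_i(Y)_\Q$ for $i>0$ and $Y$ smooth projective over $\F$: via the Bloch--Lichtenbaum--Friedlander--Suslin motivic spectral sequence, Parshin controls the higher motivic cohomology $H^{p,q}(Y,\Q)$ with $q<p$ and forces rational and homological equivalences to agree on $\sZ^*(Y)\otimes\Q$.

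The decisive obstacle is precisely this last step: no unconditional tool in positive characteristic yields the required vanishing of the Albanese kernel of $CH_0(X)_\Q$, let alone of higher rational Chow groups, for a general smooth projective $\F$-variety. For this reason the present text does not attempt a direct proof of Conjecture \ref{cb} but, together with Tate (and, where needed, Parshin), incorporates it as a hypothesis from which to draw the triangulated consequences collected in Theorem \ref{tmain} and Proposition \ref{p10.2}.
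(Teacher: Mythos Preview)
You correctly identify that Conjecture~\ref{cb} is open and that the paper does not attempt a proof; your concluding paragraph matches the paper's stance precisely. But your roadmap for the known partial cases misrepresents the logic of the paper in two places.

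First, the paper does not proceed via your split into (i) rat $=$ hom and (ii) hom $=$ num. Theorem~\ref{tk} goes directly from nilpotence of $\Ker\big(CH^d(X\times X)\otimes\Q\to A^d_\num(X\times X)\otimes\Q\big)$ plus the Tate conjecture for $X$ to Beilinson for $X$ in every codimension. Finite-dimensionality supplies the nilpotence, and then Tate alone finishes the job: there is no separate step of ``upgrading nilpotent to zero'' requiring an extra $K$-theoretic input.

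Second, your appeal to Parshin reverses the paper's implications. Theorem~\ref{tg} (Geisser) derives Parshin \emph{from} Tate $+$ Beilinson, not the other way round; and your assertion that Parshin ``forces rational and homological equivalences to agree'' is unsupported---the vanishing of $H^j(X,\Q(n))$ for $j\neq 2n$ says nothing about injectivity of the cycle class map on $CH^n(X)\otimes\Q=H^{2n}(X,\Q(n))$. Likewise, Remark~\ref{r2}(iii) is a statement about \emph{homological} motives (conditional on Tate), so it cannot serve to reduce Beilinson, a statement about Chow motives, to the abelian-type case; that reduction would already presuppose what you are trying to prove.
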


This is the strongest possible conjecture: it implies that all adequate equivalence relations
should coincide on
$\sZ^n(X)\otimes\Q$! (Compare \cite[3.2.2.1 and 3.2.7.2]{andre}.)

\begin{conj}[Parshin]\label{cp} For any smooth projective $\F$-variety $X$, the algebraic
$K$-group $K_i(X)$ is torsion for $i>0$.
\end{conj}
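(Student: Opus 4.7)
The statement is Parshin's conjecture, an open problem of the same depth as the other conjectures the paper manipulates. My plan is to reduce it to vanishing statements for rational motivic cohomology and then attack these via Galois and weight considerations, which is essentially the only systematic route currently available.

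I would begin by invoking the motivic/Atiyah--Hirzebruch spectral sequence, which degenerates rationally into
\[K_i(X)_\Q \cong \bigoplus_{j} H^{2j-i}_\sM(X,\Q(j)).\]
Conjecture \ref{cp} is then equivalent to the assertion $H^p_\sM(X,\Q(j))=0$ for every pair $(p,j)$ with $p\ne 2j$. The range $p>j+\dim X$ is automatic from Bloch's dimension bound on higher Chow groups, and $p<0$ is the Beilinson--Soulé vanishing (known for $X=\Spec \F$ by Quillen and for smooth projective curves). The substance lies in $0\le p\le j+\dim X$ with $p\ne 2j$.

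The natural next step is to compare rational motivic cohomology with continuous $l$-adic cohomology via the cycle/regulator map
\[H^p_\sM(X,\Q(j))\otimes_\Q \Q_l \longrightarrow H^p_\cont(X,\Q_l(j)).\]
By Hochschild--Serre the target is an extension of $H^p_\cont(\bar X,\Q_l(j))^G$ by $H^{p-1}_\cont(\bar X,\Q_l(j))_G$, and Deligne's Theorem \ref{rh} shows these Galois (co)invariants vanish whenever $p\ne 2j$: Frobenius acts with eigenvalues of weight $p-2j\ne 0$, so none equals $1$. Thus the right-hand side vanishes throughout the relevant range, and one would like to deduce the same for the left-hand side.

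The hard part, of course, is exactly the \emph{injectivity} of this comparison map on rational motivic cohomology; this is precisely what is not known, and it is essentially equivalent to the injectivity half of the Tate--Beilinson package that the paper is reformulating. So the above strategy folds back into the very circle of conjectures the paper treats, and one should not expect Parshin to fall in isolation. The productive route is the one the paper takes: place Parshin alongside Tate and Beilinson inside $\DM_\et(\F)$ and establish implications between them, as announced in Proposition \ref{p10.2}, rather than attempt a direct attack.
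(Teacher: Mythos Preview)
You correctly recognise that this is a conjecture, not a theorem, and that no unconditional proof is on offer. The relevant conditional result in the paper is not Proposition \ref{p10.2} (which packages Beilinson $+$ Parshin together as a non-degeneracy statement) but Theorems \ref{tg} and \ref{tG}: Tate $+$ Beilinson $\Rightarrow$ Parshin. Your sketch is \emph{not} the argument used there, and as a route to that implication it would in fact be circular.

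Your plan is to kill $H^i(X,\Q(n))$ for $i\ne 2n$ by mapping it to $H^i_\cont(X,\Q_l(n))$, noting that the target vanishes by weights, and invoking injectivity of the regulator. But injectivity of the higher cycle map is part of Friedlander's conjecture \ref{cff}. The paper does prove that Tate $+$ Beilinson is equivalent to the appropriate form of Friedlander (Corollary \ref{c9.1}), yet the direction Tate $+$ Beilinson $\Rightarrow$ Friedlander in Theorem \ref{t1} already \emph{uses} Theorem \ref{tG}, i.e.\ Parshin in weight $n$, as an input. So one cannot bootstrap Parshin out of regulator injectivity without assuming Parshin first.

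The paper's actual proof of Theorem \ref{tG} is the Geisser--Soul\'e Frobenius argument and never appeals to injectivity of a comparison map. Assuming Beilinson for $X\times X$ in codimension $d=\dim X$, the ring $\End(h(X))=CH^d(X\times X)_\Q$ coincides with its numerical quotient, which is semi-simple by Jannsen; hence $h(X)$ splits into simple numerical motives $S_\alpha$, and $H^i(X,\Q(n))$ decomposes accordingly. Frobenius acts on $\Hom(\Phi(S_\alpha),\Z(n)[i])$ by the scalar $q^n$, while the cohomological Tate conjecture for $(X,n)$ forces $q^n$ not to be a Frobenius eigenvalue on $S_\alpha$ unless $S_\alpha\simeq \L^n$. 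Thus every summand with $S_\alpha\not\simeq\L^n$ vanishes, and the remaining one contributes only when $i=2n$. The $l$-adic side enters solely to read off eigenvalues, not via any regulator on motivic cohomology.
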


This contains in particular the Beilinson-Soul\'e vanishing conjecture on eigenspaces for the
Adams operations (\cite{beiL}, \cite[p. 501, Conj.]{soule2}).

\begin{remark}  Milne writes in \cite[\S 1]{milneaim} that Tate already formulated conjecture
\ref{cb} orally in the Woods Hole seminar of 1964. On the other hand, Conjecture \ref{cp} is
attributed to Parshin by Jannsen in \cite[Conj. 12.2]{jannsenhab}, but is also formulated by
Beilinson in \cite[Conj. 8.3.3 b)]{beiAH}\footnote{Beilinson attributes it to Parshin in
\cite[Conj. 2.4.2.3]{beiL}.}. So the attributions of these conjectures are a bit blurred. For
simplicity, I will keep their established names of ``Tate conjecture", ``Beilinson conjecture"
and ``Parshin conjecture" here.
\end{remark}

\subsection{``Elementary" implications}
We now start reviewing implications between these conjectures, in the same spirit as
Theorem \ref{tt} and Remark \ref{r2}.

\begin{thm}[Geisser \protect{\cite{geisser}}]\label{tg} Conjecture \ref{ct} +
Conjecture
\ref{cb}
$\Rightarrow$ Conjecture \ref{cp}.
\end{thm}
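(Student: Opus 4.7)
The plan is to reinterpret Parshin's conjecture in terms of motivic cohomology and then exploit the rigidity of $l$-adic cohomology over $\F$ furnished by Tate, Beilinson, and Deligne's Riemann hypothesis. The rational Bloch--Lichtenbaum/Friedlander--Suslin spectral sequence degenerates and yields the Adams decomposition
\[K_i(X)_\Q \cong \bigoplus_{n \ge 0} H^{2n-i}_{\mathrm{mot}}(X,\Q(n)), \qquad i \ge 0,\]
so Conjecture~\ref{cp} is equivalent to the vanishing $H^j_{\mathrm{mot}}(X,\Q(n)) = 0$ for every smooth projective $X/\F$ whenever $j < 2n$.

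First I would analyse the étale side, where the vanishing is almost automatic from weights. Under Conjecture~\ref{ct}, Remark~\ref{r2}(ii) gives semisimplicity of Frobenius $\phi$ on $H^*_\cont(\bar X,\Q_l)$, and by Theorem~\ref{rh} its eigenvalues on $H^q_\cont(\bar X,\Q_l(n))$ are Weil numbers of absolute value $q^{q/2 - n}$, so $1$ is an eigenvalue only when $q = 2n$. The Hochschild--Serre sequence for $G \simeq \hat\Z$,
\[0 \to H^{j-1}_\cont(\bar X,\Q_l(n))_G \to H^j_\cont(X,\Q_l(n)) \to H^j_\cont(\bar X,\Q_l(n))^G \to 0,\]
then forces $H^j_\cont(X,\Q_l(n)) = 0$ for $j \notin \{2n, 2n+1\}$; for $j \in \{2n, 2n+1\}$ semisimplicity identifies both flanking groups with $A^n_\num(X)_\Q \otimes \Q_l$ via the cycle class map and Theorem~\ref{tt}(3)(a).

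The heart of the proof is the transfer of these étale vanishings to rational motivic cohomology. Under Conjecture~\ref{cb} rational and numerical equivalences agree on cycles, so together with Tate the cycle class map is bijective in degree $j = 2n$. To handle $j < 2n$ I would run the niveau spectral sequence on $X$; its $E_1$-term is built from motivic cohomology of function fields of subvarieties, controlled by Milnor $K$-theory of such function fields, which in turn is computable. Combined with induction on $\dim X$---applying Tate and Beilinson to smooth projective resolutions of closures of points, all of which lie within the hypotheses---this should propagate the étale vanishings up through the spectral sequence to give $H^j_{\mathrm{mot}}(X,\Q(n)) = 0$ for $j < 2n$, and thereby Parshin.

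The main obstacle is precisely this last transfer. The Beilinson--Lichtenbaum comparison (known mod $l$ by Voevodsky--Rost, hence rationally after passage to $\Q_l$) gives motivic-to-étale isomorphisms only in the range $j \le n$, so the intermediate range $n < j < 2n$ has to be treated by a more delicate argument. The induction on dimension is subtle because it must be carried out simultaneously over an expanding family of auxiliary smooth projective varieties entering through desingularisations; it is here that the full strength of Conjectures~\ref{ct} and~\ref{cb} for \emph{all} smooth projective $\F$-varieties---rather than for a single $X$---is genuinely required.
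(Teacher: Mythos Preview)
Your reduction to the vanishing of $H^j(X,\Q(n))$ for $j\ne 2n$ is correct, and your \'etale analysis is fine, but the ``transfer'' step --- which you yourself flag as the main obstacle --- is a genuine gap, and the route you sketch does not close it. The $E_1$-page of the niveau spectral sequence for motivic cohomology involves groups $H^{j-c}(k(x),\Q(n-c))$ for points $x$ of codimension $c$; these are \emph{not} controlled by Milnor $K$-theory except on the diagonal $j-c=n-c$. For a function field $F$ of transcendence degree $>0$ over $\F$ one has no direct computation of $H^m(F,\Q(r))$ in the range $0<m<r$, so the induction on dimension you propose has nothing to feed on. Beilinson--Lichtenbaum is likewise irrelevant rationally: with $\Q$-coefficients the map $H^j_{\mathrm{mot}}\to H^j_\et$ is already an isomorphism in all degrees (Theorem~\ref{t3.3}~b)), so the issue is not comparing to \'etale cohomology of $X$ but rather exploiting the arithmetic of $\F$.

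The argument that actually works is structurally different and does not pass through \'etale cohomology of $X$ at all. Conjecture~\ref{cb} for $X\times X$ in codimension $d$ identifies $CH^d(X\times X)_\Q$ with $A^d_\num(X\times X)_\Q$, which by Jannsen's theorem is a semi-simple $\Q$-algebra; hence the Chow motive $h(X)$ splits as a finite direct sum of simple motives $S_\alpha$, and $H^i(X,\Q(n))$ decomposes accordingly. One then uses Soul\'e's Frobenius trick: the geometric Frobenius is an endomorphism of the identity functor on $\DM_\gm^\eff(\F,\Q)$, so on $\Hom(\Phi(S_\alpha),\Z(n)[i])$ it acts simultaneously as $F_\alpha^*$ and as multiplication by $q^n$. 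If $S_\alpha\simeq\L^n$ the Hom group is $\Q$ for $i=2n$ and $0$ otherwise; if $S_\alpha\not\simeq\L^n$, one shows (here the Tate conjecture for $X$ enters, via the vanishing of $H^{2n}(\bar S_\alpha,\Q_l(n))^G$) that $F_\alpha\ne q^n$ in the division algebra $\End(S_\alpha)$, so $F_\alpha^*-q^n$ is simultaneously invertible and zero on the Hom group, forcing it to vanish. This is the content of the paper's Theorem~\ref{tG}; note that it uses Conjecture~\ref{cb} only for $X\times X$ in codimension $d$, not for all varieties.
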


See Theorem \ref{tG} below for a more precise version.

Geisser's proof is based on Jannsen's semi-simplicity theorem \cite{jannsen} plus a Frobenius
argument going back to Soul\'e \cite{soule}. The same is true for the proof of the following
theorem, in which $CH^d$ is the quotient of $\sZ^d$ by rational equivalence.

\begin{thm}[Kahn \protect{\cite{cell}}]\label{tk} Let $X$ be a smooth projective $\F$-variety
of dimension
$d$. Assume that $\Ker(CH^d(X\times X)\otimes \Q\to A^d_\num(X\times X)\otimes \Q$ is a
nilpotent ideal. Then Conjecture \ref{ct} for $X$ and all $n\ge 0$ $\Rightarrow$ Conjecture
\ref{cb} for
$X$ and all $n\ge 0$.
\end{thm}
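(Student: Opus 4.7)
My plan is to combine Jannsen's semi-simplicity theorem with Deligne's Riemann hypothesis and a Soul\'e-type Frobenius argument, using the nilpotency hypothesis as the device that transfers information from numerical equivalence back up to rational equivalence.

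Set $R := CH^d(X\times X)\otimes \Q$ and $\bar R := A^d_\num(X\times X)\otimes \Q$. By Jannsen's semi-simplicity theorem, $\bar R$ is a finite-dimensional semi-simple $\Q$-algebra. Combined with the nilpotency of $I := \Ker(R \to \bar R)$, this yields the crucial lifting principle: any complete system of orthogonal idempotents in $\bar R$ lifts to one in $R$.

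The core of the proof is to produce enough such idempotents from the Frobenius. Let $\phi \in R$ be the class of the graph of the geometric Frobenius and $\bar\phi \in \bar R$ its image. Theorem \ref{rh} factors the characteristic polynomial of Frobenius on $H^*_l(\bar X)$ as $\prod_{i=0}^{2d} P_i(t)$ with the $P_i$ pairwise coprime, since distinct weights carry distinct absolute values. Applying Conjecture \ref{ct} for $X$ at every $n$ through Theorem \ref{tt} identifies $A^n_\num(X)\otimes \Q_l$ with $H^{2n}_\cont(\bar X,\Q_l(n))^G$ and gives a sufficient grip on $\bar\phi$ via $l$-adic cohomology. A B\'ezout argument then produces polynomials $f_i(t)\in \Q[t]$ such that the elements $\bar\pi_i := f_i(\bar\phi)$ form a complete system of orthogonal idempotents in $\bar R$, decomposing the numerical motive $h_\num(X)$ into weight pieces on which $\bar\phi$ acts with eigenvalues of absolute value $q^{i/2}$. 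By the first step, lift to a system $(\pi_i)\subset R$, and hence to a decomposition $h_\rat(X) = \bigoplus_i h^i_\rat(X)$ of the rational Chow motive.

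Applying $\Hom(\un(n)[2n],-)$ to both decompositions computes $CH^n(X)\otimes\Q$ and $A^n_\num(X)\otimes\Q$ summand by summand; a standard weight argument of Soul\'e discards every $h^i$ with $i\ne 2n$, since Frobenius would otherwise act on a finitely generated $\Q$-vector space with eigenvalues of absolute value $\ne 1$. On the surviving summand $h^{2n}$, the surjectivity of the cycle map from Theorem \ref{tt}(3)(a) together with the rank identity of Conjecture \ref{ct} forces $CH^n(X)\otimes\Q \to A^n_\num(X)\otimes\Q$ to be an isomorphism. The main obstacle I foresee is the middle step: producing the K\"unneth-type projectors in $\bar R$ as polynomials in $\bar\phi$ without Conjecture \ref{ct} on $X\times X$. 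The ring $\bar R$ does not act on $H^*_l(\bar X)$ \emph{a priori}; only the possibly larger ring of correspondences modulo homological equivalence does. Controlling the interplay between the two rings, and verifying that the eigenvalue separation for Frobenius descends to $\bar R$, will require the full strength of Jannsen's trace formalism in the rigid category $\Mot_\num(\F,\Q)$.
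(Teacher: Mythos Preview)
The paper does not prove this theorem in detail; it only notes that, like Geisser's Theorem~\ref{tg}, the argument rests on ``Jannsen's semi-simplicity theorem plus a Frobenius argument going back to Soul\'e'', referring to \cite{cell}. Your proposal correctly assembles these ingredients together with idempotent lifting through the nilpotent kernel $I$, so the overall strategy is right. Your anticipated obstacle is also not a real obstruction: one does not need $\bar R$ to act on $H^*_l(\bar X)$, since $R$ itself acts via the cycle class map, and a correspondence killing all of $H^*_l(\bar X)$ is homologically (hence numerically) trivial; thus $\prod_i P_i(\phi)\in I$ and $\prod_i P_i(\bar\phi)=0$ in $\bar R$ automatically, with no Tate input on $X\times X$.

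There is, however, a genuine gap in your last step. After discarding the summands $h^i$ with $i\ne 2n$, you claim that on $h^{2n}$ the Tate conjecture ``forces $CH^n(X)\otimes\Q\to A^n_\num(X)\otimes\Q$ to be an isomorphism''. But nothing so far bounds $\dim_\Q CH^n(X)\otimes\Q$: this group is not known to be finite-dimensional until Conjecture~\ref{cb} is established, so a rank comparison cannot give injectivity. (The same slip occurs in your justification for $i\ne 2n$: the vector space is not \emph{a priori} finite-dimensional; the correct reason is the nilpotency of $P_i(\phi)\pi_i$ in $R$ combined with $P_i(q^n)\ne 0$.) The K\"unneth piece $h^{2n}_\num$ can still contain simple summands $S_\alpha\not\simeq\L^n$ whose Frobenius is a Weil number of weight $2n$ different from $q^n$, and these must be killed separately. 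The clean route, parallel to the paper's proof of Theorem~\ref{tG}, is to bypass K\"unneth projectors entirely and decompose $h_\num(X)$ into isotypic pieces $S_\alpha^{m_\alpha}$: for $S_\alpha\not\simeq\L^n$ one shows $F_\alpha\ne q^n$ using Tate for $X$, whence the minimal polynomial $p_\alpha$ of $F_\alpha$ satisfies $p_\alpha(q^n)\ne 0$ while $p_\alpha(\phi)^N\tilde e_\alpha=0$ in $R$, killing the Chow piece; for $S_\alpha\simeq\L^n$ one lifts the numerical isomorphism $(\L^n)^{m_\alpha}\simeq S_\alpha^{m_\alpha}$ and its inverse to $\sM_\rat$ and uses nilpotency to see the lifted composites are invertible, so $M_\alpha\simeq(\L^n)^{m_\alpha}$ already in $\sM_\rat$.
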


The point is that the hypothesis of Theorem \ref{tk} is
verified if the Chow motive of
$X$ is
\emph{finite dimensional} in the sense of Kimura and O'Sullivan (see \cite[Ch. 12]{andre} or
\cite{ivorraIHES}). This will be the case if $X$ is \emph{of abelian type}, that is, if its
motive verifies condition (iii) of Remark
\ref{r2} for \emph{rational equivalence} (\emph{ibid.}). This gives many cases where one can
apply Theorem
\ref{tk}, see \S \ref{known}.

\subsection{From smooth projective to more general varieties}\label{s2.3} Putting together the
cohomological Tate conjecture ((3) (a) in Theorem \ref{tt}) and the Beilinson conjecture
(Conjecture \ref{cb}) gives the following

\begin{conj}\label{cf} For any smooth projective $\F$-variety $X$ and any $n\ge 0$, the cycle
class map
\[CH^n(X)\otimes \Q_l\to H^{2n}(\bar X,\Q_l(n))^G\]
is bijective.
\end{conj}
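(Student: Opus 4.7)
The plan is to derive Conjecture \ref{cf} as an elementary consequence of the two conjectures just invoked: part (3)(a) of Theorem \ref{tt} (the cohomological Tate conjecture, i.e.\ surjectivity of the $\Q_l$-cycle class map) and the Beilinson conjecture \ref{cb} (rational and numerical equivalence agree on $\sZ^n(X)\otimes \Q$).

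First I would factor the cycle class map through $l$-adic homological equivalence:
\[CH^n(X)\otimes \Q_l \twoheadrightarrow (CH^n(X)/\!\!\sim_{l\text{-hom}})\otimes \Q_l \hookrightarrow H^{2n}(\bar X,\Q_l(n))^G,\]
where the left arrow is the tautological quotient and the right arrow is injective by the definition of $l$-adic homological equivalence together with $\Z$-flatness of $\Q_l$. The cohomological Tate conjecture asserts that the composition is surjective, forcing the right-hand map to be an isomorphism.

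Second, I would consider the analogous chain of rational surjections
\[CH^n(X)\otimes \Q\ \twoheadrightarrow\ (CH^n(X)/\!\!\sim_{l\text{-hom}})\otimes \Q\ \twoheadrightarrow\ A^n_\num(X)\otimes \Q,\]
where the second surjection exists because homological equivalence refines numerical equivalence---a direct consequence of Poincar\'e duality, since a class that dies in $H^{2n}(\bar X,\Q_l(n))$ pairs trivially with every class in $H^{2d-2n}(\bar X,\Q_l(d-n))$. Beilinson's conjecture says that the total composition is an isomorphism, which collapses both surjections to isomorphisms. Extending scalars back to $\Q_l$ turns the left arrow of the previous factorisation into an isomorphism, and composing with the right-hand isomorphism delivers bijectivity.

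The main obstacle is of course not this derivation but the two input conjectures themselves, which carry all the arithmetic content. Within the derivation, the single non-formal input is the refinement relation between homological and numerical equivalence; this is classical (compare the equivalence (1) $\iff$ (3)(a) + (3)(b) recalled in Theorem \ref{tt}, which tacitly packages the same compatibility).
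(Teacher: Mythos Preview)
Your derivation matches what the paper asserts: the sentence introducing Conjecture~\ref{cf} says it is obtained by ``putting together the cohomological Tate conjecture ((3)(a) in Theorem~\ref{tt}) and the Beilinson conjecture (Conjecture~\ref{cb})'', and the paper leaves the (elementary) verification to the reader. Your argument is the expected one.

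One technical point deserves care. The justification ``by the definition of $l$-adic homological equivalence together with $\Z$-flatness of $\Q_l$'' for the injectivity of
\[(CH^n(X)/\!\!\sim_{l\text{-hom}})\otimes \Q_l \longrightarrow H^{2n}(\bar X,\Q_l(n))^G\]
is not adequate as written: flatness yields an injection into $H^{2n}(\bar X,\Q_l(n))^G\otimes_\Z\Q_l$, not into $H^{2n}(\bar X,\Q_l(n))^G$ itself, and a priori a $\Z$-independent family of cycle classes could become $\Q_l$-dependent. The quickest fix, once Beilinson is in hand, is a dimension count: Beilinson forces $CH^n(X)\otimes\Q=A^n_\num(X)\otimes\Q$, which has $\Q$-dimension $\rk A^n_\num(X)$; by Theorem~\ref{tt}(2) this equals $\dim_{\Q_l}H^{2n}(\bar X,\Q_l(n))^G$, and surjectivity (Tate) finishes. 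Alternatively one may invoke compatibility of the cycle class map with the intersection pairing and Poincar\'e duality: the pairing on $A^n_\num(X)\otimes\Q_l$ is nondegenerate, which forces injectivity. Either way the conclusion stands.
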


This statement continues to make sense when removing the word ``projective". It was 
conjectured to be true by Friedlander. Actually, Friedlander made
a general conjecture involving the $l$-adic Chern character from algebraic $K$-theory, with
targets of the form $H^i(\bar X,\Q_l(n))^G$ for all $i\in \Z$ (ibid.), namely:

\begin{conj}[Friedlander, \protect{\cite[8.3.4 b)]{beiAH}}]\label{cff} For any smooth
$\F$-variety $X$, the Chern character
\[ch_{n,i}:K_{2n-i}(X)_\Q^{(n)}\otimes \Q_l\to H^i(\bar X,\Q_l(n))^G\]
is an isomorphism, where the left hand side is the $n$-th Adams eigenspace on the algebraic
$K$-theory of $X$.
\end{conj}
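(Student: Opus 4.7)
The plan is to reduce to the smooth projective case, then use Parshin's conjecture (Conjecture \ref{cp}) and Deligne's purity (Theorem \ref{rh}) to dispose of all degrees $i\neq 2n$, leaving Conjecture \ref{cf} (which is Tate + Beilinson) to handle $i=2n$. So let me first treat $X$ smooth projective. Fix $n,i$. If $i=2n$, then $K_0(X)_\Q^{(n)}=CH^n(X)_\Q$ by Grothendieck--Riemann--Roch, and $ch_{n,2n}$ coincides, up to a nonzero rational factor, with the $l$-adic cycle class map, hence is bijective by Conjecture \ref{cf}. If $i\neq 2n$, the right-hand side vanishes by Theorem \ref{rh}: Frobenius eigenvalues on $H^i(\bar X,\Q_l(n))$ have complex absolute value $q^{(i-2n)/2}\neq 1$, so there are no Galois invariants. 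The left-hand side vanishes as well: for $i<2n$, Parshin (Conjecture \ref{cp}) forces $K_{2n-i}(X)\otimes\Q=0$; for $i>2n$, the negative $K$-groups of the regular scheme $X$ are zero.

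For general smooth $X/\F$, I would apply Gabber's refinement of de Jong's alteration theorem to a Nagata compactification of $X$ to produce a proper surjective, generically finite map $f:X'\to X$ of degree $d$ prime to $l$, together with an open immersion $j:X'\hookrightarrow \bar X'$ into a smooth projective $\bar X'$ whose complement is a strict normal crossings divisor $D=\bigcup_{\alpha\in A}D_\alpha$. Since $d$ is a unit in $\Q_l$ and in $\Q$, the identity $\tfrac{1}{d}f_*f^*=\mathrm{id}$ holds on both sides of the Chern character, reducing the statement for $X$ to the statement for $X'$. For $X'$ I induct on $|A|$: for each smooth closed $D_\alpha\subset\bar X'$, Quillen's localization on the $K$-side and the Gysin triangle on the $l$-adic side produce compatible long exact sequences linked by the Chern character (Gillet). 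By induction the conjecture is known for $\bar X'$ and for every iterated intersection of the $D_\alpha$ (all smooth projective), so the five lemma completes the reduction.

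The main obstacle is a genuinely clean compatibility of the Chern character with these localization triangles \emph{after} projection onto Adams eigenspaces: the Gysin triangle in $l$-adic cohomology has to be functorial enough to match Quillen's localization weight by weight, and the Galois invariants on the smooth (non-proper) pieces must be controlled. Ayoub's covariant $l$-adic realisation from \S \ref{s.base}, combined with the motivic spectral sequence converging to $K_*^{(n)}\otimes \Q$, is the natural vehicle, but setting it up in the requisite generality is nontrivial. A second, more structural obstruction is the reliance on Gabber's prime-to-$l$ alteration: eliminating it, so as to handle $\Z_l$-coefficients or to avoid an auxiliary role of $l$ on the $K$-theory side, would require a positive-characteristic resolution of singularities, or a refinement controlling the full alteration degree.
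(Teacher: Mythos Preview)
This statement is a \emph{conjecture}, not a theorem: the paper does not prove it, and there is no ``paper's own proof'' to compare against. What you have written is not a proof of Conjecture~\ref{cff} but a sketch of the implication
\[
\text{Conjecture~\ref{cf}} + \text{Conjecture~\ref{cp}} \Longrightarrow \text{Conjecture~\ref{cff}},
\]
which is essentially Jannsen's result \cite[Th.~12.7~a)]{jannsenhab} discussed in \S\ref{s2.3} of the paper. Read as an unconditional proof it is circular, since Conjectures~\ref{cf} and~\ref{cp} are themselves open.

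Even as a conditional argument, there is a genuine gap in your reduction to the smooth projective case. Taking Galois invariants $(-)^G$ is not an exact functor, so the Gysin long exact sequence in $l$-adic cohomology does \emph{not} remain exact after applying $(-)^G$; your five-lemma argument therefore does not go through as stated. The paper flags exactly this point in \S\ref{s2.3}: Jannsen's original proof needs the semi-simplicity condition $S^n$ of Remark~\ref{sn} (together with weight arguments) to repair this failure of exactness. Your final paragraph gestures at the difficulty (``the Galois invariants on the smooth (non-proper) pieces must be controlled'') but does not resolve it. Indeed, the whole thrust of the paper is to replace the awkward formulation via $(-)^G$ by a triangulated one --- the modified cycle map $\cl_X^{n,j}$ of \S\ref{s.rel} --- precisely in order to sidestep this obstruction; see Proposition~\ref{rk=f}, which shows that Friedlander's conjecture \emph{plus} the semi-simplicity condition $\rho_X^{n,j}$ is equivalent to the bijectivity of $\cl_X^{n,j}$.
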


For $i=2n$ we get the Adams eigenspace of weight
$n$ on $K_0(X)\otimes\Q$
which may be identified with $CH^n(X)\otimes
\Q$ thanks to the Grothendieck Riemann-Roch theorem, so Friedlander's
conjecture includes Conjecture \ref{cf} in the extended case of smooth varieties. For
$X$ smooth projective, it implies Parshin's conjecture
\ref{cp} in view of Theorem \ref{rh}, as observed by Beilinson
(ibid.). Geisser's theorem \ref{tg} refines this observation.

\begin{remarks} \
\begin{enumerate}
\item I want to avoid entering the details of algebraic $K$-theory
and the construction of $l$-adic Chern characters here, in order to concentrate on the already
substantial case of motivic cohomology and $l$-adic cycle class maps. 
\item Chern characters
involve denominators, so that Friedlander's conjecture is intrinsically with rational
coefficients. One could get some integrality by using higher Chern classes, but not completely
avoid the denominators. On the other hand, as we shall see, a motivic version of this
conjecture can be formulated integrally.
\end{enumerate}
\end{remarks}

In his habilitation thesis \cite[Th. 12.7 a)]{jannsenhab}, Jannsen proved that Friedlander's
conjecture follows from its restriction to smooth projective varieties, provided one adds a
generalisation of Condition
$S^n$ from Remark \ref{sn} and assumes resolution of singularities. Jannsen's proof is a
little awkward mainly because the formulation of Friedlander's conjecture involves fixed points
under Galois action, which is not an exact functor: this is where the recourse to
semi-simplicity (and the weight filtration) was essential.

Jannsen also gives an extension of these conjectures to singular varieties using $K'$-theory
and (Borel-Moore) $l$-adic homology.

Let $K$ be a finitely generated field of characteristic $p$, and let $X$ be a smooth
$K$-variety. Then $X$ spreads as a smooth $S$-scheme of finite type $\sX$ for a suitable smooth model $S$ of
$K$ over $\F_p$. Using this remark, one sees that Friedlander's conjecture potentially implies
statements concerning $K$-varieties: this is one of its many interests. For example, one
easily sees that it implies Conjecture 2.4.2.2 of \cite{beiL} on rational $K$-groups of fields
of positive characteristic, see \cite[8.28 (v) and 8.30]{tatesheaf} and \cite[Th. 60 and 61]{handbook}.


\subsection{Motivic reformulations} Since the appearance of \cite{jannsenhab}, two major
progresses have been made: 1) de Jong's theorem on alterations \cite{dJ}, and 2) definitions of
motivic cohomology, together with motivic $l$-adic cycle class maps. 

Motivic cohomology groups generalise Chow groups, and they agree
rationally with Adams eigen\-spaces on algebraic $K$-theory. For smooth varieties
there are mainly two versions of these groups: Bloch's higher Chow groups \cite{blochhigher} and
the Suslin-Voevodsky motivic cohomology groups. They are shown to be canonically (but not very
directly) isomorphic over any field in \cite{allagree}. 

In \cite{tatesheaf}, I was interested in formulating a version of Conjecture
\ref{cff} using motivic cohomology. I used the Suslin-Voevodsky version, but assumed resolution
of singularities (over finite fields!) because it was built in Voevodsky's theory of
triangulated motives at the time. To get around resolution, I changed course and used higher
Chow groups in
\cite{glr} and
\cite{cell}. Here I want to give a version of
Friedlander's conjecture \ref{cff} involving realisation functors on Voevodsky's $\DM(\F)$, thus
going back to the original viewpoint of using Suslin-Voevodsky motivic cohomology. It has
several advantages:

\begin{enumerate}
\item While Bloch's theory avoids resolution of singularities, it relies on a subtle
sophistication of Chow's moving lemma which renders the functoriality of his cycles complexes
(as opposed to their homology groups) delicate. By contrast, the motivic complexes of Suslin
and Voevodsky have a straightforward functoriality.
\item Much of the resolution of singularities needed in the basics of Voevodsky's theory has
been removed. On the one hand, he proved very simply the cancellation theorem over any perfect
field \cite{voecan}. On the other hand, de Jong's alteration theorem turns out to be sufficient
for most of the development, even integrally.
\item Some of the remaining original constructions of Voevodsky, which still require resolution
of singularities, like the motives (with or without compact supports) of singular schemes, can
now be bypassed by using the $4$ operations., see \S \ref{s.sing}.
\item The triangulated framework is much richer than the one involving smooth varieties: for
example, it automatically yields conjectural statements for the cohomology and cohomology with
compact supports of possibly singular varieties in the spirit of Jannsen, see (3).
\end{enumerate} 

\enlargethispage*{20pt}

\subsection{A reading guide} Let me now outline the sequel of this paper. I tried to go step
by step and by increasing degree of sophistication:

\begin{enumerate}
\item In \S \ref{s.motl}, I introduce motivic cohomology \`a la Suslin-Voevodsky by using
their concrete motivic complexes; the construction of the $l$-adic cycle class map is then very
easy but insufficient.
\item In \S \ref{s.DM}, I introduce Voevodsky's categories of motives over a field; the
motivic cohomology of \S \ref{s.motl} may then be interpreted as Hom groups in these categories
(Theorem \ref{t3.1} and \ref{t3.2}).
\item In \S \ref{s.base}, I describe the situation over a base, and in particular outline the
formalism of six operations. Here the story becomes more complicated as several versions of
categories of motives come into play. The main point is that they often agree: see \S
\ref{s.recap}. An important outcome is the construction of motives (with or without compact
supports) of singular varieties, with the right functoriality: see \S \ref{s.sing}. The
$l$-adic realisation functor is finally introduced in \S \ref{s.real}.
\item \S \ref{s.efsp} is technical but essential to get Theorem \ref{tmain} of the
introduction.
\item In \S \ref{s.geisser2}, I prove a more precise version of Geisser's Theorem \ref{tg}.
The main reason why I did this was to convince myself that there is no way to get the equivalence (ii) $\iff$ (iv) of Corollary \ref{c9.1} ``$n$ by $n$". 
\item In \S \ref{sFried}, we get slowly towards the motivic formulation of Friedlander's
conjecture and the proof of Theorem \ref{tmain}: this is achieved in \S \ref{sfried}, see 
Proposition \ref{rk=f}, Theorem \ref{t1} and Corollary \ref{c9.1}. In particular, the conditions
of Theorem \ref{tmain} are equivalent to Friedlander's conjecture plus a generalisation of
Condition $S^n$ of Remark \ref{sn}.  A simple lemma of Ayoub (Lemma
\ref{l9.2}) drastically simplifies an argument I had outlined in my 2006 talk.
\item In \S \ref{s.fg}, I give two other reformulations of these conjectures: a) a realisation
functor from a suitable category is fully faithful, b) the Hom groups of this category are
finitely generated. See  Proposition \ref{p10.1} and Theorem \ref{t10.1}. This involves an
apparently artificial construction, which is better explained in the next section. 
\item In \S \ref{s.w},  I recall Lichtenbaum's Weil-\'etale topology and inflict it to $\DM$.
This sheds a better light on the previous section, but these new categories have their
shortcomings.
\end{enumerate}


\section{Motivic cohomology and $l$-adic cohomology}\label{s.motl}

\subsection{Motivic cohomology, Nisnevich and \'etale}\label{s.motcoh} Let us first recall
Voevodsky's groups of \emph{finite correspondences} \cite{V}: if $X,Y$ are smooth $k$-schemes,
we denote by
$c(X,Y)$ the free abelian group with basis the closed integral subschemes $Z\subseteq X\times
Y$ such that the projection $Z\to X$ is finite and surjective on some connected component of
$X$. The beauty of finite correspondences is that they compose ``on the nose", that is, without
having to mod out by any adequate equivalence relation. In particular, they are covariant in $Y$
and contravariant in $X$.

If $(Y_1,y_1),(Y_2,y_2)$ are two pointed smooth $k$-schemes ($y_i\in Y_i(k)$), we define
\begin{multline*}
c(X,(Y_1,y_1)\wedge(Y_2,y_2)) =\\ \Coker\left(c(X,Y_1\times y_2)\oplus
c(X,y_1\times Y_2)\to c(X,Y_1\times Y_2)\right)
\end{multline*}
and more generally, we define $c(X,(Y_1,y_1)\wedge\dots\wedge (Y_n,y_n))$ by a similar formula:
this is a direct summand of $c(X,Y_1\times\dots\times Y_n)$.

We denote by $\Delta^\cdot$ be the standard cosimplicial $k$-scheme, with 
\[\Delta^n= \Spec k[t_0,\dots,t_n]/(\sum t_i-1).\]

Let now $X\in \Sm(k)$, and let $n\ge 0$. The $n$-th \emph{Suslin-Voevodsky complex} is
the complex whose $i$-th term is
\[C^i(X,\Z(n)) = c(X\times \Delta^{n-i},\G_m^{\wedge n})\]
where $\G_m^{\wedge n}=(\G_m,1)\wedge\dots\wedge (\G_m,1)$, and the differentials are as usual.

Varying $X$, this defines a complex of presheaves (even sheaves) over the \'etale site on
$\Sm(k)$, and \emph{a fortiori} over the corresponding Nisnevich
site. Let us denote this complex by $\underline{C}^*(\Z(n))$. We set

\begin{defn}\label{d3.1} a) $H^i(X,\Z(n))= \bH^i_\Nis(X,\underline{C}^*(\Z(n)))$. This is
\emph{motivic cohomology}.\\ 
b) $H^i_\et(X,\Z(n))= \bH^i_\et(X,\underline{C}^*(\Z(n)))$. This is \emph{\'etale
motivic cohomology}.
\end{defn}

The link between motivic cohomology and Chow groups is the following:

\begin{prop} One has isomorphisms
\[CH^n(X)\simeq H^{2n}(X,\Z(n)).\]
\end{prop}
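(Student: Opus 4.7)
The plan is to reduce the statement to a known comparison between the Suslin--Voevodsky motivic cohomology defined above and Bloch's higher Chow groups $CH^n(X,\bullet)$, and then invoke the classical identification of $CH^n(X,0)$ with the usual Chow group.

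First I would recall that $CH^n(X,j)$ is the $j$-th homology of Bloch's cycle complex $z^n(X,\bullet)$, where $z^n(X,j)$ is freely generated by integral subschemes of codimension $n$ in $X\times\Delta^j$ meeting all faces properly. By construction $z^n(X,0)$ is the group of codimension-$n$ cycles on $X$, and the image of $z^n(X,1)\to z^n(X,0)$ is exactly the subgroup of cycles rationally equivalent to zero; hence $CH^n(X,0)\cong CH^n(X)$. It therefore suffices to establish the isomorphism
\[H^{2n}(X,\Z(n))\iso CH^n(X,0).\]
For this I would invoke the Voevodsky--Suslin--Friedlander comparison theorem \cite{allagree}: for any smooth $X$ over a perfect field and any $i,n\ge 0$, there is a natural isomorphism $H^i(X,\Z(n))\iso CH^n(X,2n-i)$; specializing to $i=2n$ yields the desired formula. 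The underlying content is a Nisnevich-local quasi-isomorphism between the complex $\underline{C}^*(\Z(n))$ of finite correspondences introduced above and a sheafified version of Bloch's cycle complex, obtained by viewing finite correspondences as a class of equidimensional cycles in good position and then moving general cycles into that class.

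The main obstacle will be this comparison theorem itself, whose proof requires Bloch's moving lemma to control cycles on smooth varieties subject to arbitrary face conditions, together with a delicate Nisnevich-local argument relating the two presheaves of complexes. For a pedagogical review of this scope I would simply cite \cite{allagree} rather than reproduce the proof. An alternative, more elementary route specifically for $i=2n$ would exploit the fact that $\underline{C}^*(\Z(n))$ has cohomology sheaves concentrated in degrees $\le n$, so that the Nisnevich hypercohomology spectral sequence degenerates on the edge and yields $H^{2n}(X,\Z(n))\cong H^n_{\Nis}(X,\sH^n(\underline{C}^*(\Z(n))))$; one then identifies this group with $CH^n(X)$ via the Nesterenko--Suslin--Totaro isomorphism $\sH^n(\underline{C}^*(\Z(n)))\cong \mathcal{K}^M_n$ and the Bloch--Kato--Kerz formula $H^n(X,\mathcal{K}^M_n)\cong CH^n(X)$.
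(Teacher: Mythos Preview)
Your main approach is exactly what the paper does: the paper gives no proof of this proposition at all, but immediately remarks that it is a special case of the general comparison with Bloch's higher Chow groups and cites \cite{allagree}. So your primary line (reduce to $CH^n(X,0)$ and invoke \cite{allagree}) matches the paper perfectly, with more detail than the paper provides.

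Your alternative route via Milnor $K$-theory is extra and not in the paper. Its endpoints are correct, but the middle step has a gap. From the vanishing $\sH^q(\Z(n))=0$ for $q>n$ and Nisnevich cohomological dimension $\le \dim X$, the hypercohomology spectral sequence only forces $E_2^{p,q}=0$ on the line $p+q=2n$ when $p>\dim X$; for $n<p\le \dim X$ the terms $H^p_{\Nis}(X,\sH^{2n-p}(\Z(n)))$ need not vanish, so the edge map to $H^n_{\Nis}(X,\sH^n(\Z(n)))$ is not obviously an isomorphism once $\dim X>n$. Establishing that these extra terms do not contribute essentially requires the same cycle-theoretic input (Gersten/coniveau arguments tied to the comparison with higher Chow groups) that you were trying to avoid. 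So the alternative is circular as stated; your first argument via \cite{allagree} is the clean one and is what the paper intends.
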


More generally, motivic cohomology groups (for the Nisnevich topology) are isomorphic to Bloch's
higher Chow groups, with a suitable reindexing \cite{allagree}. This also implies:
\begin{equation}\label{eq4.3}
H^i(X,\Z(n))=0 \text{ for } i>2n.
\end{equation}

\subsection{Continuous \'etale cohomology} We now assume that $k=\F$ is
finite until the end of this section.

Let $X\in\Sm(\F)$. We write $H^i_\cont(X,\Z_l(n))$ for Jann\-sen's \emph{continuous \'etale
cohomology} \cite{jannsencont}: recall that if $(A_n)$ is an inverse system of \'etale sheaves, $H^i_\cont(X,(A_n))$ is defined as the $i$-th derived functor of the composite functor
\[(A_n)\mapsto (\Gamma(X,A_n))\mapsto \lim \Gamma(X,A_n).\]

We also write $H^i_\cont(X,\Q_l(n)):=H^i_\cont(X,\Z_l(n))\otimes\Q$. 

Since \'etale cohomology of $X$ with finite coefficients is finite, this coincides with the
na\"\i ve $l$-adic cohomology:
\begin{equation}\label{eq3.0}
H^i_\cont(X,\Z_l(n))\iso \lim H^i_\et(X,\mu_{l^\nu}^{\otimes n}).
\end{equation}

The Hochschild-Serre spectral sequence yields short exact sequences:
\begin{equation}\label{eq3.4}
0\to H^{i-1}_\cont(\bar X,\Z_l(n))_G\to H^i_\cont(X,\Z_l(n))\to H^i_\cont(\bar X,\Z_l(n))^G\to
0.
\end{equation}

\subsection{An $l$-adic lemma}  
Let
\begin{equation}\label{eq6.1c}
e\in H^1_\cont(\F,\hat{\Z})\simeq \Hom_\cont(\hat{\Z},\hat{\Z})\simeq \hat{\Z}
\end{equation}
be the canonical generator corresponding to the identity, where the first isomorphism is given
by the arithmetic Frobenius of $\F$.  Note that $e^2=0$  since $cd(\F)=1$. The
following generalises  \cite[Prop. 6.5]{milneamer} (the proof is the same as for \cite[Prop. 6.5]{tatesheaf}):

\begin{lemma}\label{l4.2} Let $C\in \hat{D}_c^b(\F,\Z_l)$ be a perfect complex of $l$-adic
sheaves over
$\Spec \F$. Then the diagram
\begin{equation}\label{eq9.2}
\xymatrix{
H^j_\cont(\F,C) \ar[r]^a \ar[dd]_{\cdot c} & H^j(C)^G\ar[dr]^{i}\\
 && H^j(C)\ar[dl]^p\\
H^{j+1}_\cont(\F,C)& H^j(C)_G\ar[l]_b
}
\end{equation}
commutes for any $j\in\Z$. Here $i$ is the inclusion, $p$ is the projection and $a,b$ are
the maps stemming from the hypercohomology spectral sequence for $H^*_\cont(\F,C)$.\qed
\end{lemma}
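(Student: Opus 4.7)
The plan is to reduce the commutativity of diagram \eqref{eq9.2} (whose left vertical arrow is cup product with the generator $e$ of \eqref{eq6.1c}) to the case where $C$ is a single $l$-adic sheaf in degree zero, and then to invoke the classical cup-product computation for $G=\hat\Z$-modules. The key observation is that every arrow in \eqref{eq9.2} is natural in $C$: the edge maps $a$ and $b$ come from the Hochschild--Serre spectral sequence, which collapses to \eqref{eq3.4} since $G$ has $l$-cohomological dimension $1$; multiplication by $e$ is functorial because $e$ is a universal class (so cup product commutes with pullback along any morphism of complexes); and the inclusion $i$ and projection $p$ are the canonical maps $H^j(C)^G \hookrightarrow H^j(C) \twoheadrightarrow H^j(C)_G$, which are natural in $C$ as an object of $\hat D^b_c(\F,\Z_l)$.

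Granted these naturality properties, I would induct on the amplitude of $C$. A truncation triangle
\[ \tau_{<n}C \to C \to H^n(C)[-n] \by{+1} \]
in $\hat D^b_c(\F,\Z_l)$ induces a morphism of diagrams of type \eqref{eq9.2} fitting into the corresponding long exact sequences in continuous cohomology. A standard diagram chase then reduces commutativity for $C$ to commutativity for $\tau_{<n}C$ and for $H^n(C)[-n]$. Since $C$ is perfect, and in particular of finite amplitude, iterating the reduction collapses the problem to $C=M[0]$ with $M$ a single $l$-adic sheaf.

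For $M=\lim_\nu M/l^\nu$, the Mittag--Leffler conditions (satisfied because $M$ is finitely generated as a $\Z_l$-module with continuous $G$-action) allow one to further reduce to $M$ a finite discrete $G$-module. For such $M$, I would use the two-term free resolution
\[ 0 \to \Z[G] \by{\sigma-1} \Z[G] \to \Z \to 0 \]
of the trivial $G$-module $\Z$. Applying $\Hom_{\Z[G]}(-,M)$ gives the complex $M \by{\sigma-1} M$, identifying $H^0(G,M)=M^G$ and $H^1(G,M)=M_G$. An explicit cup-product computation then shows that multiplication by $e \in H^1(G,\Z)$ factors as the composition $M^G \hookrightarrow M \twoheadrightarrow M_G$, which is precisely the content of the diagram in the case $C=M$, $j=0$; arbitrary $j$ follows by shifting. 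This is the content of the cited Milne, \cite[Prop.\ 6.5]{milneamer}, and of the parallel result in \cite[Prop.\ 6.5]{tatesheaf}.

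The main technical obstacle is the naturality bookkeeping in the dévissage step: one has to verify that each of the five types of map appearing in \eqref{eq9.2} is compatible with the connecting homomorphisms of the long exact sequences attached to a truncation triangle, in particular that the middle composition $p \circ i$ behaves well under such boundary maps. This is a routine but slightly tedious verification; alternatively, one could avoid the explicit dévissage by phrasing the statement as an identity of natural transformations of functors $\hat D^b_c(\F,\Z_l) \to \Ab$ (on both paths of \eqref{eq9.2}) and checking it on a set of compact generators, where the computation again reduces to the single-sheaf case.
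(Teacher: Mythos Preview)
Your single-sheaf computation is correct and is indeed the content of Milne's \cite[Prop.~6.5]{milneamer}. The paper itself gives no proof beyond citing this and \cite[Prop.~6.5]{tatesheaf}, so there is little to compare against directly.

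Your d\'evissage step, however, has a gap. You want to show that two natural transformations $\phi_C,\psi_C:H^j_\cont(\F,C)\to H^{j+1}_\cont(\F,C)$ (cup-product by $e$ and $b\circ p\circ i\circ a$) coincide. Knowing $\phi=\psi$ on $\tau_{<n}C$ and on $H^n(C)[-n]$ does \emph{not} give $\phi=\psi$ on $C$ by a five-lemma chase: for $x\in H^j_\cont(\F,C)$, naturality only tells you that $\phi_C(x)-\psi_C(x)$ lies in the image of $H^{j+1}_\cont(\F,\tau_{<n}C)$, not that it vanishes. Your alternative of checking on compact generators has the same defect unless the two transformations are first lifted to the chain level --- and lifting $\psi_C$ is essentially the statement to be proved.

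A clean repair: first show that $\cdot e$ already factors through the edge maps. Since $e$ lies in $F^1H^1_\cont(\F,\Z_l)$ (it is in the image of $b$) and the Hochschild--Serre filtration is multiplicative, cup-product by $e$ sends $H^j_\cont$ into $F^1H^{j+1}_\cont=\ker a$ and kills $F^1H^j_\cont=\IM b$ (as $F^2=0$ by $cd(G)=1$). Hence $\cdot e=b\circ\phi''\circ a$ for a unique natural map $\phi'':H^j(C)^G\to H^j(C)_G$. Now $\phi''$ visibly depends only on the $G$-module $H^j(C)$: use naturality along $\tau_{\le j}C\to C$ and $\tau_{\le j}C\to H^j(C)[-j]$, both of which are isomorphisms on $H^j$. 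So the question collapses to $C=M[0]$, where your explicit resolution argument applies. Alternatively, one may bypass d\'evissage entirely by realising $\cdot e$ as the boundary map of the triangle obtained by tensoring $C$ with the extension of $\Z_l$ by $\Z_l$ representing $e$, and computing directly with the standard resolution of $\Z_l$ over $\Z_l[[G]]$; this is the route indicated in Appendix~\ref{s.geisser}.
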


\subsection{Na\"\i ve cycle class maps}\label{s.naive} The rigidity theorem of Suslin-Voe\-vod\-sky
(\cite[Th. 7.6]{suvo}  or \cite[Th. 6.1]{VIHES}) yields canonical isomorphisms
\begin{equation}\label{eq3.3}
H^i_\et(X,\Z(n)\otimes \Z/l^\nu)\simeq H^i_\et(X,\mu_{l^\nu}^{\otimes n})
\end{equation} 
which, in view of \eqref{eq3.0}, immediately
yield  higher $l$-adic cycle class maps:
\begin{equation}\label{eq3.2}
H^j_\et(X,\Z(n))\otimes \Z_l \to H^{j}_\cont(X,\Z_l(n)).
\end{equation}

One has isomorphisms \cite{blochhigher}
\[K_{2n-i}(X)_\Q^{(n)}\simeq H^j(X,\Q(n))\iso H^j_\et(X,\Q(n)),\]
which convert the maps \eqref{eq3.2} into the Chern characters $ch_{n,i}$ of Conjecture
\ref{cff}. Thus, this construction already allows us to reformulate Conjecture
\ref{cff} in terms of motivic cohomology. However it is too crude to provide
a correct \emph{integral} version of this conjecture (see \S \ref{s6.1} for a detailed
discussion). For this, one needs a triangulated version of
\eqref{eq3.2}, namely a map
\[\Z(n)_\et\otimes \Z_l\to R\lim \mu_{l^\nu}^{\otimes n}\]
in a suitable triangulated category, which has to be modified. In \cite{glr}, this was done in
the derived category of
\'etale sheaves on $Sm(\F)$, using Bloch's cycle complexes instead of the Suslin-Voevodsky
complexes. Since we are going to ultimately give a reformulation using $\DM_\et^\eff(\F)$, it is
best to construct the above morphism directly in this category. This will be done in \S
\ref{s.rel}.

\section{Categories of motives over a field}\label{s.DM} 

\subsection{Categories of effective geometric motives}\label{s.motgm} Let $\DM_\gm^\eff(k)$
be Voevodsky's tensor triangulated category of effective geometric motives \cite{V}: we also
refer to Levine's survey \cite[Lect. 1 and 2]{levineIHES} for a nice exposition. Then any
$X\in \Sm(k)$ has a
\emph{motive}
$M(X)\in\DM_\gm^\eff(k)$. The functor $X\mapsto M(X)$ has the following properties:

\begin{description}
\item[Homotopy invariance] $M(X\times \A^1)\iso M(X)$.
\item[Mayer Vietoris] If $X = U\cup V$ with $U,V$ open, then one has an exact triangle
\[M(U\cap V)\to M(U)\oplus M(V)\to M(X)\by{+1}.\]
\item[Projective bundle formula] Write $\Z(1)$ for the direct summand of $M(\P^1)[-2]$ defined
by any rational point: this is the \emph{Tate object}. For $n>0$, define $\Z(n)=\Z(1)^{\otimes
n}$: if $M\in \DM_\gm^\eff(k)$, we write $M(n):=M\otimes \Z(n)$. Now let $E\to X$ be a vector
bundle of rank
$r$. Then
\[M(\P(E))\simeq \bigoplus_{n=0}^{r-1} M(X)(n)[2n].\]
\item[Gysin] Let $Z\subset X$ be a closed subscheme, smooth of codimension $c$, and let
$U=X-Z$. Then one has an exact triangle
\[M(U)\to M(X)\to M(Z)(c)[2c]\by{+1}.\]
\end{description}

Let us recall the construction of $\DM_\gm^\eff(k)$ in two words: it parallels the construction
of Chow motives to a large extent, but is based on smooth, not necessarily projective,
varieties. We start from the category
$\Cor(k)$ of
\emph{finite corrrespondences} over $k$: its objects are smooth $k$-schemes of finite type and
its morphisms are the groups $c(X,Y)$ of \S \ref{s.motcoh}. The graph of a morphism is a finite
correspondence, which yields a (covariant) functor $\Sm(k)\to \Cor(k)$.

Since $\Cor(k)$ is an additive category, one may consider the homotopy category $K^b(\Cor(k))$
of bounded chain complexes of objects of $\Cor(k)$. Then $\DM_\gm^\eff(k)$ is defined as the
pseudo-abelian envelope of the Verdier localisation of $K^b(\Cor(k))$ with respect to
the ``relations" corresponding to homotopy invariance and Mayer-Vietoris as above. This yields
a string of functors:
\[\Sm(k)\to \Cor(k)\to K^b(\Cor(k))\to \DM_\gm^\eff(k).\]

Their composition is the above functor $M$.

\subsection{Categories of motivic complexes, Nisnevich topology}\label{s.moteff}
The relationship between motivic cohomology and $\DM_\gm^\eff(k)$ is

\begin{thm}[\protect{\cite{V}}]\label{t3.1} One has canonical isomorphisms
\[H^i(X,\Z(n))\simeq \Hom_{\DM_\gm^\eff(k)}(M(X),\Z(n)[i]).\]
\end{thm}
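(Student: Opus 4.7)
The plan is to factor the desired identification through Voevodsky's larger triangulated category $\DM_-^\eff(k)$ of (bounded above) motivic complexes, in which the Hom groups out of $M(X)$ can be computed as honest Nisnevich hypercohomology.

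First, I would recall the construction of $\DM_-^\eff(k)$: it is a full subcategory of $D^-(\NST(k))$, the derived category of Nisnevich sheaves with transfers, spanned by complexes whose cohomology sheaves are homotopy invariant. A fundamental theorem of Voevodsky (relying on $k$ being perfect) says that $\DM_\gm^\eff(k)$ embeds as a full triangulated subcategory of $\DM_-^\eff(k)$ via the functor sending $M(X)$ to the complex concentrated in degree $0$ given by the representable sheaf with transfers $L(X)=c(-,X)$. So it suffices to compute $\Hom_{\DM_-^\eff(k)}(L(X),\Z(n)[i])$.

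Second, I would identify the object representing $\Z(n)$ in $\DM_-^\eff(k)$. By construction, $\Z(n)$ in $\DM_\gm^\eff(k)$ is built from $M(\P^1)$ and the rational point, and under the embedding into $\DM_-^\eff(k)$ it corresponds to the Suslin--Voevodsky complex $\underline{C}^*(\Z(n))$ (possibly shifted so that the $\G_m^{\wedge n}$ term sits in the right degree, matching Definition \ref{d3.1}). The key input here is Voevodsky's theorem that for any presheaf with transfers $F$, the total complex of Suslin singular chains $C_*(F)$ has homotopy invariant Nisnevich cohomology sheaves, hence lies in $\DM_-^\eff(k)$; applied to the sheaf associated to $\G_m^{\wedge n}$ this gives a canonical model for $\Z(n)$.

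Third, for any $C\in \DM_-^\eff(k)$ represented by a complex of Nisnevich sheaves with transfers having homotopy invariant cohomology sheaves, one has the computation
\[\Hom_{\DM_-^\eff(k)}(L(X),C[i])=\bH^i_\Nis(X,C).\]
This follows from two facts: (i) Hom out of $L(X)$ in $D^-(\NST(k))$ computes Nisnevich hypercohomology of $X$ with coefficients in $C$, because $L(X)$ is projective as a sheaf with transfers for the Nisnevich topology; and (ii) the localisation $D^-(\NST(k))\to \DM_-^\eff(k)$ is the identity on the subcategory of complexes with homotopy-invariant cohomology, by a standard argument using the Suslin complex functor as a left adjoint to the inclusion (this is where one uses Voevodsky's theorem that Nisnevich cohomology preserves homotopy invariance over a perfect field, to ensure the $\A^1$-local replacement does not change hypercohomology on smooth $X$). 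Applying (i) and (ii) to $C=\underline{C}^*(\Z(n))$ yields exactly $H^i(X,\Z(n))$ as defined in Definition \ref{d3.1}.

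The main obstacle is step three, specifically the careful passage between $D^-(\NST(k))$ and $\DM_-^\eff(k)$: one must know that the $\A^1$-localisation does not perturb Hom groups out of $L(X)$ when the target already has homotopy invariant Nisnevich cohomology sheaves. This rests squarely on Voevodsky's theorem that $\Sm(k)$ together with the Nisnevich topology is well adapted to homotopy invariant presheaves with transfers (perfectness of $k$ is essential). Once this is granted, assembling the three steps is formal and gives the stated natural isomorphism $H^i(X,\Z(n))\simeq \Hom_{\DM_\gm^\eff(k)}(M(X),\Z(n)[i])$.
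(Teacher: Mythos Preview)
Your proposal is correct and follows precisely the approach outlined in the paper: the paper does not give a detailed proof but refers to \cite{V}, explaining only that one passes to the larger category $\DM_-^\eff(k)$, uses the full faithfulness of the embedding $\DM_\gm^\eff(k)\hookrightarrow \DM_-^\eff(k)$, and proves the statement there. Your three-step expansion (the Yoneda-type embedding via $L(X)$, the identification of $\Z(n)$ with the Suslin--Voevodsky complex $\underline{C}^*(\Z(n))$, and the computation of $\Hom(L(X),-)$ as Nisnevich hypercohomology using Voevodsky's theorem on homotopy invariance over a perfect field) is exactly how the argument runs in \cite{V}.
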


This is proven by introducing a larger tensor triangulated category $\DM_-^\eff(k)$,
constructing a fully faithful functor $\DM_\gm^\eff(k)\to \DM_-^\eff(k)$ and proving Theorem
\ref{t3.1} in
$\DM_-^\eff(k)$.

In short, $\DM_-^\eff(k)$ is constructed out of Nisnevich sheaves with transfers. A
\emph{presheaf with transfers} $\sF$ is simply an additive contravariant functor from the
category $\Cor(k)$ of the previous subsection to abelian groups; $\sF$ is a \emph{Nisnevich
sheaf with transfers} if, viewed as a functor on $\Sm(k)$, it is a sheaf for the Nisnevich
topology. Write $\NST(k)$ for the abelian category of Nisnevich sheaves with transfers. Inside
the bounded above derived category $D^-(\NST(k))$, Voevodsky considers the full subcategory
consisting of those complexes whose homology sheaves are \emph{homotopy invariant}: this is
$\DM_-^\eff(k)$. The embedding of $\DM_\gm^\eff(k)$ into $\DM_-^\eff(k)$ may then be thought of
as a Yoneda embedding: it is constructed using complexes $\underline{C}_*(\sF)$ generalising
those described in \S \ref{s.motcoh}.

The category $\DM_-^\eff(k)$ is constructed out of bounded above complexes of Nisnevich
sheaves. It will be more convenient for us to work with the corresponding
category $\DM^\eff(k)$ constructed in the same way but without boundedness conditions. In fact
we have a commutative square of inclusions of tensor triangulated categories
\begin{equation}
\begin{CD}\label{eq4.2}
\DM_\gm^\eff(k) @>a>> \DM^\eff(k)\\
@VcVV @VdVV\\
\DM_\gm(k) @>b>> \DM(k).
\end{CD}
\end{equation}

The two bottom categories are obtained from the top categories by inverting the Tate object,
but in rather different ways. On the left hand side, one defines $\DM_\gm(k)$ as the category
whose objects are pairs $(M,m)$ with $M\in \DM_\gm^\eff(k)$ and $m\in\Z$, and Hom sets are
defined by
\[\Hom_{\DM_\gm(k)}((M,m),(N,n))=\colim_{r\ge -m,-n} \Hom_{\DM_\gm^\eff(k)}(M(m+r),N(n+r))\]
cf. \cite[p. 192]{V}. The full faithfulness of $a$ follows from [the proof of] \cite[Th.
3.2.6]{V} and the full faithfulness of $c$ follows from \cite{voecan}; both results use the
perfectness of the field $k$.

On the other hand, if we try and define $\DM(k)$ from $\DM^\eff(k)$ in the same way, the
category we get is too small: in particular it does not have infinite direct sums, while
$\DM^\eff(k)$ does. The solution is to pass to the model-categoric level and imitate the
construction of spectra in algebraic topology: namely one considers ``$\Z(1)$-spectra" of the
form
\[(C_n,e_n)_{n\in\Z}\]
where $C_n\in C(NST)$ and $e_n$ is a morphism of complexes $C_n\otimes \Z(1)\to C_{n+1}$ for
$\Z(1)$ as in \S \ref{s.motcoh}. This is outlined by Morel in
\cite[\S 5.2]{morel}, as an analogue to the construction given by Voevodsky in \cite[\S
5]{VICM} for the stable homotopy category of schemes. A detailed construction, over a base, is
given by Cisinski and D\'eglise in \cite[Ex. 7.15]{cis-deg1}.

One can show that the functors $b$ and $d$ are also fully faithful. The categories
$\DM^\eff(k)$ and $\DM(k)$ are closed, i.e., have internal Homs.

\subsection{Variant} If $A$ is a commutative ring, one may define similar categories of
motives with coefficients in $A$; we shall denote them by $\DM(k,A)$ and the decorated
analogues. The most important cases for us will be $A=\Q$, $A=\Z/n$ and, later, $A=\Z_l$.

\subsection{Remark on notation} I will never write $\DM(k,\Z)$ for $\DM(k)$. There is a
dilemma (or a trilemma) for writing the unit object of $\DM(k,A)$. A spontaneous choice would
be $A$, but this leads to ambiguities. One ambiguity which would be disastrous here would be to
confuse notationally the complex of sheaves $\Z(n)\otimes \Z_l$ and the $l$-adic sheaf
$\Z_l(n)$.

Another choice which has been made elsewhere to avoid this problem is to use the neutral
notation $\un$ for the unit object. This leads to notation like $\un(n)[i]$, which I find
disgracious.

For these reasons, I have decided to retain the notation $\Z$ for the unit object of
$\DM(k,A)$ even when $A$ is another ring than $\Z$. Similarly, the notation $\Z(n)$ is retained
in $\DM(k,A)$ for any $A$. This means that $\DM(k,A)(\Z,\Z)\allowbreak =A$ and
$\DM(k,A)(M(X),\Z(n)[i]) = H^i(X,\Z(n))\otimes A$ for $X\in \Sm(k)$.

This also applies to the variants of $\DM(k)$ to come.

On the other hand, I will sometimes write $H^i(X,A(n))$ for $H^i(X,\Z(n))\otimes A$ when there is no risk of ambiguity, e.g. for $A=\Q$ or $\Z/m$ (but not $A=\Z_l$).

 I hope this will not create any confusion.
 
\subsection{Categories of motivic complexes, \'etale topology} Let $A$ be a commutative ring.
There is an
\'etale analogue to the inclusion
$\DM^\eff(k,A)\allowbreak\Inj
\DM(k,A)$, using the \'etale topology
rather than the Nisnevich topology. This yields a naturally commutative diagram of tenros
triangulated categories:
\begin{equation}\label{eq4.1}
\begin{CD}
\DM^\eff(k,A) @>\alpha^*>> \DM_\et^\eff(k,A)\\
@V{d}VV @V{d_\et}VV\\
\DM(k,A) @>\alpha^*>> \DM_\et(k,A)
\end{CD}
\end{equation}
where $\alpha^*$ denotes the change of topology functor. The functor $d_\et$ is still fully
faithful.

The basic results on the effective categories are identical
to those stated in \cite[Prop. 3.3.2 and 3.3.3]{V} for their bounded versions:

\begin{thm}\label{t3.2} Assume that the field $k$ has finite cohomological dimension, and
exponential characteristic $p$. Then:\\ 
a) For $A=\Q$, the functors $\alpha^*$ of \eqref{eq4.1} are
equivalences of tensor triangulated categories.\\
b) for $n>0$ prime to $p$, the natural functor
\[\DM^\eff_\et(k,\Z/n)\to D(k_\et,\Z/n)\]
is an equivalence of triangulated categories, where the right hand side is the derived category
of sheaves of $\Z/n$-modules on the small \'etale site of $\Spec k$ and the functor is induced
by restriction to the small \'etale site.\\
c) For $\nu \ge 1$, the categories $\DM^\eff_\et(k,\Z/p^\nu)$ and $\DM_\et(k,\Z/p^\nu)$ are
$0$. 
\end{thm}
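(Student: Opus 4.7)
The plan is to reduce each part to Voevodsky's original arguments for the bounded-above categories (\cite[Prop. 3.3.2, 3.3.3]{V}) and then upgrade to the unbounded setting. The main upgrade is that both $\DM^\eff(k,A)$ and $\DM_\et^\eff(k,A)$ are compactly generated triangulated categories with small sums, whose compact generators are the motives $M(X)$ for $X\in\Sm(k)$. A change-of-topology functor (or, in (b), a restriction functor) between such categories that is fully faithful on compact objects and preserves sums is automatically fully faithful, and if its essential image generates, it is an equivalence. So it suffices to verify the statements for the compact generators and their shifts/twists.

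For part (a): I would show that for any homotopy invariant Nisnevich sheaf with transfers $\sF$ and any smooth $X/k$, the comparison map $H^*_\Nis(X,\sF\otimes\Q)\to H^*_\et(X,\alpha^*\sF\otimes\Q)$ is an isomorphism. The point is that $\alpha^*$ factors through a sheafification step whose cofiber is controlled by Galois cohomology of henselian local rings, which is torsion; combined with $k$ having finite cohomological dimension and the transfer structure, the cofiber is killed after $\otimes\Q$. This yields full faithfulness of $\alpha^*$ on hearts for the homotopy $t$-structure; a hypercohomology spectral sequence argument then gives full faithfulness on the whole triangulated category. Essential surjectivity follows because the Suslin complex $\underline{C}_*(\sF)$ is the same whether $\sF$ is viewed Nisnevich or étale. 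For the stable version, one invokes the model-categoric $\G_m$-stabilization, which manifestly commutes with $\alpha^*$.

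For part (b): By Suslin–Voevodsky rigidity (Theorems 7.6 of \cite{suvo} and 6.1 of \cite{VIHES}), any homotopy invariant étale sheaf with transfers of $\Z/n$-modules ($n$ prime to $p$) is, up to isomorphism, the pullback from $\Spec k_\et$ of its fiber at a geometric point. Concretely, the pullback functor $D(k_\et,\Z/n)\to \DM_\et^\eff(k,\Z/n)$ is fully faithful (one checks this on generators using rigidity and the computation $H^i_\et(X,\Z(n)\otimes\Z/m)\simeq H^i_\et(X,\mu_m^{\otimes n})$ of \eqref{eq3.3}), and its image generates the target. This gives the equivalence.

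For part (c): Full faithfulness of $d_\et$ in \eqref{eq4.1} reduces us to showing $\DM_\et(k,\Z/p^\nu)=0$. Since $\Z(1)$ is $\otimes$-invertible in the stable category, it is enough to prove $\Z(1)=0$ in $\DM_\et(k,\Z/p^\nu)$. As a complex of étale sheaves, $\Z(1)\otimes\Z/p^\nu$ is represented, up to shift, by $\G_m\otimes^L\Z/p^\nu$. In characteristic $p$ the $p^\nu$-th power map on $\G_m$ is étale-locally an isomorphism: its kernel $\mu_{p^\nu}$ is étale-locally trivial (since $x^{p^\nu}=1$ forces $x=1$ in any reduced ring), and the map is étale surjective. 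Therefore $\G_m\otimes\Z/p^\nu=0$ as an étale sheaf, so $\Z(1)=0$, whence $\Z=\Z(1)\otimes\Z(-1)=0$ and the whole category vanishes.

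The main obstacle is the technical upgrade from bounded-above to unbounded categories, i.e., justifying that Voevodsky's computations of Hom groups on the generators and his fully-faithful comparisons persist in the unbounded and $\G_m$-stabilized setting; this is a matter of invoking the compact generation framework of Cisinski–Déglise \cite{cis-deg1} rather than reproving anything deep. A secondary point of care in (a) is ensuring the rational étale/Nisnevich comparison really does vanish on higher cohomology without boundedness hypotheses — this is where finite cohomological dimension of $k$ enters.
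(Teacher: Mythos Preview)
Your approach to (a) and (b) is essentially the paper's: the paper does not give a proof but refers to \cite[Prop.~3.3.2 and 3.3.3]{V} for the bounded-above versions, and the passage to the unbounded and stable categories is exactly the kind of compact-generation bootstrapping you describe.

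However, your argument for (c) contains a genuine error. You claim that in characteristic $p$ the $p^\nu$-th power map on $\G_m$ is \'etale-locally surjective. This is false: the morphism of schemes $\G_m\to\G_m$, $x\mapsto x^{p^\nu}$, is the (iterated) Frobenius, hence purely inseparable, not \'etale. Concretely, over $\G_m=\Spec k[t,t^{-1}]$ the unit $t$ acquires a $p$-th root only after the purely inseparable extension $k[t^{1/p}]$, which is not an \'etale cover. Thus $\G_m\otimes\Z/p^\nu\ne 0$ as an \'etale sheaf on $\Sm(k)$, and your deduction that $\Z(1)=0$ collapses. (Your observation that $\mu_{p^\nu}=0$ as an \'etale sheaf is correct and gives injectivity, but not surjectivity.)

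The paper's hint points to the correct mechanism: the Artin--Schreier exact sequence
\[
0\to \Z/p\to \G_a\by{F-1}\G_a\to 0
\]
of \'etale sheaves with transfers. The sheaf $\G_a$ is $\A^1$-contractible (the multiplication $\A^1\times\G_a\to\G_a$ is an explicit homotopy between the identity and zero), so $\G_a\simeq 0$ in $\DM^\eff_\et(k)$. The triangle then forces $\Z/p\simeq 0$, hence the unit object of $\DM^\eff_\et(k,\Z/p^\nu)$ vanishes and the whole category is zero; the stable case follows. The conceptual point you are missing is that in characteristic $p$ the $p$-torsion phenomena are governed by the \emph{additive} group (Artin--Schreier) rather than the multiplicative one (Kummer), and it is precisely the $\A^1$-contractibility of $\G_a$---a non-homotopy-invariant sheaf---that kills $\Z/p$ after $\A^1$-localisation.
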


Part c) of Theorem \ref{t3.2} means that $\DM_\et^\eff(k)$ and $\DM_\et(k)$ are $\Z[1/p]$-linear;
the basic reason why this follows from homotopy invariance is the Artin-Schreier exact sequence
(of
\'etale shaves with transfers)
\[0\to \Z/p\to \G_a\by{F-1} \G_a\to 0.\]

One deduces a pendant to Theorem \ref{t3.1}, plus an important comparison between motivic
and \'etale motivic cohomology:

\begin{thm}\label{t3.3} a) One has canonical isomorphisms
\[H^i_\et(X,\Z(n))\otimes \Z[1/p]\simeq \Hom_{\DM_\et^\eff(k)}(M(X),\Z(n)[i]).\]
b) For any smooth $X$ and any $n\ge 0$, $i\in\Z$, the natural map
\[H^i(X,\Z(n))\otimes \Q\to H^i_\et(X,\Z(n))\otimes \Q\]
is an isomorphism.
\end{thm}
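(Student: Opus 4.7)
The plan is to prove (a) as the étale analogue of Theorem \ref{t3.1} and then to deduce (b) from (a) combined with Theorem \ref{t3.2}(a). The unifying framework is that in both the Nisnevich and étale settings, $\DM^\eff(k,A)$ (resp.\ $\DM_\et^\eff(k,A)$) is built from complexes of Nisnevich (resp.\ étale) sheaves with transfers, and the object $\Z(n)$ is represented by a suitable version of the Suslin--Voevodsky complex $\underline{C}^*(\Z(n))$.

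For part (a), I would follow the same route as Voevodsky's proof of Theorem \ref{t3.1}, replacing ``Nisnevich'' by ``étale'' throughout. The clean way to organise the argument is via the adjunction $\alpha^* \dashv R\alpha_*$: one has
\[\Hom_{\DM_\et^\eff(k)}(M(X),\Z(n)[i]) = \Hom_{\DM^\eff(k)}(M(X), R\alpha_*\alpha^*\Z(n)[i]).\]
The complex $R\alpha_*\alpha^*\underline{C}^*(\Z(n))$ is, by construction, the presheaf of Nisnevich hypercohomology computing étale hypercohomology, i.e.\ $U \mapsto \bH^*_\et(U,\underline{C}^*(\Z(n)))$. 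Provided it remains $\A^1$-local (so that the Nisnevich $\Hom$ from $M(X)$ is computed by its value at $X$), this yields exactly $\bH^i_\et(X,\underline{C}^*(\Z(n))) = H^i_\et(X,\Z(n))$. The $\Z[1/p]$-factor appears because $\DM_\et^\eff(k)$ is $\Z[1/p]$-linear by Theorem \ref{t3.2}(c).

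For part (b), the natural map is induced by the change-of-topology functor $\alpha^*$. Tensoring with $\Q$, Theorem \ref{t3.2}(a) asserts that $\alpha^*$ induces an equivalence $\DM^\eff(k,\Q)\iso \DM_\et^\eff(k,\Q)$, so the map
\[\Hom_{\DM^\eff(k,\Q)}(M(X),\Z(n)[i]) \to \Hom_{\DM_\et^\eff(k,\Q)}(M(X),\Z(n)[i])\]
is a bijection. By Theorem \ref{t3.1} its left-hand side is $H^i(X,\Z(n))\otimes \Q$, and by part (a) (using $\Z[1/p]\otimes\Q = \Q$) its right-hand side is $H^i_\et(X,\Z(n))\otimes \Q$. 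The main obstacle lies in the étale $\A^1$-locality invoked in part (a): prime-to-$p$ torsion coefficients are handled by the Suslin--Voevodsky rigidity theorem \eqref{eq3.3}, which identifies $\alpha^*\Z(n)\otimes\Z/l^\nu$ with the locally constant sheaf $\mu_{l^\nu}^{\otimes n}$; the rational part is handled by Theorem \ref{t3.2}(a); and the $p$-primary part vanishes by the Artin--Schreier argument sketched after Theorem \ref{t3.2}, which is precisely why $\Z[1/p]$ is forced upon us.
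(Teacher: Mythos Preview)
Your proposal is correct and matches the paper's intent: the paper gives no explicit proof, stating only that one ``deduces'' Theorem~\ref{t3.3} as a pendant to Theorem~\ref{t3.1} together with Theorem~\ref{t3.2}. Your argument for (b) via the equivalence of Theorem~\ref{t3.2}(a) is exactly the intended one, and your treatment of (a) --- the \'etale analogue of Theorem~\ref{t3.1}, with the $\Z[1/p]$ forced by Theorem~\ref{t3.2}(c) --- is likewise what the paper has in mind.

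One small comment on presentation: your route to (a) via the adjunction $(\alpha^*,R\alpha_*)$ and the $\A^1$-locality of $R\alpha_*\alpha^*\Z(n)$ is valid, but slightly more roundabout than necessary. The more direct line is simply to repeat the proof of Theorem~\ref{t3.1} with the \'etale topology in place of the Nisnevich topology, obtaining $\Hom_{\DM_\et^\eff(k)}(M(X),\Z(n)[i])\simeq \bH^i_\et(X,\underline{C}^*(\Z(n)))$; the $\Z[1/p]$ then appears because the right-hand side is automatically $\Z[1/p]$-linear by Theorem~\ref{t3.2}(c), and \'etale hypercohomology commutes with the flat base change $\Z\to\Z[1/p]$. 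Your d\'evissage into rational, prime-to-$p$ torsion, and $p$-primary parts is precisely what underlies the \'etale version of Voevodsky's $\A^1$-locality input, so either way you are invoking the same ingredients.
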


An upshot is that if we work with $\DM$, we are bound to lose $p$-torsion
information on \'etale motivic cohomology. Using the groups from Definition \ref{d3.1} b)
retains this information, but we lose the power of the triangulated formalism.\footnote{To
recover this power for the $p$-torsion, one would have to invent new triangulated categories
which would accomodate non homotopy invariant phenomena: this is very much work in progress now.}

We shall also need:

\begin{lemma}\label{l4.1} Let $\DM_{\gm,\et}(k)$ be the full subcategory of $\DM_\et(k)$ which
is the pseudo-abelian envelope of the image of $\DM_\gm(k,\Z[1/p])\to \DM_\et(k)$. Then
$\DM_{\gm,\et}(k)$ is a rigid subcategory of $\DM_\et(k)$: every object is strongly dualisable.
\end{lemma}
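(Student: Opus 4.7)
The plan is to reduce rigidity of $\DM_{\gm,\et}(k)$ to rigidity of $\DM_\gm(k,\Z[1/p])$ itself, and then transport that rigidity along the symmetric monoidal functor to $\DM_\et(k)$ and across the pseudo-abelian envelope.

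First, I would recall that $\DM_\gm(k,\Z[1/p])$ is rigid. This rests on Voevodsky's duality $M(X)^\vee \simeq M(X)(-d)[-2d]$ for $X$ smooth projective of dimension $d$, combined with the statement that every geometric motive is a direct summand of some $M(X)(n)[i]$ with $X$ smooth projective. Over a perfect field of exponential characteristic $p$, Gabber's refinement of de Jong's alteration theorem (alluded to in the footnote on page~2) makes this last statement available with $\Z[1/p]$-coefficients, with no assumption of resolution of singularities. This is the only step in which any genuine geometry is used.

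Second, I would invoke the elementary fact that a symmetric monoidal functor preserves strong duals: if $(M,M^\vee,\eta,\epsilon)$ is a duality datum in $\DM_\gm(k,\Z[1/p])$ and $F$ denotes the composite symmetric monoidal functor $\DM_\gm(k,\Z[1/p])\to \DM(k,\Z[1/p])\overset{\alpha^*}{\to} \DM_\et(k)$, then $(F(M),F(M^\vee),F(\eta),F(\epsilon))$ is a duality datum in $\DM_\et(k)$. Hence every object in the image of $F$ is strongly dualisable, and the image is closed under duality since $F(M^\vee)$ is again in the image.

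Third, I would use the formal fact that in an additive symmetric monoidal category the class of strongly dualisable objects is stable under direct summands: if $M=N\oplus N'$ is dualisable and $\pi\colon M\to M$ is the idempotent projecting onto $N$, then the transposed idempotent $\pi^\vee$ on $M^\vee$ cuts out a dual of $N$, with unit and counit obtained from $\eta$ and $\epsilon$ by composing with $\pi$ and $\pi^\vee$. Since $\DM_{\gm,\et}(k)$ is by definition the pseudo-abelian envelope of the image of $F$, every object of $\DM_{\gm,\et}(k)$ is strongly dualisable, and its dual again lies in $\DM_{\gm,\et}(k)$. The main obstacle is the first step; the remaining two are purely formal.
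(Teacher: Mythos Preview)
Your proposal is correct and follows essentially the same approach as the paper: reduce to rigidity of $\DM_\gm(k,\Z[1/p])$ via Gabber's refinement of de Jong (so that motives of smooth projective varieties generate), then transport strong dualisability along the $\otimes$-functor and across direct summands. The paper's proof is terser and leaves the direct-summand and $\otimes$-functor steps implicit, but the content is the same.
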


(Recall that an object $M$ of a unital symmetric monoidal category $\sA$ is \emph{strongly
dualisable} if there exists a triple $(M^*,\eta,\epsilon)$ with $M^*\in \sA$ and
$\eta:\un\to M\otimes M^*$, $\epsilon:M^*\otimes M\to \un$, verifying adjunction-like
identities. Such a triple is then unique up to unique isomorphism; see \cite{dp}.)

\begin{proof} Since $\DM_\gm(k,\Z[1/p])\to \DM_\et(k)$ is a (strict) $\otimes$-functor, it
suffices to show that $\DM_\gm(k,\Z[1/p])$ is rigid. But this triangulated category is generated
by motives of smooth projective varieties, as follows from Gabber's refinement of de Jong's
theorem, and these objects are strongly dualisable (cf. \cite[App. B]{HK}).
\end{proof}

\subsection{Motivic complexes in $\DM$} They are easy to understand in weight
$\le 1$:

\begin{thm}[see \protect{\cite[Cor. 3.4.3]{V}}] One has isomorphisms in $\DM^\eff(k)$
\begin{align*}
\Z(0)&\simeq \Z\\
\Z(1)&\simeq \G_m[-1]
\end{align*}
where $\G_m$ is viewed as a homotopy invariant Nisnevich sheaf with transfers.
\end{thm}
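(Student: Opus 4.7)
The first isomorphism $\Z(0)\simeq \Z$ is a matter of convention: by the notational choice explained earlier, $\Z$ is the unit object of $\DM^\eff(k)$, and $\Z(0)$ is interpreted as the empty tensor power of $\Z(1)$, i.e.\ the unit. So there is nothing to prove. All the content lies in the second isomorphism, and my plan is to establish it in three stages.

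\textbf{Stage 1: reduction to a statement about $\G_m$.} Apply Mayer--Vietoris to the open cover $\P^1=U\cup V$ with $U=\A^1$ and $V=\P^1\setminus\{0\}$, so that $U\cap V=\G_m$. This gives a distinguished triangle
\[M(\G_m)\to M(\A^1)\oplus M(\P^1\setminus 0)\to M(\P^1)\by{+1}.\]
Homotopy invariance identifies both $M(\A^1)$ and $M(\P^1\setminus 0)$ with $\Z$. Splitting off the copy of $\Z$ coming from the rational point $1\in \G_m(k)$, one obtains $M(\G_m)\simeq \Z\oplus \tilde M(\G_m)$ and $M(\P^1)\simeq \Z\oplus \tilde M(\G_m)[1]$. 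Since $\Z(1)$ is defined as the complementary summand of $\Z\inj M(\P^1)$ shifted by $-2$, this yields a canonical isomorphism $\Z(1)\simeq \tilde M(\G_m)[-1]$. Thus it remains to identify $\tilde M(\G_m)$ with $\G_m[0]$.

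\textbf{Stage 2: $\G_m$ as an object of $\DM^\eff(k)$.} Endow the Nisnevich sheaf $X\mapsto \mathcal{O}(X)^*$ with a transfer structure: for a finite correspondence $Z\subseteq X\times Y$ finite and surjective over a component of $X$, send $u\in\mathcal{O}(Y)^*$ to the norm $N_{Z/X}(u|_Z)$. This is additive, functorial, and compatible with composition of correspondences; homotopy invariance $\mathcal{O}(X\times\A^1)^*=\mathcal{O}(X)^*$ for smooth $X$ is standard. Hence $\G_m$ defines a homotopy invariant Nisnevich sheaf with transfers, and therefore an object of $\DM^\eff(k)$ concentrated in degree $0$.

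\textbf{Stage 3: the comparison morphism.} Inside the derived category of Nisnevich sheaves with transfers, $\tilde M(\G_m)$ is represented by the reduced Suslin complex $\underline{C}_*(\tilde L(\G_m))$, where $\tilde L(\G_m)$ is the reduced representable sheaf with transfers of the pointed scheme $(\G_m,1)$. The evaluation map $\tilde L(\G_m)\to \G_m$ sending a formal generator $[f]$ to $f\in\mathcal{O}(X)^*$ is a morphism of presheaves with transfers (by the very definition of the norm transfers in Stage 2), and is surjective. After passing to the Suslin complex, the claim is that this morphism becomes a quasi-isomorphism in $\DM^\eff(k)$. Since $\G_m$ is already homotopy invariant, this is equivalent to showing that the kernel sheaf $K$ satisfies $\underline{C}_*(K)\simeq 0$ in $\DM^\eff(k)$, i.e.\ that its Suslin homology sheaves vanish.

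\textbf{Main obstacle.} The heart of the argument, and the step I expect to be the most delicate, is the vanishing $\underline{C}_*(K)\simeq 0$ in Stage 3. This is not a formal consequence of homotopy invariance of $\G_m$; it requires a genuine computation of Suslin homology. The classical way to carry it out is via an explicit chain-level contraction using the geometry of symmetric products of $\G_m$ (Dold--Thom type arguments), which eventually shows that the singular homology presheaves of $K$ vanish Nisnevich-locally on any smooth $X$. Once this vanishing is in hand, Stages 1--3 combine to give $\Z(1)\simeq \tilde M(\G_m)[-1]\simeq \G_m[-1]$, as desired.
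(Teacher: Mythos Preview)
The paper does not give a proof of this theorem; it simply cites Voevodsky \cite[Cor.\ 3.4.3]{V}. So there is no ``paper's own proof'' to compare against, and your proposal must be judged on its own merits and against the original source.

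Your Stages 1 and 2 are correct and standard. The Mayer--Vietoris reduction to $\Z(1)\simeq \tilde M(\G_m)[-1]$ is exactly how one reconciles the $\P^1$-based definition of $\Z(1)$ used in this paper with the $\G_m$-based Suslin--Voevodsky complex, and the norm transfer on $\G_m$ is the right structure.

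Stage 3 is where the content lies, and you correctly flag it as the obstacle. However, your description of how the vanishing $\underline{C}_*(K)\simeq 0$ is actually proved is misdirected. The argument in the literature (see \cite[Lect.\ 4]{mvw}) does not proceed via symmetric products or a Dold--Thom argument in the topological sense. Instead, one gives a concrete description of $c(X,\G_m)$ for smooth connected $X$: an irreducible cycle $W\subset X\times(\A^1\setminus 0)$ finite surjective over $X$ is the zero locus of a well-defined monic polynomial in $\mathcal{O}(X)[t]$ with invertible constant term. The map to $\mathcal{O}(X)^*\times\Z$ then reads off the constant term (up to sign) and the degree, and the kernel $K$ is generated by explicit combinations of such polynomials. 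The crucial step is an explicit $\A^1$-homotopy, written down as a polynomial identity, contracting these generators. This is elementary but delicate, and is not captured by invoking ``geometry of symmetric products''. If you intend to carry out Stage 3, you should follow that polynomial calculation rather than the route you sketch.
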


These quasi-isomorphisms persist when passing to $\DM_\et^\eff(k)$: we shall put an index
$\et$ for clarity. In particular, if $m$ is invertible in $k$, multiplication by $m$ gives a
``Kummer" isomorphism
\[\mu_m\iso \Z/m(1)_\et\]
hence morphisms 
\begin{equation}\label{eq3.1}
\mu_m^{\otimes n}\to \Z/m(n)_\et
\end{equation}
in the category $\DM_\et^\eff(k)$. The following reformulation of \eqref{eq3.3} is a
special case of Theorem \ref{t3.2} b) (see also \cite[Th. 6.1]{VIHES}):

\begin{thm}\label{t3.4} \eqref{eq3.1} is an isomorphism for any $n\ge 0$.
\end{thm}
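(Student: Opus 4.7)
The plan is to reduce to the case $n=1$, which is essentially the classical Kummer isomorphism, and then propagate to arbitrary $n$ by the tensor structure of $\DM_\et^\eff(k,\Z/m)$.

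First I would handle $n=1$. By the preceding theorem, $\Z(1)_\et\simeq \G_m[-1]$ in $\DM_\et^\eff(k)$, so tensoring with $\Z/m$ yields
\[
\Z/m(1)_\et\simeq \cone(\G_m \by{m} \G_m)[-1].
\]
The Kummer short exact sequence $0\to \mu_m\to \G_m\by{m} \G_m\to 0$ is an exact sequence of \'etale sheaves \emph{with transfers} on $\Sm(k)$ (the transfers on $\G_m$ restrict to transfers on $\mu_m$ by functoriality of the finite correspondence machinery), and it identifies $\cone(\G_m\by{m}\G_m)$ with $\mu_m[1]$. Combining gives the Kummer isomorphism $\mu_m \iso \Z/m(1)_\et$ already indicated in the text.

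Next I would observe that \eqref{eq3.1} for general $n$ is simply the $n$-th tensor power of this $n=1$ case, inside the symmetric monoidal category $\DM_\et^\eff(k,\Z/m)$. Indeed, by construction of the Tate object, $\Z(n)_\et=\Z(1)_\et^{\otimes n}$ in $\DM^\eff_\et(k)$ (this is built into $\DM_\gm^\eff(k)$ and transported via the fully faithful functors of \eqref{eq4.1}), so after tensoring with $\Z/m$ one has $\Z/m(n)_\et\simeq \bigl(\Z/m(1)_\et\bigr)^{\otimes n}$; and $\mu_m^{\otimes n}$ is, by definition, the $n$-fold $\otimes$-power of $\mu_m$ in the same category. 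Under these identifications, the morphism \eqref{eq3.1} is the $n$-fold $\otimes$-power of the Kummer map.

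Since the tensor power of an isomorphism in a symmetric monoidal category is again an isomorphism, we are done. The only subtle point, which I would spell out carefully, is the last compatibility: that $\mu_m\otimes \mu_m$ (formed as tensor of sheaves) agrees with $\mu_m\otimes \mu_m$ (formed in $\DM_\et^\eff(k,\Z/m)$). This is where one uses that $\mu_m$ is locally free of rank one over $\Z/m$ on the \'etale site, so no higher $\Tor$'s intervene; alternatively, invoking Theorem \ref{t3.2}(b), one may pass to the equivalent category $D(k_\et,\Z/m)$ where the statement reduces to the standard isomorphism $\mu_m^{\otimes n}\simeq \mu_m^{\otimes n}$. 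This observation is really the only step where care is needed; the rest is formal.
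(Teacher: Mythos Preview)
Your argument is correct and aligns with the paper's treatment. The paper gives no proof at all beyond the sentence preceding the statement: it simply declares Theorem~\ref{t3.4} to be ``a reformulation of \eqref{eq3.3}'' and ``a special case of Theorem~\ref{t3.2}~b)'', with a further reference to \cite[Th.~6.1]{VIHES}. Your write-up is a more explicit unpacking of exactly this: you reduce to $n=1$ via the Kummer sequence (which the paper already records as an isomorphism), propagate by tensor powers, and then --- at the one genuinely nontrivial compatibility --- you invoke flatness of $\mu_m$ or, alternatively, the equivalence of Theorem~\ref{t3.2}~b). That last alternative \emph{is} the paper's entire proof. So there is no real divergence in method; you have simply written out what the paper leaves implicit.
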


\subsection{The $l$-adic realisation functor} Let us mention immediately that, for a prime
number
$l$ invertible in $k$, there exists an $l$-adic realisation $\otimes$-functor
\[R_l:\DM_\et(k)\to \hat{D}_\et(k,\Z_l)\]
where the right hand side is the category defined by Ekedahl in \cite{ek}. Intuitively, its
restriction to $\DM^\eff_\et(k)$ is obtained as the ``inverse limit" of the compositions
\[\DM^\eff_\et(k)\to \DM^\eff_\et(k,\Z/l^n)\simeq D(k_\et,\Z/l^n)\]
cf. Theorem \ref{t3.2} b). This functor
commutes with infinite direct sums, which implies formally that it has a right adjoint
$\Omega_l$. When $k$ is a finite field, the motivic reformulation of Friedlander's conjecture
will rely on this pair of adjoints and its relationship with the subcategory $\DM^\eff_\et(k)$.

For details on the definition of $R_l$, I refer to the next section.

\section{Categories of motives over a base}\label{s.base}

In this section, $S$ is a Noetherian separated scheme.

\subsection{Generalising Voevodsky's definition over a field} This pre\-sents no basic
difficulty: in short, one replaces the category $\Cor(k)$ used in the previous section by a
category $\Cor(S)$ based on the theory of relative cycles developed by Suslin and Voevodsky in
\cite{suvo1}. The desciption of $\Cor(S)$ is outlined in \cite[App. 1A]{mvw}, and
in more detail in \cite[\S 2]{voecan}. The construction of a category $\DM_-^\eff(S)$ (even for
$S$ a simplicial scheme!) is then given in \cite{voebase}. 

Ivorra's exposition \cite{ivorra-exp} develops this a bit further in
\S 2, and introduces a corresponding category of geometric motives $\DM_\gm^\eff(S)$ in \S 4:
the only difference with the case of a field is that the Mayer-Vietoris relation is generalised
to ``Mayer-Vietoris for the Nisnevich topology". Finally, Ivorra constructs a fully faithful functor
$\DM_\gm^\eff(S)\Inj
\DM_-^\eff(S)$ in \S 5, just as in the case of a field.

Another exposition of the theory of finite correspondences, in the case of a regular
base, is given by D\'eglise in \cite{deg-exp}. He then gives a general exposition with Cisinski
in
\cite[\S 8]{cis-deg}, that they use in \emph{loc. cit.}, \S 10 to construct categories
$\DM_\gm^\eff(S)$ and
$\DM^\eff(S)$ as above, but also stable versions $\DM_\gm(S)$ and $\DM(S)$. (In \cite{cis-deg},
the index $\gm$ is replaced by $c$.) In this way, the picture of \S\S \ref{s.motgm} and
\ref{s.moteff} is extended to a general base. The only difference is that, in the diagram
\[
\begin{CD}
\DM_\gm^\eff(S) @>a>> \DM^\eff(S)\\
@VcVV @VdVV\\
\DM_\gm(S) @>b>> \DM(S)
\end{CD}
\]
generalising \eqref{eq4.2}, the horizontal functors are still fully faithful but it  is not
known whether the vertical functors are.

\subsection{Where the buck stops}\label{s.bucks} So far we seem to live in the best possible
world, having fully extended Voevodsky's theory over a base. Unfortunately, one runs into
trouble in two situations:

\begin{enumerate}
\item When trying to apply to this construction the yoga
of six operations of Voevodsky-Ayoub.
\item Similarly for realisation functors, most importantly the $l$-adic realisation functors.
\end{enumerate}

In (1), the aim would be to endow $S\mapsto \DM(S)$ with the six functors of
Gro\-then\-dieck:
$f^*,f_*,f_!,f^!$,
$\otimes,
\uHom$ where $f:S'\to S$ is a morphism of Noetherian schemes (cf. \cite[\S 2]{illusie}). The
strategy of Grothendieck, Verdier and Deligne for coherent and
\'etale cohomology is awkward here, mainly because Voevodsky works with big sites rather than
small sites.  For this reason, Voevodsky proposed another axiomatic strategy which fits
better in his context. 

Roughly speaking, his framework is the following. One starts from associating to any
(reasonable) scheme
$S$ a triangulated category $\bH(S)$; one assumes that some of the operations are already
constructed and verify certain axioms. From this input, one can then deduce the other
operations and all their properties.  

After Voevodsky sketched this programme for $f^*,f_*,f_!,f^!$, it was worked out by Ayoub in
\cite[Ch. 1]{ayoub} (see his 1.4.1 for a precise statement of the above).

Voevodsky had in mind primarily the stable homotopy categories of sche\-mes $\SH(S)$ that he
developed jointly with Morel \cite{mv,VICM}. In \cite[Ch. 4]{ayoub}, Ayoub gives examples of
theories $S\mapsto \bH(S)$ which verify his axioms 1.4.1. They include $\SH$, but not $\DM$.

The problem with $\DM$ is that transfers make it
difficult to verify the axiom of ``locality". This axiom (number 4 in
\cite[1.4.1]{ayoub}) roughly requires that if $Z$ is a closed subset of $S$ with open
complement $U$, one can obtain $\bH(S)$ by glueing $\bH(Z)$ and $\bH(U)$.

The issue for (2): constructing $l$-adic realisation functors, is to extend the rigidity theorem
of Suslin-Voevodsky over a base. For this, a strategy to reduce to the case of a perfect base
field treated in \cite{suvo} is obstructed by the same axiom of locality.

\subsection{Where the buck restarts: motives without transfers}\label{s.buck} On the other hand,
the examples of \cite[Ch. 4]{ayoub} do include \emph{motives without transfers}. If $\tau$ is a
suitable topology, the category $\DA_\tau(S)$ is constructed as $\DM_\tau(S)$, except that one
replaces the category $\Cor(S)$ by the simpler category $\Sm(S)$ (more correctly:
$\Z[\Sm(S)]$) throughout.  We shall use the established practice of dropping $\tau$ if it is the
Nisnevich topology. For $S$ the spectrum of a field $k$, this construction appears first
in \cite[\S 5.2]{morel} with the notation
$\widetilde{\DM}(k)$.

That this theory satisfies locality is the contents of
\cite[Cor. 4.5.44]{ayoub}\footnote{The notation of \cite{ayoub} may be confusing: compare
\cite[App. A]{ayoubreal}.}, whose proof is inspired by that of
\cite[Th. 2.21]{mv}. An essentially self-contained construction of $\DA_\tau$ is given in
\cite[\S 3]{ayoubreal}.

As above, one may define this theory with coefficients in a commutative ring $A$; there is a
natural $\otimes$-functor
\[\DA_\tau(S,A)\to \DM_\tau(S,A).\]

In \cite[\S 15.2]{cis-deg}, Cisinski and D\'eglise prove that this functor is an equivalence of
categories when $A=\Q$ and $\tau=\et$ if $S$ is excellent and geometrically unibranch. In \cite[Th. B.1]{ayoubreal}, Ayoub
extends this to the case where any prime number is invertible either in $A$ or
on
$S$, provided
$S$ is universally Japanese, normal and of finite \'etale cohomological dimension. (There is an
extra minor condition, and $2$ must be inverted unless the characteristic is $0$.) In this way,
the formalism of the 4 operations is largely repaired.

If $S$ has characteristic $p>0$, we may choose $A=\Z[1/2p]$. But the Artin-Schreier argument of Theorem \ref{t3.2} c)
shows that 
\[\DM_\et(S)\to \DM_\et(S,\Z[1/p])\] 
is an equivalence of categories, and the same
works for $\DA_\et$. So the comparison theorem holds with coefficients $\Z[1/2]$ in this case.
If $S$ is a $\Q$-scheme, then it holds with integer coefficients.

\subsection{A competitor: Beilinson motives} In \cite{cis-deg}, Cisinski and D\'eglise do prove
the locality axiom outlined in \S \ref{s.bucks} for $\DM$ when $S$ and $Z$ are smooth over a
common base. They can relax this smoothness assumption provided one works with rational
coefficients. To this aim, they introduce yet another theory: \emph{Beilinson motives}.

Roughly speaking, Beilinson motives are modules over the $K$-theory spectrum, an
object of $\SH$ which represents algebraic $K$-theory. They form a $\Q$-linear tensor
triangulated category $\DM_{\cyr{B}}(S)$, defined for any reasonable $S$. The locality axiom for
$\SH$ then implies rather formally the locality axiom for $\DM_{\cyr{B}}$.

They also construct an equivalence of $\otimes$-triangulated categories
\[\DM_{\cyr{B}}(S)\simeq \DM(S,\Q)\] 
for any excellent and geometrically unibranch scheme $S$
\cite[Th. 8 and 9]{cis-deg}, which extends the locality axiom for $\DM(-,\Q)$ to these schemes.

Finally, they prove an equivalence 
\[\DM_{\cyr{B}}(S)\simeq \DA_\et(S,\Q)\] 
for any $S$ \cite[Th. 13]{cis-deg}, which connects with the results explained in \S
\ref{s.buck}.

\subsection{Recapitulation}\label{s.recap}

Since the above discussion may have confused the reader, let me summarise it:

\begin{enumerate}
\item There are three types of triangulated theories of motives on the market, associated to a
reasonable scheme $S$: $\DM_\tau(S,A)$, $\DA_\tau(S,A)$, $\DM_{\cyr{B}}(S)$. Here $\tau$ denotes
a suitable Grothendieck toplogy (usually Nisnevich or \'etale) and $A$ denotes a commutative
ring. The categories $\DM_{\cyr{B}}(S)$ are $\Q$-linear.

(This does not exhaust the triangulated theories of motives on the market: there are also the
versions of Levine and Hanamura, see \S \ref{0.1}. Here I limit myself to those which are
constructed in the spirit of Voevodsky and Morel.)
\item For any $S$, there is an equivalence of $\otimes$-categories $\DM_{\cyr{B}}(S)\simeq
\DA_\et(S,\Q)$.
\item Suppose for simplicity that $S$ is a $k$-scheme for a field $k$ of exponential
characteristic $p$. Then the natural $\otimes$-functor
\[\DA_\et(S,\Z[1/2])\to \DM_\et(S,\Z[1/2])\]
is an equivalence of categories provided $S$ is normal and universally Japanese. If $p=1$, this
holds with integer coefficients.
\item The assignment $S\mapsto \DA_\tau(S,A)$ satisfies the formalism of the six operations of
Voevodsky-Ayoub (for $\tau=\Nis, \et$).
\end{enumerate}

\subsection{Some philosophy}  It may be disturbing that one has to replace the beautiful
categories of motives based on finite correspondences by apparently uglier categories in order
to have a working theory. In fact, this is not so serious. What matters is that this theory
coincides with
$\DM_\et(-,A)$ in important cases, especially when the base is a field. In this way, the
connection with algebraic cycles is not lost. As for other bases, I like to think of some
theory $S\mapsto
\bH(S)$ extending $k\mapsto \DM_\et(k,A)$ as a black box, which is helpful for applying the six
functors formalism but whose specific description  for a general $S$ is not so important. An
example is given just below.

\subsection{Application: motives of singular varieties}\label{s.sing} Let $k$ be a field. In
\cite[\S 4.1]{V}, Voevodsky associates to any $k$-scheme  $X$  of finite type two objects
$\underline{C}_*(X)$ and $\underline{C}_*^c(X)$ of $\DM_-^\eff(k)$. These objects play the
respective r\^oles of the motive and motive with compact supports of $X$. Unfortunately,
the proof of their functoriality rests on \cite[Th. 4.1.2]{V}, whose proof uses Hironaka's
resolution of singularities in a central way, so is valid only in characteristic $0$. Under
this condition, the functorialities show that these objects belong to $\DM_\gm^\eff(k)$, by
reduction to the case of smooth projective schemes.

As I first learned from Jo\"el Riou, the six operations allow us to define such objects in
nonzero characteristic at least in the \'etale categories, by avoiding Hironaka resolution:

\begin{defn}\label{d6.1}  Let $\tau$ be the Nisnevich or the \'etale topology, and let $p_X:X\to
\Spec k$ be a
$k$-scheme of finite type. We define:
\begin{enumerate}
\item Its motive: $M(X)=\uHom((p_X)_*\Z_X,\Z)\in \DA_\tau(k)$.
\item Its motive with compact supports: $M_c(X)=\uHom((p_X)_!\Z_X,\Z)\allowbreak\in\DA_\tau(k)$.
\end{enumerate}
\end{defn}

\begin{prop}\label{p6.2} a) The motive is covariant for any morphism.\\
b) The motive with compact supports is covariant for projective morphisms and contravariant for
\'etale morphisms.\\ 
c) Let $X$ be a $k$-scheme of finite type, $i:Z\to X$ a closed immersion
and
$j:U\to X$ the complementary open immersion. Then the sequence
\[M_c(Z)\by{i_*}M_c(X)\by{j^*}M_c(U)\]
defines an exact triangle in $\DA_\tau(k)$.\\
d) Let 
\[\begin{CD}
Z'@>i'>> X'\\
@VqVV @VpVV\\
Z@>i>> X
\end{CD}\]
be an abstract alteration in the following sense: $p$ is projective, $i$ is a closed immersion,
$Z'=p^{-1}(Z)$ and 
\[r=p_{|X'-Z'}:X'-Y'\to X-Z\] 
is an \'etale (finite) morphism of degree $m$. Then the map of cones
\[\cone(M(Z')\to M(X'))\to \cone(M(Z)\to M(X))\]
is split surjective either in $\DM(k,\Z[1/m])$ or in $\DA_\et(k,\Z[1/m])$ and is an
isomorphism in $\DA_\tau(k,\Z[1/m])$ if
$m=1$. In particular, if
$m=1$ we have an exact triangle
\[M(Z')\to M(X')\oplus M(Z)\to M(X)\by{+1}.\]
e) If $X$ is projective, $M_c(X)\simeq M(X)$.\\
f) If $X$ is smooth, $M(X)$ is the classical motive of $X$.  If we work in $\DM(k)$ or $\DA_\et(k,A)$, with $2$ invertible in $A$, then
\[M_c(X)\simeq M(X)^*(-d)[-2d],\]
where $d=\dim X$.\\ 
g) Let $\sT=\DA_{\gm,\et}^\eff(k)$ or $\DM_\gm^\eff(k,\Z[1/p])$, where $p$ is the exponential
characteristic of $k$. Then $M(X),M_c(X)\in
\sT$ for any $X$.
\end{prop}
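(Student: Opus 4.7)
The strategy is to unpack Definition~\ref{d6.1} in Voevodsky--Ayoub's six-functors formalism for $S\mapsto\DA_\tau(S)$ \cite[Ch.~4]{ayoub}, combined with the comparison $\DA_\et(-,\Z[1/2p])\simeq\DM_\et(-,\Z[1/2p])$ of \cite[Th.~B.1]{ayoubreal} and the rigidity supplied by Lemma~\ref{l4.1}. Parts (a) and (b) are formal. For $f\colon X\to Y$ with $p_X=p_Y\circ f$, the unit $\Z_Y\to f_*f^*\Z_Y=f_*\Z_X$ (using $f_!=f_*$ when $f$ is projective) and the counit $f_!f^*\to\mathrm{id}$ available from $f^!=f^*$ when $f$ is \'etale produce natural maps $(p_Y)_*\Z_Y\to (p_X)_*\Z_X$, $(p_Y)_!\Z_Y\to (p_X)_!\Z_X$ and $(p_X)_!\Z_X\to (p_Y)_!\Z_Y$ respectively; applying $\uHom(-,\Z)$ reverses arrows and yields the claimed functorialities. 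For (c), apply $(p_X)_!$ to the localisation triangle $j_!\Z_U\to \Z_X\to i_*\Z_Z\by{+1}$, use $(p_X)_!j_!=(p_U)_!$ and $(p_X)_!i_*=(p_X)_!i_!=(p_Z)_!$, then dualise. Part (e) is immediate from $p_X$ proper $\Rightarrow(p_X)_*=(p_X)_!$.

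For (d), one assembles (c) for $(Z,X)$ and $(Z',X')$ into a morphism of exact triangles connected via $p_*=p_!$ (projective), and passes to the cone. Since $r=p|_{X'-Z'}$ is finite \'etale of degree $m$, the six-functors formalism furnishes a trace map $r_*r^*\to\mathrm{id}$ whose composition with the unit equals multiplication by $m$; this provides the claimed splitting after inverting $m$, and yields an isomorphism when $m=1$ because $r$ is then an isomorphism and proper base change turns the cone-map into an equivalence. Part (f) combines the smooth purity isomorphism $p_X^!\simeq p_X^*(d)[2d]$ with the adjunction $\uHom((p_X)_!F,G)\simeq(p_X)_*\uHom(F,p_X^!G)$ to compute $M_c(X)\simeq (p_X)_*\Z_X(d)[2d]$ for smooth $X$; since $(p_X)_\sharp\Z_X\simeq(p_X)_!\Z_X(d)[2d]$ is strongly dualisable in the rigid subcategory of Lemma~\ref{l4.1}, a duality argument identifies $\uHom((p_X)_*\Z_X,\Z)$ with $(p_X)_\sharp\Z_X$---the classical Voevodsky motive---and relates $M_c(X)$ to $M(X)^*$ up to the appropriate twist. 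The hypothesis that $2\in A^\times$ enters precisely to secure both the comparison $\DA_\et\simeq\DM_\et$ and the rigidity of Lemma~\ref{l4.1}.

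Finally, (g) is proved by Noetherian induction on $\dim X$. By Gabber's refinement of de Jong's alteration theorem \cite{illusie-gabber}, every $X$ admits a projective alteration $p'\colon X'\to X$ with $X'$ smooth quasi-projective and $\deg p'$ a power of the exponential characteristic, so the hypothesis of (d) is available after inverting $p$. Taking $Z\subset X$ to be a closed subscheme over whose complement $p'$ is an isomorphism (and setting $Z':=p'^{-1}(Z)$), part (d) expresses $M(X)$ as a retract of an object built from $M(X')$, $M(Z)$ and $M(Z')$; since $\dim Z,\dim Z'<\dim X$ and $X'$ is smooth (so $M(X')\in\sT$ tautologically), the induction closes. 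For $M_c(X)$ one applies the same induction using (c) in place of (d). The main obstacle is part (f): identifying the abstract object $\uHom((p_X)_*\Z_X,\Z)$ with Voevodsky's $(p_X)_\sharp\Z_X$ for smooth (non-proper) $X$ is a genuine Poincar\'e-duality statement, not a formal consequence of the axioms---it is what forces the ``$2$ invertible'' hypothesis, via Lemma~\ref{l4.1}.
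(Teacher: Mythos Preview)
Your overall strategy matches the paper's: work on the ``predual'' objects $M^*(X)=(p_X)_*\Z_X$ and $BM(X)=(p_X)_!\Z_X$, and extract everything from the six-functor identities. Parts (a), (b), (c), (e) are handled exactly as in the paper, and your induction for (g) is the paper's d\'evissage via Gabber--de Jong (one correction: over the complement of $Z$ the alteration $p'$ is finite \'etale of degree a $p$-power, not an isomorphism; your use of (d) with $m=p^a$ still goes through once $p$ is inverted).

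There is, however, a genuine slip in (d). You invoke ``(c) for $(Z,X)$ and $(Z',X')$'', but (c) is the localisation triangle for $M_c$, whereas (d) is about $M$; there is no triangle $M(U)\to M(X)\to M(Z)$ in general, so two instances of (c) cannot be assembled into the required map of cones. The paper's argument is different and uses an ingredient you do not mention: one stays on $X$, applies the functorial triangle $j_!j^!\to\mathrm{id}\to i_*i^*$ to both $\Z_X$ and $p_*\Z_{X'}$, and then invokes \emph{projective base change} ($i^*p_*\simeq q_*i'^*$, $j^*p_*\simeq r_*j'^*$) to rewrite the second triangle as $j_!r_*\Z_{U'}\to p_*\Z_{X'}\to i_*q_*\Z_{Z'}$. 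Only after this does one push forward by $(p_X)_*$ and dualise; the map of cones is then visibly $(p_X)_*j_!$ applied to the unit $\Z_U\to r_*\Z_{U'}$, and your trace argument (which is correct, and agrees with the paper's) finishes the job. Your observation about (f) is well-taken: the paper's formula $\uHom((p_X)_\#\Z_X,\Z)\simeq(p_X)_*\Z_X$ only yields $M(X)\simeq(p_X)_\#\Z_X$ after biduality, which is where rigidity and the ``$2$ invertible'' hypothesis (via \cite[Cor.~2.14]{ayoubreal}) enter.
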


\begin{proof} We shall prove the (a priori finer) dual functoriality to a) -- f) for
$M^*(X):=(p_X)_*\Z_X$ and $BM(X):=(p_X)_!\Z_X$.

a) Let $f:X\to Y$. The isomorphism of functors
\[(p_X)_*\simeq (p_Y)_*\circ f_*\]
gives an isomorphism
\[M^*(X)\simeq (p_Y)_*( f_*\Z_X).\]

We get a morphism $f^*:M^*(Y)\to M^*(X)$ from the unit morphism $\Z_Y\to f_*f^*\Z_Y =
f_*\Z_X$.

b) Same argument for the contravariance of $BM$, using the covariance of $f_!$ and the
isomorphism
$f_!\iso f_*$ for $f$ projective \cite[Sch. 1.4.2]{ayoub}. Let now $f:X\to Y$ be an
\'etale morphism. To define a covariant functoriality, we have to exhibit a canonical morphism
$f_!\Z_X\to \Z_Y$ or equivalently
$\Z_X\to f^!\Z_Y$. But since $f$ is \'etale, we have $f^!=f^*$ (ibid.) and we take the identity
morphism.

c) follows from applying $(p_X)_!$ to the exact triangle
\[j_!j^!\Z_X\to \Z_X\to i_*i^*\Z_X\by{+1}\]
from \cite[Lemma 1.4.6]{ayoub}, noting that $i^*\Z_X=\Z_Z$, $j^!\Z_X=j^*\Z_X=\Z_U$ and
$i_*=i_!$.

d) Let $j:X-Z\to X$, $j':X'-Z'\to X'$ be the complementary open immersions to $i,i'$. Applying
to
$p_*\Z_{X'}$ the exact triangle of functors used in the proof of c), we get a commutative
diagram of exact triangles
\[\begin{CD}
j_!j^!p_*\Z_{X'}@>>> p_*\Z_{X'}@>>> i_*i^*p_*\Z_{X'}@>+1>>\\
@AAA @AAA @AAA\\
j_!j^!\Z_X@>>> \Z_X@>>> i_*i^*\Z_X@>+1>>.
\end{CD}\]

By projective base change \cite[Sch. 1.4.2 5]{ayoub}, this may be rewritten as
\[\begin{CD}
j_!r_*\Z_{X'-Z'}@>>> p_*\Z_{X'}@>>> i_*q_*\Z_{Z'}@>+1>>\\
@AAA @AAA @AAA\\
j_!\Z_{X-Z}@>>> \Z_X@>>> i_*\Z_Z@>+1>>
\end{CD}\]
or
\[\begin{CD}
j_!r_*\Z_{X'-Z'}@>>> p_*\Z_{X'}@>>> p_*i'_*\Z_{Z'}@>+1>>\\
@AAA @AAA @AAA\\
j_!\Z_{X-Z}@>>> \Z_X@>>> i_*\Z_Z@>+1>>.
\end{CD}\]

Consider the unit map $a:\Z_{X-Z}\to r_*\Z_{X'-Z'}=r_!\Z_{X'-Z'}$ and
the counit map  $b:r_!\Z_{X'-Z'}\to \Z_{X-Z}$. The composition $ba$ is multiplication by $m$ in
two cases: if we push the situation in $\DM(S)$ (because $b$ is given by the transpose of the
graph of $r$) or if $\tau=\et$ (same proof as for \cite[Lemma 2.3]{ayoubreal}). If
$m=1$, $r$ is the identity in any situation. Applying
$(p_X)_*$ to the above diagram, this concludes the proof of d).

e) follows from the isomorphism $(p_X)_*\iso (p_X)_!$ (when $X$ is projective).

In f), the second statement follows from the isomorphism $(p_X)_!\iso (p_X)_\#(-d)[-2d]$, where
$(p_X)_\#$ is the left adjoint of $p_X^*$, defined because $p_X$ is smooth (\cite[Sch.
1.4.2 3]{ayoub} plus \cite[cor. 2.14]{ayoubreal}); recall that the isomorphism $(p_X)_\# \Z_X=M(X)$ is formal. The first statement
follows from the isomorphism
\[\uHom_k((p_X)_\#A,B)\iso (p_X)_*\uHom_X(A,p_X^*B)\]
from \cite[Prop. 2.3.52]{ayoub}, applied with $A=\Z_X,B=\Z$.

Finally, g) follows from the previous properties by d\'evissage using Gabber's refinement of
de Jong's theorem \cite{illusie-gabber}, by reduction to the case of smooth projective
varieties.
\end{proof}

\begin{remarks}\label{r6.1}
1) The condition ``projective" may be relaxed to ``prop\-er" by arguments involving Chow's
lemma\footnote{plus a support property \cite[2.2.5]{cis-deg}, as pointed out by the referee.}, cf. \cite[Exp. XVII, \S 7]{sga4} and \cite{cis-deg}.

2) There are other useful formulas for $M(X)$ and $M^c(X)$:
\[M(X)=f_!f^!\Z, \qquad M^c(X)=f_*f^!\Z\]
obtained by applying the Verdier duality of \cite[Vol. I, p. 435, Th. 2.3.75]{ayoub}.

3) It would remain to compare the motives $M(X)$ and $M_c(X)$ of Proposition \ref{p6.2} (or
rather their images in $\DM_\tau(k)$) with those defined by Voevodsky. We leave this as a
problem for the interested reader. 

4) This provides definitions for motivic theories associated to a $k$-scheme of finite
type $X$, in the style of
\cite[\S 9]{VF}:
\begin{description}
\item[Motivic cohomology] 
\[H^i_\tau(X,\Z(n)) = \DA_\tau(k)(M(X),\Z(n)[i]).\]
\item[Motivic cohomology with compact supports] 
\[H^i_{\tau,c}(X,\Z(n)) =\DA_\tau(k)(M_c(X),\Z(n)[i]).\]
\item[Motivic homology] 
\[H_i^\tau(X,\Z(n)) = \DA_\tau(k)(\Z(n)[i],M(X)).\]
\item[Borel-Moore motivic homology] 
\[H_i^{\tau,c}(X,\Z(n)) =\DA_\tau(k)(\Z(n)[i],M_c(X)).\]
\end{description}
It would remain to compare these (co)homology groups with those given by cdh cohomology as in
\cite{VF,V} or \'eh cohomology as in \cite{geissereh}: we leave this as another problem for the
interested reader.
\end{remarks}

\subsection{The $l$-adic realisation functor}\label{s.real}  The correct version of the target
is the category defined by Ekedahl in \cite{ek}; technical details are spelled out in Ayoub's
article
\cite[\S 5]{ayoubreal}. He gets $l$-adic realisation functors:

\begin{thm}\label{t5.1} Let $S$ be a $k$-scheme of finite type and $l\ne 2,p$, where $k$ is a
finite field of characteristic $p$. Then there exist  $\otimes$-triangulated functors
\begin{equation}\label{eq5.2}
R_l:\DA_\et(S,\Z_l)\to \hat{D}_\et(S,\Z_l)
\end{equation}
sending the ``motive" of a smooth $S$-scheme $f:X\to S$ to the dual of $Rf_*\Z_l$. We shall also write $R_l$ for its restriction to $\DA_\et(S)$.\\
These
functors commute with the $6$ operations of Grothendieck.
\end{thm}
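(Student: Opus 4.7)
The plan is to build the realisation functor in three stages: a torsion-coefficient version via rigidity, an $l$-adic assembly via Ekedahl's framework, and a verification of compatibility with the six operations.

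First I would construct, for each $\nu \ge 1$, a $\otimes$-triangulated functor
\[\DA_\et(S, \Z/l^\nu) \to D_\et(S, \Z/l^\nu),\]
where the target is the derived category of sheaves of $\Z/l^\nu$-modules on the small \'etale site of $S$. The crucial input is a relative rigidity theorem generalising \cite[Th.~6.1]{VIHES}: for the \'etale topology and $\Z/l^\nu$-coefficients with $l$ invertible on $S$, the natural functor from $\DA_\et^\eff(S, \Z/l^\nu)$ to $D_\et(S, \Z/l^\nu)$ should be an equivalence onto the appropriate subcategory, and the Tate twist corresponds to the Kummer isomorphism $\Z/l^\nu(1) \simeq \mu_{l^\nu}$. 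The reason one works with $\DA_\et$ rather than $\DM_\et$ is precisely that the locality axiom discussed in \S\ref{s.bucks}--\ref{s.buck} holds for $\DA_\et$ but is problematic for $\DM_\et$ with transfers over a general base; this allows one to reduce to the field case treated in Theorem \ref{t3.2}(b) by a d\'evissage using $i^*, j^*$.

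Second, I would pass to the $l$-adic limit. Ekedahl's $\hat{D}_\et(S, \Z_l)$ is obtained as a suitable (homotopy) limit of the categories $D_\et(S, \Z/l^\nu)$ with a Mittag--Leffler-type condition, and to lift the compatible system of finite-level functors to a single functor out of $\DA_\et(S, \Z_l)$ one must work at the model-categoric (or $\infty$-categoric) level. Concretely, one realises $\DA_\et(S, \Z_l)$ as symmetric $T$-spectra of $\Z_l$-modules in simplicial \'etale sheaves and constructs \eqref{eq5.2} as a derived functor of an explicit point-set model whose levelwise reduction mod $l^\nu$ recovers the construction of step one. That $R_l(M(X)) = (Rf_*\Z_l)^\vee$ for $f \colon X \to S$ smooth is then a direct computation: in $\DA_\et(S)$ one has $M(X) = f_\# \Z_X$, and by the smooth projection formula together with Verdier duality $f_! \simeq f_\#(d)[2d]$ and $(Rf_*)^\vee \simeq f_! \Z_l^\vee$ at the $l$-adic target, the values match.

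Third, I would deduce compatibility with the six operations. Both source and target assignments satisfy Ayoub's axioms of \cite[1.4.1]{ayoub}; in particular, both satisfy locality (for $\hat{D}_\et$ this is classical SGA 4; for $\DA_\et$ this is \cite[Cor.~4.5.44]{ayoub}). The realisation is, by construction, a morphism of such stable homotopical 2-functors compatible with $f^*$ and $\otimes$. Compatibility with $f_\#$ for smooth $f$ is built in, and then compatibility with $f_*, f_!, f^!, \uHom$ follows formally by the adjunction and base-change machinery of \cite[Ch.~1]{ayoub}.

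The main obstacle is unquestionably the relative rigidity statement underlying step one: proving that, for $S$ a reasonable base and $l$ invertible on $S$, the \'etale motivic $\Z/l^\nu$-coefficient category agrees with the classical derived category of torsion sheaves requires a genuine extension of Suslin--Voevodsky's theorem, and this is where the hypotheses $l \ne p$ and $l \ne 2$ enter (the latter coming from Ayoub's \cite[Th.~B.1]{ayoubreal} comparison, which is needed to freely interchange $\DA_\et$ and $\DM_\et$ with $\Z[1/2]$-coefficients when identifying the source of $R_l$ with $\DA_\et(S)$).
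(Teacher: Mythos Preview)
Your outline is essentially correct and matches the strategy Ayoub carries out in \cite{ayoubreal}, which is precisely what the paper cites: the paper's own proof consists entirely of references to \cite[Prop.~5.8]{ayoubreal} for the construction of \eqref{eq5.2}, \cite[Th.~6.6 and Prop.~6.7]{ayoubreal} for the six-operations compatibility, and \cite[Prop.~5.9]{ayoubreal} for the value on $M(X)$. So you have supplied considerably more detail than the paper does, and your three-stage decomposition (finite-level rigidity, $l$-adic assembly, formal verification via Ayoub's axiomatics) is faithful to how the cited results are proved.

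One small point: your explanation of the hypothesis $l\ne 2$ is not quite the right one for \emph{this} theorem. Theorem~\ref{t5.1} is stated and proved entirely for $\DA_\et$, so the $\DA_\et/\DM_\et$ comparison of \cite[Th.~B.1]{ayoubreal} is not what forces $l\ne 2$ here; rather, that comparison is invoked later in the paper (see the footnote at the start of \S\ref{s.rel} and the discussion after) when one wants to translate statements back to $\DM_\et(\F)$. The restriction $l\ne 2$ in Ayoub's construction itself stems from technical hypotheses in \cite{ayoubreal} on the \'etale realisation (related to orientability and the treatment of the Tate twist). This does not affect the correctness of your outline.
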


\begin{proof} See  \cite[Prop. 5.8]{ayoubreal} for \eqref{eq5.2}, \emph{ibid.}, Th. 6.6 and
Prop. 6.7 for the compatibility with the 6 operations and \emph{ibid.}, Prop. 5.9 for the value
of $R_l$ on the motive of $X$.
\end{proof}

\subsection{The motivic version of the $l$-adic cycle class map} Assume that $S=\Spec k$ in
Theorem
\ref{t5.1}. For a smooth
$k$-scheme
$f:X\to
\Spec k$,
$R_l$ induces homomorphisms
\begin{multline}\label{eq5.1}
H^i_\et(X,\Z(n)) = \DM_\et(k)(M(X),\Z(n)[i])\\
\by{R_l} \hat{D}_\et(k,\Z_l)(R_lM(X),R_l\Z(n)[i]) = \hat{D}_\et(k,\Z_l)(\Z_l,Rf_*\Z_l(n)[i])\\
=H^i_\cont(X,\Z_l(n)).
\end{multline}

Here the equality on the first line follows from Theorem \ref{t3.3} a) and the full
faithfulness of the functor $d_\et$ in \eqref{eq4.1}. The equality on the second line uses the
fact that $Rf_*\Z_l$ is a strongly dualisable object: this is a formal consequence of the fact
that $R_l$ is a (strict) $\otimes$-functor and that $M(X)$ is strongly dualisable in
$\DM_\et(k)$ (Lemma \ref{l4.1}).

One can check that \eqref{eq3.2} is induced by \eqref{eq5.1} when $k$ is finite.

We get similar cycle class maps for the (co)homology theories considered in Remark \ref{r6.1}
4).

However, as said in \S \ref{s.naive}, these maps are insufficient to get a good reformulation of Friedlander's conjecture: we shall have to wait until \S \ref{s.rel} to get the latter.

\section{Effectivity and spectra}\label{s.efsp}

This section is very formal and would work in a wider generality; it will be used
in the proof of Corollary \ref{c9.1}. 

\subsection{Compacts, duals and Brown representability} In this subsection I review some now
basic facts about ``large" triangulated categories. First some terminology from \cite{neeman2}:

\begin{defn} Let $\sS$ be a triangulated category.\\
a) An object $X\in \sS$ is \emph{compact} if the functor $\sS(X,-)$ commutes with representable
direct sums.\\
b) A subcategory of $\sS$ is \emph{localising} if it is full, triangulated and stable
under representable direct sums.\\
c) A set of objects $\sX$ of $\sS$ is \emph{dense in $\sS$} if 
\[\sX^\perp:=\{Y\in \sS\mid \sS(X,Y[i])=0\ \forall X\in\sX\ \forall i\in\Z\}\]
is reduced to $0$.\\
d) The category $\sS$ \emph{satisfies TR5} if small direct sums are representable in $\sS$, and
define triangulated functors.\\
e) The category $\sS$ is \emph{compactly generated} if it satisfies TR5 and there exists a dense
set of compact objects.
\end{defn}

Clearly, the full subcategory $\sS^c\subseteq \sS$ consisting of compact objects is triangulated
and \emph{thick}, i.e., closed under direct summands.

The following is Neeman's version of Brown's representability theorem (a special case
of \cite[Prop. 8.4.2]{neeman2}):

\begin{thm}\label{tBrown} Let $\sS$ be a compactly generated triangulated category. Let
$H:\sS^{op}\to
\Ab$ be a cohomological functor. Then $H$ is representable if and only if it converts
small direct sums into small products.
\end{thm}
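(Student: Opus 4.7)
The plan is to follow the standard Brown--Adams argument as adapted by Neeman to compactly generated triangulated categories. One direction is formal: if $H\simeq \sS(-,Y)$, then $H$ converts direct sums into products since in any category with coproducts $\sS(\bigoplus X_i,Y)=\prod\sS(X_i,Y)$. So I focus on constructing a representing object $Y$ under the assumption that $H$ sends direct sums to products.

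First I choose a set $\{G_\alpha\}$ of compact objects which is dense in $\sS$ and closed under shifts (this exists by hypothesis). I will build $Y$ as the homotopy colimit of an inductively defined tower $Y_0\to Y_1\to\cdots$ equipped with compatible elements $\xi_n\in H(Y_n)$; by Yoneda and the direct-sum-to-product property, these give natural transformations $\phi_n\colon\sS(-,Y_n)\to H$. Set $Y_0=\bigoplus_{\alpha}\bigoplus_{x\in H(G_\alpha)} G_\alpha$; the tautological element of $H(Y_0)=\prod_{\alpha,x}H(G_\alpha)$ defines $\xi_0$, and by construction $\phi_0$ is surjective on every $G_\alpha$. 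Inductively, given $(Y_n,\xi_n)$, let $K_n(G_\alpha)=\ker\bigl(\phi_n\colon\sS(G_\alpha,Y_n)\to H(G_\alpha)\bigr)$ and form the exact triangle $Z_n\to Y_n\to Y_{n+1}\to Z_n[1]$, where $Z_n=\bigoplus_{\alpha,\,f\in K_n(G_\alpha)}G_\alpha$ with its tautological map into $Y_n$. Since each component $f$ of $Z_n\to Y_n$ satisfies $H(f)(\xi_n)=0$, the map $H(Y_n)\to H(Z_n)$ sends $\xi_n$ to $0$, so by the long exact sequence of $H$ applied to the triangle, $\xi_n$ lifts to some $\xi_{n+1}\in H(Y_{n+1})$.

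The heart of the argument is to show that $\phi_X\colon\sS(X,Y)\to H(X)$ is an isomorphism for $X$ any generator $G_\alpha$, where $Y=\hocolim Y_n$. Because $G_\alpha$ is compact, applying $\sS(G_\alpha,-)$ to the defining triangle $\bigoplus Y_n\xrightarrow{1-\mathrm{shift}}\bigoplus Y_n\to Y$ yields
\[
\sS(G_\alpha,Y)\simeq \colim_n\sS(G_\alpha,Y_n),
\]
the obstruction term vanishing because $1-\mathrm{shift}$ is injective on $\N$-indexed direct sums. On the other hand, $\phi_n|_{G_\alpha}$ is surjective for every $n$ (it factors $\phi_0$, which is surjective by construction), while every element of $\ker(\phi_n|_{G_\alpha})$ maps to $0$ in $\sS(G_\alpha,Y_{n+1})$ by construction. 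So the transition maps in the tower are surjective onto $H(G_\alpha)$ with kernels successively killed, whence the direct limit identifies with $H(G_\alpha)$ under $\phi$.

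Finally, the full subcategory of objects $X\in\sS$ such that $\phi_X$ is an isomorphism is closed under shifts and exact triangles (five-lemma, using that $H$ is cohomological) and under arbitrary direct sums (both $\sS(-,Y)$ and $H$ convert them to products). It is therefore a localising subcategory containing the dense family $\{G_\alpha\}$, and so equals $\sS$. I expect the main obstacle to be the colimit computation just above: one must carefully invoke the Milnor-type triangle, use compactness of $G_\alpha$ to commute $\sS(G_\alpha,-)$ with the coproducts, and track that the successive killing of kernels makes the direct limit land isomorphically on $H(G_\alpha)$. Everything else is a formal density argument.
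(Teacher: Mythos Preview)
The paper does not actually prove this statement: it simply records it as ``a special case of \cite[Prop.~8.4.2]{neeman2}'' and moves on. Your proposal is precisely the Brown--Adams--Neeman argument that underlies that citation, so in spirit you are reproducing what the paper defers to.

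Your outline is essentially correct, but one step is elided and deserves to be made explicit. You construct compatible elements $\xi_n\in H(Y_n)$ and then speak of $\phi_X\colon\sS(X,Y)\to H(X)$ without saying how $\phi$ is defined on $Y=\hocolim Y_n$. You need an element $\xi\in H(Y)$ restricting to each $\xi_n$. This comes from applying the cohomological functor $H$ to the Milnor triangle: since $H$ takes direct sums to products, $H(\bigoplus Y_n)=\prod H(Y_n)$, and the compatible family $(\xi_n)$ lies in the kernel of $H(1-s)$, hence lifts (non-uniquely) to some $\xi\in H(Y)$. Only after fixing such a $\xi$ does the comparison $\sS(G_\alpha,Y)\simeq\colim_n\sS(G_\alpha,Y_n)\to H(G_\alpha)$ in your ``heart of the argument'' paragraph make sense and agree with $\phi_{G_\alpha}$. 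This is routine, but without it the map $\phi$ is undefined.

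A second point worth one sentence of justification: your final density argument uses that a localising subcategory containing a dense set of compact objects is all of $\sS$. This is true, but it is itself a nontrivial consequence of the same circle of ideas (build a cellular approximation to any $X$ out of the generators; its cone is right-orthogonal to the generators, hence zero). With these two points filled in, your proof is complete and matches Neeman's.
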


\begin{cor}\label{cBrown} Let $\sS$ be as in Theorem \ref{tBrown}, and let $f$ be a triangulated
functor to another triangulated category $\sT$. Then $f$ has a right adjoint if and only if it
commutes with small direct sums. This right adjoint is triangulated.
\end{cor}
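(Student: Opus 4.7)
The plan is to apply Theorem \ref{tBrown} pointwise to construct $g$, promote it to a functor by Yoneda, and finally equip it with a triangulated structure. The forward direction is automatic: any left adjoint preserves small direct sums in its source.

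For the converse, fix $Y\in\sT$ and form $H_Y=\sT(f(-),Y):\sS^{op}\to\Ab$. This functor is cohomological because $f$ is triangulated and $\sT(-,Y)$ is cohomological, and it converts small direct sums into products because $f$ preserves small direct sums and $\sT(\bigoplus_i-,Y)=\prod_i\sT(-,Y)$. Theorem \ref{tBrown} then yields an object $g(Y)\in\sS$ with a natural isomorphism $\sS(-,g(Y))\iso H_Y$. By Yoneda, $Y\mapsto g(Y)$ extends uniquely to a functor right adjoint to $f$, and $g$ is automatically additive since both sides of the adjunction are abelian groups.

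To show $g$ is triangulated, first check via Yoneda that $g$ commutes with the shift, using $\sS(X,g(Y[1]))\simeq\sT(f(X),Y[1])\simeq\sT(f(X[-1]),Y)\simeq\sS(X[-1],g(Y))\simeq\sS(X,g(Y)[1])$. Given a distinguished triangle $Y'\to Y\to Y''\by{+1}$ in $\sT$, complete $g(Y')\to g(Y)$ to a distinguished triangle $g(Y')\to g(Y)\to C\by{+1}$ in $\sS$. Since $f$ is triangulated, the image $fg(Y')\to fg(Y)\to f(C)\by{+1}$ is distinguished, and the counit $\epsilon:fg\to\mathrm{id}$ provides the first two edges of a morphism to $Y'\to Y\to Y''\by{+1}$; the triangulated axioms supply a fill-in $f(C)\to Y''$, whose adjoint $\phi:C\to g(Y'')$ is readily checked, using naturality of the adjunction and the shift compatibility, to fit into a morphism from the distinguished triangle above to the candidate triangle $g(Y')\to g(Y)\to g(Y'')\by{+1}$ obtained by applying $g$ to each edge of the original triangle. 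Applying $\sS(X,-)$ and identifying the second sequence, via the adjunction, with $\sT(f(X),-)$ applied to the original distinguished triangle, both become long exact sequences, and the induced morphism is the identity on all positions except $\sS(X,C)\to\sS(X,g(Y''))$. The five lemma then forces this map to be an isomorphism for every $X$, hence $\phi$ is an isomorphism by Yoneda, and the candidate triangle is distinguished.

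The main obstacle is this final step. Constructing $g$ and its adjunction is a formal consequence of Brown representability, but producing a triangulated structure on $g$ requires the nontrivial ingredients of the fill-in $\phi$ (via the counit and the triangle axioms) and a careful five-lemma comparison of long exact sequences to promote it to an isomorphism.
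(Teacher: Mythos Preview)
Your argument is correct and is precisely the standard proof one would give; the paper itself leaves this corollary as an exercise for the reader, so there is nothing to compare against beyond noting that your write-up supplies exactly the expected details. The one place worth a word of reassurance is the commutativity of the final square $C\to g(Y')[1]$ versus $C\by{\phi} g(Y'')\to g(Y')[1]$: this does follow, as you say, from naturality of the counit together with the second commuting square furnished by TR3 in $\sT$, and taking adjoints; it is not entirely ``readily checked'' but it is routine once written out.
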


To deduce Corollary \ref{cBrown} from Theorem \ref{tBrown} is left to the reader as an exercise.

We also note the following useful lemma of Ayoub:

\begin{lemma}[\protect{\cite[Lemma 2.1.28]{ayoub}}]\label{lAyoub} Let $\sS,\sT,f$ be as
in Corollary
\ref{cBrown}, and assume that $f$ has a right adjoint $g$. Then $g$ commutes with small direct
sums if and only if $f$ preserves compact objects. 
\end{lemma}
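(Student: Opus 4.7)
The proof is a standard adjunction-plus-compact-generation argument; I outline it without dwelling on routine manipulations.

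First I would handle the easy direction: assume $g$ commutes with small direct sums, and let $X\in\sS^c$ be compact. For any family $(Y_i)_{i\in I}$ in $\sT$, compute
\[\sT\bigl(f(X),\bigoplus_i Y_i\bigr) \;\simeq\; \sS\bigl(X,g(\textstyle\bigoplus_i Y_i)\bigr) \;\simeq\; \sS\bigl(X,\textstyle\bigoplus_i g(Y_i)\bigr) \;\simeq\; \bigoplus_i \sS(X,g(Y_i)) \;\simeq\; \bigoplus_i \sT(f(X),Y_i),\]
using adjunction, the hypothesis on $g$, compactness of $X$, and adjunction again. Hence $f(X)$ is compact.

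For the converse, assume $f$ preserves compact objects and fix a family $(Y_i)$ in $\sT$. The universal property gives a canonical morphism
\[\phi\colon \bigoplus_i g(Y_i)\longrightarrow g\bigl(\textstyle\bigoplus_i Y_i\bigr)\]
in $\sS$, and I want to see that $\phi$ is an isomorphism. Let $C=\cone(\phi)$. For any compact $X\in\sS^c$, the same chain of identifications as above — adjunction, compactness of $X$ on the left side, compactness of $f(X)$ on the right side, and adjunction — shows that the induced map $\sS(X,\bigoplus_i g(Y_i))\to \sS(X,g(\bigoplus_i Y_i))$ is an isomorphism for every shift. Consequently $\sS(X,C[n])=0$ for all compact $X$ and all $n\in\Z$.

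Finally I would invoke compact generation of $\sS$: the set $\sS^c$ is dense in $\sS$, so $C\in (\sS^c)^\perp=\{0\}$, whence $\phi$ is an isomorphism. There is no real obstacle here; the only thing to keep in mind is to apply the density property with all shifts, which is why I extract the hypothesis ``dense'' in the form $\sS^c$ being a generating set of compacts rather than merely a weak generator.
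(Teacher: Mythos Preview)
Your argument is correct and is the standard proof of this fact. Note, however, that the paper does not supply its own proof: the lemma is simply quoted from \cite[Lemma 2.1.28]{ayoub} and used as a black box. So there is nothing in the paper to compare against beyond the citation; your write-up is exactly the kind of routine adjunction-and-density argument one finds in Ayoub or Neeman, and it goes through without issue.
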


Finally, if $\sS$ is a tensor triangulated category \cite[Def. 2.1.148]{ayoub}, we have
the notion of strongly dualisable object, see Lemma \ref{l4.1} and its proof.

\begin{lemma}\label{ldual-compact} Let $\sS$ be a tensor triangulated category, with unit object
$\un$. Then strongly dualisable objects are compact if and only if $\un$ is compact.
\end{lemma}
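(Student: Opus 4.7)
The assertion is a biconditional, and one direction is essentially trivial: the unit $\un$ is itself strongly dualisable (with dual $\un$, and $\eta=\epsilon=\mathrm{id}$), so if every strongly dualisable object is compact then in particular $\un$ is. The substance lies in the converse.

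Assume $\un$ is compact and let $M\in\sS$ be strongly dualisable, with dual data $(M^*,\eta,\epsilon)$. The plan is to reduce $\sS(M,-)$ to $\sS(\un,-)$ via the standard adjunction trick for dualisable objects. Namely, the triangle identities for $(M,M^*,\eta,\epsilon)$ formally imply that $(-)\otimes M$ is left adjoint to $(-)\otimes M^*$, so one obtains a natural isomorphism
\[
\sS(M,Y)\;\cong\;\sS(\un,M^*\otimes Y)\qquad (Y\in\sS),
\]
the unit and counit being induced in the obvious way from $\eta$ and $\epsilon$.

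Now given a small family $(Y_i)_{i\in I}$ whose direct sum exists in $\sS$, I want to chain together
\[
\sS(M,\bigoplus_i Y_i)\cong \sS(\un,M^*\otimes\bigoplus_i Y_i)\cong \sS(\un,\bigoplus_i M^*\otimes Y_i)\cong \bigoplus_i\sS(\un,M^*\otimes Y_i)\cong \bigoplus_i\sS(M,Y_i).
\]
The first and last isomorphisms are the duality adjunction just recalled. The second uses that $M^*\otimes(-)$ commutes with small direct sums: this holds because $M^*\otimes(-)$ is itself a left adjoint (its right adjoint is $M\otimes(-)$, again by the duality data), and left adjoints preserve colimits. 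The third isomorphism is precisely the compactness of $\un$.

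The potential subtlety — the only place where anything beyond formal nonsense is used — is the commutation $M^*\otimes\bigoplus\simeq\bigoplus M^*\otimes$; but once one has identified $M^*\otimes(-)$ as a left adjoint via the same duality data, this is automatic and no further assumption on $\sS$ (such as TR5 or compact generation) is required beyond the existence of the direct sum in question. This completes the argument.
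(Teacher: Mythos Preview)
Your proof is correct and follows the same approach as the paper: the key identity $\sS(M,Y)\simeq\sS(\un,M^*\otimes Y)$ reduces compactness of $M$ to that of $\un$. The paper's proof is a one-line hint recording only this isomorphism, whereas you spell out the trivial direction and justify the commutation of $M^*\otimes(-)$ with direct sums via the adjunction, but the substance is identical.
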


\begin{proof} If $X,Y\in \sS$ with $X$ strongly dualisable, then
\[\sS(X,Y)\simeq \sS(\un, X^*\otimes Y)\]
where $X^*$ is the dual of $X$.
\end{proof}

\subsection{The $\hocolim$ construction} Let $\sS$ be a triangulated category in
which countable direct sums are representable. In \cite{bn}, B\"okstedt and Neeman observed
that one can perform the hocolim construction, up to non unique isomorphism: given a sequence
\begin{equation}\label{eqA.6}
\dots\by{f_{n-1}} X_n \by{f_n} X_{n+1}\by{f_{n+1}}\dots 
\end{equation}
in $\sS$, define $\hocolim(X_n,f_n)$ as a cone of the morphism
\[\bigoplus_{n\in \Z} X_n\by{1-f_\cdot}\bigoplus_{n\in \Z} X_n.\]

If $Y$ is a compact object of $\sS$, one easily constructs an isomorphism
\begin{equation}\label{eq7.5}
\colim \sS(Y,X_n)\iso \sS(Y,\hocolim X_n).
\end{equation}

\subsection{$\Z(1)$-spectra} Let $S$ be a Noetherian scheme and $A$ be a commutative ring. Write
$cd(S,A)$ for the \'etale cohomological dimension of $S$ relatively to sheaves of $A$-modules. 
We need:

\begin{prop}\label{p6.1} Suppose that $cd(S,A)<\infty$.\\ 
a) The category $\DA_\et^\eff(S,A)$ is compactly generated.\\
 b) The ``suspension spectrum" functor $i:\DA_\et^\eff(S,A)\to \DA_\et(S,A)$
has a right adjoint $M\mapsto M^\eff$.\\
c) The category $\DA_\et^\eff(S,A)$ is closed, with internal Hom given by
\[\uHom^\eff(M,N)=\uHom(i(M),i(N))^\eff\]
where $\uHom$ is the internal Hom of $\DA_\et(S,A)$.
\end{prop}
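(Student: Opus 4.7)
The three parts are in decreasing order of substance: (a) carries the real work tied to the hypothesis on $cd(S,A)$, while (b) and (c) are then essentially formal consequences of Brown representability and adjunction chasing.

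For (a), I would exhibit the motives $M(X)$ for $X\in \Sm(S)$ of finite type (with their Tate twists and shifts) as a dense family of compact generators. Density is built into the construction of $\DA_\et^\eff(S,A)$ as an $\A^1$--\'etale-localisation of the derived category of $A$-linear presheaves on $\Sm(S)$. Compactness of $M(X)$ amounts to the fact that $\bH^0_\et(X,-)$ commutes with small direct sums; this is where $cd(S,A)<\infty$ enters, since it guarantees that \'etale cohomology commutes with filtered colimits via a standard bounded-cohomological-dimension argument. Stabilising to $\Z(1)$-spectra preserves compactness because direct sums in the spectrum category are computed levelwise in an appropriate model.

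For (b), I would appeal to Corollary \ref{cBrown}: since $\DA_\et^\eff(S,A)$ is compactly generated by (a), the functor $i$ admits a right adjoint if and only if it preserves small direct sums. Concretely, $i$ sends $M$ to the $\Z(1)$-spectrum whose $n$-th term is $M\otimes\Z(n)$ with identity bonding maps; since tensoring with $\Z(n)$ is a left adjoint and direct sums are levelwise, $i$ manifestly commutes with them. The adjoint $M\mapsto M^\eff$ is then automatically triangulated, again by Corollary \ref{cBrown}.

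For (c), two steps. First, existence: the contravariant functor $P\mapsto \DA_\et^\eff(P\otimes M, N)$ converts direct sums to products (as $\otimes$ preserves direct sums in each variable), and is thus representable by Theorem \ref{tBrown}, giving an internal Hom $\uHom^\eff(M,N)$. Second, the explicit formula: chase adjunctions
\[
\DA_\et^\eff\bigl(P, \uHom(iM,iN)^\eff\bigr) \cong \DA_\et\bigl(iP, \uHom(iM,iN)\bigr) \cong \DA_\et\bigl(iP\otimes iM, iN\bigr) \cong \DA_\et\bigl(i(P\otimes M), iN\bigr),
\]
using (b), the tensor--Hom adjunction in $\DA_\et(S,A)$, and the fact that $i$ is a strict $\otimes$-functor. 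Comparing with the defining property of $\uHom^\eff(M,N)$ and invoking that $i$ is fully faithful (the \'etale cancellation input, cf. Ayoub) identifies the two objects via Yoneda. The main obstacle throughout is step (a): controlling compactness of $M(X)$ in the \'etale setting (not automatic as in the Nisnevich case) and verifying that it survives the passage to $\Z(1)$-spectra --- precisely where the hypothesis $cd(S,A)<\infty$ is indispensable.
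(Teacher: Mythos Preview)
Your argument is essentially the paper's, with more detail filled in. For (a) you correctly identify compactness of $M(X)$ as the crux and the role of $cd(S,A)<\infty$; the paper argues identically (finite cohomological dimension of each $X/S$, reduction to single-sheaf cohomology via hypercohomology spectral sequences, then commutation of \'etale cohomology with filtered colimits). Your aside about ``stabilising to $\Z(1)$-spectra'' is misplaced here: part (a) concerns only the effective category, where no spectra are involved; likewise, Tate twists are unnecessary among the generators since $\Z(n)$ is already a summand of a motive $M(Y)$. For (b) you and the paper both invoke Brown representability via Corollary~\ref{cBrown}.

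For (c) the paper writes only ``follows formally''; your unpacking is correct, but note a subtlety you brush past. Your derivation of the \emph{formula} $\uHom^\eff(M,N)=\uHom(iM,iN)^\eff$ (as opposed to mere existence of an internal Hom, which is Brown representability) genuinely requires $i$ to be fully faithful: your adjunction chain terminates in $\DA_\et(i(P\otimes M),iN)$, and identifying this with $\DA_\et^\eff(P\otimes M,N)$ is exactly full faithfulness. You cite this as ``the \'etale cancellation input, cf.\ Ayoub'', but be aware that the paper itself later expresses uncertainty about full faithfulness of $i$ over a general base (see the parenthetical after Proposition~\ref{p7.3}). Over a perfect field this is fine; in the stated generality of Proposition~\ref{p6.1} one should either restrict accordingly or read (c) as asserting closedness (via Brown) together with a formula that is verified whenever cancellation holds.
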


\begin{proof} a) If $cd(S,A)<\infty$, then $cd(X,A)<\infty$ for any $X/S$ of finite type
\cite[Exp. X]{sga4}. For such $X$,
\[\DA_\et^\eff(S,A)(M(X),C)\simeq H^0_\et(X,C)\]
for any $C\in \DA_\et^\eff(S,A)$ viewed as a complex of \'etale sheaves. To see that $M(X)$ is
compact, one reduces to the cohomology of a single sheaf via hypercohomology spectral sequences,
and then one uses that \'etale cohomology commutes with filtering direct limits of sheaves. Clearly, the set of $M(X)$'s\footnote{more correctly, $X$ running through a set of
representatives of isomorphism classes of smooth $S$-schemes} is dense in
$\DA_\et^\eff(k,A)$. Axiom TR5 is also clear in terms of complexes
of sheaves (this is where the unboundedness is important).

b) follows from a) and Theorem
\ref{tBrown}, or directly from the construction of $\DA_\et(S)$ as a category of spectra. To apply it, we use the fact that $i$ commutes with small direct sums. c)
follows formally.
\end{proof}

In $\DA_\et^\eff(S)$, we have the object $\Z(1)$ which may be defined as $\pi^*\Z(1)$ for $\pi:S\to \Spec k$, or alternately as a shifted direct summand of the motive of $\P^1_S$. For an object $M\in \DA_\et^\eff(S)$, we write as usual $M(1):=M\otimes \Z(1)$. 

We may consider ``$\Z(1)$-spectra" in $\DA_\et^\eff(S)$: these are systems 
\[(M_n,\phi_n)_{n\in\Z}, \quad \phi_n:M_n(1)\to M_{n+1}.\]

Of course, one could also consider $\Z(1)[i]$-spectra for some $i\in\Z$: examples which may be
relevant are $i=1$ ($\Z(1)[1]\simeq \G_m[0]$) and $i=2$ $\Z(1)[2]\in \DM_\gm^\eff(k)$ is the
image of the Lefschetz motive). This does not make any difference in practice.

Let $M\in \DA_\et(S)$: we may associate to it the $\Z(1)$-spectrum
\[M_n=(M(n))^\eff.\]

The transition morphisms 
\begin{equation}\label{eq7.0}
\phi_n:M_n(1)\to M_{n+1}
\end{equation}
are obtained by twisting once the counit morphisms
$i M_n\to M(n)$ and using that $i$ is a monoidal functor.

\begin{lemma}\label{l7.2} The morphisms \eqref{eq7.0} induce isomorphisms
\[M_n\iso \uHom^\eff(\Z(1),M_{n+1}).
\]
 In other words, the spectrum $(M_n)$ is an \emph{$\Omega$-spectrum}.
\end{lemma}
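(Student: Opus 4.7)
The plan is to show that the adjoint morphism $\tilde\phi_n:M_n\to \uHom^\eff(\Z(1),M_{n+1})$ of $\phi_n$ (under the $\otimes/\uHom^\eff$-adjunction of Proposition \ref{p6.1} c)) is an isomorphism, by proving that it induces a bijection on $\Hom(N,-)$ for every $N\in\DA_\et^\eff(S)$ and invoking Yoneda.

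First, I would chain the adjunctions to rewrite both sides. Using Proposition \ref{p6.1} c) and the fact that $i$ is monoidal with $i\Z(1)=\Z(1)$, together with the adjunction $i\dashv(-)^\eff$ and $M_{n+1}=M(n+1)^\eff$, I get
\[
\Hom(N,\uHom^\eff(\Z(1),M_{n+1}))=\Hom(N\otimes\Z(1),M_{n+1})=\Hom(iN\otimes\Z(1),M(n+1)).
\]
Similarly,
\[
\Hom(N,M_n)=\Hom(N,M(n)^\eff)=\Hom(iN,M(n)).
\]

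Next, I would trace the definitions to identify the map $\tilde\phi_n$ induces between these two groups. By construction $\phi_n$ is the adjoint of $\epsilon_n\otimes 1_{\Z(1)}:iM_n\otimes\Z(1)\to M(n)\otimes\Z(1)=M(n+1)$, where $\epsilon_n:iM_n\to M(n)$ is the counit. For $f:N\to M_n$ with image $g=\epsilon_n\circ if:iN\to M(n)$, the image of $f$ under $\tilde\phi_{n,*}$ corresponds, after both adjunctions, to $(\epsilon_n\otimes 1)\circ(if\otimes 1)=g\otimes 1_{\Z(1)}$. Thus the induced map
\[
\Hom(iN,M(n))\longrightarrow \Hom(iN\otimes\Z(1),M(n)\otimes\Z(1))
\]
is simply $g\mapsto g\otimes 1_{\Z(1)}$.

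Finally, since $\DA_\et(S)$ is the $\Z(1)$-stabilised category, the Tate object $\Z(1)$ is $\otimes$-invertible there, so $-\otimes\Z(1):\DA_\et(S)\to\DA_\et(S)$ is an equivalence of triangulated categories. Hence $g\mapsto g\otimes 1_{\Z(1)}$ is a bijection for every $N$, and Yoneda yields the claim. The only mildly delicate step is the diagram chase identifying the induced map with tensoring by $\Z(1)$; I would expect this to be the main (but routine) obstacle, as it requires one to unwind the definitions of $\phi_n$, the counit $\epsilon_n$, and the two adjunctions simultaneously.
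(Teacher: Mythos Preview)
Your proof is correct and is precisely the ``adjunction game'' the paper alludes to in its one-line proof: you have simply made explicit the chain of adjunctions ($\otimes\dashv\uHom^\eff$ and $i\dashv(-)^\eff$) and identified the resulting map as $-\otimes 1_{\Z(1)}$, then invoked the invertibility of $\Z(1)$ in $\DA_\et(S)$, which is exactly what the paper means.
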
 

\begin{proof} By an adjunction game, this follows from the fact that $\Z(1)$ is invertible in
$\DA_\et(S)$.
\end{proof}

\subsection{The case of a field} We now assume $S=\Spec k$, where $k$ is a perfect field of
finite \'etale cohomological dimension. 

Let us
apply the hocolim construction to
$X_n=iM_n(-n)\in
\DA_\et(k)$: the transition morphisms
$M_n(1)\to M_{n+1}$ induce morphisms $f_n:X_n\to X_{n+1}$. These morphisms are compatible with
the untwisted counit morphisms $X_n\to M$, hence induce a morphism
\begin{equation}\label{eq7.4}
\hocolim iM_n(-n)\to M.
\end{equation}

\begin{prop} \label{p7.3} Suppose that $S=\Spec k$. Then,  for any $M\in \DA_\et(k)=\DM_\et(k)$,
\eqref{eq7.4} is an isomorphism.
\end{prop}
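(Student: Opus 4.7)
The plan is to apply a Brown-representability style argument: since $\DA_\et(k)$ is compactly generated (it is $\DA_\et^\eff(k)$ stabilised along the $\otimes$-invertible object $\Z(1)$), it suffices to check that \eqref{eq7.4} induces an isomorphism on $\Hom(Y,-)$ for every $Y$ in a dense set of compact objects. A convenient such set is
\[\mathcal{G} = \{iM(X)(-m)[j] \mid X \in \Sm(k),\ m \ge 0,\ j \in \Z\}.\]
Each $iM(X)$ is compact by Proposition \ref{p6.1} and Lemma \ref{lAyoub} (its right adjoint $(-)^\eff$ commutes with direct sums by construction of $\DA_\et(k)$ as a category of $\Z(1)$-spectra), and $\Z(1)$-twists preserve compactness; density of $\mathcal{G}$ is likewise built into the spectrum description.

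Next, fix $Y = iN(-m) \in \mathcal{G}$ with $N = M(X)$. By \eqref{eq7.5} one has
\[\Hom(Y,\hocolim iM_n(-n)) \simeq \colim_n \Hom(Y, iM_n(-n)).\]
I would then chain identifications on the $n$-th term (for $n \ge m$): by $\otimes$-invertibility of $\Z(1)$ in $\DA_\et(k)$,
\[\Hom(iN(-m), iM_n(-n)) = \Hom(i(N(n-m)), iM_n);\]
by the adjunction $i \dashv (-)^\eff$ together with full faithfulness of $i$ (the analogue of Voevodsky's cancellation theorem, valid over a perfect field for $\DA_\et$ as for $\DM_\et$),
\[\Hom(i(N(n-m)), iM_n) = \Hom_{\DA_\et^\eff(k)}(N(n-m), M_n);\]
and since $M_n = (M(n))^\eff$, the adjunction in the other direction plus $\Z(1)$-invertibility give
\[\Hom_{\DA_\et^\eff(k)}(N(n-m), (M(n))^\eff) = \Hom(i(N(n-m)), M(n)) = \Hom(iN, M(m)) = \Hom(Y, M).\]
Unwinding the definitions shows that the composite is induced by the counit $\epsilon_n: iM_n \to M(n)$ twisted by $-n$, i.e. by the canonical map $iM_n(-n) \to M$.

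Hence for every $n \ge m$ the map $\Hom(Y, iM_n(-n)) \to \Hom(Y, M)$ is already an isomorphism, so both the transition maps of the colimit and the map to $\Hom(Y,M)$ are isomorphisms; passing to the colimit yields a canonical isomorphism $\colim_n \Hom(Y, X_n) \iso \Hom(Y, M)$ compatible with \eqref{eq7.4}. As $Y$ varies over the dense family $\mathcal{G}$, the cone of \eqref{eq7.4} has trivial $\Hom$ from all compact generators, hence vanishes.

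The main technical obstacle is the unravelling in the central step: one has to verify that the composite of the counits $\epsilon_n$, the structure maps $\phi_n$, and the $\Z(1)$-invertibility identifications intertwines with the transition maps $f_n$ defining the hocolim. This is a naturality statement forced by the adjunction $i \dashv (-)^\eff$, but it requires careful bookkeeping of the various twists and the fact that the counit is a $\otimes$-natural transformation.
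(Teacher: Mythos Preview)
Your proof is correct and follows essentially the same route as the paper: test \eqref{eq7.4} against compact generators using \eqref{eq7.5}, then use $\Z(1)$-invertibility plus full faithfulness of $i$ (cancellation) plus the adjunction $i\dashv(-)^\eff$ to identify $\Hom(Y,iM_n(-n))$ with $\Hom(Y,M)$ for $n$ large. The paper phrases this with an arbitrary compact $N\in\DM_{\gm,\et}(k)$ and the observation that $N(n)\in\IM i$ for $n\gg 0$, whereas you choose explicit generators $iM(X)(-m)[j]$ for which the threshold is simply $n\ge m$; the chain of identifications and the final naturality check (your ``bookkeeping'' paragraph, the paper's ``one checks that this isomorphism is inverse to the above map'') are the same.
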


(I don't know if this is true over a general base; the proof below uses the full faihtfulness of
$i$, which relies on the cancellation theorem.)

\begin{proof}\label{p7.2} Since $\DM_\et(k)$ is generated by $\DM_{\gm,\et}(k)$, it is enough to check that, for any compact object $N\in \DM_{\gm,\et}(k)$, the morphism
\begin{multline*}
\colim\nolimits_n \DM_\et(k)(N, iM_n(-n))\iso \DM_\et(k)(N,\hocolim_n iM_n(-n))\\
\to \DM_\et(k)(N,M)
\end{multline*}
given by \eqref{eq7.5} and \eqref{eq7.4} is an isomorphism.

There exists $n_0\gg 0$ such that $N(n)\in \IM i$ for $n\ge n_0$. For clarity, let us write $N(n)=i N_n$ for $n\ge n_0$. Since $i$ is fully faithful, we get for $n\ge n_0$
\begin{multline*}
\DM_\et(k)(N, iM_n(-n))=\DM_\et(k)(iN_n, iM_n)\\
 \osi  \DM_\et^\eff(k)(N_n, (M(n))^\eff)\simeq \DM_\et(k)(iN_n, M(n))\\
 =\DM_\et(k)(N(n), M(n))\osi \DM_\et(k)(N,M).
\end{multline*}

We thus get an isomorphism
\begin{multline*}
\DM_\et(k)(N,M)\iso \colim\nolimits_{n\ge n_0} \DM_\et(k)(N, iM_n(-n))\\
=\colim\nolimits_n \DM_\et(k)(N, iM_n(-n)).
\end{multline*}

One check that this isomorphism is inverse to the above map, which concludes the proof.
\end{proof}

\subsection{$\Omega$-spectra and $\Sigma$-spectra} The following corollary is the \emph{raison
d'\^etre} of this section:

\begin{cor}\label{c7.1} For $M\in \DM_\et(k)$, the following are equivalent:
\begin{thlist}
\item $M\in \DM_\et^\eff(k)$.
\item For any $n\ge 0$, the natural morphism $M^\eff(n)\to (M(n))^\eff$ is an isomorphism.
($(M(n))^\eff$ is a ``$\Sigma$-spectrum".)
\item For any $n>0$, $i(M(n))^\eff(-1)\in \DM_\et^\eff(k)$. 
\end{thlist}
\end{cor}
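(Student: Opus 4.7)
The plan is to prove the cycle (i) $\Rightarrow$ (ii) $\Rightarrow$ (iii) $\Rightarrow$ (i). The first two implications are formal: for (i) $\Rightarrow$ (ii), I would use that $i$ is monoidal and fully faithful together with $\Z(n) \in \DM_\et^\eff(k)$ for $n \ge 0$ to see that a counit isomorphism $iM^\eff \iso M$ gives $M(n) \cong i(M^\eff(n))$, whence $(M(n))^\eff \cong M^\eff(n)$; and (ii) $\Rightarrow$ (iii) is immediate because $iM^\eff(n)(-1) = iM^\eff(n-1) \in \IM i$ as soon as $n \ge 1$.

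For the substantive implication (iii) $\Rightarrow$ (i), I would first show that (iii) already implies (ii), and then deduce (i) from (ii) via the hocolim description of Proposition \ref{p7.3}. Set $M_n := (M(n))^\eff$. The key step is the identity $(iM_n(-1))^\eff \cong M_{n-1}$, which I would derive by Yoneda from a short chain of adjunction isomorphisms, alternating between the $(i,(-)^\eff)$ adjunction (using the full faithfulness of $i$) and the invertibility of the Tate twist in $\DM_\et(k)$; this is essentially an avatar of Voevodsky's cancellation theorem, or equivalently of the $\Omega$-spectrum structure of Lemma \ref{l7.2}. Given this identity, assumption (iii) says that $iM_n(-1) \in \IM i$ for $n \ge 1$, so the counit $i(iM_n(-1))^\eff \iso iM_n(-1)$ is an isomorphism, yielding $iM_{n-1} \cong iM_n(-1)$. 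Twisting back by $\Z(1)$ and invoking the full faithfulness of $i$ gives $M_n \cong M_{n-1}(1)$ in $\DM_\et^\eff(k)$, and downward iteration yields $M_n \cong M^\eff(n)$ for all $n \ge 0$; this is (ii).

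To finish with (ii) $\Rightarrow$ (i): under (ii), $X_n := iM_n(-n) \cong iM^\eff(n)(-n) = iM^\eff$ in $\DM_\et(k)$, so every $X_n$ lies in the essential image of $i$. Since $i$ is a left adjoint to $(-)^\eff$ it commutes with small direct sums, and being triangulated it commutes with cones, hence with $\hocolim$. Thus $\IM i$ is a localising subcategory of $\DM_\et(k)$, and Proposition \ref{p7.3} identifies $M$ with $\hocolim X_n$, which therefore lies in $\DM_\et^\eff(k)$.

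The hard part will be the cancellation-flavoured identification $(iM_n(-1))^\eff \cong M_{n-1}$, which is the only place where a real input beyond formal adjoint manipulations and Proposition \ref{p7.3} is used; once it is available, the rest of the argument is essentially bookkeeping.
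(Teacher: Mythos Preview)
Your proof is correct and follows essentially the same route as the paper: the paper also proves (i) $\Rightarrow$ (ii) $\Rightarrow$ (iii) as obvious, then (iii) $\Rightarrow$ (ii) via the $\Omega$-spectrum property (Lemma \ref{l7.2}) and induction, and (ii) $\Rightarrow$ (i) via Proposition \ref{p7.3}. Your Yoneda computation $(iM_n(-1))^\eff \cong M_{n-1}$ is exactly the content of Lemma \ref{l7.2} rewritten without internal Homs, so the ``cancellation-flavoured identification'' you flag as the hard part is in fact already available and no harder than the rest; the only genuine input beyond formal adjunctions is Proposition \ref{p7.3}, which in turn rests on the cancellation theorem (full faithfulness of $i$).
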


\begin{proof} (i) $\Rightarrow$ (ii) $\Rightarrow$ (iii) is obvious; (ii) $\Rightarrow$ (i)
follows from  Proposition \ref{p7.3} applied to $M$. Finally, assume (iii) true: thus we may
write
$(M(n))^\eff\simeq N(1)$ for some $N\in \DM_\et^\eff(k)$. Using Lemma \ref{l7.2}, we find
\[(M(n-1))^\eff\iso \uHom^\eff(\Z(1),(M(n))^\eff)\simeq \uHom^\eff(\Z(1),N(1))\osi N\]
or $(M(n-1))^\eff(1)\iso (M(n))^\eff$; by induction on $n$, we get (ii).
\end{proof}

\section{A proof of Theorem \ref{tg}}\label{s.geisser2}

From now on, $\F$ is a finite field.

The aim of this section is to give the following effective version of Geisser's theorem
\ref{tg} \cite[Th. 3.3]{geisser0}:

\begin{thm}\label{tG} Let $X$ be a smooth projective $\F$-variety of dimension $d$, and let
$n\ge 0$. Assume:
\begin{enumerate}
\item Conjecture \ref{cb} holds for $X\times X$ in codimension $d$.
\item The cohomological Tate conjecture holds for $X$ in codimension $n$. 
\end{enumerate}
Then, for any $i\ne 2n$, $H^i(X,\Q(n))=0$ (``Parshin's conjecture in weight $n$").
\end{thm}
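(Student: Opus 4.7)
The plan is to combine Soul\'e's Frobenius argument \cite{soule} with Jannsen's semi-simplicity theorem \cite{jannsen} and the partial hypotheses of the statement. First I would recall Soul\'e's observation that the $q$-power Frobenius endomorphism $F := \Phi_X : X \to X$ acts on $H^i(X, \Q(n))$ as multiplication by $q^n$ for every $i$. This rests on the identification $H^i(X, \Q(n)) \simeq K_{2n-i}(X)^{(n)}_\Q$ with Adams eigenspaces, and the identity $F^* = \psi^q$ on rational $K$-theory.

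Next, hypothesis (1) identifies $\End_{\DM_\gm(\F)_\Q}(h(X)) = CH^d(X\times X)_\Q$ with its numerical version, which by Jannsen's theorem is a finite-dimensional semi-simple $\Q$-algebra. I would consider the correspondence $F = [\Gamma_{\Phi_X}]$ in this algebra: by Deligne's Riemann Hypothesis (Theorem \ref{rh}) its eigenvalues on $H^j_l(\bar X, \Q_l)$ have absolute value $q^{j/2}$, so its minimal polynomial factors as $m_F = \prod_j m_j$ with the $m_j$ pairwise coprime (distinct absolute values). The Chinese Remainder Theorem in the semi-simple algebra then yields orthogonal Chow--K\"unneth idempotents $\pi_j = P_j(F) \in CH^d(X\times X)_\Q$ summing to $[\Delta_X]$. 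By the first step, $\pi_j$ acts on $H^i(X, \Q(n))$ as the scalar $P_j(q^n)$; since $q^n$ is not a root of $m_{j'}$ for $j' \ne 2n$ by weights, the CRT-defining congruences force $P_j(q^n) = 0$ for $j \ne 2n$. Thus $H^i(X, \Q(n)) = H^i(h^{2n}(X), \Q(n))$ with $h^{2n}(X) := \pi_{2n}\,h(X)$.

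Then I would isolate within $h^{2n}(X)$ the ``Tate piece'' $N$ corresponding to the $F = q^n$ eigenspace. Hypothesis (2) provides codimension-$n$ cycles whose $l$-adic classes span $H^{2n}_l(\bar X, \Q_l(n))^G$, producing a morphism $h^{2n}(X) \to \Z(n)[2n]^{\oplus r}$ (with $r = \dim_{\Q_l}H^{2n}_l(\bar X, \Q_l(n))^G$) whose $l$-adic realization picks out exactly the Tate classes. Using both this map and its transpose (via Poincar\'e duality on $X$), together with the rat $=$ num identification on $\End(h(X))$ from (1), the aim is to promote this to an isomorphism $N \simeq \Z(n)[2n]^{\oplus r}$ in $\DM_\gm(\F)_\Q$, with $N$ a Chow-motivic direct summand of $h^{2n}(X)$ cut out by an idempotent $\pi_{2n}^{(q^n)}$. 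The polynomial-in-$F$ argument again shows that $\pi_{2n} - \pi_{2n}^{(q^n)}$ acts as zero on $H^i(X, \Q(n))$, so $H^i(X, \Q(n)) \simeq H^i(\Z(n)[2n]^{\oplus r}, \Q(n))$.

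The proof would conclude with Quillen's computation $K_*(\F)_\Q = \Q$ concentrated in degree $0$, whence $H^j(\F, \Q(m)) = 0$ unless $j = m = 0$, giving $H^i(\Z(n)[2n]^{\oplus r}, \Q(n)) = 0$ for $i \ne 2n$. The main obstacle will be the identification of $N$ with a direct sum of Tate motives in $\DM_\gm(\F)_\Q$: it requires combining the semi-simplicity from (1), the surjectivity of the cycle map from (2), and some semi-simplicity of Frobenius on the $q^n$-eigenspace --- a variant of the $S^n$ condition of Remark \ref{sn} --- to guarantee that the further idempotent $\pi_{2n}^{(q^n)}$ really lifts to the Chow-motivic level.
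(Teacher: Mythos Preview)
Your proposal uses the same three ingredients as the paper --- Jannsen's semi-simplicity (from hypothesis (1)), the Frobenius-eigenvalue argument, and the cohomological Tate conjecture (hypothesis (2)) --- but organizes them differently. The paper skips your Chow--K\"unneth detour entirely: since $\End(h(X))_\Q \simeq A^d_\num(X\times X,\Q)$ is semi-simple, one decomposes $h(X)=\bigoplus_\alpha S_\alpha$ into \emph{simple} motives directly, and the dichotomy is simply $S_\alpha\simeq\L^n$ versus $S_\alpha\not\simeq\L^n$. For the first kind, $\Hom(\Phi(\L^n),\Z(n)[i])=0$ for $i\ne 2n$ is immediate. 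For the second, hypothesis (2) is used only to show $F_\alpha\ne q^n$ in the division algebra $D_\alpha=\End(S_\alpha)$: since $\Hom(S_\alpha,\L^n)=0$ by simplicity, Tate gives $H^{2n}(\bar S_\alpha,\Q_l(n))^G=0$, so $q^n$ is not an eigenvalue of $F_\alpha$ on realization, hence $F_\alpha-q^n\in D_\alpha^\times$. Then $F_\alpha^*-q^n$ acts invertibly and as zero on $\Hom(\Phi(S_\alpha),\Z(n)[i])$, forcing it to vanish.

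Your flagged ``main obstacle'' is a phantom. You do \emph{not} need any $S^n$-type hypothesis to construct $\pi_{2n}^{(q^n)}$: since Frobenius is central in the semi-simple algebra $\End_\num(h(X))_\Q$ (it is a natural endomorphism of the identity functor), its minimal polynomial is a product of distinct irreducibles over $\Q$, hence squarefree, so $(t-q^n)$ occurs at most once and CRT applies. What actually remains after you produce $N=\pi_{2n}^{(q^n)}h(X)$ is to show $N\simeq(\L^n)^{\oplus r}$; but this is exactly the paper's step applied to the simple summands of $N$: each simple $S$ with $F_S=q^n$ has $H^{2n}(\bar S,\Q_l(n))^G\ne 0$, so Tate produces a nonzero element of $\Hom(S,\L^n)$, whence $S\simeq\L^n$ by simplicity. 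Your ``map and transpose'' construction is unnecessary once you use semi-simplicity this way. A minor point: in your step 4, the claim $P_j(q^n)=0$ for $j\ne 2n$ tacitly assumes $q^n$ is a root of $m_F$; if it is not (possible when $n>d$), the stronger conclusion $H^i(X,\Q(n))=0$ for \emph{all} $i$ follows directly from $m_F(q^n)\ne 0$.
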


\begin{proof} We shall use Theorem \ref{t3.1} as well as the functor
\begin{equation}\label{eq8.1}
\Phi:\sM_\rat^\eff(\F,\Q)\to \DM_\gm^\eff(\F,\Q)
\end{equation}
of \cite[Prop. 2.1.4]{V}, where the left hand side is the category of effective Chow motives
over $\F$. 
This shows that
$\End(h(X))=CH^d(X\times X)\otimes
\Q$ acts on the motivic cohomology of
$X$, where $h(X)$ is the Chow motive of $X$. By assumption, 
\[CH^d(X\times X)\otimes \Q\iso A^d_\num(X\times X,\Q).\]

By Jannsen \cite{jannsen}, the right hand side is a semi-simple $\Q$-algebra. Thus we may
decompose $h(X)$ as a direct sum
\[h(X)=\bigoplus_{\alpha\in A} S_\alpha\]
where the $S_\alpha$ are simple motives. Accordingly, we may decompose the motivic cohomology
of $X$ as
\[H^i(X,\Q(n))=\bigoplus_{\alpha\in A} \DM_\gm^\eff(\F,\Q)(\Phi(S_\alpha),\Z(n)[i]).\]

Let $\L$ be the Lefschetz motive. If $S_\alpha=\L^n$, then 
\begin{multline*}
\DM_\gm^\eff(\F,\Q)(\Phi(S_\alpha),\Z(n)[i])=\DM_\gm^\eff(\F,\Q)(\Z(n)[2n],\Z(n)[i])\\
=0\text{ if
} i\ne 2n.
\end{multline*}

To conclude, it remains to see that $\DM_\gm^\eff(\F,\Q)(\Phi(S_\alpha),\Z(n)[i])=0$ if
$S_\alpha\not\simeq\L^n$. 

Let us first show that $\sM_\rat^\eff(\F,\Q)(S_\alpha,\L^n)=0$. This is obvious if $n\notin
[0,d]$. If
$n\in [0,d]$, then $A^n_\num(X)\ne 0$\footnote{If $L$ is an ample line bundle on $X$, then $\deg(L^d)> 0$ and \emph{a fortiori} $0\ne L^n\in A^n_\num(X)$ for all $n\in [0,d]$\dots}, which shows that $\L^n$ appears as a direct summand of
$h(X)$ and we conclude by semi-simplicity.

We now use that Frobenius defines an endomorphism of the
identity functor of $\DM_\gm^\eff(\F,\Q)$: this follows from the easy fact that it is natural with respect to the action of finite correspondences. In particular, for $M,N\in \DM_\gm^\eff(\F,\Q)$, with
Frobenius endomorphisms $F_M,F_N$, we have the equality $F_M^* = (F_N)_*$ on
$\DM_\gm^\eff(\F,\Q)(M,N)$. Since the Frobenius endomorphism of $\L^n$ equals $q^n$, we get:

\begin{lemma} Let $F_\alpha$ be the Frobenius endomorphism of $S_\alpha$. Then 
$F_\alpha^*$ acts on $\DM_\gm^\eff(\F,\Q)(\Phi(S_\alpha),\Z(n)[i])$ by multiplication by
$q^n$.\qed
\end{lemma}

To conclude, it suffices to show that $F_\alpha\ne q^n$ in
$D_\alpha=\End_{\sM_\num^\eff(\F,\Q)}(S_\alpha)$. (Then $F_\alpha^*-q^n$ will act both invertibly
and by $0$ on the $D_\alpha$-vector space $\DM_\gm^\eff(\F,\Q)(\Phi(S_\alpha),\Z(n)[i])$, and the
latter will have to be $0$.)

The cycle class map of Conjecture \ref{cf} splits into components:
\[\sM_\rat^\eff(\F,\Q)(S_\alpha,\L^n)\to H^{2n}(\bar S_\alpha,\Q_l(n))^G\]
with an obvious abuse of notation on the right. As we saw above, the left hand side is $0$.
By (2), the right hand side is therefore $0$ as well. But this means that $1$ is not an
eigenvalue of the characteristic polynomial of $F_\alpha$ acting on $H^{2n}(\bar
S_\alpha,\Q_l(n))$, which concludes the proof.
\end{proof}

\section{Motivic reformulation of Friedlander's conjecture}\label{sFried}

\subsection{An integral version?}\label{s6.1} Getting a motivic version of Conjecture
\ref{cff} with rational coefficients from \eqref{eq3.2} or \eqref{eq5.1} is easy: one considers
the composition
\begin{equation}\label{eq9.1}
\bar \cl_X^{n,i}:H^i_\et(X,\Z(n))\otimes \Q_l\to H^i_\cont(X,\Q_l(n))\to H^i_\cont(\bar X,\Q_l(n))^G
\end{equation}
and conjectures that it is an isomorphism. What about an integral version?

The first idea, to consider the composition
\[H^i_\et(X,\Z(n))\otimes \Z_l\to H^i_\cont(X,\Z_l(n))\to H^i_\cont(\bar X,\Z_l(n))^G\]
is tempting but looks unreasonable: a special case would imply a na\"\i ve integral version of
the Tate conjecture, and it is well-konwn that the corresponding integral refinement of the
Hodge conjecture has counterexamples, the first ones being due to Atiyah-Hirzebruch using
Godeaux varieties.

Over $\F$, counterexamples to the surjectivity of the map
\[CH^n(X)\otimes \Z_l\to H^{2n}_\cont(\bar X,\Z_l(n))^G\]
also exist. In \cite[Aside 1.4]{milneaim}, Milne hints that the Atiyah-Hirzebruch examples also
work here: this was fully justified in \cite[Th. 2.1]{ct-sz}. These counterexamples persist
after any finite extension of $\F$. More recently, A. Pirutka has given an example
of a smooth projective $\F$-variety
$X$ for which the map
$CH^2(X)\to CH^2(\bar X)^G$ is not surjective \cite{pirutka}: this provides a different kind of
counterexample.

Of course, this rules out the even more na\"\i ve idea that \eqref{eq3.2} and \eqref{eq5.1}
could be isomorphisms. In fact they cannot even be isomorphisms rationally, in view of the
exact sequence \eqref{eq3.4}. If $X$ is smooth projective and $i=2n+1$, one gets a map
\[0=H^{2n+1}(X,\Z(n))\otimes \Q_l\to H^{2n+1}_\cont(X,\Q_l(n))\osi H^{2n}_\cont(\bar
X,\Q_l(n))_G.\]

Here we used \eqref{eq4.3} and Theorem \ref{t3.3} b) for the first vanishing, as well as
\eqref{eq3.4} and Theorem \ref{rh}, which gives the vanishing of
$ H^{2n+1}(\bar X,\Q_l(n))^G$. The equality 
\[\dim_{\Q_l} H^{2n}_\cont(\bar X,\Q_l(n))_G= \dim_{\Q_l} H^{2n}_\cont(\bar X,\Q_l(n))^G\]
shows that $H^{2n}_\cont(\bar X,\Q_l(n))_G$ is in general nonzero.

\subsection{The complex $\Z^c$}\label{s6.2} The simplest example of the above is for $n=0$ and
$X=\Spec \F=\Spec \F_q$. Then
\begin{equation}\label{eq6.1}
H^j_\cont(\F_{q^r},\Z_l(0)) =
\begin{cases} \Z_l&\text{if $j=0,1$}\\
0&\text{else}
\end{cases}
\end{equation}
while 
\[H^j_\et(\F_{q^r},\Q(0)) =
\begin{cases} \Q&\text{if $j=0$}\\
0&\text{else.}
\end{cases}\]

In \eqref{eq6.1}, transition morphisms for passing from $\F_q$ to $\F_{q^r}$ are the
identity on $H^0$ but are multiplication by $r$ on
$H^1$. This gives a computation of the cohomology sheaves of
$R\lim(\Z_l)$, where
$R\lim:\hat{D}_\et(\F,\Z_l)\to D(\F_\et,\Z_l)$ is the total derived functor of $\lim$:
\[\lim\nolimits^j\Z_l =
\begin{cases} 
\Z_l&\text{if $j=0$}\\
\Q_l&\text{if $j=1$}\\
0&\text{else.}
\end{cases}
\]

In \cite[\S 4]{tatesheaf}, I computed the object $R\lim \Z_l$ as follows: there
exists an
object $\Z^c\in D^b(\F_\et,\Z)$, represented by an explicit complex of \'etale sheaves of length
$1$, such that
\[R\lim \Z_l \simeq \Z^c\otimes \Z_l\text{ in } D(\F_\et,\Z_l)\]
for any $l\ne p$ (and even for $l=p$) \cite[Def. 4.1 and Th. 4.6 b)]{tatesheaf}.

Let us repeat the properties of $\Z^c$: its cohomology sheaves are
$\Z$ in degree $0$ and $\Q$ in degree $1$; in particular
\begin{equation}\label{eq7.2}
\Z/n[0]\iso\Z^c\oo^L\Z/n.
\end{equation}

The extension class
$\partial\in \Ext^2_{\F_\et}(\Q,\Z)$ corresponding to $\Z^c$ is nonzero; it can be
factored as follows in $D^b(\F_\et,\Z)$:
\begin{equation}\label{eqpartial}
\begin{CD}
\Q[-1] @>\partial>> \Z[1]\\
@V{\pi}VV @A{\beta}AA\\
\Q/\Z[-1] @>\cdot e>> \Q/\Z.
\end{CD}
\end{equation}

Here $\pi$ is induced by the projection $\Q\to \Q/\Z$, $\beta$ is the integral Bockstein and
$\cdot e$ is cup-product with the canonical generator from \eqref{eq6.1c}.

In particular, $\partial\otimes \Q=0$, yielding an isomorphism
\begin{equation}\label{eq6.1b}
\Z^c\otimes \Q\simeq \Q[0]\oplus \Q[-1].
\end{equation}

Finally, we have
\begin{equation}\label{eq6.1a}
H^j_\et(\F,\Z^c) =
\begin{cases} \Z&\text{if $j=0,1$}\\
0&\text{else.}
\end{cases}
\end{equation}

In particular, \eqref{eq6.1c} refines to a generator
\begin{equation}\label{eq6.1d}
e\in H^1_\et(\F,\Z^c).
\end{equation}

See Appendix \ref{s.geisser} for more details on $\Z^c$.

I used $\Z^c$ in \cite{tatesheaf} and \cite{glr} to define a
\emph{modified cycle class map}:
\begin{equation}\label{eq7.3}
H^j_\et(X,\Z(n)\oo^L\Z^c)\otimes \Z_l\overset{\cl_X^{n,j}}{\longrightarrow}
H^{j}_\cont(X,\Z_l(n))
\end{equation}
which serves for an integral version of Friedlander's conjecture. In the sequel, we shall
recover this map more conceptually.

\subsection{A lemma of Ayoub}

Let $\sM,\sN$ be two unital symmetric monoidal categories, and let $f:\sM\to \sN$ be a (strong,
symmetric, unital) monoidal functor. Assume that $f$ has a right adjoint $g$. Then the unit map
\[M\to gf(M)\]
enriches into a bi-natural transformation
\begin{equation}\label{eq6.0}
M\otimes gf(M')\to gf(M\otimes M'), \quad M,M'\in \sM
\end{equation}
which can be described as follows: by adjunction, such a morphism corresponds to a morphism
\[f(M)\otimes fgf(M')\simeq f(M\otimes gf(M'))\to f(M\otimes M')\simeq f(M)\otimes f(M').\]

This morphism is simply induced by the counit map $fgf(M')\to f(M')$.

In particular, we may take $M'=\un$ and get a natural
transformation
\begin{equation}\label{eq6.2}
M\otimes gf(\un)\to gf(M).
\end{equation}

We are interested in a sufficient condition for \eqref{eq6.2} to be an isomorphism. 
For this, note that we have another bi-natural transformation
\begin{equation}\label{eq6.3}
M\otimes g(N)\to g(f(M)\otimes N), \quad M\in \sM,N\in \sN
\end{equation}
which corresponds by adjunction to the map
\[f(M)\otimes fg(N)\simeq f(M\otimes g(N))\to f(M)\otimes N\]
given by the counit at $N$. To recover \eqref{eq6.0} from \eqref{eq6.3}, just take $N=f(M')$.
The following ``projection formula" is due to Joseph Ayoub \cite[Lemme
2.8]{ayoubbetti}:

\begin{lemma} \label{l9.2}
\eqref{eq6.3} is an isomorphism if $M$ is strongly dualisable.
\end{lemma}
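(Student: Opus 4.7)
My plan is to test the morphism \eqref{eq6.3} against an arbitrary object $P\in\sM$ via the Yoneda lemma, using that strong dualisability of $M$ means $-\otimes M$ has right adjoint $-\otimes M^*$ in $\sM$, and moreover that $f(M)$ is then strongly dualisable in $\sN$ with dual $f(M^*)$ (because $f$ is a unital symmetric monoidal functor, so $f$ preserves the defining $\eta,\epsilon$ data).

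First I would compute the two Hom-functors in parallel. On the left,
\[\Hom_\sM(P,M\otimes g(N))\simeq \Hom_\sM(M^*\otimes P,g(N))\simeq \Hom_\sN(f(M^*\otimes P),N)\simeq \Hom_\sN(f(M^*)\otimes f(P),N),\]
using the internal adjunction for $M$, the adjunction $(f,g)$, and the fact that $f$ is monoidal. On the right,
\[\Hom_\sM(P,g(f(M)\otimes N))\simeq \Hom_\sN(f(P),f(M)\otimes N)\simeq \Hom_\sN(f(M^*)\otimes f(P),N),\]
using the adjunction $(f,g)$ and the internal adjunction for $f(M)$ (with dual $f(M^*)$). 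So the two functors $P\mapsto\Hom_\sM(P,-)$ are canonically isomorphic.

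Next I would check that the composite isomorphism of functors is induced by \eqref{eq6.3}. Unwinding, both chains use the same unit/counit of $(f,g)$ and the same coevaluation/evaluation for $M$ (transported by $f$ to those for $f(M)$). Concretely, tracing the identity on $M\otimes g(N)$ through the left chain produces, after adjunction back, exactly the map whose adjoint is the counit map $f(M)\otimes fg(N)\to f(M)\otimes N$; this is the definition of \eqref{eq6.3}. Hence \eqref{eq6.3} induces the Yoneda isomorphism, so it is an isomorphism.

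The only delicate point will be this last compatibility check. The main obstacle is purely bookkeeping: one must verify that the canonical isomorphism $f(M^*)\simeq f(M)^*$ (coming from $f$ being strong monoidal) intertwines the two uses of the evaluation/coevaluation cleanly, so that the two routes to $\Hom_\sN(f(M^*)\otimes f(P),N)$ agree up to the expected naturality. Once that diagram commutes, the proof is complete by Yoneda, and no further hypothesis on $\sM$, $\sN$ or $N$ is required beyond strong dualisability of $M$.
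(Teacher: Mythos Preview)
Your proposal is correct and is essentially the same argument as the paper's, just phrased via Yoneda rather than via uniqueness of adjoints: the paper observes that $t_M\circ g$ and $g\circ t_{f(M)}$ are both right adjoint to $f\circ t_{M^*}\simeq t_{f(M^*)}\circ f$, while you unpack this adjoint comparison by computing $\Hom(P,-)$ of both sides. Both approaches end with the same compatibility check (that the resulting isomorphism agrees with \eqref{eq6.3}), which the paper likewise acknowledges and omits.
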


\begin{proof} It is so simple that we don't resist in reproducing it:

Given an objet $X$ of $\sM$ or $\sN$, write $t_X$ for the functor $X\otimes -$. Let $C$ be a
strong dual of $M$, i.e., there exist morphisms $C \otimes M \to \un$ and $\un \to M \otimes
C$ which make $t_C$ a left adjoint of $t_M$. As $f$ is monoidal, symmetric and unital, one
deduces that $f(C)$ is a strong dual of $f(M)$, hence that $t_{f(C)}$ is left adjoint to
$t_{f(M)}$. On the other hand, there is an obvious isomorphism  $t_{f(C)}
\circ f \simeq f \circ t_C$. Using the adjunctions $(f; g)$, $(t_C; t_M)$ and
$(t_{f(C)};t_{f(B)})$, one deduces an isomorphism of functors $t_M \circ g \simeq g \circ
t_{f(M)}$. It remains to see that this isomorphism coincides with \eqref{eq6.3}. This
verification is omitted.
\end{proof}

\begin{cor}\label{c6.1} If $M$ is strongly dualisable, \eqref{eq6.2} is an isomorphism.\qed
\end{cor}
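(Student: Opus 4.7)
The corollary should be an immediate consequence of Lemma \ref{l9.2}: the plan is simply to unwind the chain of specialisations \eqref{eq6.3} $\to$ \eqref{eq6.0} $\to$ \eqref{eq6.2} that the excerpt has already described, and invoke the lemma at the top of the chain.

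More precisely, the paper notes that \eqref{eq6.0}, namely $M \otimes gf(M') \to gf(M \otimes M')$, is obtained from \eqref{eq6.3}, namely $M \otimes g(N) \to g(f(M) \otimes N)$, by taking $N = f(M')$. By Lemma \ref{l9.2}, \eqref{eq6.3} is an isomorphism for every $N$ as soon as $M$ is strongly dualisable; specialising $N = f(M')$ then shows that \eqref{eq6.0} is an isomorphism for every $M'$ under the same hypothesis on $M$. Finally, \eqref{eq6.2} is exactly \eqref{eq6.0} specialised at $M' = \un$, so it too is an isomorphism.

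Since no verification beyond this string of specialisations is needed, there is essentially no obstacle; the only minor point to keep in mind is that strong dualisability of $M$ is preserved under passing to the specialised maps (which is automatic: the hypothesis on $M$ does not depend on the second variable). Thus the proof amounts to a one-line citation of Lemma \ref{l9.2} combined with the observation, already recorded in the excerpt, that \eqref{eq6.2} is a special case of \eqref{eq6.0} which is in turn a special case of \eqref{eq6.3}.
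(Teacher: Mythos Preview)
Your proposal is correct and matches the paper's approach: the corollary is marked with \qed\ precisely because it follows immediately from Lemma~\ref{l9.2} via the specialisations $N=f(M')$ and $M'=\un$ already recorded in the text. There is nothing to add.
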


\begin{remark}\label{r9.1} Of course Lemma \ref{l9.2} and Corollary \ref{c6.1} can be extended:
suppose that
$\sM,\sN$ are triangulated categories and $f$ (hence $g$) is a triangulated functor. Suppose
also that $g$ commutes with small direct sums (compare Lemma \ref{lAyoub}). Then, given $N\in
\sN$, the full subcategory of $\sM$ consisting of those $M$ for which \eqref{eq6.3} is an
isomorphism is localising. In particular, if strongly dualisable objects are dense in $\sM$,
then \eqref{eq6.3} holds universally as soon as $g$ commutes with small direct sums.
\end{remark}

\subsection{A stable theorem} We shall need:

\begin{lemma}\label{l5.1} The functor $R_l$ of Theorem \ref{t5.1} has a right adjoint
$\Omega_l$.
\end{lemma}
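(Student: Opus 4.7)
The natural approach is to invoke Neeman's form of Brown representability in the guise of Corollary \ref{cBrown}: it suffices to verify that the source category $\DA_\et(S,\Z_l)$ is compactly generated and that $R_l$ commutes with small direct sums.

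For compact generation, I would argue as follows. The proof of Proposition \ref{p6.1} a) shows that $\DA_\et^\eff(S,A)$ is compactly generated by motives $M(X)$ of smooth $S$-schemes $X$ whenever $\cd(S,A)<\infty$; in our setting $S$ is of finite type over $\F$, so this applies with $A=\Z_l$ (and also with $A=\Z/l^\nu$). Passing to the stable category by inverting $\Z(1)$ via the $\Z(1)$-spectrum construction of \S \ref{s.moteff} preserves this property: a dense family of compact generators is given by the shifted Tate twists $\Sigma^\infty M(X)(n)$, with compactness following from Lemma \ref{ldual-compact} together with the fact that such objects are strongly dualisable in the stable category.

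For commutation with direct sums, the cleanest input is the description of $R_l$ in Theorem \ref{t5.1}: on motives of smooth $S$-schemes $f\colon X\to S$ it computes the dual of $Rf_*\Z_l$, which is a constructible, hence compact, object of $\hat D_\et(S,\Z_l)$. Together with the fact that $R_l$ is a (strong, unital, symmetric) monoidal triangulated functor, this means $R_l$ sends a set of compact generators of $\DA_\et(S,\Z_l)$ to compact objects. Since $\hat D_\et(S,\Z_l)$ is itself compactly generated, commutation of $R_l$ with small direct sums is equivalent to preservation of compact generators (this is the standard ``preserves compactness implies commutes with coproducts'' observation for triangulated functors between compactly generated categories; alternatively, one reads it off from the spectrum-level construction of $R_l$ in \cite[\S 5]{ayoubreal}, which is a left Quillen functor after suitable localisation).

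Applying Corollary \ref{cBrown} then yields the desired right adjoint $\Omega_l$, which is automatically triangulated. As a bonus, Lemma \ref{lAyoub} combined with the fact that $R_l$ preserves compact objects immediately implies that $\Omega_l$ also commutes with small direct sums, a property which will be useful in the sequel (e.g.\ for applying Remark \ref{r9.1}). The only genuine obstacle is the direct-sum compatibility of $R_l$: this is ultimately a model-categorical point in Ayoub's construction, and the most honest write-up is to cite it from \cite{ayoubreal} rather than reconstruct it.
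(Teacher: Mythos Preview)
Your overall strategy---invoke Corollary~\ref{cBrown} after checking compact generation of the source and that $R_l$ commutes with small direct sums---is exactly the paper's. The paper's proof is a single sentence: Corollary~\ref{cBrown} applies because $R_l$ commutes with small direct sums (a fact taken as known from Ayoub's construction).

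However, your write-up contains two genuine errors. First, the argument you give for commutation with direct sums is flawed: you claim that $R_l(M(X))\simeq (Rf_*\Z_l)^*$ is ``constructible, hence compact'' in $\hat D_\et(S,\Z_l)$, but constructible is \emph{not} the same as compact in Ekedahl's $l$-adic category. Indeed, the paper's Remark immediately following this lemma points out that the unit object $\Z_l$ itself is not compact in $\hat D_\et(S,\Z_l)$, whereas $\Z_l=R_l(\Z)$ with $\Z$ compact on the motivic side. So $R_l$ does \emph{not} preserve compact objects, and your ``preserves compactness implies commutes with coproducts'' step fails at the outset (that implication is also not a general triangulated fact in any case; Lemma~\ref{lAyoub} goes the other way and presupposes the adjoint). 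Your parenthetical alternative---that commutation with direct sums comes from the model-categorical construction in \cite{ayoubreal}---is the correct justification, and is what should be cited.

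Second, and more seriously, your ``bonus'' conclusion that $\Omega_l$ commutes with small direct sums is false, for exactly the reason above: since $R_l$ does not preserve compacts, Lemma~\ref{lAyoub} gives the opposite conclusion. The paper states this explicitly in the Remark following the lemma. You should delete that paragraph; relying on it later (e.g.\ in Remark~\ref{r9.1}) would propagate the error.
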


\begin{proof} This is true for both versions: with coefficients $\Z$ or $\Z_l$. Once again this
follows from Corollary \ref{cBrown} (note that $R_l$ commutes with
small direct sums).
\end{proof}

\begin{remark} The unit object $\Z_l$ of $\hat{D}_\et(S,\Z_l)$ is not compact. This implies that
$\Omega_l$ does not commute with small direct sums (see lemma \ref{lAyoub}).
\end{remark}


We now apply Corollary \ref{c6.1} and Remark \ref{r9.1} to the adjunction
$(R_l,\Omega_l)$ of Lemma \ref{l5.1}; we get:

\begin{thm}\label{t6.1} For any strongly dualisable object $M$ of $\DA_\et(S)$,
the map
\[M\otimes \Omega_l(\Z_l)\to \Omega_lR_l(M)\]
is an isomorphism.\\ 
If $S=\Spec \F$ and $l\ne 2$, this applies to all objects of $\DM_{\gm,\et}(\F)$.
\end{thm}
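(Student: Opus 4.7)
The plan is to recognize the first claim as a direct application of Corollary \ref{c6.1} to the monoidal adjunction $(R_l,\Omega_l)$, and the second claim as the combination of Lemma \ref{l4.1} with the Cisinski--D\'eglise--Ayoub comparison between $\DA_\et$ and $\DM_\et$ over $\F$.

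First, I would unpack what the map in the statement is. Since $R_l$ is a strong symmetric monoidal functor (Theorem \ref{t5.1}), one has $R_l(\un)=\Z_l$ in $\hat{D}_\et(S,\Z_l)$, hence $\Omega_l R_l(\un)=\Omega_l(\Z_l)$. The general natural transformation \eqref{eq6.2}, instantiated with $f=R_l$ and $g=\Omega_l$ (whose existence is given by Lemma \ref{l5.1}), is therefore exactly the arrow
\[M\otimes \Omega_l(\Z_l)\longrightarrow \Omega_l R_l(M).\]
Corollary \ref{c6.1} then immediately yields that this arrow is an isomorphism as soon as $M$ is strongly dualisable in $\DA_\et(S)$. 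This settles the first assertion.

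For the second assertion, I would show that every object of $\DM_{\gm,\et}(\F)$ lifts, through the natural comparison functor, to a strongly dualisable object of $\DA_\et(\F)$. By Lemma \ref{l4.1}, the subcategory $\DM_{\gm,\et}(\F)\subseteq \DM_\et(\F)$ is rigid: every one of its objects is strongly dualisable in $\DM_\et(\F)$. To transfer this to $\DA_\et(\F)$ I would invoke the recapitulation of \S\ref{s.recap}(3): since $\Spec\F$ is normal and universally Japanese of finite \'etale cohomological dimension, Ayoub's Theorem B.1 of \cite{ayoubreal} provides an equivalence of tensor triangulated categories
\[\DA_\et(\F,\Z[1/2])\iso \DM_\et(\F,\Z[1/2]),\]
and both sides are automatically $\Z[1/p]$-linear by the Artin--Schreier argument of Theorem \ref{t3.2}(c), so the hypothesis $l\ne 2$ (which already appears in the construction of $R_l$) makes this identification sufficient for our purposes. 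Strong dualisability is preserved by equivalences of tensor triangulated categories, so every object of $\DM_{\gm,\et}(\F)$ is strongly dualisable when regarded in $\DA_\et(\F)$, and the first part applies.

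There is no genuine obstacle: the theorem is essentially a packaged application of Ayoub's projection formula (Lemma \ref{l9.2}) and its Corollary \ref{c6.1}, together with the rigidity lemma \ref{l4.1}. The only delicate point is the bookkeeping needed to align the various flavours of motivic category ($\DA_\et$ versus $\DM_\et$, with or without $2$ inverted), which is handled entirely by the summary in \S\ref{s.recap}. I expect the write-up to be very short, essentially a two-line verification followed by a citation trail.
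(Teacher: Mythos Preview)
Your proposal is correct and follows essentially the same approach as the paper: the first assertion is an immediate instance of Corollary \ref{c6.1} applied to the monoidal adjunction $(R_l,\Omega_l)$ of Lemma \ref{l5.1}, and the second assertion follows from the rigidity of $\DM_{\gm,\et}(\F)$ (Lemma \ref{l4.1}) together with the $\DA_\et$--$\DM_\et$ comparison of \S\ref{s.recap}. The paper's one-line proof also cites Remark \ref{r9.1}, but since $\Omega_l$ does not commute with small direct sums (Remark after Lemma \ref{l5.1}) that remark does not actually extend the conclusion here, so your omission of it is harmless.
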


\begin{cor}\label{c6.2} Let $\Gamma=\Omega_l(\Z_l)$. Let $M,N\in \DA_\et(S)$ and suppose
that $N$ is strongly dualisable. Then there is a natural isomorphism
\[\DA_\et(S)(M,N\otimes \Gamma)\iso \hat{D}_\et(S,\Z_l)(R_l(M),R_l(N)).\]
In particular, assume that $S=\Spec \F$ and $l\ne 2$; taking $M=M(X)$ for a smooth $\F$-scheme
$X$ and
$N=\Z(n)[i]$, we get an isomorphism
\[\DM_\et(\F)(M(X),\Gamma(n)[i])\iso H^i_\cont(X,\Z_l(n)).\]
\end{cor}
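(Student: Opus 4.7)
The plan is to combine Theorem \ref{t6.1} with the adjunction $(R_l,\Omega_l)$ of Lemma \ref{l5.1}. Since $N$ is strongly dualisable, Theorem \ref{t6.1} supplies a natural isomorphism $N\otimes\Gamma\iso \Omega_l R_l(N)$. Applying $\DA_\et(S)(M,-)$ and then the $(R_l,\Omega_l)$ adjunction gives
\[\DA_\et(S)(M,N\otimes\Gamma)\iso \DA_\et(S)(M,\Omega_l R_l(N))\iso \hat{D}_\et(S,\Z_l)(R_l(M),R_l(N)),\]
which is the first assertion; naturality in $M$ and $N$ is automatic from the naturality of each step.

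For the particular case, specialise $S=\Spec\F$, $M=M(X)$ and $N=\Z(n)[i]$. The Tate twist $\Z(n)[i]$ is invertible in $\DA_\et(\F)$ (its inverse being $\Z(-n)[-i]$), hence strongly dualisable, so the first part applies. Since $R_l$ is a strong unital $\otimes$-functor, one has $R_l(\Z(n)[i])\simeq \Z_l(n)[i]$, while Theorem \ref{t5.1} identifies $R_l M(X)$ with $(Rf_*\Z_l)^*$, where $f:X\to\Spec\F$. Using the duality pairing in $\hat{D}_\et(\F,\Z_l)$, the target Hom group rewrites as
\[\hat{D}_\et(\F,\Z_l)((Rf_*\Z_l)^*,\Z_l(n)[i])\iso \hat{D}_\et(\F,\Z_l)(\Z_l,Rf_*\Z_l(n)[i])=H^i_\cont(X,\Z_l(n)),\]
exactly as in the computation of \eqref{eq5.1}. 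To finish, since $\F$ is a field and $l\ne 2,p$, the equivalence $\DA_\et(\F)\simeq \DM_\et(\F)$ recalled in \S\ref{s.recap} identifies the source with $\DM_\et(\F)(M(X),\Gamma(n)[i])$.

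I expect no serious obstacle: the argument is essentially a formal unwinding of Theorem \ref{t6.1} and the adjunction. The substantive content sits in Theorem \ref{t6.1}, where Ayoub's projection formula (Lemma \ref{l9.2}) together with the strong dualisability of $N$ turn \eqref{eq6.2} into an isomorphism; the present corollary is the direct payoff. The only bookkeeping item worth watching is the compatibility of the $\DA_\et/\DM_\et$ identification with both the adjunction $(R_l,\Omega_l)$ and the object $\Gamma\in\DM_\et(\F)$ singled out in the introduction --- routine under the equivalence of \S\ref{s.recap}, but worth making explicit.
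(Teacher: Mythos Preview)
Your proof is correct and matches the paper's intended argument: the corollary is stated without proof precisely because it follows by applying $\DA_\et(S)(M,-)$ to the isomorphism of Theorem \ref{t6.1} and then invoking the adjunction $(R_l,\Omega_l)$, exactly as you do. Your treatment of the particular case --- invertibility of $\Z(n)[i]$, the identification $R_l M(X)\simeq (Rf_*\Z_l)^*$, and the passage to $H^i_\cont(X,\Z_l(n))$ via strong dualisability of $Rf_*\Z_l$ --- is the same computation the paper carries out in \eqref{eq5.1} and \S\ref{s6.4}.
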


\begin{lemma}\label{l9.1} Let $f:S\to \Spec \F$ be a smooth $\F$-scheme of finite type. Then
there is a canonical isomorphism $\Omega_l^S(\Z_l)\simeq f^*\Omega_l^{\Spec \F}\Z_l$.
\end{lemma}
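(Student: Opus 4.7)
The plan is to first establish a more general exchange isomorphism $f^{*}\circ\Omega_{l}^{\Spec\F}\simeq\Omega_{l}^{S}\circ f^{*}$ of functors $\hat D_\et(\Spec\F,\Z_l)\to\DA_\et(S)$, and then specialize at the unit $\Z_l$, using that $f^{*}\Z_l=\Z_l$.

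The key observation is that for a smooth morphism $f$, the pullback functor $f^{*}$ admits a left adjoint $f_{\#}$, both on the motivic side $\DA_\et(-)$ (by construction of $\DA_\et$, cf.\ \cite[Sch.\ 1.4.2]{ayoub}) and on the $l$-adic side $\hat D_\et(-,\Z_l)$ (alternatively, for $f$ smooth of relative dimension $d$ one has $f_{\#}\simeq f_{!}(d)[2d]$, so $f_{\#}$ is recovered from $f_{!}$ up to a Tate twist). Theorem~\ref{t5.1} tells us that $R_l$ commutes with the six operations of Grothendieck. From this, together with the commutation of $R_l$ with the Tate twist, one obtains a canonical exchange isomorphism
\[
R_l^{\Spec\F}\circ f_{\#}\;\simeq\; f_{\#}\circ R_l^{S}.
\]
Taking right adjoints of both sides of this isomorphism of functors transforms $(f_{\#},f^{*})$ on the motivic/$l$-adic sides into $(f^{*},\Omega_l\circ\cdot)$ and yields the desired exchange
\[
f^{*}\circ\Omega_{l}^{\Spec\F}\;\simeq\;\Omega_{l}^{S}\circ f^{*}.
\]

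Evaluating on $\Z_l\in\hat D_\et(\Spec\F,\Z_l)$ and noting that $f^{*}\Z_l=\Z_l$ (since $\Z_l$ is the unit object and $f^{*}$ is a strong monoidal functor) gives
\[
f^{*}\Omega_{l}^{\Spec\F}\Z_l\;\simeq\;\Omega_{l}^{S}f^{*}\Z_l\;=\;\Omega_{l}^{S}\Z_l,
\]
which is the claim. The main obstacle is verifying the exchange isomorphism $R_l\circ f_{\#}\simeq f_{\#}\circ R_l$: while commutation with $f^{*}$ and with $f_{!}$ are among the six-functor compatibilities supplied by Theorem~\ref{t5.1}, turning either of them into a statement about $f_{\#}$ uses either the smoothness-relative purity isomorphism $f_{!}\simeq f_{\#}(d)[2d]$ (compatible with $R_l$ since $R_l$ commutes with Tate twists) or a direct Beck--Chevalley argument from $R_l\circ f^{*}\simeq f^{*}\circ R_l$ via adjunction. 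Once this is in place, the rest of the argument is formal Yoneda/adjunction manipulation.
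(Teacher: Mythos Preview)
Your proof is correct and follows essentially the same approach as the paper's: both use that smoothness of $f$ gives a left adjoint $f_\#$ to $f^*$, that $f_\#$ commutes with $R_l$, and then pass to right adjoints to obtain $f^*\circ\Omega_l^{\Spec\F}\simeq\Omega_l^S\circ f^*$, before evaluating at $\Z_l$. The paper simply cites \cite{ayoubreal} for the commutation $R_l\circ f_\#\simeq f_\#\circ R_l$, whereas you spell out two routes to it (via relative purity $f_\#\simeq f_!(d)[2d]$ or via Beck--Chevalley from $R_l\circ f^*\simeq f^*\circ R_l$); this extra detail is fine but not a different strategy.
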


\begin{proof} Since $f$ is smooth, $f^*$ has a left adjoint $f_\sharp$ which commutes with $R_l$ \cite{ayoubreal}. This implies formally that $f^*$ commutes with $\Omega_l$. Since $f^*$ also commutes with $R_l$ (ibid.), the conclusion follows.
\end{proof}

\subsection{Towards the integral Friedlander conjecture}\label{s6.4} Theorem \ref{t6.1} and its
corollary look fantastic, but they are in some sense a \emph{trompe-l'\oe il}: all the hard
$l$-adic information is concentrated in the object
$\Gamma = \Omega_l(\Z_l)$. To explain this, let us evaluate this object against the motive of
some smooth variety detwisted: for simplicity we assume $S=\Spec \F$. Thus, if $M=M(X)(-n)[-i]$
with
$X$ a smooth $\F$-scheme and $n,i\in\Z$, we have
\begin{multline*}
\DM_\et(\F)(M(X)(-n)[-i],\Omega_l(\Z_l))=\hat{D}_\et(\F,\Z_l)(R_l(M(X)(-n)[-i]),\Z_l)\\
=\hat{D}_\et(\F,\Z_l)(\Z_l,Rf_*\Z_l(n)[i])=H^i_\cont(X,\Z_l(n))
\end{multline*}
where $f:X\to \Spec \F$ is the structural morphism.

Nevertheless, Corollary \ref{c6.2} gives the flavour of the integral Friedlander conjecture,
to come. In order to get something interesting, we need to go to the
\emph{effective} subcategory
$\DM_\et^\eff(\F)$. 

\subsection{The modified motivic cycle map} \label{s.rel}

From Lemma \ref{l5.1} and Proposition \ref{p6.1}, we deduce that the composite functor
\[\DA_\et^\eff(S)\by{i} \DA_\et(S)\by{R_l}\hat{D}_\et(S,\Z_l)\]
has the right adjoint
\[\Omega_l^\eff:\hat{D}_\et(S,\Z_l)\by{\Omega_l}\DA_\et(S)\by{-^\eff}\DA_\et^\eff(S).\]

Using \eqref{eq6.2},
we get a morphism for any $M\in \DA_\et^\eff(S)$:
\begin{equation}\label{eq6.4}
M\otimes \Gamma^\eff\to \Omega_l^\eff R_l(M)
\end{equation}
where $\Gamma=\Omega_l(\Z_l)$, as in Corollary \ref{c6.2}. The most important case is
$M=\Z(n)$ for $n\ge 0$, which we now record:
\begin{equation}\label{eq6.5}
\Gamma^\eff(n)\to \Omega_l^\eff \Z_l(n).
\end{equation}

The right hand side of \eqref{eq6.5} is denoted by $\Z_l(n)^c$ in \cite{tatesheaf} and
\cite{glr}. It computes continuous \'etale cohomology with coefficients in $\Z_l(n)$.

\begin{defn}\label{d7.1} We call \eqref{eq6.5} the \emph{modified motivic cycle map}.
\end{defn}

\subsection{The object $\Gamma^\eff$} From now on, we assume that $S=\Spec \F$ and $l\ne
2$.\footnote{In order to work with $\DM_\et(\F)$ rather than $\DA_\et(\F)$.} Of course,
Corollary
\ref{c6.1} tells us that
\eqref{eq6.4} is an isomorphism when $M$ is strongly dualisable in $\DM_\et^\eff(\F)$, but there
are very few such objects: we only have the objects of $d_{\le 0} \DM_{\gm,\et}^\eff(\F)$ (see
below). The motive $\Z(n)$, for $n>0$, is certainly not of this kind. Nevertheless we are in a
better situation than in \S
\ref{s6.4}, in that it is possible to compute
$\Gamma^\eff$:

\begin{prop}\label{p7.1} We have a canonical isomorphism
\begin{equation}\label{eq7.1}
\Z^c\otimes \Z_l\iso \Gamma^\eff
\end{equation}
where $\Z^c$ is the object of $D^b(\F_\et,\Z)$ recalled in \S \ref{s6.2}. In particular,
$\Gamma^\eff\otimes \Q\simeq \Q_l[0]\oplus \Q_l[-1]$.
\end{prop}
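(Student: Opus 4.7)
The plan is to identify both $\Gamma^\eff$ and $\Z^c\otimes\Z_l$ as representing the same functor $X\mapsto H^i_\cont(X,\Z_l)$ on $\DM_\et^\eff(\F)$, then produce a comparison morphism via the distinguished triangle defining $\Z^c$.

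First I work out what $\Gamma^\eff$ represents. By the adjunction $R_l\circ i\dashv\Omega_l^\eff$ from \S\ref{s.rel}, together with Theorem \ref{t5.1} (which identifies $R_l(iM(X))$ with the strongly dualisable $(Rf_*\Z_l)^\vee$ for $f:X\to\Spec\F$ smooth), we obtain
\[\DM_\et^\eff(\F)(M(X),\Gamma^\eff[i])=\hat{D}_\et(\F,\Z_l)(R_l(iM(X)),\Z_l[i])=H^i_\cont(X,\Z_l).\]
Since the motives $M(X)$ form a compact generating family for $\DM_\et^\eff(\F)$ (Proposition \ref{p6.1}(a)), this determines $\Gamma^\eff$ up to isomorphism. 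On the other side, $\Z^c\otimes\Z_l$ lifts from $D^b(\F_\et,\Z)$ to $\DM_\et^\eff(\F)$: the constituents $\Z$ and $\Q[-1]$ are canonically in the motivic category, and the gluing class $\partial$ of \eqref{eqpartial} lifts via the mod-$l^\nu$ equivalence $\DM_\et^\eff(\F,\Z/l^\nu)\simeq D(\F_\et,\Z/l^\nu)$ of Theorem \ref{t3.2}(b) and a passage to the limit. For any smooth $X$ one then has $\Hom(M(X),(\Z^c\otimes\Z_l)[i])=\mathbb{H}^i_\et(X,R\lim\Z_l)=H^i_\cont(X,\Z_l)$, so the same functor is represented.

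Next I produce the comparison morphism $\phi:\Z^c\otimes\Z_l\to\Gamma^\eff$. The unit of adjunction furnishes a canonical map $\Z\to\Gamma^\eff$, and I extend it along the triangle $\Z_l\to\Z^c\otimes\Z_l\to\Q_l[-1]\xrightarrow{\partial}\Z_l[1]$ by showing that the composite $\Q_l[-1]\to\Z_l[1]\to\Gamma^\eff[1]$ vanishes; via the factorisation \eqref{eqpartial}, this amounts to identifying the two avatars of the Frobenius class $e$ on the motivic and $l$-adic sides. That $\phi$ is an isomorphism is then verified on the compact generators $M(X)$: both Hom groups equal $H^i_\cont(X,\Z_l)$ and the construction induces the identity between them. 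The ``in particular'' decomposition $\Gamma^\eff\otimes\Q\simeq\Q_l[0]\oplus\Q_l[-1]$ is immediate from \eqref{eq6.1b} upon tensoring by $\Z_l$.

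The main obstacle is the careful lifting of the extension class $\partial$ from $D^b(\F_\et,\Z)$ to $\DM_\et^\eff(\F)$: the mod-$l^\nu$ equivalence of Theorem \ref{t3.2}(b) handles each finite stage, but matching the $l$-adic inverse-limit defining $\hat{D}_\et(\F,\Z_l)$ with the motivic adjunction $\Omega_l^\eff$ — in particular, verifying that the two incarnations of the class $e$ coincide under $R_l$ — is the key coherence point.
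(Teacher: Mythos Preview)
Your argument has a genuine gap at the step where you assert
\[\Hom_{\DM_\et^\eff(\F)}(M(X),(\Z^c\otimes\Z_l)[i])=\mathbb{H}^i_\et(X,R\lim\Z_l)=H^i_\cont(X,\Z_l).\]
This equality is precisely the content of the proposition, not an input to it. The identification $\Z^c\otimes\Z_l\simeq R\lim\Z_l$ quoted from \S\ref{s6.2} takes place in $D(\F_\et,\Z_l)$, i.e.\ on the small \'etale site of $\Spec\F$. Evaluating $\Z^c\otimes\Z_l$ against $M(X)$ in $\DM_\et^\eff(\F)$ computes $\mathbb{H}^i_\et(X,f^*(\Z^c\otimes\Z_l))$, that is, hypercohomology of $X$ with coefficients in the \emph{constant} complex with cohomology sheaves $\Z_l$ and $\Q_l$. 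This is not $H^i_\cont(X,\Z_l)$: pullback $f^*$ does not commute with $R\lim$, so $f^*(R\lim_\F\Z_l)\not\simeq R\lim_X\Z_l$ in general. Concretely, for $X$ smooth projective of dimension $d>0$ the former is governed by $H^*_\et(X,\Q_l)$ (constant sheaf), which vanishes in positive degrees, whereas $H^*_\cont(X,\Q_l)$ is typically nonzero up to degree $2d+1$.

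The paper closes this gap with a weight argument that your proposal never invokes. After observing (as you implicitly do) that the comparison map is an isomorphism mod~$l$, so that its cone is uniquely divisible, the paper works rationally and uses the Riemann hypothesis (Theorem~\ref{rh}) together with \eqref{eq3.4} to show that for smooth projective $X$ the map $H^i_\cont(\pi_0(X),\Q_l)\to H^i_\cont(X,\Q_l)$ is an isomorphism; since the analogous statement for $\Z^c\otimes\Q_l$ holds trivially (constant coefficients), the comparison reduces to the $0$-dimensional case where it is the computation of \S\ref{s6.2}. The construction of the map itself is also handled differently: rather than extending along the triangle $\Z_l\to\Z^c\otimes\Z_l\to\Q_l[-1]$ (which requires checking an obstruction you only sketch), the paper identifies $\rho_0\Gamma^\eff$ with $\Z^c\otimes\Z_l$ directly and takes the counit of the adjunction $(i_0,\rho_0)$.
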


\begin{proof} This corresponds to \cite[Th. 6.3 and Cor. 6.4]{tatesheaf}. Let me repeat the
proof, which is a simple application of Theorem \ref{rh}. 
We first construct the map \eqref{eq7.1}. For $n\ge 0$, let $d_{\le n} \DM_\et^\eff(\F)$ be the
localising subcategory of $\DM_\et^\eff(\F)$ generated by motives of varieties of dimension $\le
n$ \cite[\S 3.4]{V}. The inclusion functor $i_n:d_{\le n} \DM_\et^\eff(\F)\to \DM_\et^\eff(\F)$
has a right adjoint $\rho_n$. Suppose $n=0$: by \S \ref{s6.2} we have
\[\rho_0 \Gamma^\eff \simeq \Z^c\otimes \Z_l\]
and we get the map by adjunction.

It is clear that 
\[\Gamma^\eff\otimes \Z/l\simeq \Omega_l^\eff(\Z/l)=\Z/l.\]

Combining this with \eqref{eq7.2}, we find that the cone of \eqref{eq7.1} is uniquely
divisible. Hence it suffices to show that \eqref{eq7.1}$\otimes \Q$ is an isomorphism.

Let $X$ be a
smooth projective
$\F$-variety, and let $X\to\pi_0(X)$ be the Stein factorisation of the structural morphism $X\to
\Spec \F$: $\pi_0(X)$ is the ``scheme of constants" of $X$. For $i\in\Z$, we have a commutative
diagram
\[\begin{CD}
H^i_\et(\pi_0(X),\Z^c\otimes \Q_l)@>a>> H^i_\et(X,\Z^c\otimes \Q_l)\\
@VcVV @VdVV\\
H^i_\cont(\pi_0(X),\Q_l)@>b>> H^i_\cont(X,\Q_l)
\end{CD}\]
where the vertical maps are induced by \eqref{eq7.1}$\otimes \Q$. 

In this diagram, $c$ is an isomorphism by the result of \S \ref{s6.2}. The
 map $a$ is an isomorphism for trivial reasons.  The Riemann hypothesis over finite
fields and \eqref{eq3.4} imply that $b$ is an isomorphism. Hence so is $d$.

Thus \eqref{eq7.1}$\otimes \Q$ induces an isomorphism when evaluated against the motive
$M(X)[-i]$ for any smooth projective $X$ and any $i\in\Z$. Since these objects generate
$\DM_\et^\eff(\F,\Q)$,
\eqref{eq7.1}$\otimes \Q$ is an isomorphism by Yo\-ne\-da's lemma.
\end{proof}

The terminology of Definition \ref{d7.1} is now justified by

\begin{cor}\label{c9.3} Evaluating \eqref{eq6.5} against the motive of a smooth variety $X$
yields a map
\[H^j_\et(X,\Z(n)\oo^L\Z^c)\otimes \Z_l\overset{\cl_X^{n,j}}{\longrightarrow}
H^{j}_\cont(X,\Z_l(n))\] 
of the form \eqref{eq7.3}.\qed
\end{cor}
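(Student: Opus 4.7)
My plan is to obtain the statement by a direct unwinding of definitions, substituting Proposition~\ref{p7.1} into the modified motivic cycle map~\eqref{eq6.5} and then applying $\DM_\et^\eff(\F)(M(X),-[j])$.

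First I would rewrite the source of \eqref{eq6.5}: by Proposition~\ref{p7.1},
\[\Gamma^\eff(n)\simeq (\Z(n)\oo^L\Z^c)\otimes \Z_l.\]
Since $M(X)$ is compact in $\DM_\et^\eff(\F)$ (Proposition~\ref{p6.1}~a)), the functor $\DM_\et^\eff(\F)(M(X),-)$ commutes with the filtered colimit presenting $-\otimes\Z_l$, so applying it to the source yields
\[\DM_\et^\eff(\F)(M(X),(\Z(n)\oo^L\Z^c)[j])\otimes\Z_l.\]
By Theorem~\ref{t3.3}~a), this group equals $H^j_\et(X,\Z(n)\oo^L\Z^c)\otimes\Z_l$, which is exactly the source of~\eqref{eq7.3}.

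For the target, the factorisation $\Omega_l^\eff=(-)^\eff\circ\Omega_l$ combined with the adjunctions $(R_l,\Omega_l)$ of Lemma~\ref{l5.1} and $(i,(-)^\eff)$ of Proposition~\ref{p6.1}~b) identifies
\[\DM_\et^\eff(\F)(M(X),\Omega_l^\eff\Z_l(n)[j])\simeq \hat D_\et(\F,\Z_l)(R_l\,iM(X),\Z_l(n)[j]).\]
Theorem~\ref{t5.1} gives $R_l\,iM(X)\simeq (Rf_*\Z_l)^\vee$ for $f\colon X\to\Spec\F$ the structural morphism; using strong dualisability of $R_lM(X)$ (cf.\ Lemma~\ref{l4.1} and the $\otimes$-compatibility of $R_l$) together with the fact that $\Z_l(n)$ is pulled back from $\Spec\F$, the right-hand side rewrites as
\[\hat D_\et(\F,\Z_l)(\Z_l,Rf_*\Z_l(n)[j])=H^j_\cont(X,\Z_l(n)),\]
which is the target of~\eqref{eq7.3}.

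The main (minor) obstacle is to verify that the resulting arrow is ``of the form''~\eqref{eq7.3}, i.e.\ agrees with the cycle class map originally constructed by different means in~\cite{tatesheaf,glr}. This reduces to checking that, under the identifications above, the morphism induced by the unit of the adjunction $(R_l\circ i,\Omega_l^\eff)$ on generators $M(X)$ matches the one built there directly from \eqref{eq3.3} by passage to the $l$-adic limit; this is a matter of coherence of the duality and adjunction data recalled in~\S\ref{s6.2}, and the two maps are characterised by the same universal property on the generating family of motives of smooth varieties.
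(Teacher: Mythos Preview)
Your proposal is correct and is exactly the unwinding the paper leaves implicit: the corollary is marked with a bare \qed\ because it follows immediately from Proposition~\ref{p7.1} (identifying the source) and the adjunction chain $(i,(-)^\eff)$, $(R_l,\Omega_l)$ together with the computation of \S\ref{s6.4} (identifying the target). Your last paragraph goes slightly beyond what is asserted: ``of the form~\eqref{eq7.3}'' only means having the displayed source and target, not literal agreement with the map built in \cite{tatesheaf,glr}, so the coherence check you sketch is not required for the corollary as stated.
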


\begin{remark}\label{r9.7} Let us record the other computations of \cite[Cor. 6.10 and
6.12]{tatesheaf}, which also rely on the Riemann hypothesis:
\begin{enumerate}
\item For $n\in \Z$, the cohomology sheaves of $\Omega_l^\eff\Z_l(n)\otimes \Q$ are concentrated
in
$[n,2n+1]$; in particular, $\Omega_l^\eff\Z_l(n)\otimes \Q=0$ for $n<0$.
\item For $0\le m<n$, $\rho_m(\Omega_l^\eff\Z_l(n)\otimes \Q)=0$ (see proof of Proposition
\ref{p7.1} for the definition of $\rho_m$). 
\end{enumerate}
\end{remark}

\subsection{The motivic Friedlander conjecture}\label{sfried} We start with:

\begin{lemma}\label{l7.1} For any $M\in \DM_\et(\F)$ and $\nu\ge 1$, \eqref{eq6.4}$\otimes
\Z/l^\nu$ is an isomorphism (hence so is \eqref{eq6.5}$\otimes
\Z/l^\nu$).
\end{lemma}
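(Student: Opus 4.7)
The plan is to reduce the statement to the Suslin--Voevodsky rigidity theorem, under the form of Theorem \ref{t3.2} b), and to identify \eqref{eq6.4}$\otimes \Z/l^\nu$ with the unit of an equivalence of categories.

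First, I would compute the left-hand side of \eqref{eq6.4}$\otimes \Z/l^\nu$. By Proposition \ref{p7.1}, $\Gamma^\eff \simeq \Z^c\otimes\Z_l$, and by \eqref{eq7.2} we have $\Z^c\oo^L\Z/l^\nu\simeq \Z/l^\nu$. Hence
\[
\Gamma^\eff\oo^L\Z/l^\nu\simeq \Z/l^\nu,
\]
so the left-hand side of \eqref{eq6.4}$\otimes \Z/l^\nu$ is simply $M\oo^L\Z/l^\nu$.

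Next, I would interpret the right-hand side in the $\Z/l^\nu$-linear setting. The realization functor $R_l$ is a $\otimes$-triangulated, $\Z$-linear functor, so it induces a functor $R_l\otimes\Z/l^\nu$ between the mod-$l^\nu$ categories, fitting into a commutative square
\[
\begin{CD}
\DM_\et^\eff(\F,\Z/l^\nu) @>R_l\otimes \Z/l^\nu>> \hat D_\et(\F,\Z_l)\otimes \Z/l^\nu\\
@V\wr VV @VV\wr V\\
D(\F_\et,\Z/l^\nu) @= D(\F_\et,\Z/l^\nu)
\end{CD}
\]
where the left vertical arrow is the equivalence of Theorem \ref{t3.2} b) and the right vertical arrow is the tautological identification of Ekedahl's category mod $l^\nu$. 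That the bottom arrow is the identity (up to natural isomorphism) follows from Theorem \ref{t3.4}: it encodes precisely the Kummer identification $\mu_{l^\nu}^{\otimes n}\simeq \Z/l^\nu(n)_\et$ through which both functors are constructed. Consequently $R_l\otimes \Z/l^\nu$ is an equivalence of categories, and by uniqueness of adjoints its right adjoint $\Omega_l^\eff\otimes \Z/l^\nu$ is a quasi-inverse.

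Finally, tracing through the construction of \eqref{eq6.4} via \eqref{eq6.2}, the map \eqref{eq6.4}$\otimes \Z/l^\nu$ is identified with the unit of this adjunction evaluated at $M\oo^L\Z/l^\nu$; since the adjunction is an equivalence, this unit is an isomorphism. The parenthetical statement for \eqref{eq6.5} follows by specializing $M=\Z(n)$.

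The delicate step I expect to be the main obstacle is the second one: verifying that under the equivalence of Theorem \ref{t3.2} b) the functor $R_l\otimes \Z/l^\nu$ really corresponds to the identity of $D(\F_\et,\Z/l^\nu)$. This requires unpacking Ayoub's construction of $R_l$ in \cite{ayoubreal} at finite level and checking its compatibility with restriction to the small \'etale site and with the rigidity isomorphism \eqref{eq3.1}; once this compatibility is in hand, the rest of the argument is a formal manipulation of adjunctions.
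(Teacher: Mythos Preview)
Your proposal is correct and follows essentially the same route as the paper: reduce to the Suslin--Voevodsky rigidity theorem (Theorem \ref{t3.2} b)) to see that $R_l^\eff$ and $\Omega_l^\eff$ become quasi-inverse equivalences modulo $l^\nu$, and then identify \eqref{eq6.4}$\otimes\Z/l^\nu$ with the unit of this equivalence. One minor remark: your appeal to Proposition \ref{p7.1} to compute $\Gamma^\eff\oo^L\Z/l^\nu$ is a detour --- that proposition rests on the Riemann hypothesis, whereas the identification $\Gamma^\eff\oo^L\Z/l^\nu\simeq\Omega_l^\eff(\Z/l^\nu)\simeq\Z/l^\nu$ already falls out of the equivalence itself, which is how the paper proceeds.
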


\begin{proof}
Theorem \ref{t3.2} b) implies that $R_l^\eff=R_l\circ i$ and $\Omega_l^\eff$ become
quasi-inverse equivalence of categories when we pass to $\Z/l^\nu$ coefficients.
\end{proof}


\begin{prop}\label{rk=f} Let $X$ be a smooth $\F$-variety and $n\in \Z$. Then the following are
equivalent:
\begin{thlist}
\item The map $\cl_X^{n,j}$ of Corollary \ref{c9.3} is an isomorphism for any $j\in\Z$.
\item Same as {\rm (i)}, replacing $\cl_X^{n,j}$ by $\cl_X^{n,j}\otimes \Q$.
\item The map $\bar \cl_X^{n,j}$ of \eqref{eq9.1} is bijective for any $j\in \Z$
(Friedlander's conjecture), and the composition as in Remark \ref{sn} 
\[\rho_X^{n,j}:H^j_\cont(\bar X, \Q_l(n))^G\Inj H^j_\cont(\bar X,
\Q_l(n))\Surj H^j_\cont(\bar X, \Q_l(n))_G\]
is bijective for any $j\in \Z$.  
\end{thlist}  
\end{prop}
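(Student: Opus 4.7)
The argument splits as (i)$\iff$(ii) and (ii)$\iff$(iii), which require rather different tools. For (i)$\iff$(ii) only the converse is nontrivial. Let $C\in\DM_\et^\eff(\F)$ be the cone of \eqref{eq6.5}. Lemma~\ref{l7.1} gives $C\otimes^L\Z/l^\nu=0$ for every $\nu$, hence for each $j$ the group $D^j:=\Hom(M(X),C[j])$ is a $\Z_l$-module on which multiplication by $l$ is invertible; so $D^j$ is a $\Q_l$-vector space, in particular $D^j = D^j\otimes\Q$. Condition (i) asserts the vanishing of all $D^j$, and (ii) the vanishing of all $D^j\otimes\Q$, so they coincide.

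For (ii)$\iff$(iii), I unpack $\cl_X^{n,j}\otimes\Q$ using the decompositions available on both sides. By Proposition~\ref{p7.1} and Theorem~\ref{t3.3}~b), the rationalised source decomposes as
\[
\bigl(H^j(X,\Q(n))\oplus H^{j-1}(X,\Q(n))\bigr)\otimes\Q_l,
\]
while the target $H^j_\cont(X,\Q_l(n))$ sits in the Hochschild--Serre short exact sequence \eqref{eq3.4} with subspace $H^{j-1}_\cont(\bar X,\Q_l(n))_G$ and quotient $H^j_\cont(\bar X,\Q_l(n))^G$. I would show that $\cl_X^{n,j}\otimes\Q$ is ``diagonal'' with respect to these filtrations: its projection onto the quotient is $\bar\cl_X^{n,j}$ on the first summand and zero on the second; the second summand therefore factors through the subspace, and the induced map is $\rho_X^{n,j-1}\circ\bar\cl_X^{n,j-1}$. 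Granting this, $\cl_X^{n,j}\otimes\Q$ is bijective for all $j$ precisely when $\bar\cl_X^{n,j}$ is bijective for all $j$ and, using the former and reindexing, $\rho_X^{n,j}$ is bijective for all $j$, which is (iii).

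The main obstacle is verifying the diagonality claim of the previous paragraph. I would combine two inputs both governed by the canonical generator $e\in H^1_\et(\F,\Z^c)$ of \eqref{eq6.1d}: on the target side, Lemma~\ref{l4.2} applied to the perfect complex $R(\bar p_X)_*\Q_l(n)$ on $\Spec\F$ identifies multiplication by $e$ with the composite $H^j(C)^G\hookrightarrow H^j(C)\twoheadrightarrow H^j(C)_G$ that defines $\rho$; on the source side, the factorization \eqref{eqpartial} of the extension class $\partial$ expresses the splitting $\Z^c\otimes\Q\simeq\Q[0]\oplus\Q[-1]$ via cup-product with the same $e$. Tracing these two occurrences of $e$ through the construction of \eqref{eq6.5} yields simultaneously the vanishing of the off-diagonal contribution and the identification of the diagonal one with $\rho_X^{n,j-1}\circ\bar\cl_X^{n,j-1}$.
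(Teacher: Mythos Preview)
Your proof is correct and follows essentially the same approach as the paper. For (i)$\iff$(ii) you spell out the cone argument that the paper compresses into a citation of Lemma~\ref{l7.1} (your assertion that $D^j$ is a $\Z_l$-module is justified by the short exact sequence $0\to\Coker\cl_X^{n,j}\to D^j\to\Ker\cl_X^{n,j+1}\to 0$, both ends being $\Z_l$-modules); for (ii)$\iff$(iii) the paper organises the same comparison as a map of short exact sequences rather than via the splitting of $\Q^c$, identifying the induced maps on sub and quotient as $f^{j-1}=\rho_X^{n,j-1}\circ\bar\cl_X^{n,j-1}$ and $\bar\cl_X^{n,j}$ exactly as you do, and invoking \eqref{eq3.4}, \eqref{eq6.1b} and Lemma~\ref{l4.2} for the compatibility check you sketch in your final paragraph.
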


\begin{proof} This is an elaboration of the proof of \cite[Prop. 3.9]{glr}:

The equivalence (i)
$\iff$ (ii) follows from Lemma
\ref{l7.1}. We now prove (ii) $\iff$ (iii).

Using \eqref{eq3.4}, \eqref{eq6.1b} and the commutative diagram \eqref{eq9.2} applied to
$C=Rf_*\Q_l(n))$ for $f:X\to \Spec \F$ the structural morphism, one checks that
$\cl_X^{n,j}\otimes \Q$ fits in a diagram of short exact sequences
\[\begin{CD}
0&\to& H^{j-1}(X,\Q(n))_{\Q_l}&\to& H^j(X,\Q^c(n))_{\Q_l}&\to& H^j(X,\Q(n))_{\Q_l}&\to& 0\\
&&@V{f^{j-1}}VV @V{\cl_X^{n,j}\otimes \Q}VV @V{\bar \cl_X^{n,j}}VV\\
0&\to& H^{j-1}_\cont(\bar X,\Q_l(n))_G&\to& H^j_\cont(X,\Q_l(n))&\to& H^j_\cont(\bar
X,\Q_l(n))^G&\to& 0
\end{CD}\]
where $-_{\Q_l}:=-\otimes_\Q \Q_l$, $\Q^c(n):=\Z(n)\oo^L\Z^c\otimes \Q$.
 Moreover, $f^j=\rho_X^{n,j}\circ\bar\cl_X^{n,j}$.

If $\cl_X^{n,j}\otimes \Q$ is bijective, we get the surjectivity of $\bar \cl_X^{n,j}$
and the injectivity of $f^{j-1}$.  So if $\cl_X^{n,j+1}\otimes \Q$ is also bijective, then $\bar
\cl_X^{n,j}$ and
$\rho_X^{n,j}$ are bijective. 

Conversely, if $\bar \cl_X^{n,j}$, $\rho_X^{n,j-1}$ and $\bar \cl_X^{n,j-1}$ are bijective,
then  $\cl_X^{n,j}\otimes \Q$ is
bijective.
\end{proof}

\begin{thm}\label{t1} For $X$ smooth projective and $n\ge 0$, consider the
following  conditions:
\begin{description}
\item[$A(X,n)$] Conjectures \ref{ct} and \ref{cb} hold for $(X,n)$.
\item[$B(X,n)$] $\cl_X^{n,j}$ is an isomorphism for all $j\in\Z$.
\end{description}
Let $d=\dim X$. Then 
\begin{align*}
B(X,n)+B(X,d-n)&\Rightarrow A(X,n)+A(X,d-n)\\
\intertext{and}
A(X\times X,d)+A(X,n)&\Rightarrow B(X,n).
\end{align*}
In particular, the validity of
$B(X^m,n)$ for all $(m,n)$ is independent of $l$.
\end{thm}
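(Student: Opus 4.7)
The plan is to reduce everything to Proposition \ref{rk=f}, which reformulates $B(X,n)$ as the conjunction of Friedlander's conjecture for $(X,n)$ and the bijectivity of $\rho_X^{n,j}$ for every $j$. Once this is done, I expect the Tate conjecture to follow quickly from condition (4) of Remark \ref{sn} ((a)$_n$ + (a)$_{d-n}$ + $S^n$), and the Beilinson conjecture from the fact that an isomorphism $\bar\cl_X^{n,2n}$ forces rational equivalence to agree with homological equivalence (and then homological equals numerical by Tate).

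For the first implication, I would apply Proposition \ref{rk=f} to both $B(X,n)$ and $B(X,d-n)$. Evaluating Friedlander at $j=2n$ for codimension $n$ gives the surjectivity (a)$_n$ of the cycle map and the injectivity of the cycle map modulo homological equivalence; similarly $j=2(d-n)$ with codimension $d-n$ gives (a)$_{d-n}$; finally $\rho_X^{n,2n}$ bijective is precisely $S^n$. Remark \ref{sn} (4) then yields Conjecture \ref{ct} for $(X,n)$, hence hom = num in codimension $n$ by Theorem \ref{tt} (3)(b); combined with the injectivity rat = hom on $CH^n(X)_\Q$ just obtained, this gives Conjecture \ref{cb} for $(X,n)$. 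The symmetric argument gives $A(X,d-n)$.

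For the second implication, from $A(X,n)$ I extract the cohomological Tate conjecture in codimension $n$; together with the Beilinson conjecture for $X\times X$ in codimension $d$ contained in $A(X\times X,d)$, Geisser's effective version (Theorem \ref{tG}) delivers Parshin in weight $n$: $H^j(X,\Q(n))=0$ for $j\neq 2n$. I will then verify the criterion (iii) of Proposition \ref{rk=f} case by case. For $j\neq 2n$, the Riemann hypothesis (Theorem \ref{rh}) shows that Frobenius acts on $H^j_\cont(\bar X,\Q_l(n))$ with eigenvalues of weight $j-2n\neq 0$, hence both $H^j_\cont(\bar X,\Q_l(n))^G$ and $H^j_\cont(\bar X,\Q_l(n))_G$ vanish, so $\bar\cl_X^{n,j}$ and $\rho_X^{n,j}$ are trivially bijective. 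For $j=2n$, the Tate conjecture in $A(X,n)$ supplies $S^n$ (by Remark \ref{sn} (4)), so $\rho_X^{n,2n}$ is bijective; the cycle map $CH^n(X)_\Q\to H^{2n}_\cont(\bar X,\Q_l(n))^G$ is surjective by (3)(a), and injective because Beilinson plus Tate give rat = hom on $CH^n(X)_\Q$. The main subtlety here is bookkeeping: making sure that the pieces one extracts from $A(X,n)$ and from Geisser's theorem match exactly with both halves of criterion (iii) of Proposition \ref{rk=f}; I expect this to be the principal obstacle, but essentially routine.

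The last assertion, that the validity of $B(X^m,n)$ for all $(m,n)$ is independent of $l$, follows formally from the two implications: if $B(X^m,n)$ holds for all $(m,n)$, then applying the first implication to every pair $(n,d_m-n)$ with $d_m=m\dim X$ yields $A(X^m,n)$ for all $(m,n)$, a statement involving only rational and numerical equivalence and the order of the pole of $\zeta$, and therefore $l$-independent. Conversely, from $A(X^m,n)$ for all $(m,n)$ one gets in particular $A(X^{2m},d_{2m})=A(X^m\times X^m,d_{2m})$, so the second implication delivers $B(X^m,n)$ for any given $l\ne 2,p$.
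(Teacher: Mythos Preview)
Your proof is correct and follows essentially the same route as the paper's: reduce $B(X,n)$ via Proposition~\ref{rk=f} and Theorem~\ref{rh} to the conjunction of (i) Parshin in weight $n$, (ii) Conjecture~\ref{cf} for $(X,n)$, and (iii) condition $S^n$; then use Theorem~\ref{tt} and Remark~\ref{sn} to match (ii)+(iii) for $n$ and $d-n$ with $A(X,n)+A(X,d-n)$, and Theorem~\ref{tG} to recover (i) from $A(X\times X,d)+A(X,n)$. One small slip in your last paragraph: for $Y=X^m$ the second implication requires $A(Y\times Y,\dim Y)=A(X^{2m},d_m)$, not $A(X^{2m},d_{2m})$; since you are assuming $A(X^{m'},n')$ for all $(m',n')$ this does not affect the argument.
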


\begin{proof} This is a $\DM$ version of the equivalence between conjectures 3.1 and 3.2 of
\cite{glr}. Let me recall the proof:

1) By Proposition \ref{rk=f} and Theorem \ref{rh}, $B(X,n)$
is equivalent to the following three conditions put together:
\begin{thlist}
\item $H^i(X,\Q(n))=0$ for $i\ne 2n$.
\item Conjecture \ref{cf} holds for $(X,n)$.
\item Condition $S^n$ of Remark \ref{sn} holds for $X$.
\end{thlist}

2) By Theorem \ref{tt} and Remark \ref{sn}, (ii) + (iii) for $(X,n)$ and $(X,d-n)$ $\iff$
$A(X,n)+A(X,d-n)$, where $d=\dim X$. In particular we get the first implication in Theorem
\ref{t1}.

3) It remains to see that $A(X\times X,d)+ A(X,n) \Rightarrow$ (i): this follows from Theorem
\ref{tG}.
\end{proof}

\begin{cor}\label{c9.1} a) The following are equivalent:
\begin{thlist}
\item Conjectures \ref{ct} and \ref{cb}.
\item $\cl_X^{n,j}$ is an isomorphism for all smooth projective $X$ and any $n\ge 0$, $j\in\Z$.
\item $\cl_X^{n,j}$ is an isomorphism for all smooth $X$ and any $n\ge 0$, $j\in\Z$.
\item \eqref{eq6.5} is an isomorphism for any $n\ge 0$.
\item For any $n>0$, $(\Omega_l^\eff\Z_l(n))(-1)\in \DM_\et^\eff(\F)$.
\item $\Gamma\in \DM_\et^\eff(\F)$; equivalently, $\Gamma^\eff\iso \Gamma$.
\item Same as {\rm (vi)}, replacing $\DM_\et(\F)$ by $\DM_\et(\F,\Z_l)$.
\end{thlist}
b) If this holds, then \eqref{eq6.4} is an isomorphism for any $M\in \DM_{\gm,\et}^\eff(\F)$.
In particular, for such $M$ and for any $N\in\DM_\et^\eff(\F)$, there is an isomorphism
\[\DM_\et^\eff(\F)(N,M\otimes \Z^c)\otimes \Z_l\iso \hat{D}_\et(\F,\Z_l)(R_l(N),R_l(M)). \]
\end{cor}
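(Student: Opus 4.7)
\textbf{Proof plan for Corollary \ref{c9.1}.} My plan is to organize the argument around two pivots: the effectivity criterion of Corollary \ref{c7.1} applied to the object $\Gamma=\Omega_l(\Z_l)$ itself, and the input of Theorem \ref{t1} which converts the algebraic-cycle conjectures into information about the cycle class maps $\cl_X^{n,j}$.

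First I dispose of the purely motivic/$l$-adic block (iv) $\iff$ (v) $\iff$ (vi) $\iff$ (vii). Since $\Z(n)$ is invertible (hence strongly dualisable) in $\DM_\et(\F)$, Theorem \ref{t6.1} supplies a canonical isomorphism $\Z(n)\otimes\Gamma\iso \Omega_l R_l(\Z(n))=\Omega_l(\Z_l(n))$. Applying the right adjoint $(-)^\eff$ of the inclusion $i$ identifies $\Omega_l^\eff\Z_l(n)$ with $(\Gamma(n))^\eff$, and under this identification the modified motivic cycle map \eqref{eq6.5} becomes the canonical comparison $\Gamma^\eff(n)\to(\Gamma(n))^\eff$ appearing in Corollary \ref{c7.1}(ii). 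Applying that corollary to $M=\Gamma$ then yields (iv) $\iff$ (v) $\iff$ (vi) on the nose. The equivalence with (vii) is routine: $\Gamma$ lies in the essential image of $\DM_\et(\F,\Z_l)\to\DM_\et(\F)$ because $\Omega_l$ factors through $\Z_l$-coefficients, and the effectivity condition is insensitive to this change of base ring.

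Second, I prove (ii) $\iff$ (iii) $\iff$ (iv). The implication (iv) $\Rightarrow$ (iii) is immediate by evaluating \eqref{eq6.5} against $M(X)[-j]$ for $X$ smooth and invoking Theorem \ref{t3.3}(a) together with Corollary \ref{c9.3}; (iii) $\Rightarrow$ (ii) is trivial. For (ii) $\Rightarrow$ (iv) I appeal to Yoneda's lemma: motives of smooth projective varieties generate $\DM_\et^\eff(\F)$ as a localising subcategory, by Gabber's refinement of de Jong's alteration theorem (as in the proof of Lemma \ref{l4.1}), so it is enough to test \eqref{eq6.5} against $M(X)[-j]$ with $X$ smooth projective, which is precisely the content of (ii). The equivalence (i) $\iff$ (ii) is a direct application of Theorem \ref{t1}: from (i), $A(X\times X,d)+A(X,n)$ gives $B(X,n)$, whence (ii); conversely $B(X,n)+B(X,d-n)$ gives $A(X,n)+A(X,d-n)$, and letting $n$ and $X$ vary recovers (i).

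For part (b), Lemma \ref{l4.1} makes every $M\in\DM_{\gm,\et}^\eff(\F)$ strongly dualisable in $\DM_\et(\F)$, so Theorem \ref{t6.1} yields $M\otimes\Gamma\iso\Omega_l R_l(M)$. Under (vi), $\Gamma=\Gamma^\eff\in\DM_\et^\eff(\F)$, hence $M\otimes\Gamma\in\DM_\et^\eff(\F)$ and $\Omega_l^\eff R_l(M)=(\Omega_l R_l(M))^\eff=\Omega_l R_l(M)$, proving that \eqref{eq6.4} is an isomorphism. Proposition \ref{p7.1} then identifies $\Gamma^\eff$ with $\Z^c\otimes\Z_l$, and the desired Hom isomorphism follows from the $(R_l,\Omega_l)$ adjunction together with the full faithfulness of $i$; the external $\otimes\Z_l$ on the left is handled either by passing to the $\Z_l$-linear version via (vii) or by moving $\Z_l$ through the compact object $N$. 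The hardest step, and the real heart of the argument, is the identification $\Omega_l^\eff\Z_l(n)\simeq(\Gamma(n))^\eff$ which matches the modified motivic cycle map \eqref{eq6.5} with the counit comparison of Corollary \ref{c7.1} applied to $\Gamma$; once this bridge is in place, all seven conditions line up by formal manipulations.
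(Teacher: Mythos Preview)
Your proposal is correct and follows essentially the same route as the paper's own proof: the equivalence (i) $\iff$ (ii) via Theorem \ref{t1}, the passage (ii) $\iff$ (iv) by testing against motives of smooth projective varieties (which generate), and the block (iv) $\iff$ (v) $\iff$ (vi) by applying Corollary \ref{c7.1} to $M=\Gamma$ after identifying $\Omega_l^\eff\Z_l(n)$ with $(\Gamma(n))^\eff$ via Theorem \ref{t6.1}. Your explicit bridging step --- spelling out why \eqref{eq6.5} becomes the comparison map $\Gamma^\eff(n)\to(\Gamma(n))^\eff$ --- is a welcome unpacking of what the paper leaves implicit.

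One small slip: in the final sentence of part (b) you speak of ``moving $\Z_l$ through the compact object $N$'', but $N\in\DM_\et^\eff(\F)$ is arbitrary, not assumed compact. Your alternative route through the $\Z_l$-linear category via (vii) is the right one and suffices, so this does not affect the validity of the argument.
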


\begin{proof} a) (i) $\iff$ (ii): this follows from Theorem \ref{t1}.

(ii) $\iff$ (iv): this is
formal, since $\DM_{\gm,\et}^\eff(\F)$ is generated by the $M(X)$ for $X$ smooth
projective.\footnote{One may avoid the use of Gabber's theorem here by using the equivalence
(i) $\iff$ (ii) of Proposition 
\ref{rk=f} and working in $\DM_\gm(\F,\Q)$: then only de Jong's theorem is needed.}

(iv) $\Rightarrow$ (iii) $\Rightarrow$ (ii) is trivial.

(iv) $\iff$ (v) $\iff$ (vi): this follows from Corollary \ref{c7.1}. 

(vi) $\iff$ (vii) is formal. (Note that the right adjoint to $\DM_\et(\F)\to \DM_\et(\F,\Z_l)$
is just given by restriction of scalars.)

b) Applying the functor $-^\eff$ to the isomorphism of Theorem \ref{t6.1}, we get an
isomorphism
\[(M\otimes \Gamma)^\eff \iso \Omega_l^\eff R_l(M).\]

If (vi) holds, the obvious isomorphism $(M\otimes \Gamma^\eff)^\eff\iso M\otimes
\Gamma^\eff$ yields an isomorphism $M\otimes
\Gamma^\eff\iso\Omega_l^\eff R_l(M)$, that one checks is
\eqref{eq6.4}.
\end{proof}

The next corollary follows from Lemma \ref{l9.1}:

\begin{cor}\label{c9.2} Suppose that the equivalent conditions of Corollary \ref{c9.1} hold,
and let $S$ be a smooth $\F$-scheme of finite type. Then there is a canonical  isomorphism
\[M\otimes \Z_l\otimes \Z^c\iso \Omega_lR_l(M)\]
for any strongly dualisable $M\in \DA_\et(S)$.\qed
\end{cor}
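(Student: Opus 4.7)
The plan is to chain together three already established facts: Theorem \ref{t6.1}, Lemma \ref{l9.1}, and the identification $\Gamma\simeq \Z^c\otimes \Z_l$ over $\Spec\F$ afforded by the standing hypothesis together with Proposition \ref{p7.1}.

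First, applying Theorem \ref{t6.1} to the strongly dualisable object $M\in \DA_\et(S)$ yields a canonical isomorphism
\[M\otimes \Omega_l^S(\Z_l)\iso \Omega_l^S R_l(M).\]
The remaining task is thus to identify $\Omega_l^S(\Z_l)$ with $\Z_l\otimes \Z^c$ (understood as a pullback from the base). By Lemma \ref{l9.1}, if $f:S\to \Spec\F$ denotes the structural morphism, there is a canonical isomorphism $\Omega_l^S(\Z_l)\simeq f^*\Gamma$ where $\Gamma=\Omega_l^{\Spec\F}(\Z_l)$.

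Under the standing hypothesis, Corollary \ref{c9.1}(vi) gives $\Gamma\iso \Gamma^\eff$ in $\DM_\et(\F)$, while Proposition \ref{p7.1} identifies $\Gamma^\eff$ with $\Z^c\otimes \Z_l$. Composing these and transporting to $\DA_\et(\F)$ through the equivalence $\DA_\et(\F,\Z[1/2])\iso \DM_\et(\F,\Z[1/2])$ recalled in \S \ref{s.buck}, we obtain $\Gamma\simeq \Z^c\otimes \Z_l$ in $\DA_\et(\F)$. Since $f^*$ is a monoidal triangulated functor, pulling back yields $f^*\Gamma\simeq \Z^c\otimes \Z_l$ on $S$, adopting the standard abuse of notation that writes $\Z^c$ for its pullback to $S$. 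Substituting into the first display produces the desired canonical isomorphism $M\otimes \Z_l\otimes \Z^c\iso \Omega_l^S R_l(M)$.

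There is no substantive obstacle here: the argument is essentially bookkeeping, combining the universal identification of Theorem \ref{t6.1} with the base-change compatibility from Lemma \ref{l9.1} and the explicit computation of $\Gamma$ on $\Spec\F$. The only point worth watching is that each identification respects the monoidal structure and transfers unambiguously along $f^*$; this is automatic since $f^*$ is a symmetric monoidal triangulated functor that commutes with $R_l$ (and hence is compatible, via adjunction, with the constructions involving $\Omega_l$).
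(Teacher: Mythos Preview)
Your proof is correct and follows exactly the approach the paper intends: the paper's own proof consists only of the sentence ``The next corollary follows from Lemma \ref{l9.1}'' together with a \qed, and you have simply unpacked this by combining Theorem \ref{t6.1}, Lemma \ref{l9.1}, Corollary \ref{c9.1}(vi), and Proposition \ref{p7.1} in the obvious way.
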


\section{Duality and finite generation}\label{s.fg}

Motivated by the last isomorphism of Corollary \ref{c9.1}, we define here a new category
$\DA_W(\F)$ out of $\DA_\et(\F)$ by using the product $\mu:\Z^c\oo^L \Z^c\to \Z^c$ from Appendix
\ref{s.geisser}. We shall then prove that the equivalent statements of corollary \ref{c9.1} are
also equivalent to the finite generation (as $\Z[1/p]$-modules) of the Hom groups for the
constructible objects of $\DA_W(\F)$.

\subsection{$l$-adic duality}\label{s.ladic} Let $\hat{D}^b_c(\F,\Z_l)$ be the full
subcategory of
$\hat{D}_\et(\F,\Z_l)$ consisting of perfect complexes: it is equivalent to the derived
category of bounded complexes of finitely generated $\Z_l[[G]]$-modules. A form of
Verdier duality is the following (cf.
\cite[Ch. I, Ann. 2]{cg}):

\begin{prop}\label{p.verdier} The ``global sections" functor $R\Gamma:\hat{D}^b_c(\F,\Z_l)\to
D^b(\Z_l)$ has a right adjoint $\Pi$, and $\Pi(X)\simeq X[1]$ where $X\in D^b(\Z_l)$ is viewed
as a complex of constant sheaves in $\hat{D}^b_c(\F,\Z_l)$.\qed
\end{prop}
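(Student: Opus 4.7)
The plan is to construct $\Pi$ by hand and verify the adjunction via the triangle identities, exploiting that $G \simeq \hat{\Z}$ has $l$-adic cohomological dimension $1$. Recall that $\hat{D}^b_c(\F,\Z_l)$ is the bounded derived category of finitely generated continuous $\Z_l[[G]]$-modules, and that $R\Gamma$ is derived Galois invariants. Write $c: D^b(\Z_l) \to \hat{D}^b_c(\F, \Z_l)$ for the constant sheaf functor (trivial $G$-action).

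First I would establish the fundamental fiber sequence in $\hat{D}^b_c(\F, \Z_l)$:
\[
cR\Gamma(C) \to C \by{1-F} C
\]
for any $C$, where $F$ is Frobenius. This lives in $\hat{D}^b_c$ rather than just $D^b(\Z_l)$ because $1-F$ is $G$-equivariant ($G$ is abelian and topologically generated by $F$); the fiber has trivial $G$-action since $Fx = x$ on $\ker(1-F)$, and its underlying $\Z_l$-complex is $R\Gamma(G,C)$ by the standard description of $\hat{\Z}$-cohomology.

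Next I would set $\Pi(X) := c(X)[1]$ and specify the candidate unit and counit. For trivial $Y$, the canonical class $e \in H^1(G,\Z_l)$ of \eqref{eq6.1c} yields a functorial splitting $R\Gamma(cY) \simeq Y \oplus Y[-1]$, so $R\Gamma(\Pi(X)) \simeq X[1] \oplus X$; projection onto the second factor provides the counit $\epsilon_X: R\Gamma(\Pi(X)) \to X$. The boundary map of the fundamental fiber sequence provides the unit $\eta_C: C \to cR\Gamma(C)[1] = \Pi(R\Gamma(C))$.

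Finally I would verify the two triangle identities. The clean observation is that $1-F$ acts as $0$ on every constant sheaf $cY$, so the fundamental triangle for $C = cY$ degenerates: we get $cR\Gamma(cY) \simeq cY \oplus cY[-1]$, and the boundary $\eta_{cY}: cY \to cY[1] \oplus cY$ is simply the inclusion of the second summand. The identity $\Pi(\epsilon_X) \circ \eta_{\Pi(X)} = \mathrm{id}$ is then immediate, and applying $R\Gamma$ to the fundamental triangle (noting that $G$ acts trivially on $R\Gamma(C)$, so $R\Gamma(1-F) = 0$) yields $\epsilon_{R\Gamma(C)} \circ R\Gamma(\eta_C) = \mathrm{id}$ by the same mechanism. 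The point requiring care will be to confirm that the splitting $R\Gamma(cY) \simeq Y \oplus Y[-1]$ coming from $e$ is compatible with the boundary map of the fundamental triangle; this should reduce to the commutativity of the pentagon of Lemma \ref{l4.2} applied to $C = Y$, which is precisely the statement that cup product with $e$ realises the Hochschild--Serre boundary.
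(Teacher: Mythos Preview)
The paper gives no proof of this proposition: it simply cites Serre's \emph{Cohomologie galoisienne}, Ch.~I, Annexe~2, where the relevant duality for $\hat{\Z}$ is recorded. Your hands-on construction via the $1-F$ triangle is therefore a genuinely different route, and a reasonable one; but the first step has a real gap.

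The issue is your ``fundamental fiber sequence'' $cR\Gamma(C)\to C\xrightarrow{1-F}C$ \emph{in $\hat D^b_c(\F,\Z_l)$}. You justify this by saying ``the fiber has trivial $G$-action since $Fx=x$ on $\ker(1-F)$''. That only shows $G$ acts trivially on $H^0$ of the fiber. What you need is that the fiber, as an object of the derived category of $\Z_l[[G]]$-modules, lies in the essential image of $c$; merely having trivial $G$-action on cohomology is not sufficient for this (there can be nontrivial extensions governed by the class $e$). One fix is to exhibit the explicit chain homotopy $h(x,y)=(-y,0)$ on the totalisation $[C\xrightarrow{1-F}C]$ showing $F\sim 1$, and then argue more carefully. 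A cleaner fix is to run the argument through the \emph{other} adjunction $i^*\dashv c$ (where $i^*=\Z_l\otimes^L_{\Z_l[[G]]}-$): the unit $C\to c\,i^*C$ composes with $t=1-F$ to zero (since $i^*t=0$), and the induced map $\operatorname{cofib}(t\colon C\to C)\to c\,i^*C$ is an isomorphism because the forgetful functor to $D^b(\Z_l)$ is conservative and carries it to the identity. Shifting by $[-1]$ gives your triangle, since $R\Gamma(C)$ and $i^*C$ are represented by the same two-term complex $[C\xrightarrow{t}C]$ up to a shift.

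Once this is in place, the rest of your argument (unit via the boundary map, counit via the splitting $R\Gamma(cY)\simeq Y\oplus Y[-1]$, triangle identities from the degeneration on constant objects, compatibility via Lemma~\ref{l4.2}) goes through. Note, incidentally, that the previous paragraph already contains a shorter proof of the proposition itself: the computation $R\Gamma\simeq i^*[-1]$ together with $i^*\dashv c$ immediately yields $R\Gamma\dashv c[1]=\Pi$, bypassing the explicit unit/counit verification.
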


\begin{cor}\label{c.verdier} Let $C\in \hat{D}_c^b:=\hat{D}_c^b(\F,\Z_l)$. Then\\
a) The composition 
\[\Hom_{\hat{D}_c^b}(\Z_l,C)\times \Hom_{\hat{D}_c^b}(C,\Z_l[1])\to \Hom_{\hat{D}_c^b}(\Z_l,\Z_l[1])=\Z_l\] 
is a perfect duality modulo torsion.\\
b) The compositions 
\[\Hom_{\hat{D}_c^b}(\Z_l,C)\times \Hom_{\hat{D}_c^b}(C,\Z/l^\nu[1])\to
\Hom_{\hat{D}_c^b}(\Z_l,\Z/l^\nu[1])=\Z/l^\nu\] 
induce a perfect duality of finite groups
\[\Hom_{\hat{D}_c^b}(\Z_l,C)_\tors\times \Hom_{\hat{D}_c^b}(C,\Z_l[2])_\tors\to \Q_l/\Z_l.\]
\end{cor}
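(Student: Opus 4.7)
The plan is to transport both pairings to $D^b(\Z_l)$ via Proposition \ref{p.verdier}, where they become classical duality for finitely generated $\Z_l$-modules. Set $A=R\Gamma(C)\in D^b(\Z_l)$; since $C$ is perfect and $cd(\F)=1$, $A$ has finitely generated cohomology. The adjunction $(R\Gamma,\Pi)$ together with the formula $\Pi(X)=X[1]$, combined with the standard adjunction between the constant sheaf functor and $R\Gamma$, yields canonical identifications
\[\Hom_{\hat{D}_c^b}(\Z_l,C)=H^0(A),\qquad \Hom_{\hat{D}_c^b}(C,\Z_l[n])=\Hom_{D(\Z_l)}(A,\Z_l[n-1]),\]
under which the composition pairings in $\hat{D}_c^b$ correspond to the evaluation pairings in $D(\Z_l)$.

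Because $\Z_l$ has global dimension one, $A$ splits (non-canonically) as $\bigoplus_i H^i(A)[-i]$, and $R\Hom(M,\Z_l)$ is concentrated in degrees $0$ and $1$ for any finitely generated $\Z_l$-module $M$. This yields a split decomposition
\[\Hom_{D(\Z_l)}(A,\Z_l[m])=\Hom(H^{-m}(A),\Z_l)\oplus \Ext^1(H^{1-m}(A),\Z_l),\]
in which the second summand factors through $H^{1-m}(A)[m-1]$ and hence pairs trivially with $H^0(A)$ under composition.

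For (a), take $m=0$: the evaluation pairing $H^0(A)\times \Hom(H^0(A),\Z_l)\to\Z_l$ reduces modulo torsion to the canonical perfect pairing between $H^0(A)/\mathrm{torsion}$ and its $\Z_l$-dual. For (b), two further ingredients suffice. First, $\Q_l/\Z_l$ is $\Z_l$-injective and $C$ is compact, so taking the colimit $\nu\to\infty$ of the stated $\Z/l^\nu$-pairings gives the Pontryagin evaluation pairing $H^0(A)\times \Hom(H^0(A),\Q_l/\Z_l)\to \Q_l/\Z_l$. Second, the triangle $\Z_l\to\Q_l\to\Q_l/\Z_l\to\Z_l[1]$ induces a surjection $\Hom_{\hat{D}_c^b}(C,\Q_l/\Z_l[1])\twoheadrightarrow \Hom_{\hat{D}_c^b}(C,\Z_l[2])_\tors$ which, under the identifications above, becomes the restriction $\Hom(H^0(A),\Q_l/\Z_l)\to \Hom(H^0(A)_\tors,\Q_l/\Z_l)$; its kernel $\Hom(H^0(A)/\mathrm{torsion},\Q_l/\Z_l)$ obviously evaluates trivially on $H^0(A)_\tors$. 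The induced pairing
\[H^0(A)_\tors\times \Hom(H^0(A)_\tors,\Q_l/\Z_l)\to \Q_l/\Z_l\]
is the Pontryagin pairing of a finite $\Z_l$-module with its dual, hence perfect.

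The main obstacle is not algebraic but diagrammatic: verifying that the composite of the two adjunctions, the splitting of $A$ into its cohomology over the PID $\Z_l$, and the connecting morphism from the triangle $\Z_l\to\Q_l\to\Q_l/\Z_l$ fit together so that the intrinsic composition pairing in $\hat{D}_c^b$ is identified with the tautological evaluation pairing. Once this bookkeeping is completed, the perfectness statements follow at once from classical $\Z_l$-module duality.
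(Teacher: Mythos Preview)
Your proof is correct and follows essentially the same route as the paper: both transport the problem to $D^b(\Z_l)$ via the adjunction of Proposition~\ref{p.verdier} and then invoke the fact that $\Z_l$ is a PID. The only stylistic difference is that the paper uses the canonical universal coefficient exact sequence for $H^q(R\Hom(A,\Z_l))$, whereas you use a (non-canonical) splitting $A\simeq\bigoplus H^i(A)[-i]$; and for part (b) you pass through $\Q_l/\Z_l$ and a colimit argument, while the paper reads off $\Hom(C,\Z_l[2])_\tors$ directly from the $\Ext^1$ term of the universal coefficient sequence. Both variants are standard, and both leave the same point to the reader: that the abstract isomorphisms obtained really are induced by the stated composition pairings (which you rightly flag as the only genuine bookkeeping obstacle).
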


\begin{proof} Proposition \ref{p.verdier} yields an isomorphism
\[R\Gamma\uHom_\F(C,\Z_l[1])\iso \uHom(R\Gamma(C),\Z_l)\]
where $\uHom_\F$ denotes the internal Hom of $\hat{D}^b_c(\F,\Z_l)$. Taking
cohomology, we get
\[\Hom_{\hat{D}_c^b}(C,\Z_l[q+1])\iso H^q(\uHom(R\Gamma(C),\Z_l))\]
and we conclude with the well-known exact sequences
\begin{multline*}
0\to \Ext^1_{\Z_l}(\Hom_{\hat{D}_c^b}(\Z_l,C[1-q]),\Z_l)\to H^q(\uHom(R\Gamma(C),\Z_l))\\
\to\Hom_{\Z_l}(\Hom_{\hat{D}_c^b}(\Z_l,C[-q]),\Z_l)\to 0
\end{multline*}
for $q=0,-1$. (We leave it to the reader to verify that the corresponding isomorphisms are
really induced by the pairings in a) and b).)
\end{proof}

\subsection{An abstract construction} Let $\sM$ be a unital, symmetric monoidal category and
let $\Gamma$ be a commutative monoid in $\sM$ \cite[Ch. VII, \S 3]{mcl}. We shall construct a
new (unital, symmetric) monoidal category $\sM(\Gamma)$ and a pair of adjoint functors
\[\sM\begin{smallmatrix}\gamma^*\\ \leftrightarrows\\ \gamma_*\end{smallmatrix} \sM(\Gamma)\]
with $\gamma^*$ symmetric monoidal.

For this, we observe that the endofunctor $M\mapsto M\otimes \Gamma$ has the structure of a
monad \cite[Ch. VI, \S 1]{mcl}; we define $\sM(\Gamma)$ as the Kleisli category associated to
this monad (ibid., \S 5, Th. 1). Recall its definition:

\begin{itemize}
\item The objects of $\sM(\Gamma)$ are the objects of $\sM$.
\item Given two objects $M,N$,
\[\sM(\Gamma)(M,N)=\sM(M,N\otimes \Gamma).\]
\item Composition is defined by using the multiplication of $\Gamma$.
\end{itemize}

This construction comes with an adjunction $(\gamma^*,\gamma_*)$ as required. Let us describe
these two functors: 

\begin{itemize}
\item $\gamma^*$ is the identity on objects and acts on morphisms via the unit of $\Gamma$.
\item $\gamma_*$ sends an object $M$ to $M\otimes \Gamma$ and acts on morphisms via the
multiplication of $\Gamma$.
\end{itemize}

The only thing which is not in Mac Lane's book is the symmetric monoid\-al structure on
$\sM(\Gamma)$ and $\gamma^*$: this will be left to the reader.

\subsection{An artificial category}\label{s.art} Recall from Appendix \ref{s.geisser} that the
object $\Z^c$ of $\DM_\et(\F)$ is provided with a multiplication $\mu:\Z^c\oo^L \Z^c\to \Z^c$;
together with the map $\eta:\Z[0]\to \Z^c$, one checks that $\Z^c$ has the structure of a
commutative monoid. This will be better justified in \S \ref{s.w}.

\begin{defn}\label{d10.1} Let $f:S\to \Spec \F$ be a smooth scheme of finite type, and let $A$
be a commutative ring.\\ a) We write $\DA_W(S,A)$ for the category $\DA_\et(S,A)(f^*\Z^c)$, with
the notation of the previous subsection. \\ 
b) If $S=\Spec \F$, we write $\DA_W(\F,A)$ for $\DA_W(\Spec \F,A)$ and
define $\DA_W^\eff(\F)$, $\DA_{\gm,W}(\F)$ and $\DA_{\gm,W}^\eff(\F)$ as the corresponding thick
subcategories.
 \end{defn}

In particular, we have a symmetric monoidal functor $\gamma^*:\DA_\et(S,A)\to \DA_W(S,A)$ with
right adjoint $\gamma_*$, and

\begin{lemma}\label{l10.1} There is a unique symmetric monoidal functor 
\[R_l^W:\DA_W(S,\Z_l)\to \hat{D}_\et(S,\Z_l)\]
such that $R_l=R_l^W\circ \gamma^*$.
\end{lemma}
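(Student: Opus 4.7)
The plan is to recognise the lemma as an instance of the universal property of the Kleisli construction. Concretely, for any strong symmetric monoidal functor $F: \sM \to \sN$ and any commutative monoid $\Gamma$ in $\sM$, strong symmetric monoidal functors $H: \sM(\Gamma) \to \sN$ satisfying $H \circ \gamma^* = F$ correspond bijectively to morphisms of commutative monoids $\epsilon: F(\Gamma) \to \un_\sN$ in $\sN$: given $\epsilon$, the functor $H$ agrees with $F$ on objects, while on a Kleisli morphism $\varphi: M \to N \otimes \Gamma$ it is defined as the composite
\[
F(M) \xrightarrow{F(\varphi)} F(N \otimes \Gamma) \simeq F(N) \otimes F(\Gamma) \xrightarrow{\mathrm{id} \otimes \epsilon} F(N).
\]
The checks that this respects Kleisli composition and the symmetric monoidal structure on Hom-sets (both of which invoke $\mu: \Gamma \otimes \Gamma \to \Gamma$) translate directly into the monoid-morphism axioms on $\epsilon$. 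The first step is to prove this general universal property as a standalone lemma.

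Applied with $\sM = \DA_\et(S, \Z_l)$, $\sN = \hat{D}_\et(S, \Z_l)$, $F = R_l$ and $\Gamma = f^* \Z^c$ (commutative monoid via $f^*$ of the multiplication from Appendix \ref{s.geisser}), the construction of $R_l^W$ reduces to producing a canonical monoid morphism $\epsilon: R_l(f^*\Z^c) \to \Z_l$. Over $S = \Spec \F$, I would use Proposition \ref{p7.1}, which supplies the isomorphism $\Z^c \otimes \Z_l \iso \Gamma^\eff = \Omega_l^\eff(\Z_l)$, and compose with the counit of the adjunction $(R_l, \Omega_l)$:
\[
R_l(\Z^c) \simeq R_l\bigl(i\, \Omega_l^\eff(\Z_l)\bigr) \longrightarrow R_l \Omega_l(\Z_l) \xrightarrow{\ \mathrm{counit}\ } \Z_l.
\]
For general smooth $f: S \to \Spec \F$, I pull this morphism back via $f^*$, using Lemma \ref{l9.1} and the compatibility of $R_l$ with $f^*$ from Theorem \ref{t5.1}.

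The main verification is that $\epsilon$ is a morphism of commutative monoids. This is essentially automatic on the $\Omega_l$ side: $\Omega_l(\Z_l)$ inherits a commutative monoid structure from $\Omega_l$ being lax monoidal (as the right adjoint of the strong monoidal $R_l$), and the counit $R_l\Omega_l \to \id$ respects lax monoidal structures, so the counit at $\Z_l$ is tautologically a monoid morphism. The nontrivial bookkeeping, which I expect to be the principal obstacle, is to identify the monoid structure on $\Gamma^\eff \simeq \Z^c \otimes \Z_l$ transported via Proposition \ref{p7.1} with the one induced from the $\mu$ of Appendix \ref{s.geisser}; this should be built into the construction of $\Z^c$ there, whose multiplication is engineered precisely so that $\Z^c \otimes \Z_l$ represents continuous $l$-adic cohomology as a ring, but the diagram chase has to be carried out explicitly.

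Uniqueness of $R_l^W$ then follows formally from the universal property: any symmetric monoidal extension of $R_l$ along $\gamma^*$ induces (by evaluation on the tautological Kleisli morphism $\Gamma \to \un$ coming from $\id_\Gamma$) a commutative monoid morphism $R_l(f^*\Z^c) \to \Z_l$, the extension is determined by this augmentation through the explicit formula above, and the canonicity of both the monoid structure on $\Z^c$ and the counit of $(R_l, \Omega_l)$ pins down this augmentation as the $\epsilon$ just constructed.
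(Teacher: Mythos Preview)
Your Kleisli-universal-property framework is sound and would yield a correct proof, but the paper bypasses almost all of it with a single observation: the unit map $R_l(\eta):\Z_l[0]\to R_l(f^*\Z^c)$ is already an \emph{isomorphism}. Indeed, the cohomology sheaves of $\Z^c$ are $\Z$ in degree $0$ and $\Q$ in degree $1$ (\S\ref{s6.2}), and the $l$-adic realisation annihilates the uniquely divisible sheaf $\Q$. The augmentation is therefore simply $\epsilon=R_l(\eta)^{-1}$; since $\eta:\Z\to\Z^c$ is the unit of the monoid, $R_l(\eta)$ is a monoid isomorphism and its inverse is automatically a monoid morphism. The ``nontrivial bookkeeping'' you flag --- matching the monoid structure transported to $\Gamma^\eff$ via Proposition~\ref{p7.1} against the explicit $\mu$ of Appendix~\ref{s.geisser} --- never arises.

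Your uniqueness argument also has a small gap. The universal property reduces uniqueness of $R_l^W$ to uniqueness of the monoid morphism $\epsilon:R_l(f^*\Z^c)\to\Z_l$, but you do not show there is only one such morphism; ``canonicity'' of your particular construction is not the same as uniqueness among all augmentations. The paper's observation closes this gap immediately: any monoid morphism $\epsilon$ must satisfy $\epsilon\circ R_l(\eta)=\id_{\Z_l}$, and since $R_l(\eta)$ is invertible this forces $\epsilon=R_l(\eta)^{-1}$. Your route through Proposition~\ref{p7.1} and the counit of $(R_l,\Omega_l)$ does land on the same $\epsilon$ (both are left inverses to $R_l(\eta)$), but at the cost of machinery that the one-line computation of $R_l(\Z^c)$ renders unnecessary.
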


\begin{proof} The definition of $R_l^W$ on objects is forced since $\gamma^*$ is the identity
on objects; we need to define $R_l^W(f)$ for $f\in \DA_W(S,\Z_l)(M,N)$. Thus $f$ is a morphism
from $M$ to $N\otimes f^*\Z^c$  in $\DA_\et(S,\Z_l)$. We then get
\[R_l(f):R_l(M)\to R_l(N\otimes f^*\Z^c)\simeq R_l(N)\otimes f^*R_l(\Z^c).\]

But $R_l(\eta):\Z_l[0]\to R_l(\Z^c)$ is an isomorphism, in view of the cohomology sheaves of
$\Z^c$;  this gives the desired morphism.
\end{proof}

One interest of the categories $\DA_W(S,A)$ is that they allow us to reformulate the equivalent
conditions of Corollary \ref{c9.1} in a way which justifies the title of this paper:

\begin{prop}\label{p10.1} a) The conditions of Corollary \ref{c9.1} a) are also equivalent to
the following: the restriction of $R_l^W$ to $\DA_{\gm,W}(\F,\Z_l)$ is fully faithful.\\ b) If
they are verified, then the same is true for the restriction of $R_l^W$ to strongly dualisable
objects of $\DA_W(S,\Z_l)$, for any $S\in \Sm(\F)$.
\end{prop}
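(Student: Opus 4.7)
The plan is to compare the two Hom functors by bringing them into a common form as Hom groups in $\DA_\et(\F,\Z_l)$. On the source side, the Kleisli construction gives, by definition,
\[\DA_{\gm,W}(\F,\Z_l)(M,N)=\DA_\et(\F,\Z_l)(M,N\otimes\Z^c),\]
while on the target side Lemma \ref{l10.1} gives $\hat{D}_\et(\F,\Z_l)(R_l^W(M),R_l^W(N))=\hat{D}_\et(\F,\Z_l)(R_l(M),R_l(N))$, which by the $(R_l,\Omega_l)$-adjunction of Lemma \ref{l5.1} is naturally isomorphic to $\DA_\et(\F,\Z_l)(M,\Omega_l R_l(N))$. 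Since objects of $\DA_{\gm,W}(\F,\Z_l)$ come from $\DM_{\gm,\et}(\F,\Z_l)$ and are therefore strongly dualisable by Lemma \ref{l4.1}, Theorem \ref{t6.1} identifies $\Omega_l R_l(N)\simeq N\otimes\Gamma$ with $\Gamma=\Omega_l(\Z_l)$. Thus the full faithfulness of $R_l^W$ on $\DA_{\gm,W}(\F,\Z_l)$ becomes the question of comparing $N\otimes\Z^c$ with $N\otimes\Gamma$.

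For the forward direction of (a), assume the conditions of Corollary \ref{c9.1}. Condition (vi) says $\Gamma\simeq\Gamma^\eff$, and Proposition \ref{p7.1} identifies $\Gamma^\eff\simeq \Z^c\otimes\Z_l$; since we work $\Z_l$-linearly, this gives a canonical isomorphism $N\otimes\Z^c\iso N\otimes\Gamma$, and hence an abstract bijection between the two Hom groups. What remains is to verify that this bijection coincides with the map induced by $R_l^W$: one reduces this to tracing through the explicit construction of the isomorphism $\Z^c\otimes\Z_l\iso\Gamma^\eff$ in the proof of Proposition \ref{p7.1}, which is built out of the $(i,-^\eff)$ adjunction of Proposition \ref{p6.1} and the multiplication $\mu$ of $\Z^c$ used to define the Kleisli composition on $\DA_W$.

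For the converse, suppose $R_l^W$ is fully faithful on $\DA_{\gm,W}(\F,\Z_l)$, and test it on $M=M(X)$ for $X$ smooth projective together with $N=\Z(n)[j]$. The source side becomes $H^j_\et(X,\Z(n)\oo^L\Z^c)\otimes\Z_l$, the target is $H^j_\cont(X,\Z_l(n))$, and the induced map is precisely the modified cycle class map $\cl_X^{n,j}$ of Corollary \ref{c9.3}; its bijectivity for all smooth projective $X$ and all $n\ge 0$, $j\in\Z$ is condition (ii) of Corollary \ref{c9.1}, so we are done. Part (b) follows by the same computation applied over a smooth base $S\in\Sm(\F)$: Corollary \ref{c9.2} directly provides, under the assumed conditions, the analog $M\otimes\Z_l\otimes\Z^c\iso\Omega_l R_l(M)$ for every strongly dualisable $M\in\DA_\et(S)$, which when combined with the Kleisli definition of $\DA_W(S,\Z_l)$ and the $(R_l,\Omega_l)$-adjunction yields the claimed full faithfulness.

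The main obstacle lies in the compatibility check in the forward direction of (a): one must verify that the chain of identifications (Kleisli adjunction, $(R_l,\Omega_l)$-adjunction, Theorem \ref{t6.1}, Proposition \ref{p7.1}) assembles into exactly the natural transformation induced by $R_l^W$. This is a coherence verification which, while straightforward in spirit, requires careful bookkeeping because three adjunctions and the monad structure on $\Z^c$ all need to be threaded together consistently; a clean way to do it is to reduce, via Yoneda, to the case $M=\Z$ and observe that both maps then come from the counit $R_l\Omega_l(\Z_l)\to \Z_l$.
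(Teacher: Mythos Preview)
Your proof is correct and follows essentially the same approach as the paper's. The paper's proof is much more terse: for the forward direction of (a) it simply cites Corollary \ref{c9.1} b) (which already packages the isomorphism $\DM_\et^\eff(\F)(N,M\otimes\Z^c)\otimes\Z_l\iso \hat{D}_\et(\F,\Z_l)(R_l(N),R_l(M))$), and for the converse it specialises to $M(X)$ with $X$ smooth and $N=\Z(n)[i]$ to recover condition (iii) rather than (ii); part (b) is dispatched by citing Corollary \ref{c9.2}. Your unpacking via the Kleisli description, the $(R_l,\Omega_l)$-adjunction, Theorem \ref{t6.1}, and the identification $\Gamma\simeq\Gamma^\eff\simeq\Z^c\otimes\Z_l$ is exactly what lies behind that citation. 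Your explicit flagging of the compatibility check (that the abstract isomorphism agrees with the map $R_l^W$ actually induces) is a point the paper leaves implicit---it is the same check hidden in the phrase ``that one checks is \eqref{eq6.4}'' in the proof of Corollary \ref{c9.1} b)---and your suggested reduction via Yoneda to $M=\Z$ and the counit is a sound way to handle it.
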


\begin{proof} a) By part b) of Corollary \ref{c9.1}, the conditions of its part a) imply the
said full faithfulness. Conversely, if we specialise to motives $M(X)$ and $\Z(n)[i]$ for $X$
smooth and $n\ge 0$, $i\in \Z$, we get back Condition (iii) of Corollary \ref{c9.1} a).

b) This follows similarly from Corollary \ref{c9.2}.
\end{proof}

Recall the generator $e\in H^1_\et(\F,\Z^c)$ of \eqref{eq6.1a}. We may view it as a map $e\in
\DA_W(\F)(\Z,\Z[1])$; it is easy to see that $e^2=0$, for example by using its compatibility with
the generator of \eqref{eq6.1c}. Therefore, for any $M,N\in
\DA_W(\F)$, we have a complex of abelian groups
\begin{equation}\label{eq10.5}
\dots \by{\cdot e} \DA_W(S)(M,N[j])\by{\cdot e}\DA_W(S)(M,N[j+1])\by{\cdot
e} \dots 
\end{equation}

\begin{lemma}\label{l10.2} The complex \eqref{eq10.5} has torsion cohomology groups.
\end{lemma}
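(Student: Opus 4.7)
My plan is to show the complex becomes acyclic after tensoring with $\Q$, whence its cohomology must be torsion.

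First I would unwind the definitions: by the Kleisli construction of $\DA_W(\F)$, we have $\DA_W(\F)(M, N[j]) = \DA_\et(\F)(M, N[j]\otimes \Z^c)$, and multiplication by $e$ is given by post-composition (in $\DA_\et(\F)$) with the morphism
\[
\sigma_{N[j]} := (\mathrm{id}_{N[j+1]}\otimes \mu)\circ(\mathrm{id}_{N[j]}\otimes e\otimes \mathrm{id}_{\Z^c}) \colon N[j]\otimes \Z^c \to N[j+1]\otimes \Z^c.
\]
Thus \eqref{eq10.5} arises by applying $\DA_\et(\F)(M, -)$ to the complex $\cdots \to N[j]\otimes \Z^c \by{\sigma_{N[j]}} N[j+1]\otimes \Z^c \to \cdots$, where $\sigma := \mu\circ(e\otimes \mathrm{id}_{\Z^c})\colon \Z^c\to \Z^c[1]$ is ``multiplication by $e$'' in the monoid $\Z^c$.

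Next I would use the splitting $\Z^c\otimes\Q \simeq \Q[0] \oplus \Q[-1]$ from \eqref{eq6.1b}. Because $\Z^c$ has cohomology sheaves only in degrees $0$ and $1$, the $\Q[-2]$ summand of $\Q^c\otimes \Q^c$ must be killed by $\mu\otimes \Q$, which (together with the unit axiom) forces $\mu\otimes \Q$ to be the square-zero extension structure on $\Q\oplus \Q[-1]$. Moreover, the class $e\otimes \Q\in H^1(\F, \Q^c) = H^1(\F,\Q)\oplus H^0(\F,\Q) = 0\oplus\Q$ identifies with the canonical generator of the second summand, i.e.\ with the evident inclusion $\Q \inj \Q[1]\oplus \Q \simeq \Q^c[1]$. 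Consequently $\sigma\otimes \Q \colon \Q\oplus\Q[-1] \to \Q[1]\oplus\Q$ is the map $(a, b)\mapsto (0, a)$, the $b$-component being killed by the relation $e\cdot e = 0$.

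Setting $A^j := \DA_\et(\F, \Q)(M, N[j])$, the rationalised complex thus takes the form $\cdots \to A^j\oplus A^{j-1} \by{d} A^{j+1}\oplus A^j \to \cdots$ with $d(a, b) = (0, a)$; the kernel and image at $A^{j+1}\oplus A^j$ both equal $0\oplus A^j$, so the complex is acyclic. This gives the claim. The main obstacle is the small verification in the second paragraph that $\mu\otimes \Q$ really is the square-zero extension and that $e\otimes \Q$ identifies with the canonical generator; the first is forced by the degree bound on the cohomology sheaves of $\Z^c$, and the second follows from the explicit description of $\Z^c$ and of $e$ recalled in \S\ref{s6.2} and detailed in Appendix~\ref{s.geisser}. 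In particular, no input beyond the formal properties of $\Z^c$ (and certainly no Riemann hypothesis) is needed.
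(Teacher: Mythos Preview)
Your proposal is correct and follows essentially the same approach as the paper: rationalise, use the splitting $\Q^c\simeq\Q[0]\oplus\Q[-1]$, identify the differential as $(a,b)\mapsto(0,a)$, and check acyclicity directly. The paper compresses your second paragraph into the single phrase ``inspection shows that the map $e:\Q^c\to\Q^c[1]$ is the identity on $\Q[0]$ and $0$ on the other summands''; your unpacking of this via the square-zero structure of $\mu\otimes\Q$ and the identification of $e\otimes\Q$ is exactly the content of that inspection (and is made explicit in the paper only in Appendix~\ref{s.geisser}).
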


\begin{proof} It is equivalent to show that the complex
\[
\dots \by{\cdot e} \DA_\et(S,\Q)(M,N\otimes \Q^c[j])\by{\cdot e}\DA_\et(S,\Q)(M,N\otimes \Q^c[j+1])\by{\cdot
e} \dots 
\]
is acyclic. Inspection shows that the map
\[e:\Q[0]\oplus \Q[-1]\simeq \Q^c\to \Q^c[1]\simeq \Q[1]\oplus \Q[0]\]
inducing the differentials is the identity on $\Q[0]$ and $0$ on the other summands. The claim then follows from a simple computation.
\end{proof}

\subsection{Two pairings} We now imitate \S \ref{s.ladic}. Let $M\in \DA_W(\F)$. Composition of
morphisms defines a pairing
\begin{equation}\label{eq10.1}
\Hom(\Z,M)/\tors\otimes \Hom(M, \Z[1])/\tors\to \Hom(\Z,\Z[1])= \Z[1/p].
\end{equation}

Here the value of the target group comes from \eqref{eq6.1a} (recall that $\DM_\et(\F)$, hence
$\DA_W(\F)$, is $\Z[1/p]$-linear).

Thanks now to \eqref{eq7.2}, we have a canonical isomorphism
\[\Hom(\Z,\Q/\Z[1])\simeq (\Q/\Z)'\]
(given by the choice of the arithmetic Frobenius of $\F$). Here we wrote $(\Q/\Z)'$ for $\bigoplus_{l\ne p} \Q_l/\Z_l$. Thus we get a pairing
\begin{equation}\label{eq10.2}
\Hom(\Z,M)\otimes \Hom(M, \Q/\Z[1])\to (\Q/\Z)'.
\end{equation}

The tautological exact sequence
\begin{multline*}
0\to \Hom(M, \Z[1])\otimes \Q/\Z \to \Hom(M, \Q/\Z[1])\\
\to \Hom(M, \Z[2])_\tors\to 0
\end{multline*}
shows that \eqref{eq10.2} induces a pairing
\begin{equation}\label{eq10.3}
\Hom(\Z,M)_\tors\otimes \Hom(M, \Z[2])_\tors\to (\Q/\Z)'.
\end{equation}

\begin{lemma}\label{l.cell} Let $R$ be a commutative ring and $A\times B\to R$ be a
pairing of flat $R$-modules.\\
a) Suppose that the pairing is non-degenerate after $\otimes R_l$ for some prime ideal $l$ of
$R$, where $R_l$ is the completion of $R$ at $l$. Then it is non-degnenerate. \\ 
b) Assume that $R$ is a Noetherian domain with quotient
field $K$. If moreover $A\otimes_R K$ or $B\otimes_R K$ is a finite-dimensional $K$-vector
space, then $A$ and $B$ are finitely generated.\\\
c) In the case of b), suppose moreover that the pairing is perfect after $\otimes R_l$ for every
$l$. Then it is perfect.
\end{lemma}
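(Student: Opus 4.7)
\textbf{Plan for Lemma \ref{l.cell}.} All three parts rest on transferring information from the completions $R_l$ (or from the fraction field $K$) back to $R$, using that both $A$ and $B$ are flat.

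For (a), suppose $a \in A$ satisfies $\langle a, b\rangle = 0$ for all $b \in B$. Tensoring the pairing with $R_l$, the element $a \otimes 1 \in A \otimes_R R_l$ lies in the left kernel of the $R_l$-linear pairing, hence equals $0$ by the non-degeneracy hypothesis after $\otimes R_l$. Since $A$ is flat over $R$ and (in the intended setting, and in any case whenever $\bigcap_n l^n = 0$, e.g. when $R$ is a Noetherian domain and $l \ne 0$) the map $R \hookrightarrow R_l$ is injective, the map $A \to A \otimes_R R_l$ is injective, so $a = 0$. The symmetric argument handles $B$.

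For (b), I work with the non-degeneracy supplied by (a) (applied at a suitable $l$). Set $A_K = A \otimes_R K$ and $B_K = B \otimes_R K$; since $A, B$ are flat over the domain $R$, they inject into $A_K, B_K$. Non-degeneracy passes to $K$ because the pairing $A_K \times B_K \to K$ has a left kernel of the form $(\ker(A \to \Hom_R(B,R))) \otimes K = 0$, and similarly on the right. Supposing $\dim_K A_K = n < \infty$, the induced injection $B_K \hookrightarrow \Hom_K(A_K, K)$ forces $\dim_K B_K \le n$; by symmetry equality holds. Choose $a_1,\dots,a_n \in A$ mapping to a $K$-basis of $A_K$, let $A_0 \subseteq A$ be the free submodule they span, and consider the restriction
\[\phi: B \longrightarrow \Hom_R(A_0, R) \cong R^n.\]
Its kernel consists of $b \in B$ pairing to zero with each $a_i$; after $\otimes K$ such a $b$ lies in the left kernel of $A_K \times B_K \to K$, hence $b \otimes 1 = 0$ in $B_K$, and torsion-freeness of $B$ gives $b = 0$. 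Thus $B$ injects into the finitely generated $R$-module $R^n$, and by Noetherianity of $R$, $B$ is finitely generated. A symmetric argument produces $A$ finitely generated.

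For (c), by (b) both $A$ and $B$ are finitely generated, so $\Hom_R(B, R)$ is finitely generated, and the pairing induces a morphism $\psi: A \to \Hom_R(B, R)$ of finitely generated $R$-modules. To show $\psi$ is an isomorphism it suffices to check at every maximal ideal $l$, after localisation $\psi \otimes_R R_{(l)}$. Because $R_{(l)}$ is a Noetherian local ring its completion $R_l$ is faithfully flat over $R_{(l)}$, and $\psi \otimes R_l$ is an isomorphism by the perfectness hypothesis; by faithful flatness $\psi \otimes R_{(l)}$ is already an isomorphism. Hence $\psi$ is an isomorphism, and the symmetric argument gives the analogous statement for $B \to \Hom_R(A, R)$, proving perfectness.

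\textbf{Main obstacle.} The one non-formal point is ensuring that the injectivity/bijectivity conclusions really descend from $R_l$ (or $K$) back to $R$. This is where flatness of $A$ and $B$ is indispensable, combined with (i) injectivity of $R \hookrightarrow R_l$ for (a), (ii) the torsion-freeness coming from flatness over the domain in (b), and (iii) faithful flatness of Noetherian local completion for (c). In (b) the crucial trick is replacing the potentially wild $\Hom_R(B, R)$ by the finitely generated $\Hom_R(A_0, R) = R^n$ for a free submodule $A_0$ of maximal rank, so that Noetherianity of $R$ can be applied.
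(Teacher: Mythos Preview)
Your argument is correct and follows essentially the same route as the paper (which defers (a) and (b) to \cite[Lemma 3.9]{cell} and only spells out (c)). One small point to tighten in (c): the perfectness hypothesis asserts that the map $A_l \to \Hom_{R_l}(B_l,R_l)$ is an isomorphism, whereas $\psi\otimes R_l$ is the map $A_l \to \Hom_R(B,R)\otimes_R R_l$. You are implicitly using that the natural comparison map
\[
\Hom_R(B,R)\otimes_R R_l \longrightarrow \Hom_{R_l}(B_l,R_l)
\]
is an isomorphism; this holds because $B$ is finitely presented (being finitely generated over the Noetherian ring $R$, by (b)) and $R_l$ is flat over $R$. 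The paper makes exactly this step explicit (its map ``$b$'', citing Bourbaki), using only injectivity of the comparison map and then deducing bijectivity from that of the composite. With this filled in, your proof of (c) is the paper's argument, phrased via faithful flatness of $R_l$ over $R_{(l)}$ rather than the equivalent local--global principle the paper invokes.
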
 

\begin{proof} a) and b) are exactly the contents of \cite[Lemma 3.9]{cell}: we refer to loc.
cit. for the proof. The latter involves the composition
\[A_l\by{a} \Hom_R(B,R)_l\by{b} \Hom_{R_l}(B_l,R_l)\]
where $A_l=A\otimes_R R_l$, etc. Since $B$ is finitely generated and $R_l$ is flat over $R$
\cite[II.3.4, Th. 3 (iii)]{bbki}, $b$ is injective (ibid., I.2.10, Prop. 11), thus if $ba$ is
bijective so is $a$. If this is true for all $l$, then $A\to \Hom(B,R)$ is bijective by the same
proof as \cite[II.3.3, Th. 1]{bbki}. This proves c).
\end{proof}

\begin{thm}\label{t10.1} The conditions of Corollary \ref{c9.1} are also equivalent to the
following:\footnote{For the reason why $2$ is inverted, see Theorem \ref{t5.1}.}
\begin{thlist}
\item $\DA_W(\F,\Z[1/2])(M,\Z)$ is a finitely generated $\Z[1/2p]$-module  for
any $M\in\DA_{\gm,W}(\F,\Z[1/2])$.
\item $\DA_W(\F,\Z[1/2])(M,N)$ is a finitely generated $\Z[1/2p]$-module for any
$M,N\allowbreak\in
\DA_{\gm,W}(\F,\Z[1/2])$.
\item For some prime $l\ne 2,p$, $\DA_W(\F,\Z_l)(M,N)$ is a finitely generated $\Z_l$-module for
any $M,N\in \DA_{\gm,W}(\F,\Z_l)$.
\end{thlist}
These conditions imply:
\begin{thlist}
\item[(*)] For
any $M\in\DA_{\gm,W}(\F,\Z[1/2])$, \eqref{eq10.1} is a perfect pairing of finitely generated
free
$\Z[1/2p]$-modules and \eqref{eq10.3} is a perfect pairing of finite groups.
\end{thlist}
\end{thm}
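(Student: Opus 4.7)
The central tool is the comparison map
\[\phi_{M,N}:\DA_W(\F,\Z_l)(M,N)\to \hat{D}_\et(\F,\Z_l)(R_l(M),R_l(N))\]
induced by the functor $R_l^W$ of Lemma \ref{l10.1}; its bijectivity on $\DA_{\gm,W}(\F,\Z_l)$ is equivalent to the conditions of Corollary \ref{c9.1} by Proposition \ref{p10.1} a). I would first dispatch (i)~$\Leftrightarrow$~(ii): the implication (ii)~$\Rightarrow$~(i) is trivial, and the converse follows from rigidity. Indeed, for $N\in\DA_{\gm,W}(\F,\Z[1/2])$ Lemma \ref{l4.1} provides a strong dual $N^*$ in $\DA_{\gm,\et}(\F,\Z[1/2])$, and tensor--hom adjunction yields $\DA_W(M,N)\simeq \DA_W(M\otimes N^*,\Z)$; applying (i) to $M\otimes N^*\in\DA_{\gm,W}(\F,\Z[1/2])$ produces (ii).

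Next, I would prove that the conditions of Corollary \ref{c9.1} imply (ii) and (iii). Under those conditions, Proposition \ref{p10.1} a) ensures that $\phi_{M,N}$ is an isomorphism for $M,N\in\DA_{\gm,W}(\F,\Z_l)$; the target is a Hom group in $\hat{D}_c^b(\F,\Z_l)$, hence finitely generated over $\Z_l$, which gives (iii). To promote this to (ii) over $\Z[1/2p]$, I would verify that $\DA_W(\F,\Q)(M,N)$ is a finite-dimensional $\Q$-vector space for $M,N\in\DA_{\gm,W}(\F,\Q)$: using \eqref{eq6.1b} to split $\Q\otimes\Z^c$ as $\Q\oplus\Q[-1]$ and d\'evissage via Gabber's theorem to reduce to motives of smooth projective varieties, this is controlled by the finite-dimensionality of $CH^n(X)_\Q$ and the vanishing $H^i(X,\Q(n))=0$ for $i\ne 2n$ supplied by conditions \ref{c9.1} (cf. the proof of Theorem \ref{t1}). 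Combined with $\Z_l$-finite generation at every $l\ne 2,p$, Lemma \ref{l.cell} b) then yields (i), hence (ii).

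The main obstacle is the reverse implication (iii)~$\Rightarrow$~(conditions of Corollary \ref{c9.1}), which I would establish by showing that $\phi_{M,N}$ is an isomorphism on $\DA_{\gm,W}(\F,\Z_l)$ and invoking Proposition \ref{p10.1} a). The key observation is that by \eqref{eq7.2} we have $\Z^c\otimes^L\Z/l\simeq \Z/l$, so reducing modulo $l$ identifies $\DA_W$-Hom groups with ordinary $\DA_\et(\F,\Z/l)$-Hom groups; Theorem \ref{t3.2} b) then forces $\phi_{M,N}$ to be an isomorphism modulo $l$. Under (iii) the source of $\phi_{M,N}$ is finitely generated over $\Z_l$, and the target always is; Nakayama's lemma for the local ring $\Z_l$ promotes an isomorphism modulo $l$ to a genuine isomorphism of finitely generated $\Z_l$-modules.

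Finally, for (*): under the equivalent conditions, $\phi$ identifies \eqref{eq10.1} tensored with $\Z_l$ with the $l$-adic pairing of Corollary \ref{c.verdier} a), which is perfect modulo torsion. Since $\Z[1/2p]$ is a principal ideal domain, finitely generated torsion-free modules over it are free; applying Lemma \ref{l.cell} c) prime by prime for all $l\ne 2,p$ then gives the perfection of \eqref{eq10.1}, and a parallel identification with Corollary \ref{c.verdier} b) handles the torsion pairing \eqref{eq10.3}.
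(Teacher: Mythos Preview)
Your argument has two genuine gaps.

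\textbf{(iii) $\Rightarrow$ (conditions of Corollary \ref{c9.1}).} The Nakayama step is false as stated: an isomorphism modulo $l$ between finitely generated $\Z_l$-modules need not be an isomorphism --- the reduction map $\Z_l\twoheadrightarrow \Z_l/l^2\Z_l$ is an isomorphism modulo $l$ but has nontrivial kernel. Nakayama gives surjectivity only. What rigidity actually gives (cf.\ Lemma \ref{l7.1}) is that the cone $C(N)$ of the map $N\otimes\Z^c\to\Omega_lR_l(N)$ in $\DM_\et(\F,\Z_l)$ is uniquely $l$-divisible; hence $\DM_\et(\F,\Z_l)(M,C(N))$ is a $\Q_l$-vector space. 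Under (iii) the long exact sequence sandwiches this group between a quotient of $\hat D_\et(R_lM,R_lN)$ and a submodule of $\DA_W(M,N[1])$, both finitely generated over $\Z_l$; a finitely generated $\Z_l$-module which is simultaneously a $\Q_l$-vector space is zero. That is the paper's argument, and your Nakayama shortcut does not substitute for it (nor does ``iso mod $l$'' on Hom groups follow from the triangulated statement, because of the Tor term in the universal coefficient sequence).

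\textbf{(conditions of Corollary \ref{c9.1}) $\Rightarrow$ (i).} Lemma \ref{l.cell} concerns pairings of \emph{flat} modules, so it controls only $\DA_W(M,\Z)/\text{tors}$. Your inputs --- finite-dimensionality over $\Q$ and finite generation over each $\Z_l$ --- do give that each $l$-primary torsion subgroup is finite, but nothing prevents it from being nonzero for infinitely many $l$ (think of $\bigoplus_{l}\Z/l$). The paper closes this gap by reducing to $M=M(X)(-n)$ with $X$ smooth projective and invoking Gabber's theorem (via \cite{ctss}) that $H^i_\cont(X,\Z_l(n))$ is torsion-free for almost all $l$; this arithmetic ingredient is essential and is not supplied by the formal considerations you use.
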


\begin{proof} Let
(**) the condition of Proposition \ref{p10.1} a): we show (**) $\Rightarrow$ (i) + (*)
$\Rightarrow$ (ii)  $\Rightarrow$ (iii)  $\Rightarrow$(**). 

(**) $\Rightarrow$ (i) + (*): we first observe that
$R_l(M)\in \hat{D}^b_c(\F,\Z_l)$ for any
$l\ne p$. Indeed, this is true for $M=M(X)$,
$f:X\to \Spec \F$ smooth by Deligne's finiteness theorem for $l$-adic cohomology (recall that
$R_l(M(X))=(Rf_*\Z_l)^*$). Let $A=\DA_W(\F,\Z[1/2])(M,\Z)$. Clearly, (**) implies
that $A\otimes\Z_l$ is finitely generated over $\Z_l$ for any $l\ne p$. Further, Corollary
\ref{c.verdier} implies the conclusion of (*) after $\otimes\Z_l$.  Lemma
\ref{l.cell} then implies that $A/A_\tors$ is a finitely generated free $\Z[1/p]$-module, that
\eqref{eq10.3} is perfect and that $A\{l\}$ is finite for all $l\ne p$.

To get (i), we observe that the
full subcategory of $\DA_{\gm,\et}(\F,\Z[1/2])$ formed of those $M$ such that
$\DA_W(\F,\Z[1/2])(M,\Z[i])$ is finitely generated for all $i\in\Z$ is triangulated and thick
(stable under direct summands). Thus we may reduce to $M=M(X)(-n)$ with $X$ smooth projective
and $n\ge 0$. In this case, it is known that
\[\hat{D}^b_c(\F,\Z_l)(R_l(M(X)(-n)),\Z_l[i])=H^i_\cont(X,\Z_l(n))\] 
is torsion-free for almost all $l$
\cite[Th. 2]{ctss}. Hence $\DA_W(\F,\Z[1/2])(M,\Z[i])$ has finite torsion, hence is finitely
generated, for any $i$. This also finishes to prove (*).

(i) $\Rightarrow$ (ii): this is clear from the formula 
\[\DA_W(\F,\Z[1/2])(M,N)\simeq\DA_W(\F,\Z[1/2])(N^*\otimes M,\Z).\]

(ii) $\Rightarrow$ (iii): this is obvious.

(iii) $\Rightarrow$ (**): let $M,N\in \DA_{\gm,W}(\F,\Z_l)$. By hypothesis, we have a
homomomorphism of finitely generated $\Z_l$-modules:
\begin{equation}\label{eq10.4}
\DA_W(\F,\Z_l)(M,N)\to D_c(\F,\Z_l)(R_l(M),R_l(N)).
\end{equation}

To work in a triangulated category, we place ourselves in $\DM_\et(\F,\Z_l)$ and rewrite the
above as
\[
\DM_\et(\F,\Z_l)(M,N\otimes \Z^c)
\to
\DM_\et(\F,\Z_l)(M,\Omega_lR_l(N)).
\]

Let $C(N)$ be a cone of the map $N\otimes \Z^c\to
\Omega_lR_lN$ in $D_\et(\F)$. By the same argument as in the proof of Lemma
\ref{l7.1}, $C(N)=0$ if $N$ is torsion, which implies that $C(N)$ is uniquely divisible in
general. Then the exact sequence
\begin{multline*}
\DM_\et(\F,\Z_l)(M,\Omega_lR_l(N))\to \DM_\et(\F,\Z_l)(M,C(N))\\
\to \DM_\et(\F,\Z_l)(M,N\otimes \Z^c[1]) 
\end{multline*}
shows that the $\Q_l$-vector space $\DM_\et(\F,\Z_l)(M,C(N))$ is an extension of two finitely
generated $\Z_l$-modules, hence is $0$. Applying this to $N[-1]$ instead of $N$, we get the
bijectivity of \eqref{eq10.4}.
\end{proof}

\begin{remark}  To control the torsion in Hom groups, we used \cite[Th. 2]{ctss} which
rests ultimately on a theorem of Gabber on the torsion in the $l$-adic cohomology of smooth
projective varieties \cite{gabnote}. This ingredient is not in the spirit of the rest of the
paper: is there a way to avoid it?
\end{remark}

\begin{cor}\label{c10.1}
Suppose that the conditions of Corollary \ref{c9.1} a) hold.  Then,
for any $M,N\in\DA_{\gm,W}(\F,\Z[1/2])$, the complex \eqref{eq10.5} is bounded and has finite
cohomology groups.
\end{cor}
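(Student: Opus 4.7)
The proof splits naturally into the two assertions: finiteness of the cohomology groups of \eqref{eq10.5}, and boundedness of the complex itself. Finiteness is almost immediate. By Theorem \ref{t10.1}, under the standing hypothesis each term $\DA_W(\F,\Z[1/2])(M,N[j])$ is a finitely generated $\Z[1/2p]$-module, and by Lemma \ref{l10.2} every cohomology group of \eqref{eq10.5} is torsion; since a torsion subquotient of a finitely generated $\Z[1/2p]$-module is finite, the cohomology groups are finite.

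For boundedness I plan to establish the stronger statement that $\DA_W(\F,\Z[1/2])(M,N[j])$ itself vanishes for $|j|$ outside a finite range depending only on $M$ and $N$. Since these groups are finitely generated over $\Z[1/2p]$, such a group is zero as soon as its $\otimes\Q$-part vanishes and its $l$-primary torsion vanishes for each prime $l\ne 2,p$. I will verify both, with $l$-independent bounds on $|j|$.

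For the rational part, the decomposition $\Q^c\simeq \Q[0]\oplus\Q[-1]$ from \eqref{eq6.1b} identifies $\DA_W(\F,\Q)(M_\Q,N_\Q[j])$ with a direct sum of two Hom groups in $\DA_\et(\F,\Q)\simeq \DM(\F,\Q)$ in degrees $j$ and $j-1$; d\'evissage via Gabber's alteration theorem reduces these to rational motivic cohomology groups of smooth projective $\F$-varieties, which are supported in a bounded range of degrees.

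For the $l$-primary part, I will use the universal coefficient exact sequence arising from the distinguished triangle $N\by{l^\nu}N\to N\oo^L \Z/l^\nu$, which embeds the $l^\nu$-torsion of $\DA_W(\F,\Z[1/2])(M,N[j])$ into $\DA_W(\F,\Z[1/2])(M,(N\oo^L \Z/l^\nu)[j-1])$. By Lemma \ref{l7.1} combined with the comparison isomorphism of Theorem \ref{t3.2} b), this last group is identified with $H^{j-1}_\cont(\F,R\uHom(R_l M,R_l N)\oo^L \Z/l^\nu)$. The crux, which is the main obstacle, is to bound the non-vanishing range of this continuous cohomology uniformly in $l$: for $M,N$ constructible, this follows by reducing via Gabber's theorem to motives of smooth projective varieties, whose $l$-adic realizations lie in $\hat{D}^b_c(\F,\Z_l)$ with cohomological amplitudes depending only on their dimensions and not on $l$. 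Since $\F$ has $l$-adic cohomological dimension one, one obtains the desired $l$-uniform bound on the range of $j$, which combined with the rational vanishing finishes the proof.
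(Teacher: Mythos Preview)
Your finiteness argument is correct and matches the paper's: Theorem~\ref{t10.1} gives finite generation of the terms, Lemma~\ref{l10.2} gives torsion cohomology, hence finite cohomology.

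For boundedness, your organisation differs from the paper's. The paper reduces by duality to $N=\Z$, then by d\'evissage to $M=M(X)(-n)$, and directly analyses $H^j_\et(X,\Z^c(n))$: the two cohomology sheaves of $\Z^c$ together with finite \'etale cohomological dimension (via \eqref{eq3.1}) reduce everything to bounding $H^j_\et(X,\Q(n))$. Your rational/torsion split is a legitimate alternative, and your $l$-torsion step is essentially the paper's finite-cohomological-dimension argument unpacked prime by prime. (One small slip: the $l^\nu$-torsion of $\Hom(M,N[j])$ is a \emph{quotient} of $\Hom(M,(N\oo^L\Z/l^\nu)[j-1])$, not a subobject; this does not affect the vanishing conclusion.)

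The genuine gap is in your rational step. You assert that rational motivic cohomology groups $H^j(X,\Q(n))$ of smooth projective $\F$-varieties are ``supported in a bounded range of degrees'' as though this were a known fact. The upper bound $j\le 2n$ is indeed unconditional by \eqref{eq4.3}, but there is no known unconditional lower bound: for $j<2n$ these are the rationalised higher Chow groups $CH^n(X,2n-j)\otimes\Q$, whose vanishing is exactly Parshin's conjecture. This is where the standing hypothesis enters non-trivially. The paper makes this explicit: after reducing to $X$ smooth projective, it invokes Theorem~\ref{tG} (Tate $+$ Beilinson $\Rightarrow$ Parshin in weight $n$) to get $H^j_\et(X,\Q(n))=0$ for $j<2n$. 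You must cite this step; without it, your lower bound is unjustified.
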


\begin{proof} It follows from Lemma \ref{l10.2} and Theorem \ref{t10.1} that, under the conditions of Corollary \ref{c9.1} a), the cohomology groups of \eqref{eq10.5} are finite. It remains to see its boundedness. In terms of
$\DA_\et(\F,\Z[1/2])$,  amounts to the vanishing of
$\DA_\et(\F,\Z[1/2])(M,N\otimes\Z^c[j])$ for $|j|\gg 0$. By duality, we may reduce to $N=\Z$.  The vanishing is stable under cones and
direct summands, so it suffices to check it on motives of the form
$M=M(X)(-n)$, $X\in
\Sm(\F)$. This amounts to the vanishing of the cohomology groups $H^i_\et(X,\Z^c(n))$
for $|j|\gg 0$. In view of the cohomology sheaves of $\Z^c$, this reduces to the same vanishing
for $H^j_\et(X,\Z(n))$; since $X$ has finite \'etale cohomological dimension, this in turn
reduces in view of \eqref{eq3.1} to the same vanishing for $H^j_\et(X,\Q(n))$. This group is
$0$ for $j>2n$ by \eqref{eq4.3} and Theorem \ref{t3.3} b). To get the boundedness below we may
even reduce to $X$ smooth projective, and then $H^j_\et(X,\Q(n))=0$ for $j<2n$ by the
assumption and Theorem \ref{tG}.
\end{proof}

In \cite{zetavoe}, we shall use Corollary \ref{c10.1} to give a conjectural expression for the
special values of the zeta function of $M$, generalising Lichtenbaum's conjecture up to a power
of $p$ (cf. \cite[Th. 72]{handbook}).

\subsection{Rational coefficients}\label{s.curious}

\begin{prop} \label{p10.2} The following conditions are equivalent:
\begin{thlist}
\item The Beilinson and Parshin conjectures hold for any smooth projective $\F$-variety.
\item For any $M\in \DM_\gm(\F,\Q)$, the composition pairing
\[\DM_\gm(\F,\Q)(\Z,M)\otimes \DM_\gm(\F,\Q)(M,\Z) \to \DM_\gm(\F,\Q)(\Z,\Z)=\Q\]
is non-degenerate.
\end{thlist}
If this is the case,\\
a) The Hom groups in $\DM_\gm(\F,\Q)$ are finite-dimensional $\Q$-vector
spaces. \\
b) The functor $\Phi$ of \eqref{eq8.1} induces an equivalence of categories
\[D^b(\sM_\num(\F,\Q))\iso \DM_\gm(\F,\Q).\]
c) There is a unique $t$-structure on $\DM_\gm(\F,\Q)$ such that $S[-w]$ is in the heart for
any $S\in \sM_\num(\F,\Q)$ of pure weight $w$.\\ 
d) For any $M\in \DM_\gm(\F,\Q)$, the action of
Frobenius on
$R_l(M)$ is semi-simple and its characteristic polynomial is independent of $l$. 
\end{prop}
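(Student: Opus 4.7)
The plan is to derive (ii)~$\Rightarrow$~(i) directly and to obtain (a)--(d) as consequences of (b) under (i); the linchpin is to promote the functor $\Phi$ of \eqref{eq8.1} to an equivalence $D^b(\sM_\num(\F,\Q)) \iso \DM_\gm(\F,\Q)$.

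For (ii)~$\Rightarrow$~(i), fix a smooth projective $\F$-variety $X$ of dimension $d$ and integers $0 \le n \le d$, $0 \le j \le 2n$, and test the pairing on $M = M(X)(-n)[-j] \in \DM_\gm(\F,\Q)$. The duality $M(X)^* \simeq M(X)(-d)[-2d]$, valid rationally by Gabber's theorem, yields
\[ \Hom(M,\Z) = H^j(X,\Q(n)), \qquad \Hom(\Z,M) = H^{2d-j}(X,\Q(d-n)). \]
For $j < 2n$ one has $2d-j > 2(d-n)$, so $\Hom(\Z,M) = 0$ by \eqref{eq4.3}, and non-degeneracy forces $H^j(X,\Q(n)) = 0$; combined with \eqref{eq4.3} for $j > 2n$ this gives Conjecture~\ref{cp} for $(X,n)$. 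For $j = 2n$ the pairing becomes the intersection pairing $CH^n(X)_\Q \otimes CH^{d-n}(X)_\Q \to \Q$, whose non-degeneracy on $CH^n(X)_\Q$ is exactly Conjecture~\ref{cb} for $(X,n)$.

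For (i)~$\Rightarrow$~(b), Beilinson gives $\sM_\rat(\F,\Q) = \sM_\num(\F,\Q)$, which is semi-simple by Jannsen \cite{jannsen}. Parshin combined with the Adams decomposition $K_i(X)_\Q^{(n)} \simeq H^{2n-i}(X,\Q(n))$ yields $H^j(X,\Q(n)) = 0$ for $j < 2n$; with \eqref{eq4.3} this produces vanishing for all $j \ne 2n$ and $X$ smooth projective. Via the duality used above, this propagates to $\Hom_{\DM_\gm(\F,\Q)}(h(X), h(Y)(n)[i]) = 0$ for all $n \in \Z$ and $i \ne 2n$, while Theorem~\ref{t3.1} identifies the $i = 2n$ group with $\Hom_{\sM_\num(\F,\Q)}(h(X), h(Y)(n))$. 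Since $D^b(\sM_\num(\F,\Q))$ splits as $\bigoplus_n \sM_\num(\F,\Q)[-n]$ by semi-simplicity, the Ext-vanishing shows that $\Phi$ extends to a fully faithful functor $D^b(\sM_\num(\F,\Q)) \to \DM_\gm(\F,\Q)$; essential surjectivity follows because Chow motives of smooth projective $\F$-varieties with all Tate twists generate $\DM_\gm(\F,\Q)$, by Gabber's refinement of de Jong. Conclusions (a) and (c) are immediate: finite-dimensionality of Hom-groups follows from semi-simplicity together with the finite generation of $A^*_\num$, and the $t$-structure is transported from the canonical one on $D^b$.

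Under (b), every $M \in \DM_\gm(\F,\Q)$ decomposes as $\bigoplus_\alpha S_\alpha[n_\alpha]$ with $S_\alpha$ simple in $\sM_\num(\F,\Q)$. For (ii), both $\Hom(\Z, S_\alpha[n])$ and $\Hom(S_\alpha[n], \Z)$ vanish unless $n = 0$ and $S_\alpha \simeq \Z$, where the pairing reduces to the tautological $\Q \otimes \Q \to \Q$. For (d), write $S_\alpha = (h(X), e_\alpha)$ with $e_\alpha \in \End_{\sM_\num}(h(X))$ an idempotent represented by a rational cycle class (hence independent of $l$); then $R_l(S_\alpha) \simeq e_\alpha H^*_\cont(\bar X, \Q_l)$, and the Lefschetz trace formula gives $\mathrm{tr}(F^n \mid R_l(S_\alpha)) = \deg(\Gamma_{F^n} \cdot e_\alpha)$, a rational number independent of $l$; hence the characteristic polynomial has $\Q$-coefficients and is $l$-independent. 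Frobenius acts semisimply because $F_\alpha \in \End_{\sM_\num}(S_\alpha)$ lies in a finite-dimensional $\Q$-division algebra, so $\Q[F_\alpha]$ is a number field inside which every element is separable and therefore acts semisimply on any module. The main obstacle is item (b), and specifically its essential surjectivity: this requires generating $\DM_\gm(\F,\Q)$ by smooth projective Chow motives with Tate twists via Gabber's alteration theorem, bypassing the absence of Hironaka resolution in characteristic $p$.
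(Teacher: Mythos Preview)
Your proof tracks the paper's closely for the equivalence (i)~$\iff$~(ii) and for deriving (a), (b) from (i); your treatment of (d) via the Lefschetz trace formula and the division-algebra structure of $\End_{\sM_\num}(S_\alpha)$ is a valid and more explicit alternative to the paper's terse appeal to Theorem~\ref{rh}.

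There is, however, an error in your handling of (c). The $t$-structure asked for in (c) is \emph{not} the canonical one on $D^b(\sM_\num(\F,\Q))$ transported via the equivalence: the canonical $t$-structure places all of $\sM_\num(\F,\Q)$ in degree~$0$, whereas (c) asks that a simple motive $S$ of weight $w$ sit in degree~$w$ (so that $S[-w]$ lies in the heart). To construct this weight-shifted $t$-structure one must first know that every simple object of $\sM_\num(\F,\Q)$ \emph{has} a well-defined weight; this is Katz--Messing \cite{km}, which the paper invokes at precisely this point. Once granted, semi-simplicity gives a decomposition $\sM_\num(\F,\Q)=\bigoplus_w \sM_\num^w(\F,\Q)$ by weight, the $t$-structure in question has heart $\bigoplus_w \sM_\num^w(\F,\Q)[-w]$, and uniqueness is clear since every object decomposes canonically as a sum of shifted simples.

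A related subtlety affects your (d): the trace computation yields $l$-independence of the \emph{virtual} characteristic polynomial (the alternating product over cohomological degrees). If the statement is meant degree by degree, one again needs Katz--Messing or Theorem~\ref{rh} to know that $R_l(S_\alpha)$ is concentrated in the single degree $w(S_\alpha)$. Your semi-simplicity argument, by contrast, is complete as written.
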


\begin{proof} The full category of $\DM_\gm(\F,\Q)$ consisting of those $M$ such that the
hypothesis of (ii) is satisfied for $M[i]$ for any $i\in\Z$ is  thick and triangulated. Hence
(ii) is equivalent to the same statement restricted to $M=M(X)(-n)[-i]$, $X$ smooth projective,
$n\ge 0$, $i\in \Z$. Then the equivalence between (i) and (ii) (for this specific $X$) is an
observation essentially due to Jakob Scholbach:

In this case, the said pairing is of the form
\[H_i(X,\Q(n))\times H^i(X,\Q(n))\to \Q.\]

By Poincar\'e duality, the left group may be rewritten $H^{2d-i}(X,\Q(d-n))$.

Suppose $i=2n$. Then we find the intersection pairing between rational Chow groups. To say
that it is nondegenerate is equivalent to say that rational and numerical equivalences agree.

Suppose $i\ne 2n$. Then either $i>2n$ or $2d-i>2(d-n)$. So one of the two groups in the
pairing is $0$, and (ii) is equivalent to the vanishing of the other, which is Parshin's
conjecture.

a) follows from (i)  by internal duality and reduction to smooth projective varieties.
For b), see the proof of \cite[Th. 56]{handbook}. By \cite{jannsen}, $\sM_\num(\F,\Q)$ is
semi-simple and by \cite{km}, any simple $S\in \sM_\num(\F,\Q)$ has a weight; then c) follows
readily from b). Finally, d) follows from b) and Theorem
\ref{rh}, since $M$ is a direct sum of shifted simple motives.
\end{proof}

\begin{remarks} 1) Proposition \ref{p10.2} d) may be applied in particular to the motives
$M(X)$ and $M_c(X)$ of Definition \ref{d6.1}, for $X$ of finite type over $\F$.

2) If we consider $\DA_W(\F,\Q)$, the isomorphism stemming from
\eqref{eq6.1b}
\[\DA_W(\F,\Q)(M,N)\simeq \DM(\F,\Q)(M,N)\oplus \DM(\F,\Q)(M,N[-1])\]
easily shows that \eqref{eq10.1} boils down to the pairing of Proposition \ref{p10.2} (ii).

3) Let $l\ne p$. Using Theorem \ref{t3.2} b) and the duality of Proposition \ref{p.verdier}, it
is easy to see that one has a perfect pairing for any $M\in \DA_{\gm,W}(\F,\Z/l)$:
\[\Hom(\Z,M)\times \Hom(M,\Z[1])\to \Hom(\Z,\Z[1]) = \Z/l.\]

This suggests that the conditions of Proposition \ref{p10.2} might be equivalent to those of
Theorem \ref{t10.1}. Unfortunately a proof of this fails, as we shall see in  Remark
\ref{r.fails}.
\end{remarks}

\section{Weil-\'etale reformulation}\label{s.w}

\subsection{Lichtenbaum's Weil-\'etale topology and cohomology}

\begin{defn}[Lichtenbaum \protect{\cite{licht}}] a) Let $X/\F$ be a scheme of finite type, and
let $\bar X=X\otimes_\F \bar \F$. A
\emph{Weil-\'etale sheaf} on $X$ is 
\begin{itemize}
\item a sheaf $\sF$ on $\bar X$;
\item a map $\phi_\sF:\phi^*\sF\to \sF$, $\phi$ the Frobenius of $\F$.
\end{itemize}
 Weil-\'etale sheaves form the \emph{Weil-\'etale topos} of $X$. There is an underlying site 
$X_W$ described in \cite[\S 2]{licht}.\\ 
b) If $\sF$ is an abelian Weil-\'etale sheaf, one defines
\[H^0_W(X,\sF):= H^0_\et(\bar X,\sF)^{\phi_\sF}.\]
c) Weil-\'etale cohomology is defined as the derived functors of $H^0_W$.
\end{defn}

The definition of the Weil-\'etale cohomology yields short exact sequences
\[0\to H^{i-1}_\et(\bar X,\sF)_\phi\to H^i_W(X,\sF)\to H^i_\et(\bar X,\sF)^\phi\to 0\]
which show that Weil-\'etale cohomology commutes with filtering direct limits of sheaves and has finite cohomological dimension.

There is a morphism $\gamma$ from the Weil-\'etale topos to the \'etale topos (forget that
Frobenius acts continuously), hence adjoint functors: $Ab(X_\et)
\begin{smallmatrix}\gamma^*\\\rightleftarrows\\\gamma_*
\end{smallmatrix}Ab(X_W)$.

\subsection{Two basic results} They are due to Geisser \cite[Th. 5.1 and 5.3]{geisser}:

\begin{thm}\label{tgeisser} a) For any $C\in D(X_\et)$, the map
\[R\gamma_*\Z\oo^L C\to R\gamma_*\gamma^* C\]
of \eqref{eq6.2} is an isomorphism.\\
b) $R\gamma_*\gamma^*\Z \simeq \Z^c$.
\end{thm}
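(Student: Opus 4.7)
My plan is to treat parts (a) and (b) in sequence, with part (a) feeding into the mod-$n$ analysis used in part (b).

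For (a), the map in \eqref{eq6.2} is the projection-formula map for the adjoint pair $(\gamma^*, R\gamma_*)$, and I would obtain it in three moves. First, Ayoub's Lemma \ref{l9.2} (together with its Corollary \ref{c6.1}) immediately yields the isomorphism whenever $C$ is strongly dualisable in $D(X_\et)$, since $\gamma^*$ is a unital symmetric monoidal functor. Second, I would verify that $R\gamma_*$ commutes with small direct sums: this follows from the fact that the Weil-\'etale cohomological dimension of $X$ is bounded, which is visible from the short exact sequences $0 \to H^{i-1}_\et(\bar X, \sF)_\phi \to H^i_W(X, \sF) \to H^i_\et(\bar X, \sF)^\phi \to 0$ combined with the finite \'etale cohomological dimension of a scheme of finite type over $\bar\F$. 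Third, invoking Remark \ref{r9.1}, the class of $C$ for which \eqref{eq6.2} is an isomorphism is a localising subcategory of $D(X_\et)$ containing a generating family of strongly dualisable objects (e.g.\ the compact generators $j_!\Z_U$ for $j:U \to X$ \'etale, or more conservatively their $\Z/n$-reductions together with $\Q$-truncations), hence is all of $D(X_\et)$.

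For (b), I would reduce to $X = \Spec \F$ by compatibility of $\gamma$ with \'etale base change, then compute the cohomology sheaves. At the \'etale point $\Spec \F_{q^n}$, sections of $R^i\gamma_*\gamma^*\Z$ compute $H^i_W(\F_{q^n},\Z) = H^i(W_n,\Z)$, which is $\Z$ for $i=0,1$ and zero for $i\geq 2$. The transition map along $\Spec \F_{q^{mn}} \to \Spec \F_{q^n}$ restricts $W_n \to \Z$ to $W_{mn} = m\cdot W_n$, hence is multiplication by $m$ on $H^1$. Sheafifying yields $R^0\gamma_*\gamma^*\Z = \Z$, $R^1\gamma_*\gamma^*\Z = \colim_m \Z = \Q$ (transition maps being multiplication by $m$), and $R^i\gamma_*\gamma^*\Z = 0$ otherwise; this matches the cohomology sheaves of $\Z^c$ recalled in \S\ref{s6.2}.

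It remains to identify the extension class $\partial' \in \Ext^2_{\F_\et}(\Q,\Z)$ of $R\gamma_*\gamma^*\Z$ with the class $\partial$ defining $\Z^c$ via \eqref{eqpartial}. I would do this through two complementary checks. On the one hand, part (a) applied to $C = \Z/n$ gives $R\gamma_*\gamma^*\Z \oo^L \Z/n \simeq R\gamma_*\gamma^*(\Z/n)$, which (because Weil-\'etale and \'etale cohomology agree with finite coefficients on $\F$) equals the constant sheaf $\Z/n$; the same holds for $\Z^c$ by \eqref{eq7.2}. On the other hand, the Hochschild--Serre spectral sequence $E_2^{p,q} = H^p_\et(\F,R^q\gamma_*\gamma^*\Z) \Rightarrow H^{p+q}_W(\F,\Z)$ forces its differential $d_2 : H^0_\et(\F,\Q) = \Q \to H^2_\et(\F,\Z) = \Q/\Z$ to be the canonical projection, in order to recover $H^1_W(\F,\Z) = \Z$. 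This differential factors as the Bockstein composed with cup product with $e \in H^1_\cont(\F,\hat\Z)$, which is exactly \eqref{eqpartial}. Together these observations pin down $\partial' = \partial$. The main obstacle is this last extension-class identification: cohomology sheaves alone do not determine an object of $D^b(\F_\et,\Z)$, and one must genuinely track the interaction of $R\gamma_*$ with the integral Bockstein and the generator $e$; once this is done the isomorphism $R\gamma_*\gamma^*\Z \simeq \Z^c$ follows.
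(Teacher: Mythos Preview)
Your approach to part (a) is essentially the paper's: apply Corollary \ref{c6.1} on strongly dualisable objects and then extend via Remark \ref{r9.1}, using that $R\gamma_*$ commutes with direct sums. One caution: the objects $j_!\Z_U$ for $j:U\to X$ \'etale are compact generators but are \emph{not} strongly dualisable in $D(X_\et)$ in general (they are constructible but not locally constant). The paper instead takes locally constant constructible sheaves as the strongly dualisable generators; that these are dense requires a stratification/d\'evissage argument, which you should make explicit.

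For part (b) your route is genuinely different from the paper's. The paper (Appendix \ref{s.geisser}) builds an explicit map: using a flat model for $\gamma^*\Z^c$ it writes down a concrete chain map $\phi:\gamma^*\Z^c\to\Z$, takes its adjoint $\tilde\phi:\Z^c\to R\gamma_*\Z$, and checks directly that $\tilde\phi$ is an isomorphism on cohomology sheaves and is compatible with the generator $e$. Your argument instead matches cohomology sheaves and then identifies the extension class via the $d_2$-differential of the hypercohomology spectral sequence abutting to $H^*_W(\F,\Z)$. This works, but note that the abutment condition $E_\infty^{0,1}=\Z$ forces $d_2:\Q\to\Q/\Z$ to be a surjection with kernel $\simeq\Z$, hence only determines $d_2$ (and the extension class) up to a unit of $\Q^\times$ acting on $H^1=\Q$; since such automorphisms act transitively on nonzero classes in $\Ext^2_{\F_\et}(\Q,\Z)$, this is enough to conclude $R\gamma_*\gamma^*\Z\simeq\Z^c$ as objects, though not to single out a canonical isomorphism. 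The advantage of the paper's explicit construction is that it produces a \emph{specific} isomorphism compatible with $e$, which is what is later needed when tracking the monoid structure on $\Z^c$ and the pairing \eqref{eq10.1}; your abstract argument would have to be supplemented at that point.
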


Note that b) is the promised better justification of the monoid structure on $\Z^c$ (see
beginning of \S \ref{s.art})!

Since $D(X_\et)$ is generated by strongly dualisable objects\footnote{namely, locally constant
constructible sheaves} and
$\gamma^*$ preserves compact objects, a) is in fact a formal consequence of Corollary
\ref{c6.1} and Remark
\ref{r9.1}. One will find another proof of b) in Appendix \ref{s.geisser}.

The description of $\Z^c$ given in \S \ref{s6.2} then gives:

\begin{cor}\label{c11.1} a) One has long exact sequences
\[
\dots \to H^i_\et(X,C)\to H^i_W(X,\gamma^*C)
\to H^{i-1}_\et(X,C)\otimes\Q\overset{\partial}{\longrightarrow} H^i_\et(X,C)\to\dots
\]
for $C$ a complex of \'etale sheaves on $X$, where $\partial$ is described by
\eqref{eqpartial}.\\ 
b) If $C$ is torsion, $H^i_\et(X,C)\iso H^i_W(X,\gamma^*C)$.\\
c) If $C$ is a complex of sheaves of $\Q$-vector spaces,
\[H^i_W(X,\gamma^*C)\simeq H^i_\et(X,C)\oplus H^{i-1}_\et(X,C).\]
\end{cor}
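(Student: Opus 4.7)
The plan is to deduce everything formally from Theorem~\ref{tgeisser} together with the concrete description of $\Z^c$ recalled in \S~\ref{s6.2}. The starting point is that $R\Gamma_W(X,\gamma^*C) = R\Gamma_\et(X,R\gamma_*\gamma^*C)$, since Weil-\'etale cohomology factors through pushforward along $\gamma$ followed by \'etale global sections; and Theorem~\ref{tgeisser}~a) together with~b) identifies $R\gamma_*\gamma^*C$ with $\Z^c\oo^L C$ in $D(X_\et)$. So everything reduces to computing $H^*_\et(X,\Z^c\oo^L C)$ and comparing with $H^*_\et(X,C)$.

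Since $\Z^c$ has cohomology sheaves $\Z$ in degree~$0$ and $\Q$ in degree~$1$, with extension class $\partial$ described in~\eqref{eqpartial}, the Postnikov filtration yields a distinguished triangle in $D(\F_\et)$:
\[ \Z \to \Z^c \to \Q[-1] \by{\partial} \Z[1]. \]
Pulling this back to $X_\et$ and applying $-\oo^L C$ produces a triangle $C \to \Z^c\oo^L C \to C\oo^L \Q[-1] \to C[1]$. Taking $H^*_\et(X,-)$, and using the flatness of $\Q$ over $\Z$ to identify $H^i_\et(X,C\oo^L \Q[-1])$ with $H^{i-1}_\et(X,C)\otimes\Q$, produces the long exact sequence claimed in~(a), the connecting morphism being induced by $\partial$ through naturality.

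Parts (b) and (c) then drop out. For (b): if $C$ is torsion, then $C\oo^L\Q = 0$, so the third term of the triangle vanishes and the natural map $C \iso \Z^c\oo^L C$ is an isomorphism, which after taking $H^*_\et(X,-)$ gives the claimed isomorphism. For (c): by~\eqref{eq6.1b}, $\Z^c\otimes\Q \simeq \Q[0]\oplus\Q[-1]$, so for $C$ a complex of $\Q$-sheaves we have $\Z^c\oo^L C \simeq C\oplus C[-1]$ in $D(X_\et,\Q)$, whence the direct sum decomposition of cohomology. Equivalently, $\partial$ vanishes after rationalisation, so the long exact sequence of~(a) splits into short split exact sequences.

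There is no substantial obstacle; the only point requiring care is verifying that the abstract connecting morphism of the triangle, after tensoring with $C$, is indeed the $\partial$-induced map of the statement, but this is functorial in $C$ and reduces to the case $C=\Z$, where it holds by construction of $\partial$.
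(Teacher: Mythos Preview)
Your argument is correct and is precisely the proof the paper has in mind: the corollary is stated immediately after Theorem~\ref{tgeisser} with the remark that ``the description of $\Z^c$ given in \S\ref{s6.2} then gives'' it, and your unpacking --- identify $R\gamma_*\gamma^*C$ with $\Z^c\oo^L C$, use the truncation triangle $\Z\to\Z^c\to\Q[-1]\by{\partial}\Z[1]$, and read off (b) from \eqref{eq7.2} and (c) from \eqref{eq6.1b} --- is exactly that description. One small point worth making explicit is that the identification $H^{i-1}_\et(X,C\oo^L\Q)\simeq H^{i-1}_\et(X,C)\otimes\Q$ uses not only flatness of $\Q$ but also that \'etale cohomology of a scheme of finite type commutes with filtered colimits of sheaves.
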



\subsection{Inflicting the Weil-\'etale topology on $\DA$} One can faithfully follow the
template of \cite[\S 3]{ayoubreal} and get triangulated categories $\widetilde{\DA}_W(S,A)$ for
$S$ a $\F$-scheme of finite type and $A$ a commutative ring of coefficients. To do this we should define a Weil-\'etale site $\Sm_W(S)$. We can do it by simply mimicking Lichtenbaum's definition in \cite{licht}:

We first define a ``big Weil-\'etale" site $Sch_W(\F)$. Objects of the underlying category are $\bar \F$-schemes of finite type. Let $\bar S$ be such a scheme; for $n\in \Z$ we write $\bar S^{(n)}= (\phi^n)^*\bar S$; there is a canonical $\bar F$-morphism $S^{(n)}\to S$ (iterated relative Frobenius). If $\bar S,\bar T\in Sch_W(\F)$ with $\bar S$ connected, a morphism $\bar S\to \bar T$ is a $\bar \F$-morphism $\bar S\to \bar T^{(n)}$ for some $n\in \Z$; for $\bar S$ arbitrary a morphism from $\bar S$ to $\bar T$ is a collection of such morphisms over the connected components of $\bar S$. Finally, coverings are \'etale coverings.

If $S/\F$ is of finite type, we define $\Sm_W(S)$ as the full subsite of $Sch_W(\F)$ whose objects are smooth $\bar S$-schemes.

One still has a pair
of adjoint functors
\[\DA_\et(S,A)
\begin{smallmatrix}\gamma^*\\\rightleftarrows\\\gamma_*
\end{smallmatrix}\widetilde{\DA}_W(S,A).\]

Here are some good features of this construction:

\begin{lemma}\label{l11.1}
a) $\gamma^*$ is fully faithful if $A=\Z/l^\nu$.\\
b) Theorem \ref{tgeisser} a) remains valid here.\\
c) The functor $\gamma^*$ factors canonically as a composition
\[\DA_\et(S,A)\by{\gamma^*}\DA_W(S,A)\by{\iota} \widetilde{\DA}_W(S,A)\]
where $\DA_W(S,A)$ is the category from Definition \ref{d10.1} a) and $\iota$ is fully
faithful.  Moreover, the functor
$\gamma_*$ of \S \ref{s.art} is the restriction to $\DA_W(S,A)$ of the present functor
$\gamma_*$.\\
d) If $A=\Z/l^\nu$, $\DA_W(S,A)$ is a triangulated subcategory of $\widetilde{\DA}_W(S,A)$.
\end{lemma}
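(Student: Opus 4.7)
All four parts revolve around identifying the monad $\gamma_*\gamma^*$ on $\DA_\et(S,A)$ with the endofunctor $-\otimes f^*\Z^c$, and then exploiting torsion rigidity when $A=\Z/l^\nu$. I would prove the statements in the order (b), (c), (a), (d): (b) is the projection formula that underlies the identification of monads, (c) is then a categorical consequence, (a) is a rigidity statement, and (d) is essentially a corollary of (a) plus (c).

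\textbf{Plan for (b).} This is formal from Corollary \ref{c6.1} together with Remark \ref{r9.1}. First I would check that $\gamma^*$ preserves compact objects: both $\DA_\et(S,A)$ and $\widetilde{\DA}_W(S,A)$ are built from the same class of smooth $\bar S$-schemes, and $\gamma^*$ sends motives of smooth $S$-schemes to their Weil-\'etale analogues, which are compact by the construction of $\widetilde{\DA}_W$ (cohomological finiteness of the Weil-\'etale site, which inherits it from \'etale cohomology via Corollary \ref{c11.1}). By Lemma \ref{lAyoub}, $\gamma_*$ then commutes with small direct sums. Second, strongly dualisable objects are dense in $\DA_\et(S,A)$: motives of smooth projective $\F$-schemes are strongly dualisable (\cite[App.~B]{HK}) and generate after Gabber's alteration theorem (as in the proof of Lemma \ref{l4.1}). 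Remark \ref{r9.1} then upgrades the projection formula from Corollary \ref{c6.1} from strongly dualisable to arbitrary~$M$.

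\textbf{Plan for (c).} Combine (b) with the identification $R\gamma_*\gamma^*\Z \simeq f^*\Z^c$. The latter is obtained from Theorem \ref{tgeisser} b) applied over $\Spec\F$, pulled back along $f:S\to\Spec\F$ using that $\gamma^*$ and $\gamma_*$ commute with $f^*$ (the same formal argument as Lemma \ref{l9.1}, since $f$ is smooth and $f^*$ admits $f_\sharp$). Together these give a natural isomorphism of monads $\gamma_*\gamma^* \simeq -\otimes f^*\Z^c$ on $\DA_\et(S,A)$, respecting the unit and multiplication (the monoid structure on $\Z^c$ recalled in \S\ref{s.art} matching the natural structure on $\gamma_*\gamma^*\Z$). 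The universal property of the Kleisli category---$\DA_W(S,A)$ by Definition \ref{d10.1}---then produces a unique symmetric monoidal functor $\iota$ through which $\gamma^*$ factors, with $\gamma_*$ restricting to the Kleisli right adjoint. That $\iota$ is fully faithful is a standard feature of Kleisli categories: morphisms in $\DA_W(S,A)$ from $M$ to $N$ are by definition morphisms $M \to N\otimes f^*\Z^c \simeq \gamma_*\gamma^*N$ in $\DA_\et(S,A)$, which by adjunction are exactly morphisms $\gamma^*M \to \gamma^*N$ in $\widetilde{\DA}_W(S,A)$.

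\textbf{Plan for (a) and (d).} For (a), the plan is to imitate the proof of Theorem \ref{t3.2} b) on the Weil-\'etale side: the rigidity argument of Suslin-Voevodsky applies to the small site comparison since, for sheaves of $\Z/l^\nu$-modules with $l\ne p$, the change-of-topos morphism $\gamma$ induces an equivalence $D(\bar S_\et,\Z/l^\nu)\iso D(\bar S_W,\Z/l^\nu)$ (Corollary \ref{c11.1} b)). Concretely I would check that for every smooth $X/S$ and every integer~$i$, the map $H^i_\et(X,\Z/l^\nu(n))\to H^i_W(X,\gamma^*\Z/l^\nu(n))$ induced by $\gamma^*$ is bijective, which follows from Corollary \ref{c11.1} b) applied to the Suslin-Voevodsky complex $\underline{C}^*(\Z/l^\nu(n))$. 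Then (d) becomes formal: (a) shows that the unit $\mathrm{id}\to\gamma_*\gamma^*$ is an isomorphism on $\DA_\et(S,\Z/l^\nu)$, equivalently via (c) that $-\otimes f^*\Z^c\otimes\Z/l^\nu$ is the identity, which is consistent with $\Z^c\otimes^L\Z/l^\nu \simeq \Z/l^\nu[0]$ from \eqref{eq7.2}. Therefore the Kleisli category $\DA_W(S,\Z/l^\nu)$ collapses to $\DA_\et(S,\Z/l^\nu)$, and $\iota$ coincides with $\gamma^*$, which is triangulated and fully faithful. The main obstacle I anticipate is part (a): the Suslin-Voevodsky rigidity theorem was set up for the \'etale topology, and I would need to verify carefully that every step---in particular the transfer structure and the vanishing of the relevant stalks of $\gamma_*$ applied to motivic complexes with torsion coefficients---survives the passage to the Weil-\'etale big site, rather than merely the small site.
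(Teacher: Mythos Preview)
Your overall strategy and the arguments for (b), (c), (d) match the paper's approach. The paper's one-line proof of (b) (``formal by localisation from Theorem \ref{tgeisser} a)'') amounts to exactly the density-plus-projection-formula argument you sketch, as the paper itself notes just after stating Theorem \ref{tgeisser}; (c) and (d) then follow as you say.

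The only genuine confusion is in your plan for (a). You invoke Suslin--Voevodsky rigidity and worry about transfer structures and stalks, but none of that is relevant: we are in $\DA$, not $\DM$, so there are no transfers; and the functor $\gamma^*$ compares the \'etale and Weil-\'etale topologies on the \emph{same} big site, not a big site with a small one as in Theorem \ref{t3.2} b). The paper simply cites Corollary \ref{c11.1} b): for any torsion complex $C$ of \'etale sheaves one has $H^i_\et(X,C)\iso H^i_W(X,\gamma^*C)$, i.e.\ the unit $C\to R\gamma_*\gamma^*C$ is an isomorphism at the level of derived categories of sheaves; since the $\A^1$-localisation and stabilisation are performed identically on both sides and $\gamma^*$ is a left adjoint, full faithfulness persists in $\DA$. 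Even more transparently, once you have (b) and the identification $\gamma_*\gamma^*\Z\simeq f^*\Z^c$ in hand, part (a) is immediate from \eqref{eq7.2}: with $\Z/l^\nu$ coefficients the monad $-\otimes f^*\Z^c$ collapses to the identity, so the unit $\mathrm{id}\to\gamma_*\gamma^*$ is an isomorphism. Your anticipated ``main obstacle'' therefore does not exist.
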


\begin{proof}
a) follows from Corollary \ref{c11.1} b). b) is formal by localisation from Theorem
\ref{tgeisser} a). c) follows formally from b) and Theorem
\ref{tgeisser} b). Finally, d) follows easily from a).
\end{proof}

\begin{remark}\label{r11.1}
Unfortunately $\DA_W(S,A)$ is not a triangulated subcategory of $\widetilde{\DA}_W(S,A)$ in general, \emph{e.g.} for $A=\Z$ or $\Q$. Indeed, consider a fibre $F$ of the map $e\in \widetilde{\DA}_W(\F)(\Z,\Z[1])$ coming from \eqref{eq6.1a} (see above \eqref{eq10.5}). Then $F$ is represented by a Weil-\'etale sheaf (still denoted by $F$), extension of $\Z$ by itself. (One can check that the Frobenius $\phi$ acts on $F\simeq \Z\oplus \Z$ by
\[\phi(m,n)=(m,m+n),\]
compare \cite[(4.1)]{tatesheaf} and Appendix \ref{s.geisser}, but we won't need this here.)

I claim that $F$ is not in the essential image of $\gamma^*$. It is enough to show this for its image $F_\Q$ in $\widetilde{\DA}_W(\F,\Q)$. If $F_\Q$ were of the form $\gamma^*C$ for $C\in \DA_\et(\F,\Q)$, we would have
\[\gamma_* F_\Q \simeq \gamma_* \gamma^* C \simeq C\otimes \gamma_* \Q \simeq C\otimes \Q^c \simeq C\oplus C[1].\]

But it is easy to compute $\gamma_* F_\Q$ as the fibre of $\gamma_* e$: one finds
\[\gamma_* F_\Q\simeq \Q \oplus \Q[1].\]

One would then have $C\oplus C[1] \simeq \Q\oplus \Q[1]$ in $\DA_{et}(\F,\Q)$. Taking cohomology sheaves, this easily gives $C \simeq \Q[0]$.
Hence a contradiction, since then $F_\Q = \gamma^* C = \Q[0]$ would be of $\Q$-rank $1$.
\end{remark}

\subsection{The Weil-\'etale $l$-adic realisation functor} Remark \ref{r11.1} leads to

\begin{defn} We write $\overline{DA}_W(S,A)$ for the localising subcategory of $\widetilde{\DA}_W(S,A)$ generated by $DA_W(S,A)=\gamma^*\DA_\et(S,A)$. It is a $\otimes$-tri\-ang\-ul\-at\-ed subcategory.
\end{defn}

\begin{lemma}\label{l11.2} If $A=\Z/l^n$, $\overline{\DA}_W(S,A)=\DA_W(S,A)\simeq \DA_\et(S,A)$.
\end{lemma}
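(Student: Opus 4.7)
The plan is to establish two claims in parallel: first, that the Kleisli functor $\gamma^* : \DA_\et(S,\Z/l^n) \to \DA_W(S,\Z/l^n)$ of \S \ref{s.art} is an equivalence; and second, that the image $\iota(\DA_W(S,\Z/l^n))$ is already a localising subcategory of $\widetilde{\DA}_W(S,\Z/l^n)$, hence coincides with the localising subcategory $\overline{\DA}_W(S,\Z/l^n)$ it generates.

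For the first claim I would exploit the Kleisli description directly. The functor in question is the identity on objects and sends $f : M \to N$ to its composition with the unit $N \to N \otimes f^*\Z^c$, where $f : S \to \Spec \F$ is the structural morphism. By \eqref{eq7.2} the map $\Z \to \Z^c$ becomes an isomorphism after $\oo^L \Z/l^n$ over $\Spec \F$; pulling back along the monoidal left adjoint $f^*$ shows that $\mathbf{1}_S \to f^*\Z^c$ is an isomorphism in $\DA_\et(S,\Z/l^n)$. Tensoring with any $N$ keeps this an isomorphism, so $\Hom(M,N) \to \Hom(M, N \otimes f^*\Z^c)$ is bijective; the Kleisli $\gamma^*$ is thus fully faithful and, being bijective on objects, an equivalence.

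For the second claim, Lemma \ref{l11.1} d) already gives that $\DA_W(S,\Z/l^n)$ is a triangulated subcategory of $\widetilde{\DA}_W(S,\Z/l^n)$, so only closure under small direct sums remains. The total functor $\gamma^* : \DA_\et(S,\Z/l^n) \to \widetilde{\DA}_W(S,\Z/l^n)$ is a left adjoint (to $\gamma_*$), hence preserves small direct sums, and is fully faithful by Lemma \ref{l11.1} a). A fully faithful, direct-sum-preserving functor from a cocomplete category has its essential image closed under direct sums: given $X_i \simeq \gamma^* Y_i$, one has $\bigoplus X_i \simeq \gamma^*(\bigoplus Y_i)$, which still lies in the image. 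Combining the two claims yields $\overline{\DA}_W(S,\Z/l^n) = \DA_W(S,\Z/l^n) \simeq \DA_\et(S,\Z/l^n)$.

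I do not expect any serious obstacle: all the hard content is already compressed into Lemma \ref{l11.1}, the essential input being the collapse of $\cone(\Z \to \Z^c)$ after $\oo^L \Z/l^n$ from \eqref{eq7.2}. The only bookkeeping point to verify carefully is that the two versions of $\gamma^*$ under consideration—the abstract Kleisli one of \S \ref{s.art} and the Weil-\'etale site one—are compatibly related by the factorisation in Lemma \ref{l11.1} c), so that ``fully faithful on $\Z/l^n$-coefficients'' transports through both stages.
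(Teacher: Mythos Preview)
Your proposal is correct and is essentially a detailed unpacking of the paper's one-line proof, which simply says ``This follows formally from Lemma~\ref{l11.1} a).'' The only cosmetic difference is that for the equivalence $\DA_W(S,\Z/l^n)\simeq\DA_\et(S,\Z/l^n)$ you argue directly via \eqref{eq7.2} that the Kleisli monoid $f^*\Z^c$ collapses to the unit, whereas the paper would deduce this from the full faithfulness of the composite $\gamma^*$ (Lemma~\ref{l11.1} a)) together with that of $\iota$ (Lemma~\ref{l11.1} c)); these are two ways of saying the same thing.
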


\begin{proof} This follows formally from Lemma \ref{l11.1} a).
\end{proof}

\begin{prop}\label{p11.1} Let $l\ne 2$. The functor $R_l^W$ of Lemma \ref{l10.1} extends to $\overline{\DA}_W(S)$, and has a right adjoint $\Omega_l^W$. Similarly with coefficients $\Z_l$.
\end{prop}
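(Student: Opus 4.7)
First I would establish that $\widetilde{\DA}_W(S, A)$ is compactly generated, by adapting Proposition \ref{p6.1}. The key input is the finite cohomological dimension of Weil-\'etale cohomology on schemes of finite type over $\F$, which follows from the short exact sequences $0 \to H^{i-1}_\et(\bar X, \sF)_\phi \to H^i_W(X, \sF) \to H^i_\et(\bar X, \sF)^\phi \to 0$ and the assumption $cd(S, A) < \infty$. A dense set of compacts is then given by the ``motives'' in $\widetilde{\DA}_W(S, A)$ of smooth $\bar S$-schemes of finite type. Since $\overline{\DA}_W(S, A)$ is by definition the localising subcategory generated by $\gamma^*X$ for $X \in \DA_\et(S, A)$, and since $\gamma^*$ preserves compact objects (its right adjoint $\gamma_*$ commutes with filtering direct limits by finite Weil-\'etale cohomological dimension), $\overline{\DA}_W(S, A)$ is itself compactly generated, with compact generators the $\gamma^*M(X)$ for $X \in \Sm(S)$.

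The extension of $R_l^W$ is the subtle step. Since $\DA_W(S, \Z_l)$ is only a Kleisli-type full subcategory of $\widetilde{\DA}_W(S, \Z_l)$, not a triangulated one, there is no purely formal extension procedure. My approach would be to construct a triangulated $\otimes$-functor $\widetilde R_l^W: \widetilde{\DA}_W(S, \Z_l) \to \hat D_\et(S, \Z_l)$ directly, by transcribing Ayoub's construction \cite[\S 5]{ayoubreal} with the \'etale site $\Sm(S)_\et$ replaced throughout by the Weil-\'etale site $\Sm_W(S)$. Ayoub's $R_l$ arises as the derived functor of a left Quillen functor between simplicial model categories, and the same recipe applies on the Weil-\'etale side to produce $\widetilde R_l^W$. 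The compatibility $\widetilde R_l^W \circ \gamma^* \simeq R_l$ is built in, since both sides come from the same universal property pulled back through the site morphism $\gamma: \Sm_W(S) \to \Sm(S)$. The uniqueness clause of Lemma \ref{l10.1} then identifies the restriction of $\widetilde R_l^W$ to $\DA_W(S, \Z_l) \subseteq \widetilde{\DA}_W(S, \Z_l)$ with $R_l^W$. Restricting $\widetilde R_l^W$ to $\overline{\DA}_W(S, \Z_l)$ yields the extension $\overline R_l^W$.

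For the right adjoint, Corollary \ref{cBrown} applies directly: $\overline{\DA}_W(S, \Z_l)$ is compactly generated, and $\overline R_l^W$ commutes with small direct sums because it descends from a left Quillen functor and direct sums in $\overline{\DA}_W$ agree with those computed in $\widetilde{\DA}_W$. This produces the triangulated right adjoint $\Omega_l^W$. The $\Z_l$-coefficient version is handled by the same argument.

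The main obstacle is the second step: adapting Ayoub's construction faithfully to the Weil-\'etale setting requires careful bookkeeping of the rigidification and stabilisation steps, and one must check that the target is indeed $\hat D_\et(S, \Z_l)$ (not some Weil-\'etale refinement). A more conceptual alternative would be to define $\overline R_l^W$ via the bar resolution of the comonad $C = \gamma^* \gamma_*$ on $\widetilde{\DA}_W$: on $\gamma^*M$ the value is forced to be $R_l(M)$, and the extension to $\overline{\DA}_W$ arises as a homotopy colimit, the cancellation $R_l(\Z^c) \simeq \Z_l$ from Proposition \ref{p7.1} absorbing the $\Z^c$-factors appearing at each simplicial level. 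Either approach requires verifying that the formal construction does not introduce unwanted structure on the target, and that the resulting functor actually agrees with $R_l^W$ on the Kleisli subcategory.
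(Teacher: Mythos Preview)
Your approach diverges from the paper's, and the divergence is exactly at the point you flag as the ``main obstacle'': you never resolve it, whereas the paper does so with a single clean observation.

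The paper does not transcribe Ayoub's construction onto the Weil-\'etale site at all. Instead it exploits Lemma~\ref{l11.2}: with $\Z/l^\nu$ coefficients, $\gamma^*$ is already an \emph{equivalence} $\DA_\et(S,\Z/l^\nu)\iso \overline{\DA}_W(S,\Z/l^\nu)$. Passing to Ayoub's profinite category $\DA_\et(S,\Z/l^\bullet)$ and its Weil-\'etale analogue $\overline{\DA}_W(S,\Z/l^\bullet)$, the projections to each finite level form a conservative family (by \cite[Lemma~5.3]{ayoubreal} on both sides), so the torsion-level equivalences assemble: $\gamma^*$ and $\gamma_*$ are quasi-inverse equivalences between the two profinite categories. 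The functor $R_l^W$ is then simply the composite
\[
\overline{\DA}_W(S,\Z_l)\to \overline{\DA}_W(S,\Z/l^\bullet)\by{\gamma_*}\DA_\et(S,\Z/l^\bullet)\by{R_l} D_\et(S,\Z/l^\bullet),
\]
and one checks its image lies in the localising subcategory $\hat D_\et(S,\Z_l)$ because this already holds for $R_l$ and $\gamma^*\DA_\et(S,\Z_l)$ generates. No new site-level construction is needed; the target is $\hat D_\et(S,\Z_l)$ by design.

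Your direct-transcription approach would, if carried out, land in a Weil-\'etale analogue $\hat D_W(S,\Z_l)$, and identifying that with $\hat D_\et(S,\Z_l)$ would require precisely the torsion equivalence above --- so you would end up proving the same lemma, just at a later and less convenient stage. Your bar-resolution alternative is suggestive but not a proof as stated: the homotopy-colimit description does not obviously yield a triangulated $\otimes$-functor with the correct values, and ``$R_l(\Z^c)\simeq\Z_l$ absorbs the extra factors'' hides exactly the coherence issues that the profinite argument avoids. Your treatment of compact generation and the Brown-representability step for $\Omega_l^W$ is fine and matches the paper.
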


\begin{proof} It suffices to do it with $\Z_l$ coefficients. This follows immediately from Ayoub's construction of $R_l$ \cite[Def. 5.6]{ayoubreal}: it is the composition of two functors
\[\DA_\et(S,\Z_l)\to \DA_\et(S,\Z/l^\bullet)\to D_\et(S,\Z/l^\bullet)\]
where $\DA_\et(S,\Z/l^\bullet)$ is the ``profinite" version of $\DA_\et(S,\Z_l)$ described in \cite[\S 5]{ayoubreal}. There is an obvious functor 
\[\DA_\et(S,\Z/l^\bullet)\by{\gamma^*} \overline{\DA}_W(S,\Z/l^\bullet)\]
where the right category is defined similarly, and $\gamma^*$ has a right adjoint $\gamma_*$. These adjoints are clearly compatible with the projections
\[\DA_\et(S,\Z/l^\bullet)\by{s_\et^*} \DA_\et(S,\Z/l^s),\; \overline{\DA}_W(S,\Z/l^\bullet)\by{s_W^*} \overline{\DA}_W(S,\Z/l^s).\]

By \cite[Lemma 5.3]{ayoubreal}, the $(s_\et^*)$ form a conservative set of functors, and the same proof shows that the same holds for the $(s_W^*)$. From this and Lemma \ref{l11.2}, one deduces that the unit and counit of the adjunction $(\gamma^*,\gamma_*)$ are isomorphisms, hence $\gamma^*$ and $\gamma_*$ are quasi-inverse equivalences of categories. The functor $R_l^W$ is then defined as the composition
\[\overline{\DA}_W(S,\Z_l)\to \overline{\DA}_W(S,\Z/l^\bullet)\by{\gamma_*} \DA_\et(S,\Z/l^\bullet)\by{R_l} D_\et(S,\Z/l^\bullet).\]

It remains to show that the image of $R_l^W$ is contained in $\hat{D}_\et(S,\Z_l)$. This is a localising subcategory of $D_\et(S,\Z/l^\bullet)$; since it contains the image of $R_l$ and since $\gamma^*\DA_\et(S,\Z_l)$ generates $\overline{\DA}_W(S,\Z_l)$, $\hat{D}_\et(S,\Z_l)$ also contains the image of $R_l^W$. The existence of $\Omega_l^W$  follows fromTheorem \ref{tBrown}.
\end{proof}

On the other hand, there are too many objects in $\widetilde{DA}_W(S,A)$:

\begin{enumerate}
\item Even if $A=\Z/l^\nu$, $\gamma^*$ is not essentially surjective. For example, the free
$\Z/l^\nu[\phi,\phi^{-1}]$-module of rank $1$ defines an object of
$\widetilde{\DA}_W(\F,\Z/l^\nu)$ which is not in the essential image of $\gamma^*$. In
particular, it is not clear whether $R_l^W$
extends to $\widetilde{\DA}_W(\F)$.
\item $\DA_W(\F,\Q)$ is not dense in $\widetilde{\DA}_W(\F,\Q)$. For example, the sheaf $\Q\langle n\rangle$ where Frobenius
acts by multiplication by $q^n$ has no nonzero Weil-\'etale cohomology if $n\ne 0$ (this
example was given by Geisser). Hence it is right orthogonal to $\DA_{\gm,W}(\F,\Q)$, hence to
$\DA_W(\F,\Q)$.
\end{enumerate}

\subsection{A reformulation of Proposition \ref{p10.1}} The following statement is a bit more
elegant:

\begin{prop} The conditions of Corollary \ref{c9.1} a) are also equivalent to the following:
the unit map
\[\Z_l\to \Omega_l^WR_l^W\Z_l\]
of the adjunction in Proposition \ref{p11.1} is an isomorphism.
\end{prop}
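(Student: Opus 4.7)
The plan is to reduce this equivalence to Proposition \ref{p10.1} a) in both directions, with the converse powered by the abstract projection formula of Corollary \ref{c6.1} transposed from the adjunction $(R_l,\Omega_l)$ to its Weil-\'etale analogue $(R_l^W,\Omega_l^W)$ of Proposition \ref{p11.1}.

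For the direction ($\Rightarrow$), I would simply invoke Proposition \ref{p10.1} a), which under the conditions of Corollary \ref{c9.1} a) asserts that the restriction of $R_l^W$ to $\DA_{\gm,W}(\F,\Z_l)$ is fully faithful. By the $(R_l^W,\Omega_l^W)$ adjunction, full faithfulness on this subcategory is equivalent to the unit $\eta_M : M \to \Omega_l^W R_l^W M$ being an isomorphism for every $M$ in it; specialising to $M = \Z_l$ gives the desired conclusion.

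For the direction ($\Leftarrow$), the strategy is to propagate the hypothesis at $\Z_l$ to every strongly dualisable object by a projection-formula argument. Since $R_l^W$ is a strong symmetric monoidal functor admitting $\Omega_l^W$ as right adjoint, Lemma \ref{l9.2} and Corollary \ref{c6.1} apply verbatim and yield, for every strongly dualisable $M \in \overline{\DA}_W(\F,\Z_l)$, a natural isomorphism
\[M \otimes \Omega_l^W R_l^W(\Z_l) \iso \Omega_l^W R_l^W(M).\]
Composed with $M \iso M \otimes \Omega_l^W R_l^W(\Z_l)$ provided by the hypothesis $\eta_{\Z_l}:\Z_l\iso \Omega_l^W R_l^W(\Z_l)$, this is precisely $\eta_M$, by the naturality discussion preceding Corollary \ref{c6.1}. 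To conclude, I need every object of $\DA_{\gm,W}(\F,\Z_l)$ to be strongly dualisable: this follows from the obvious $\DA$-analogue of Lemma \ref{l4.1} (generation by smooth projective varieties via Gabber--de Jong) together with the fact that $\gamma^*$ is strong symmetric monoidal and so preserves strongly dualisable objects. Hence $\eta_M$ is an isomorphism on all of $\DA_{\gm,W}(\F,\Z_l)$, i.e., $R_l^W$ is fully faithful there, and Proposition \ref{p10.1} a) delivers the conditions of Corollary \ref{c9.1} a).

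The step I expect to require the most care is verifying that Lemma \ref{l9.2} really transposes to the Weil-\'etale setting: one must confirm that the functor $R_l^W$ of Proposition \ref{p11.1} carries a genuine strong symmetric monoidal structure (not just object-wise), and that the transformation \eqref{eq6.3} it produces agrees, after identification via $\eta_{\Z_l}$, with the adjunction unit. Both points are formal consequences of Proposition \ref{p11.1}, where $R_l^W$ is constructed as a composition of monoidal functors through the quasi-inverse equivalence at each finite level; but since these monoidality checks are easily taken for granted, this is the place I would double-check most carefully. A minor secondary concern --- that one works within $\overline{\DA}_W(\F,\Z_l)$ rather than $\widetilde{\DA}_W(\F,\Z_l)$ --- causes no trouble because both $\DA_{\gm,W}(\F,\Z_l)$ and the adjunction live in $\overline{\DA}_W(\F,\Z_l)$ by construction.
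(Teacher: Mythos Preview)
Your proof is correct, but it takes a different and somewhat longer route than the paper's. The paper does not pass through Proposition~\ref{p10.1}~a) at all: instead it observes that, since $\gamma^*\DA_\et(\F,\Z_l)$ generates $\overline{\DA}_W(\F,\Z_l)$, the map $\eta_{\Z_l}$ is an isomorphism if and only if $\gamma_*\eta_{\Z_l}$ is. Using the compatibility $R_l^W\gamma^*=R_l$ (hence $\gamma_*\Omega_l^W=\Omega_l$) together with Theorem~\ref{tgeisser}, this translates $\gamma_*\eta_{\Z_l}$ directly into the map $\Z^c\otimes\Z_l\to\Gamma$ in $\DA_\et(\F,\Z_l)$, whose invertibility is precisely condition~(vii) of Corollary~\ref{c9.1}~a). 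Thus the paper's argument is a one-line reduction to~(vii) via the $(\gamma^*,\gamma_*)$ adjunction, whereas you reduce to the full-faithfulness characterisation and then invoke the projection formula for $(R_l^W,\Omega_l^W)$ to propagate from $\Z_l$ to all dualisable objects.

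Your approach has the merit of making the role of dualisability explicit, but it carries two extra burdens that the paper avoids. First, your $(\Rightarrow)$ direction silently uses that $\DA_{\gm,W}(\F,\Z_l)$ is \emph{dense} in $\overline{\DA}_W(\F,\Z_l)$: full faithfulness on a subcategory only forces the unit to be an isomorphism if maps out of that subcategory detect isomorphisms in the ambient category. This is true here (since $\gamma^*$ preserves compactness and Weil-\'etale cohomology commutes with filtered colimits), but it deserves a sentence. Second, as you yourself flag, you need $R_l^W$ on $\overline{\DA}_W$ to be genuinely strong symmetric monoidal; the paper's route sidesteps this verification entirely by working with $\gamma_*$ instead.
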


\begin{proof} Using Theorem \ref{tgeisser} a) and the fact that
$\gamma^*\DA_\et(\F,\Z_l)$ generates $\overline{\DA}_W(\F,\Z_l)$, this is formally equivalent to Corollary
\ref{c9.1} a) (vii).
\end{proof}

\subsection{A reformulation of Theorem \ref{t10.1}} Consider the composite functor
\begin{multline*}
D(\Z[1/2p])\simeq d_{\le 0} \DA(\F,\Z[1/2p])\to \DA_\et(\F,\Z[1/2])\\
\to \overline{\DA}_W(\F,\Z[1/2]).
\end{multline*}

It commutes with representable direct sums and preserves compact objects, hence  has a right
adjoint $R\Gamma$ wich in turn has a right adjoint $\Pi$.

The cohomology groups of $R\Gamma(\Z)$ are given by \eqref{eq6.1a}; in particular, we have a
canonical map $R\Gamma(\Z)\to \Z[1]$, which yields a map in $\overline{\DA}_W(\F,\Z[1/2])$
\begin{equation}\label{eq11.1}
\Z[-1]\by{\theta} \Pi(\Z).
\end{equation}

\begin{thm} The conditions of Corollary \ref{c9.1} a) are equivalent to the invertibility of
\eqref{eq11.1}.
\end{thm}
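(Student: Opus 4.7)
By the $(R\Gamma,\Pi)$-adjunction, $\theta$ is invertible if and only if for every $M\in \overline{\DA}_W(\F,\Z[1/2])$ the natural map
\[\alpha_M\colon \overline{\DA}_W(\F,\Z[1/2])(M,\Z[-1])\longrightarrow D(\Z[1/2p])(R\Gamma(M),\Z),\]
obtained by applying $R\Gamma$ and post-composing with the canonical map $R\Gamma(\Z)[-1]\to \Z$ dual under shift to the projection $R\Gamma(\Z)\to \Z[1]$, is bijective. Since the composite $F\colon D(\Z[1/2p])\to \overline{\DA}_W(\F,\Z[1/2])$ sends the compact generator $\Z[1/2p]$ to the compact unit $\Z$, Lemma \ref{lAyoub} forces $R\Gamma$ to commute with small direct sums, so both sides of $\alpha_M$ are cohomological and convert coproducts into products. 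The full subcategory of $M$'s on which $\alpha_M$ is bijective is therefore localising, and invertibility of $\theta$ can be tested on the compact generators $M\in \DA_{\gm,W}(\F,\Z[1/2])$.

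For such a compact $M=\gamma^*N$, the source of $\alpha_M$ unwinds to $\DA_\et(\F,\Z[1/2])(N,\Z^c[-1])$ via the Kleisli definition of $\DA_W$ and the full faithfulness of $\iota$, while the target unwinds to $D(\Z[1/2p])(R\Gamma_\et(\F,N\otimes\Z^c),\Z)$ via Theorem \ref{tgeisser} and Lemma \ref{l11.1}. Under the hypotheses of Corollary \ref{c9.1} a), Theorem \ref{t10.1} together with Corollary \ref{c10.1} guarantees that $R\Gamma(M)$ is a bounded complex of finitely generated $\Z[1/2p]$-modules; the universal coefficient theorem over the Dedekind domain $\Z[1/2p]$ then identifies the target with $\Hom_{\Z[1/2p]}(H^0R\Gamma(M),\Z[1/2p])\oplus \Ext^1_{\Z[1/2p]}(H^1R\Gamma(M),\Z[1/2p])$, precisely the dual of the data entering the pairings \eqref{eq10.1} and \eqref{eq10.3}.

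The heart of the proof is the compatibility identifying $\alpha_M$ with the map induced by those composition pairings: through the quasi-isomorphism $R\Gamma(\Z)\simeq R\Gamma_\et(\F,\Z^c)$, the canonical projection $R\Gamma(\Z)\to \Z[1]$ must correspond to pairing against the fundamental class $e\in H^1_\et(\F,\Z^c)$ of \eqref{eq6.1d}, compatibly with the monoid structure $\mu$ on $\Z^c$ underlying the Kleisli composition of $\DA_W$. Granting this, invertibility of $\alpha_M$ on all compact $M$ is exactly the perfect-duality statement $(*)$ of Theorem \ref{t10.1}, which that theorem already shows equivalent to the conditions of Corollary \ref{c9.1} a). The main obstacle is this compatibility check, which requires careful bookkeeping of shifts and of the interplay between the two adjunctions $(F,R\Gamma)$ and $(R\Gamma,\Pi)$ and the Kleisli structure; once it is in place, the equivalence follows formally from Theorem \ref{t10.1}.
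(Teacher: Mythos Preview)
Your argument follows the same route as the paper's: evaluate $\theta$ against compact generators, identify the resulting map $\alpha_M$ with the adjoints of the pairings \eqref{eq10.1} and \eqref{eq10.3} via the universal-coefficient filtration on the target, and conclude through Theorem~\ref{t10.1}. You correctly flag the compatibility check as the heart of the matter; the paper is equally terse there (``one checks\dots compare proof of Corollary~\ref{c.verdier}'').

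There is, however, an apparent circularity in your treatment of the converse direction. You invoke Corollary~\ref{c9.1} (via Theorem~\ref{t10.1} and Corollary~\ref{c10.1}) to ensure that $R\Gamma(M)$ is perfect before applying universal coefficients, and only then assert that invertibility of $\alpha_M$ is ``exactly'' $(*)$. But for the implication \emph{$\theta$ invertible $\Rightarrow$ Corollary~\ref{c9.1}} you cannot assume the latter. The remedy is simply to drop that hypothesis: over the hereditary ring $R=\Z[1/2p]$ the short exact sequence
\[0\to \Ext^1_R\bigl(H^{1}R\Gamma(M),R\bigr)\to \Hom_{D(R)}\bigl(R\Gamma(M),R\bigr)\to \Hom_R\bigl(H^0R\Gamma(M),R\bigr)\to 0\]
holds for \emph{any} complex, with no finiteness needed, and the identification of $\alpha_M$ with the pairing adjoints can be carried out unconditionally---this is how the paper proceeds. (Relatedly, writing the target as a direct sum $\Hom\oplus\Ext^1$ is not canonical; only the filtration is.)

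A smaller point: Theorem~\ref{t10.1} only states that the conditions of Corollary~\ref{c9.1} \emph{imply} $(*)$; you claim the equivalence. The missing implication is immediate---$(*)$ includes in particular finite generation of $\Hom(\Z,M)$ for all compact $M$, hence of $\Hom(M,\Z)$ by rigidity of $\DA_{\gm,W}(\F,\Z[1/2])$, which is condition (i) of that theorem---but it should be stated.
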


\begin{proof} 
Let $M\in \overline{\DA}_W(\F,\Z[1/2])$. Evaluating \eqref{eq11.1} against $M$ produces a map
\[\overline{\DA}_W(\F,\Z[1/2])(M,\Z[1])\by{\theta_M} D(\Z[1/2p])(R\Gamma(M),\Z).\]

The right hand side fits in an exact sequence
\begin{multline*}
0\to \Ext^1(\overline{\DA}_W(\F,\Z[1/2])(M,\Z[1]),\Z[1/p])\to D(\Z[1/2p])(R\Gamma(M),\Z)\\
\to\Hom(\overline{\DA}_W(\F,\Z[1/2])(M,\Z),\Z[1/p])\to 0
\end{multline*}

One checks that $\theta_M$ induces the pairings \eqref{eq10.1} and \eqref{eq10.3}, and that its
bijectivity is equivalent to their perfectness (compare proof of Corollary \ref{c.verdier}).
\end{proof}

\begin{remark}\label{r.fails} As in \S \ref{s.curious}, one easily sees that, in
\eqref{eq11.1}, $\theta\otimes \Z/m$ is an isomorphism for any $m\ne 0$. Hence $\theta$ is an
isomorphism if and only if $\theta\otimes \Q$ is an isomorphism. The problem to go further is
that the map
\[\Pi(\Z)\otimes \Q\to \Pi(\Q)\]
is not \emph{a priori} an isomorphism. It will be, provided $\Pi$ commutes with homotopy
colimits, which happens if and only if $R\Gamma$ preserves compact objects (Lemma
\ref{lAyoub}), in other words, if $\Hom(M,\Z[i])$ is finitely generated and $0$ for almost all
$i$, for any $M\in \DA_W(\F)$\dots

The isomorphisms $\Tor(\Z/m,\Z/m)\allowbreak\simeq \Z/m$ provide a pairing in $D(\Z)$
\[\Q/\Z\times \Q/\Z\to \Q/\Z[1].\]

This pairing  is perfect after $\otimes \Q$ (in $D(\Q)$) and after $\oo^L\Z/m$ (in $D(\Z/m)$)
for any $m$. But it is clearly not perfect, since $\Hom(\Q/\Z,\Q/\Z)$ is not a torsion group.
\end{remark}

\section{A positive case of the conjectures} 

\subsection{Motives of weight $\le 1$} Write
\[w_{\le 1}\DM_{\gm,\et}^\eff(\F)\]
for the thick subcategory of $\DM_{\gm,\et}^\eff(\F)$
generated by the $M(X)$ with $X$ smooth of dimension $0$ and the $\Phi(h_1(C))$ where $C$ is a
smooth projective curve and $\Phi$ is the integral version of the functor in \eqref{eq8.1}.
(Note that
$C$ always has a zero-cycle of degree
$1$ by Weil's theorem, hence an integral Chow-K\"unneth  decomposition.) 

\subsection{A positive result} The aim of this
section is to prove:

\begin{thm}\label{t4ab} If
$M\in w_{\le 1}\DM_{\gm,\et}^\eff(\F)$, \eqref{eq6.4} is an isomorphism.
\end{thm}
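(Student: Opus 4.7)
The strategy is to use that the full subcategory $\sT \subseteq \DM_{\gm,\et}^\eff(\F)$ of objects $M$ for which \eqref{eq6.4} is an isomorphism is triangulated and thick, since both sides of \eqref{eq6.4} are exact functors of $M$ that preserve direct summands. It therefore suffices to show that $\sT$ contains the two families of generators of $w_{\le 1}\DM_{\gm,\et}^\eff(\F)$: motives $M(X)$ of smooth $0$-dimensional varieties, and the summands $\Phi(h_1(C))$ for smooth projective curves $C$.

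For $M = M(X)$ with $X$ smooth of dimension $0$, write $X = \coprod_i \Spec K_i$ and reduce by additivity to $X = \Spec K$ for a finite extension $K/\F$, with finite étale structure morphism $\pi$. Since $\pi$ is finite étale, Theorem \ref{t5.1} gives $R_l \circ \pi_* \simeq \pi_* \circ R_l$, and the same formalism together with Lemma \ref{l9.1} yields $\Omega_l \circ \pi_* \simeq \pi_* \circ \Omega_l$. The identity $M(X) \simeq \pi_*\Z$ and the projection formula then exhibit \eqref{eq6.4} for $M(X)$ as the pushforward along $\pi$ of the tautological isomorphism $\Gamma_K^\eff \iso \Omega_l^\eff \Z_l$ over $\Spec K$, forcing $M(X) \in \sT$.

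For $M = \Phi(h_1(C))$, Weil's theorem guarantees a zero-cycle of degree $1$ on $C$, hence an integral Chow--K\"unneth decomposition $M(C) = \Z \oplus \Phi(h_1(C)) \oplus \Z(1)[2]$. A naive assembly via $M(C) \in \sT$ would force $\Z(1)[2] \in \sT$, which by Proposition \ref{rk=f} is equivalent to Friedlander's conjecture and $S^1$ in codimension $1$ for \emph{all} smooth $\F$-varieties, and is not known. One must instead verify \eqref{eq6.4} at $\Phi(h_1(C))$ directly by testing against compact generators $M(Y)$ with $Y \in \Sm(\F)$. The right-hand side $\hat{D}_\et(\F,\Z_l)(R_l M(Y), R_l \Phi(h_1(C)))$ is tractable because $R_l \Phi(h_1(C)) \simeq T_l(J_C)[1]$ is concentrated in a single degree with semisimple Frobenius action of pure weight $-1$ by Theorem \ref{rh}. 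The left-hand side is computed via the two-term description of $\Z^c$ recalled in \S \ref{s6.2} (whose cohomology sheaves are $\Z$ and $\Q$) together with Proposition \ref{p7.1}. Matching the two sides proceeds as in the proof of Proposition \ref{p7.1} (application of \eqref{eq3.4} summand-wise), and reduces to Tate's theorem for the Jacobian $J_C$ (known) plus the purity of $H^1(\bar C, \Q_l)$.

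The main obstacle is precisely this last step: because the decomposition of $M(C)$ cannot be used naively, one has to make the direct matching work for $\Phi(h_1(C))$ without invoking the codimension-$1$ Tate conjecture for auxiliary varieties $Y$. The key is that $\Phi(h_1(C))$ is ``$1$-motivic'', so its cohomology $H^*_\et(Y, \Phi(h_1(C))(*))$ is controlled by the Galois representation $T_l(J_C)$ paired with the Albanese of $Y$, and Tate's theorem for abelian varieties over $\F$ delivers exactly the needed identification on the smooth projective generators; an elementary d\'evissage via Gabber's alteration theorem extends this to all smooth $Y$.
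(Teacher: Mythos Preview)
Your outline has the right shape and correctly isolates the nontrivial case as $M=\Phi(h_1(C))$, but the decisive computation for that case is not carried out; the phrases ``controlled by $T_l(J_C)$ paired with the Albanese of $Y$'' and ``Tate's theorem delivers exactly the needed identification'' name the destination without supplying the route.

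The paper's proof supplies that route. After reducing to $\Q$-coefficients via Lemma~\ref{l7.1} (which you do not invoke), one must show that
\[
\DM_{\gm,\et}^\eff(\F)\bigl(M(X),\Gamma^\eff\otimes h_1(C)[i-1]\bigr)\otimes\Q\;\longrightarrow\; H^i_\cont(X,V_l(J))
\]
is an isomorphism for all $i$ and all smooth projective $X$. The maneuver you are missing is \emph{duality for $h_1(C)$}: since $h_1(C)$ is self-dual up to a $(-1)$-twist, the left side becomes a direct summand of $H^{*}_\et(X\times C,\Gamma^\eff(1))\otimes\Q$, i.e.\ of \'etale motivic cohomology of $X\times C$ in weight~$1$. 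Because $\Z(1)\simeq\G_m[-1]$, this is concentrated in degree~$2$ and equals $\Pic(X\times C)\otimes\Q_l$; the theorem of the cube together with finiteness of $\Pic^0$ over $\F$ then pins down the $h_1(C)$-summand as $\Hom(\Alb(X),J)\otimes\Q_l$. On the $l$-adic side a weight argument (Theorem~\ref{rh}) plus semi-simplicity and Lemma~\ref{l4.2} reduce the target to $\Hom_G(V_l(\Alb(X)),V_l(J))$, and Tate's theorem identifies the two. Without this duality-to-Picard passage there is no visible way to produce $\Hom(\Alb(X),J)$ on the motivic side, and your sketch does not offer an alternative mechanism.

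Two minor points. Your treatment of Artin motives is more elaborate than needed: $M(X)$ for $X$ zero-dimensional is strongly dualisable already in $\DM_\et^\eff(\F)$, so Corollary~\ref{c6.1} applies directly. And the closing ``d\'evissage via Gabber to all smooth $Y$'' is redundant: smooth projective varieties already generate $\DM_{\gm,\et}^\eff(\F)$, so testing against those suffices.
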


\begin{proof} By Lemma  \ref{l7.1}, it suffices to prove this after tensoring with $\Q$. For
simplicity, we drop $\Phi$ from the notation: we reduce
to the case $M= M(Y)$, $Y$ smooth (projective) of dimension $0$, or
$M=h_1(C)$, $C$ a smooth projective curve. The case of an Artin motive is trivial and left to
the reader. As we shall see, that of $h_1$ of a curve follows essentially from Theorem \ref{rh} 
plus Tate's theorem on homomorphisms of abelian varieties
\cite{tate}.

It is equivalent to show that the morphism of Corollary \ref{c9.1} b) is an isomorphism for any
$N$ when $M=h_1(C)$ and after tensoring with $\Q$. We reduce as usual to $N=M(X)[-i]$,
$X$ smooth projective and $i\in \Z$.

Observe that the $l$-adic realisation of $h_1(C)$ is $T_l(J)[1]$, where $J$ is the Jacobian
of $C$. Thus we want to show that the homomorphism
\begin{equation}\label{1/2tate}\DM_{\gm,\et}^\eff(k)(M(X),\Gamma^\eff \otimes
h_1(C)[i-1])\otimes \Q\to  H^i_\cont(X,V_l(J))
\end{equation}
is an isomorphism for all $i\in \Z$.

By duality, the left hand side of \eqref{1/2tate} is isomorphic to
\[\DM^{\et,\eff}_\gm(k)(M(X)\otimes h_1(C),\Gamma^\eff(1)[i+1])\otimes \Q\]
which is a direct summand of
\[H^{i+1}_\et(X\times C,\Gamma^\eff(1))\otimes \Q\simeq H^{i+1}_\et(X\times C,\Z(1))\otimes
\Q_l\oplus H^{i+2}_\et(X\times C,\Z(1))\otimes \Q_l\]
cf. \eqref{eq6.1b} and \eqref{eq7.1}. We have
\[H^{i}_\et(X\times C,\Z(1))\otimes \Q_l=
\begin{cases}
\Pic(X\times C)\otimes \Q_l&\text{if $i=2$}\\
0 &\text{otherwise.}
\end{cases}
\]

Using the isomorphism (theorem of the cube $+$ finiteness of $\Pic^0(X\times C)$)
\[\Pic(X\times C)\otimes \Q_l\simeq \NS(X)\otimes \Q_l \oplus \NS(C)\otimes \Q_l \oplus
\Hom(\Alb(X),J)\otimes
\Q_l\] 
we see that the direct summand $\DM^\et(k)(M(X)\otimes
h_1(C),\Z(1)[2])\otimes \Q_l$ of $H^{2}_\et(X\times C,\Z(1))\otimes \Q_l$ gets identified with
$\Hom(\Alb(X),J)\otimes \Q_l$. 

A weight computation shows that the right hand side of \eqref{1/2tate} is
\[H^i_\cont(X,V_l(J))=
\begin{cases}
H^1_\cont(\bar X,V_l(J))^G &\text{si $i=1$}\\
H^1_\cont(\bar X,V_l(J))_G &\text{si $i=2$}\\
0 &\text{otherwise.}
\end{cases}
\]

Moreover, the action of $G$ on $H^1_\cont(\bar
X,V_l(J))\simeq \Hom(V_l(\Alb(X)),V_l(J))$ is semi-simple, hence the composition
\[H^1_\cont(\bar X,V_l(J))^G\to H^1_\cont(\bar X,V_l(J))\to H^1_\cont(\bar X,V_l(J))_G\]
is an isomorphism; by Lemma \ref{l4.2}, this isomorphism
may be identified with cup-produit by $e\in H^1_\cont(k,\Q_l)$:
\[H^1_\cont(X,V_l(J))\by{e}H^2_\cont(X,V_l(J)).\]

Poincar\'e duality now realises $H^i_\cont(X,V_l(J))$ as a direct summand of
$H^{i+1}_\cont(X\times C,\Q_l(1))$.

In \eqref{1/2tate}, the two sides are therefore $0$ for $i\ne
1,2$; for $i=1,2$ they fit in a commutative diagram
\[\begin{CD}
\Hom(\Alb(X),J)\otimes \Q_l @>u>>  \Hom(V_l(\Alb(X)),V_l(J))^G\\
|| && @V{\wr}VV\\
\Hom(\Alb(X),J)\otimes \Q_l @>v>>  \Hom(V_l(\Alb(X)),V_l(J))_G.
\end{CD}\]

By Tate's theorem \cite{tate}, $u$ is an isomorphism. The diagram shows that so is $v$, hence
\eqref{1/2tate} is indeed an isomorphism for any $i\in\Z$.
\end{proof}

\appendix

\section{A letter to T. Geisser}\label{s.geisser}

\hfill Paris, May 2, 2004.

\bigskip
Dear Thomas,

To start with, let $\Gamma$ be a group, $A$ a $\Gamma$-module and $e\in
H^1(\Gamma,A)$ a class. To $e$ corresponds an extension of $\Gamma$-modules
\[0\to A\to A'\to \Z\to 0.\]

It is an exercise of homological algebra to check
that $A'$ may be described\footnote{Up to sign, as in all this letter.} as $\Z\times A$ (the underlying abelian group)
with $\Gamma$-action given by
\[a(r,s)=(r,re(a)+ as).\]

This can be used as follows. Given a complex of $\Gamma$-modules $C$, cup-product by $e$
\[\cdot e: \bH^i(\Gamma,C)\to \bH^{i+1}(\Gamma, A\oo^L C)\]
may be described, up to sign, as the boundary morphism $\partial_C$ of the exact triangle
\[A\oo^L C\to A'\oo^L C\to C\overset{\partial_C}{\to} A\oo^L C[1].\]

(This is another exercise of homological algebra: observe that $e=\partial_\Z(1)$ and  compare
\cite[Ch. VIII, \S 3, Prop. 5]{cl}).

All this can be [made] topological. In particular, for $\Gamma=\hat\Z$,
$A=\hat\Z$ and $e=e$ we get a corresponding $A'=\hat{M}$ such that
$\hat{M}\otimes\Q/\Z=\tilde M$ (notation from \cite[\S 4]{tatesheaf}).

This first justifies the claim in loc. cit., Prop. 4.4 that the bottom row
in the diagram is cup-product by $e$. Second, it gives
another way to look at $M$: it is the push-out of the extension
\begin{equation}\label{e0}
0\to \hat \Z\otimes\Q\to \hat{M}\otimes\Q\to\Q\to 0
\end{equation}
by the projection $\hat \Z\otimes\Q\to \hat \Z\otimes\Q/\Z=\Q/\Z$. Viewed
from this side, we may recover $\gamma:\Q\to M$ as the composite $\Q\to
\hat \Z\otimes\Q\to \Q/\Z\to M$.

Another thing you asked me is to explain the proof of \cite[Cor. 4.8]{tatesheaf}. First, by
Theorem 4.6, the map
\[\bH^1(\F_p,\Z^c)\otimes \Z_l=\bH^1(\F_p,\Z^c\otimes \Z_l)\to \bH^1(\F_p,\Z_l(0)^c)=H^1_\cont(\F_p,\Z_l)\]
is an isomorphism and therefore sends a generator $e_0$ of $\bH^1(\F_p,\Z^c)$ to $ue$, where $u$ is an $l$-adic unit. To prove that $u=1$, it suffices to go mod $l^\nu$ for all $\nu$ and to prove that the induced map
\[\Z=\bH^1(\F_p,\Z^c)\to H^1(\F_p,\Z/l^\nu)=\Z/l^\nu\]
is the projection. For this, observe that by definition of $\tilde M$ and $M$, we have a short exact sequence
\begin{equation}\label{e1}
0\to \Z\to M\to \tilde M\to 0
\end{equation}
and that the map $\Z\to M$ splits the exact sequence of \cite[Lemma 4.3]{tatesheaf}. This
implies that $e_0\in H^1(\hat\Z,\Z^c)$ is the image of the generator of $H^0(\hat
\Z,\Z)=H^1(\hat\Z,\Z[-1])$ under the map of complexes
\begin{equation}\label{e2}
\Z[-1]\to \Z^c
\end{equation}
defined by \eqref{e1}. Another way to formulate this is that the morphism defined by $e_0$ in the derived category is realised by \eqref{e2}. To prove the claimed compatibility, we now have to identify the cone of \eqref{e2} tensored with $\Z/l^\nu$ with $\hat{M}\oo^L\Z/l^\nu$. This cone is $[\Q\to \tilde M]$; we have a string of quasi-isomorphisms
\[[\Q\to \tilde M]\oo \Z/l^\nu\iso \tilde M\oo^L\Z/l^\nu[-1]\simeq \hat{M}\oo^L\Q/\Z\oo^L\Z/l^\nu[-1]\simeq \hat{M}\oo^L\Z/l^\nu.\]

Then for the second claim of Corollary 4.8. First note that the statement is nonsense. In fact, the splitting $\Q^c\simeq \Q\oplus\Q[-1]$ of Corollary 4.5 defines a ``projection onto $H^0$" $\Q^c\to \Q$ and an ``inclusion of $H^1$" $\Q[-1]\to \Q^c$. (Note that the only endomorphism of $\Q\oplus \Q[-1]$ that induces the identity on cohomology is the identity because $Ext^1_{\hat\Z}(\Q,\Q)=0$, so these projection and inclusion are unambiguous.) The correct statement is now that the corresponding composition
\[\Q^c\to \Q\to \Q^c[1]\]
is given by cup product by $e$.

In order to make sense of this claim, I should define a product
\[\Q^c\oo^L\Q^c\to \Q^c\]
matching the product
\[\Q_l(0)^c\oo^L\Q_l(0)^c\to \Q_l(0)^c\]
which corresponds to the ``continuous" product $\Q_l\otimes\Q_l\to \Q_l$. There is a stupid way to do this, and then one can use the isomorphism of Theorem 4.6 b) tensored with $\Q$ and the obvious description of cup-product by $e$ on $\Q_l(0)^c$: this is ugly but it works, and rationally this is all there is to it. Nevertheless let me do all this integrally as it might be useful. We have a short exact sequence of topological modules over $\hat{\Z}$:
\[0\to \hat\Z\to \hat M\otimes\Q\to M\to 0\]
hence a flat version of $\Z^c$ is given by the length $1$ complex
\begin{equation}\label{e3}
[\Q\oplus \hat\Z\to \hat M\otimes\Q]
\end{equation}
(note that, as already remarked, the map $\gamma:\Q\to M$ lifts canonically to $\hat{M}\otimes
\Q$ via \eqref{e0}.) This gives us a version of $\Z^c\oo^L\Z^c$:
\[[\Q\oplus (\Q\otimes\hat \Z)^2\oplus \hat \Z\to (\hat M\otimes \Q)^2\to (\hat
M\otimes\Q)^{\otimes 2}].\]

We define the product $\Z^c\oo^L\Z^c\to \Z^c$ by the cochain map which is the projection on
$\Q\oplus\hat\Z$ in degree $0$ and the sum in degree $1$. With this definition, the diagrams
\[\begin{CD}
\Z^c\oo^L\Z^c@>>> \Z^c\\
@VVV @VVV\\
\Z/l^\nu\oo^L\Z/l^\nu@>>>\Z/l^\nu
\end{CD}\]
obviously commute coherently (for solid versions of the maps obtained by na\"\i vely tensoring
the flat version of $\Z^c$ by $\Z/l^\nu$), which proves the compatibility with the product
$\Z_l(0)^c\oo^L\Z_l(0)^c\to \Z_l(0)^c$. If we now tensor with $\Q$, we find that on $\Q^c\simeq
\Q\oplus \Q[-1]$, this product is given by the identity $\Q\to \Q$ in degree $0$ and the sum
$\Q\oplus\Q\to \Q$ in degree $1$. From this and the description \eqref{e2} of the first map of
the composition
\[\Q^c[-1]\overset{\cdot e_0}{\longrightarrow} \Q^c\oo^L\Q^c\to \Q^c\]
you easily deduce the second claim of \cite[Cor. 4.8]{tatesheaf}. 

You might also be interested in the following construction of an isomorphism $\Z^c\iso
R\gamma_*\Z$. By adjunction I need to give a map $\phi:\gamma^*\Z^c\to\Z$. Now to describe
$\gamma^*\Z^c$, I may use a discrete version of the topological $\hat\Z$-module $\hat M$, say
$\bar M$:
\[
0\to\Z\to\bar M\to\Z\to 0.
\]

Then a flat version of $\gamma^*\Z^c$ is given by
\begin{align*}
\Q\oplus\Z&\to \bar M\otimes\Q \\
(r,s)&\mapsto (0,s-r)
\end{align*}
(the discrete version of \eqref{e3}), and there is an obvious $\Z$-equivariant map $\phi$ from
this complex to $\Z[0]$ ($(r,s)\mapsto s$). You will easily check by considering cones that the
diagram
\[\begin{CD}
\Z@=\Z\\
@V{\gamma^* e_0}VV @V{e_1}VV\\
\gamma^*\Z^c[1]@>\phi>> \Z[1]
\end{CD}\]
commutes, where $e_1$ is the generator of $H^1(\Z,\Z)=Ext^1_{\Z[\Z]}(\Z,\Z)$. By adjunction it
follows that the diagram
\[\begin{CD}
\Z@=\Z\\
@V{e_0}VV @V{\tilde e_1}VV\\
\Z^c[1]@>\tilde\phi>> R\gamma_* \Z[1]
\end{CD}\]
commutes, where $\tilde\phi$ and $\tilde e_1$ denote the maps corresponding to $\phi$ and
$e_1$ under adjunction. This implies that $H^1(\hat{\Z},\Z^c)\iso H^1(\hat{\Z},R\gamma_*\Z)$
under $\tilde\phi$; passing to the open subgroups of $\hat\Z$ we get that $\tilde\phi$ induces
an isomorphism on the $\sH^1$s. Since it also obviously induces an isomorphism on the $\sH^0$s,
this proves that it is a quasi-isomorphism.
\enlargethispage*{20pt}

I hope this is useful.

Best regards,
\bigskip

\hfill Bruno

\enlargethispage*{20pt}

\end{document}